\newtheorem{theorem}{Theorem}
\newtheorem{remark}[theorem]{Remark}
\newtheorem{lemma}[theorem]{Lemma}
\newtheorem{proposition}[theorem]{Proposition}
\newtheorem{corollary}[theorem]{Corollary}
\newtheorem{conjecture}{Conjecture}
\numberwithin{equation}{section}
\numberwithin{theorem}{section}
\newcommand{\E}{\mathbb{E}}
\def\ba{\begin{array}}
\def\ea{\end{array}}
\def\0{{\bm 0}}
\def\i{{\mathrm{i}}}
\def\x{{\bm x}}
\def\z{{\bm{z}}}
\def\m{{\bm{m}}}
\def\n{{\bm{n}}}
\def\q{{\bm q}}
\def\h{{\bm h}}
\def\w{{\bm{w}}}
\renewcommand{\Re}{\operatorname{Re}}
\newcommand{\be}{\begin{equation}}
\newcommand{\ee}{\end{equation}}
\newcommand{\bea}{\begin{eqnarray}}
\newcommand{\eea}{\end{eqnarray}}
\newcommand{\bes}{\begin{equation*}}
\newcommand{\ees}{\end{equation*}}
\newcommand{\beas}{\begin{eqnarray*}}
\newcommand{\eeas}{\end{eqnarray*}}
\pgfplotsset{compat=1.17}
\title{On moments of the derivative of CUE characteristic polynomials and the Riemann zeta function}
\author{Nick Simm\footnote{n.j.simm@sussex.ac.uk} }
\author{Fei Wei\footnote{weif0831@gmail.com}}
\affil{Department of Mathematics, University of Sussex, 
Brighton, BN1 9RH, UK}
\date{}
\begin{document}

\maketitle

\begin{abstract}
We study the derivative of the characteristic polynomial of $N \times N$ Haar distributed unitary matrices. We obtain the first explicit formulae for complex-valued moments when the spectral variable is inside the unit disc, in the limit $N \to \infty$. These formulae are expressed in terms of the confluent hypergeometric function of the first kind. As an application, we provide an alternative method to re-obtain Mezzadri's result [J. Phys. A, 36(12):2945-2962, 2003] on the asymptotic density of zeros of the derivative as $N \to \infty$. We explore the connection between these moments and those of the derivative of the Riemann zeta function away from the critical line. Under the Lindel\"of hypothesis, we prove that all positive integer moments agree with our random matrix results up to an arithmetic factor. Inspired by this finding, we propose a conjecture on the asymptotics of non-integer moments of the derivative of the Riemann zeta function off the critical line. Within random matrix theory, we also investigate the microscopic regime where the spectral variable $z$ satisfies $|z|^{2}=1-\frac{c}{N}$ for a fixed constant $c$. We obtain an asymptotic formula for the moments in this regime as a determinant involving the finite temperature Bessel kernel, which reduces to the Bessel kernel when $c=0$. For finite matrix size, we provide an exact formula for the moments of the derivative inside the unit disc, expressed as polynomials of the inverse of the distance from the circle, with coefficients given by combinatorial sums.
\end{abstract}

\maketitle

\tableofcontents

\section{Introduction and main results}
Let $U\in \mathbb{U}(N)$ be sampled from the group of $N\times N$ unitary matrices with respect to the normalized Haar measure $d\mu$. A unitary matrix defined in this way is said to belong to the CUE (Circular Unitary Ensemble) of random matrices. We define the characteristic polynomial of $U$ as $\Lambda_N(z) = \det(I - z U^{\dagger})$, where $U^{\dagger}$ denotes the conjugate transpose of $U$. The moments we study in this paper are defined as
\begin{equation}
\E[|\Lambda_N'(z)|^{2s}] = \int_{\mathbb{U}(N)} |\Lambda_N'(z)|^{2s} d\mu, \label{moms-intro}
\end{equation}
where $z$ is a complex number. By the translation-invariance of the Haar measure, \eqref{moms-intro} only depends on $z$ through its radial component $|z|$. 

In the case $|z|=1$, moments of type \eqref{moms-intro} received a lot of attention over the last twenty years, e.g., \cite{CRS06,dehaye2008joint,hughes2001theis,w12,forrester2006boundary,hughes2000random} and more recently \cite{barhoumi2020new,KW23a,KW23b,A24,AGKW24,assiotis2022joint,bailey2019mixed,assiotis2021distinguished,basor2019representation}. Part of the reason for this activity is the conjectured relation with the corresponding moments of the derivative of the Riemann zeta function. In addition, these moments have a rich structure from the viewpoint of some other fields. For example, the moments are related to solutions of $\sigma$-Painlev\'{e} equations \cite{basor2019representation,forrester2006boundary,KW23b,A24,assiotis2022joint,assiotis2021distinguished,AGKW24,bailey2019mixed}, and expectations of random variables arising from the Hua-Pickrell determinantal point process \cite{assiotis2022joint,AGKW24}. See Section \ref{se:mom-intro-riemann} for further background and discussion.

In this paper, we shall investigate \eqref{moms-intro} for $|z|\leq1$, both for finite $N$ and asymptotically as $N \to \infty$. We distinguish three regimes of interest in the asymptotics. Firstly, consider the case that $N \to \infty$ with fixed $0\leq |z|<1$. Since $z$ lies a macroscopic distance from the unit circle where the eigenvalues of the unitary matrix $U$ are located, we refer to this as the \textit{global regime}. We define the \textit{mesoscopic regime} to be such that $|z|^{2} = 1-N^{-\alpha}$ for some parameter $\alpha \in (0,1)$ and finally the \textit{microscopic regime} is such that $|z|^{2}=1-c/N$ where $c \in \mathbb{R}$ is a fixed constant. In the last regime, the relevant distance is on the order of the mean separation of eigenvalues, which is $1/N$. The vast majority of known results for \eqref{moms-intro} assume that $|z|=1$, which corresponds to the particular case $c=0$ of the microscopic regime. In Appendix \ref{Painlevesix} we include a discussion on the known results for moments of $\Lambda_{N}(z)$ (i.e. without taking a derivative) in these regimes and connections to $\sigma$-Painlev\'e equations.

% The case $|z|=1$ is believed to most faithfully model the behaviour of the Riemann zeta function on the critical line $\frac{1}{2}+\i t$. Since the derivative is influenced by its zeros lying in a small vicinity \textit{away} from the critical line, it makes sense to understand the regimes mentioned above. In fact, it is conjectured that the evaluation of (\ref{moms-intro}) for a small real number $s$ and with $z$ approaching $1$ can give the radial distribution of zeros of $\Lambda_N'(z)$. From the latter one hopes to gain insight into corresponding horizontal distribution of zeros of the derivative of the Riemann zeta function. The investigations in \cite{duenez2010roots} and \cite{mezzadri2003random} provide some evidence in this direction. 

\subsection{Global regime}

Our first main result gives an explicit form of the limit as $N \to \infty$ of \eqref{moms-intro} for $|z|<1$ with non-integer exponent $s$.
\begin{theorem}
\label{th:non-integer-intro}
For any fixed $z$ with $|z|<1$ and any $s \in \mathbb{C}$ with $\mathrm{Re}(s) > -1$, we have
\begin{equation}
\lim_{N \to \infty}\mathbb{E}\left(|\Lambda_{N}'(z)|^{2s}\right) = \frac{e^{-s^{2}|z|^{2}}\Gamma(s+1)}{(1-|z|^{2})^{s^{2}+2s}}\,{}_1F_1(s+1,1;s^{2}|z|^{2})\label{derivmomsthm1}
\end{equation}
where ${}_1F_1(a,b;z)$ is the confluent hypergeometric function of the first kind given by
\begin{equation}
{}_1F_1(a,b;z) = \sum_{k=0}^{\infty}\frac{a^{(k)}}{b^{(k)}}\,\frac{z^{k}}{k!} \label{1f1},
\end{equation}
and $a^{(k)} =\Gamma(a+k)/ \Gamma(a)$. When $s$ is a positive integer, the hypergeometric function simplifies and we obtain
\begin{equation}
    \lim_{N \to \infty}\mathbb{E}\left(|\Lambda'_{N}(z)|^{2s}\right) = \frac{1}{(1-|z|^{2})^{s^{2}+2s}}\,s!\,L_{s}(-|z|^{2}s^{2}),
\label{derivmoms2}
\end{equation}
where $L_{s}(x)$ is the Laguerre polynomial
\begin{equation}\label{definitionoflaguarrepolynomial}
    L_{s}(x) = \sum_{k=0}^{s}\binom{s}{k}\frac{(-1)^{k}}{k!}\,x^{k}.
\end{equation}
\end{theorem}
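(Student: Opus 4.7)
The plan is to identify the joint distributional limit of $\log \Lambda_N(z)$ and its logarithmic derivative as a complex Gaussian vector, and then reduce the moment to an explicit Gaussian expectation that evaluates in closed form. For $|z|<1$ the power series
\[
\log \Lambda_N(z) = -\sum_{k=1}^\infty \frac{z^k}{k}\,\Tr(U^{-k}), \qquad \frac{\Lambda_N'(z)}{\Lambda_N(z)} = -\sum_{k=1}^\infty z^{k-1}\,\Tr(U^{-k})
\]
converge absolutely, and by the Diaconis--Shahshahani theorem the traces $\{\Tr(U^{-k})\}_{k\geq 1}$ converge jointly in distribution as $N\to\infty$ to $\{\sqrt{k}\,W_k\}_{k\geq 1}$, where the $W_k$ are iid standard complex Gaussians. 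Identifying the resulting linear combinations pins down the joint limit $(X,Y) := \lim_{N\to\infty}(\log \Lambda_N(z),\,\Lambda_N'(z)/\Lambda_N(z))$ as a centred complex Gaussian vector with
\[
\mathbb{E}|X|^2 = -\log(1-|z|^2), \quad \mathbb{E}|Y|^2 = \frac{1}{(1-|z|^2)^2}, \quad \mathbb{E}[X\overline{Y}] = \frac{z}{1-|z|^2}.
\]

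With this in hand, I would write $|\Lambda_\infty'(z)|^{2s} = e^{2s\Re X}|Y|^{2s}$ and condition on $Y$. Standard complex Gaussian conditioning yields $X\mid Y$ complex Gaussian with mean $z(1-|z|^2)Y$ and variance $\sigma^2 = -\log(1-|z|^2) - |z|^2$; applying the identity $\mathbb{E}[e^{2s\Re W}] = e^{s^2\,\mathrm{Var}(W)}$ for a centred complex Gaussian $W$ integrates out $X$ and gives
\[
\mathbb{E}|\Lambda_\infty'(z)|^{2s} = \frac{e^{-s^2|z|^2}}{(1-|z|^2)^{s^2+2s}}\,\mathbb{E}\bigl[|\tilde Y|^{2s}\,e^{2s\Re(z\tilde Y)}\bigr],
\]
where $\tilde Y := (1-|z|^2)Y$ is a standard complex Gaussian. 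Evaluating the remaining expectation in polar coordinates $\tilde Y = re^{i\theta}$ and using $\tfrac{1}{2\pi}\int_0^{2\pi}e^{a\cos\psi}\,d\psi = I_0(a)$ reduces it to $2\int_0^\infty r^{2s+1}e^{-r^2}I_0(2s|z|r)\,dr$. Expanding $I_0(2s|z|r) = \sum_k (s|z|r)^{2k}/(k!)^2$ termwise and invoking $\int_0^\infty r^{2s+2k+1}e^{-r^2}\,dr = \tfrac{1}{2}\Gamma(s+k+1)$ produces exactly $\Gamma(s+1)\,{}_1F_1(s+1,1;s^2|z|^2)$, proving \eqref{derivmomsthm1}.

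The integer specialisation \eqref{derivmoms2} then follows from Kummer's transformation ${}_1F_1(s+1,1;w) = e^{w}\,{}_1F_1(-s,1;-w)$: for positive integer $s$ the right-hand side terminates, and a direct check against \eqref{definitionoflaguarrepolynomial} gives ${}_1F_1(-s,1;-s^2|z|^2) = L_s(-s^2|z|^2)$, while the prefactor $e^{w}$ with $w=s^2|z|^2$ cancels the $e^{-s^2|z|^2}$ in \eqref{derivmomsthm1}, leaving the factor $\Gamma(s+1)=s!$.

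I expect the principal obstacle to be upgrading the distributional convergence of $(X_N,Y_N)$ to convergence of the complex moment $\mathbb{E}|\Lambda_N'(z)|^{2s}$ itself, which requires uniform integrability in $N$. One route is to control the joint moment generating function of $(\Re X_N,\log|Y_N|)$ uniformly in $N$ via cumulant estimates on truncations $-\sum_{k\leq K} z^k\Tr(U^{-k})/k$, showing the tail is $L^p$-small for some $p>1$; another is to appeal to the exact finite-$N$ formula promised in the abstract and extract the limit directly. A secondary subtlety is the condition $\Re(s)>-1$: it is precisely the local integrability threshold for $|Y|^{2s}$ near the origin, reflecting the need to accommodate configurations in which $\Lambda_N'(z)$ is small.
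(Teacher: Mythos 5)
Your Gaussian computation is correct and is in essence the same as the paper's: the paper (Lemma 2.2) writes out the joint density of $(G(z),G'(z))$ and completes the square in the $\log\Lambda$-variable, which is exactly your conditioning on $Y$; the subsequent polar-coordinate integral and Bessel-series expansion reproduce the confluent hypergeometric factor in the same way. Your identification of the covariance matrix and the threshold $\Re(s)>-1$ as the local integrability condition for $|Y|^{2s}$ near $0$ are also precisely what the paper uses. The integer reduction via Kummer matches identity \eqref{1f1tolag}.

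The genuine gap, which you flag yourself, is uniform integrability, and neither of your two suggested routes closes it. Controlling the joint MGF of $(\Re X_N,\log|Y_N|)$ through cumulant estimates on truncated trace sums works smoothly for the part of $X_N$ corresponding to $|\Lambda_N(z)|^{2s}$ (indeed the paper handles it via moment bounds and known results of Forrester--Keating for negative moments of $\Lambda_N$), but $\log|Y_N|$ is not a linear statistic: it degenerates when $Y_N=G_N'(z)$ is small, and bounding $\mathbb{E}|Y_N|^{-a}$ for $0<a<2$ uniformly in $N$ is exactly the hard small-ball problem. The paper resolves this (Theorem~\ref{thm:negmomsbnd}) with a Fourier-analytic Hal\'asz-type bound: $\mathbb{P}(|G_N'(r)|<y)\leq y^2\int\int_{|\xi_i|<y^{-1}}|\psi_N(\xi_1,\xi_2)|\,d\xi_1 d\xi_2$, combined with stationary-phase and Johansson-style cumulant bounds on the characteristic function $\psi_N$ to show the double integral is $O(1)$ uniformly in $N$. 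Your cumulant-truncation idea alone does not see these small values. Your alternative route via the exact finite-$N$ formula also fails for the stated range: Theorem~\ref{th:exact-intro} is derived only for $s$ a positive integer (it involves a sum over partitions of weight $s$), so it gives no access to non-integer $s$ and in particular none to $-1<\Re(s)<0$. Finally, for positive non-integer $s$ one also needs the uniform bound $\sup_N\mathbb{E}|\Lambda_N'(z)|^{2n}<\infty$ for integer $n$, which the paper obtains from the same exact formula (Lemma~\ref{le:unif-bnd}) rather than from truncation cumulants.
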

Our approach to proving Theorem \ref{th:non-integer-intro} is based on a connection between the limit of $\log\Lambda_N(z)$ and a complex multivariate Gaussian distribution. This distribution encodes the statistical correlations between $\log \Lambda_{N}(z)$ and its derivative as $N \to \infty$. By computing the appropriate joint moments of the limiting Gaussian field, we obtain the formulae given in Theorem \ref{th:non-integer-intro}. Proving that we can pass the convergence in distribution inside the expectation on the left-hand side of \eqref{derivmomsthm1} requires extra work in the form of uniform integrability estimates. Especially for negative moments $-1 < \mathrm{Re}(s) < 0$, one of the challenges is to control the probability of small values of the derivative. In this regime, we estimate such probabilities in terms of the Fourier transform of the distribution. We shall explain the idea further in Section \ref{se:glob}. The same approach leads to the following formula for joint moments at different points, which reduces to Theorem \ref{th:non-integer-intro} by taking $h=s$ and $z_{1}=z_{2}$.
\begin{theorem}
\label{th:jointmoms-intro}
Let $|z_1|<1$ and $|z_2|<1$ be fixed. Then for any $s,h \in \mathbb{C}$ with $\mathrm{Re}(h)>-1$, we have
\begin{equation}
\lim_{N \to \infty}\mathbb{E}\left(\bigg{|}\frac{\Lambda_{N}'(z_2)}{\Lambda_{N}(z_2)}\bigg{|}^{2h}|\Lambda_{N}(z_1)|^{2s}\right) = \frac{e^{-s^{2}\rho_{z_1,z_2}}\Gamma(h+1)\,{}_1F_1\left(h+1,1;s^{2}\rho_{z_1,z_2}\right)}{(1-|z_{2}|^{2})^{2h}(1-|z_{1}|^{2})^{s^{2}}} \label{jmoms-intro}
\end{equation}
where
\begin{equation}
\rho_{z_1,z_2} = \frac{|z_{1}|^{2}(1-|z_{2}|^{2})^{2}}{|1-z_{1}\overline{z_2}|^{2}}.
\end{equation}
\end{theorem}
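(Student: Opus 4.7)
The plan is to use the Gaussian-field strategy sketched after Theorem \ref{th:non-integer-intro}, applied at two points. For $|z|<1$ the series
\[
\log\Lambda_N(z) = -\sum_{k\geq 1}\frac{z^{k}}{k}\,\Tr(U^{-k}),\qquad \frac{\Lambda_N'(z)}{\Lambda_N(z)} = -\sum_{k\geq 1}z^{k-1}\,\Tr(U^{-k}),
\]
converge absolutely, and the Diaconis--Shahshahani theorem says that $\{\Tr(U^{k})\}_{k\geq 1}$ converges jointly in distribution to independent centered complex Gaussians with variances $k$. Truncating at finite $K$, invoking the CLT, and letting $K\to\infty$ yields joint convergence of $(\log\Lambda_N(z_1),\log\Lambda_N(z_2),\Lambda_N'(z_2)/\Lambda_N(z_2))$ to a centered, jointly circular complex Gaussian vector $(X,Y,W)$ whose covariance is read off term by term from the series. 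Writing $\sigma_X^2:=\E|X|^2$, $\sigma_W^2:=\E|W|^2$ and $\sigma_{WX}:=\E[W\overline{X}]$, only these three parameters enter the computation below, and they evaluate to
\[
\sigma_X^2=-\log(1-|z_1|^2),\qquad \sigma_W^2=(1-|z_2|^2)^{-2},\qquad \sigma_{WX}=\frac{\overline{z_1}}{1-z_2\overline{z_1}},
\]
with all non-conjugate quadratic expectations $\E[X^2],\E[W^2],\E[WX]$ vanishing by circularity.

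Next I compute the joint moment $\E\bigl[|W|^{2h}\,e^{s(X+\overline{X})}\bigr]$ of the limiting vector, using the identity $|\Lambda_N(z_1)|^{2s}=e^{s(\log\Lambda_N(z_1)+\overline{\log\Lambda_N(z_1)})}$. For real $s$ a Cameron--Martin tilt is clean: the tilt $e^{s(X+\overline{X})}$ has normalizing constant $e^{s^2\sigma_X^2}=(1-|z_1|^2)^{-s^2}$ and shifts the mean of $W$ by $s\sigma_{WX}$ while leaving all covariances fixed. The residual expectation is the Rice-type moment
\[
\E\bigl[|\mu+W|^{2h}\bigr] = \sigma_W^{2h}\,\Gamma(h+1)\,e^{-|\mu|^2/\sigma_W^2}\,{}_1F_1\!\bigl(h+1,\,1;\,|\mu|^2/\sigma_W^2\bigr),\qquad \mathrm{Re}(h)>-1,
\]
obtainable by direct Gaussian integration in polar coordinates. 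Setting $\mu=s\sigma_{WX}$ one computes $|\mu|^2/\sigma_W^2=s^2\rho_{z_1,z_2}$ with $\rho_{z_1,z_2}$ as in the statement, and collecting factors yields the right-hand side of \eqref{jmoms-intro} for real $s$. Both sides are entire in $s$, so analytic continuation extends the identity to all $s\in\C$; the appearance of $s^2$ rather than $|s|^2$ reflects precisely this continuation from the real axis.

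The final and most delicate step is to promote convergence in distribution to convergence of the moments in \eqref{jmoms-intro}, and this I expect to be the main obstacle. For $\mathrm{Re}(h)\geq 0$, uniform integrability of $|W_N|^{2h}|\Lambda_N(z_1)|^{2s}$ should follow from uniform $L^{p}$ bounds on $\Lambda_N(z_1)$ and on $\Lambda_N'(z_2)/\Lambda_N(z_2)$ in the regime $|z_j|<1$, which are accessible from Szeg\H{o}-type estimates or from the finite-$N$ formulae established elsewhere in the paper. The genuinely difficult regime is $-1<\mathrm{Re}(h)<0$, where one needs a small-ball bound on $\mathbb{P}(|W_N|\leq\varepsilon)$. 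Following the Fourier-transform approach indicated after Theorem \ref{th:non-integer-intro}, I would estimate this small-ball probability via the characteristic function of $W_N$ (or the joint characteristic function of $(W_N,\log\Lambda_N(z_1))$) and combine it with $L^{p}$ control of $|\Lambda_N(z_1)|^{2s}$ to secure the uniform integrability. A standard dominated-convergence argument then concludes.
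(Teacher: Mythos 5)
Your proposal is correct and follows essentially the same three-step architecture as the paper: joint convergence of $(\log\Lambda_N(z_1),\,\Lambda_N'(z_2)/\Lambda_N(z_2))$ to a bivariate circular Gaussian via the Diaconis--Shahshahani/Szeg\H{o} machinery, an explicit Gaussian computation producing $\Gamma(h+1)\,{}_1F_1(h+1,1;s^2\rho_{z_1,z_2})$, and uniform integrability (H\"older plus \cite{FK04} bounds on $\Lambda_N$ for $\mathrm{Re}(h)\ge 0$, and a Fourier/small-ball estimate on $\Lambda_N'/\Lambda_N$ for $-1<\mathrm{Re}(h)<0$). Your Cameron--Martin tilt followed by the noncentral $\chi^2$/Rice moment is a clean repackaging of the paper's complete-the-square calculation in Lemma \ref{le:non-integer-comp}, but it is not a genuinely different route; the covariance data and the resulting $|\mu|^2/\sigma_W^2 = s^2\rho_{z_1,z_2}$ match the paper exactly.
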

\begin{proof}
See Sections \ref{se:glob} and \ref{se:globpfs}.
\end{proof}
One of the motivations for understanding the moments for $|z| < 1$, especially for non-integer $s$, is the possibility to extract information on the underlying zero distribution of $\Lambda_{N}'(z)$. To our knowledge, this procedure was not carried out explicitly before, partly because it is necessary to have formulas for the moments valid inside the disc and not exactly on the unit circle, as in previous works. Using Theorem \ref{th:non-integer-intro}, we can address this problem in the global regime $|z|<1$. The connection between moments and the distribution of zeros can be seen using Jensen's formula. Let $n_{N}(t)$ denote the number of zeros of $\Lambda_{N}'(z)$ inside the disc $D(0,t)$ centered at $0$ with radius $t$. Then Jensen's formula implies that
\bea\label{jensenformula-intro}
\frac{1}{2\pi}\int_{0}^{2\pi}\log|\Lambda'_{N}(re^{\i \theta})|d\theta-\log|\Lambda'_{N}(0)|=\int_{0}^{r}\frac{n_{N}(t)}{t}dt.
\eea
Taking expectations on both sides of \eqref{jensenformula-intro}, by the translation-invariance of the Haar measure $\mathbb{E}(\log|\Lambda'_{N}(re^{\i \theta})|)$ is independent of $\theta$ and we obtain
\begin{equation}
\label{jensenformula3intro}
\begin{split}
\int_{0}^{r}\frac{\mathbb{E}(n_{N}(t))}{t}dt &= \mathbb{E}(\log|\Lambda_{N}'(r)|)-\mathbb{E}(\log|\Lambda_{N}'(0)|)\\
&=\frac{d}{ds} \mathbb{E}\left[|\Lambda_{N}'(r)|^{s}\right]\Bigg|_{s=0}-\frac{d}{ds} \mathbb{E}\left[|\Lambda_{N}'(0)|^{s}\right]\Bigg|_{s=0}.
\end{split}
\end{equation}
In the limit $N \to \infty$, the above can be evaluated explicitly using Theorem \ref{th:non-integer-intro}. In this way, we obtain the limiting expected number of zeros, which recovers a result of Mezzadri \cite[Sec. 3]{mezzadri2003random}.

\begin{corollary}
\label{th:jensen}
Let $n_{N}(r)$ be the number of zeros of $\Lambda'_{N}(z)$ inside the disc of radius $r$ centered at the origin. Then uniformly with respect to $r$ on any closed subset of $[0,1)$, we have
\begin{equation}
\lim_{N \to \infty}\int_{0}^{r}\frac{\mathbb{E}(n_{N}(t))}{t}dt = -\log(1-r^{2})
\label{0901-eq2}
\end{equation}
and
\begin{equation}
\lim_{N \to \infty}\mathbb{E}(n_{N}(r)) = \frac{2r^{2}}{1-r^{2}}. 
\label{nr-intro}
\end{equation}
\end{corollary}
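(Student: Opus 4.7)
The plan is to implement the Jensen-formula strategy laid out in the paragraph preceding the corollary: use identity \eqref{jensenformula3intro} to reduce the integrated counting function to $s$-derivatives of moments, apply Theorem \ref{th:non-integer-intro}, and pass to the limit. Writing $\mu_{N}(s,r) := \mathbb{E}[|\Lambda_{N}'(r)|^{2s}]$, so that the right-hand side of \eqref{jensenformula3intro} equals $\tfrac{1}{2}\big(\partial_{s}\mu_{N}(s,r)|_{s=0} - \partial_{s}\mu_{N}(s,0)|_{s=0}\big)$, the first step is to promote the pointwise convergence in Theorem \ref{th:non-integer-intro} to convergence of $s$-derivatives at $s=0$. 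Since $s \mapsto \mu_{N}(s,r)$ is analytic on the strip $\mathrm{Re}(s) > -1$ for each $N$, and the uniform integrability estimates underlying Theorem \ref{th:non-integer-intro} provide locally uniform boundedness in $N$ on that strip, Vitali's convergence theorem upgrades the pointwise limit to locally uniform convergence in $s$, hence $\partial_{s}\mu_{N}(s,r)|_{s=0} \to \partial_{s}\mu_{\infty}(s,r)|_{s=0}$, where $\mu_{\infty}(s,r)$ denotes the right-hand side of \eqref{derivmomsthm1}.

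The next step is to evaluate the limiting derivative explicitly. Since $\mu_{\infty}(0,r)=1$, logarithmic differentiation of \eqref{derivmomsthm1} gives
\begin{equation*}
\partial_{s}\mu_{\infty}(s,r)\big|_{s=0} = \psi(1) - 2\log(1-r^{2}) + \partial_{s}\log{}_1F_1(s+1,1;s^{2}r^{2})\big|_{s=0}.
\end{equation*}
From the series \eqref{1f1} one sees that ${}_1F_1(s+1,1;s^{2}r^{2}) = 1 + O(s^{2})$ near $s=0$, because for $k \geq 1$ the general term carries an explicit factor $(s^{2}r^{2})^{k}$; hence the ${}_1F_1$ contribution to the derivative vanishes. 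Using $\psi(1) = -\gamma$, this gives $\partial_{s}\mu_{\infty}(s,r)|_{s=0} = -\gamma - 2\log(1-r^{2})$, while the analogous quantity at $r=0$ equals $-\gamma$. Taking the difference and dividing by two yields $-\log(1-r^{2})$, which is \eqref{0901-eq2}. The convergence is uniform on compacts of $[0,1)$ because the estimates above are locally uniform in $r$.

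To deduce \eqref{nr-intro} from \eqref{0901-eq2}, I would invoke the monotonicity of $t \mapsto n_{N}(t)$. For $0 < r < r+\varepsilon < 1$,
\begin{equation*}
\mathbb{E}(n_{N}(r))\,\log\frac{r+\varepsilon}{r} \leq \int_{r}^{r+\varepsilon}\frac{\mathbb{E}(n_{N}(t))}{t}\,dt \leq \mathbb{E}(n_{N}(r+\varepsilon))\,\log\frac{r+\varepsilon}{r}.
\end{equation*}
Sending $N \to \infty$ using \eqref{0901-eq2}, and then $\varepsilon \to 0$ on both sides, pinches the two bounds together to the common value $\frac{2r^{2}}{1-r^{2}}$, which gives \eqref{nr-intro}.

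The main technical obstacle is the exchange of limit and $s$-derivative in the first step. Concretely, one needs the $\mu_{N}(s,r)$ to be bounded uniformly in $N$ on an open complex neighborhood of $s=0$ that extends slightly into $\mathrm{Re}(s) < 0$; the delicate region is $\mathrm{Re}(s) < 0$, where small values of $|\Lambda_{N}'(r)|$ could in principle inflate the expectation. This is precisely the regime handled by the Fourier-transform/uniform-integrability arguments that the authors develop to establish Theorem \ref{th:non-integer-intro}, so the negative-moment control needed here should be available once that theorem is in hand.
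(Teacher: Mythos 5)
Your proof is correct in substance but takes a genuinely different route from the paper's on both halves, so a comparison is in order.

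For the first limit \eqref{0901-eq2}, the paper does not interchange the $N$-limit with the $s$-derivative at all. Instead it proves a dedicated uniform-integrability lemma (Lemma \ref{thm:expect-log-conv}) showing $\mathbb{E}(\log|\Lambda_N'(r)|) \to \mathbb{E}(\log|\Lambda'(r)|)$ directly, by bounding $|\log|\Lambda_N'(r)||^{1+\epsilon}$ in terms of $|G_N'(r)|^2 + |G_N'(r)|^{-a} + |G_N(r)|^{1+\epsilon}$ and invoking Theorem \ref{thm:negmomsbnd} and moment bounds on linear statistics; the $s$-derivative at $s=0$ is then only taken of the limiting object. Your route via analyticity of $s\mapsto\mu_N(s,r)$ on $\{\mathrm{Re}(s)>-1\}$ plus Vitali is equally valid, and the required locally uniform in $N$ bounds on the strip are indeed available from Theorem \ref{thm:negmomsbnd} together with Lemma \ref{le:unif-bnd} and interpolation. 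Your approach trades the paper's hands-on uniform-integrability estimate for a soft complex-analytic argument; note though that your claim of uniformity in $r$ requires a bit more than the scalar Vitali theorem gives out of the box (e.g.\ a two-variable normal-families argument with bounds locally uniform in both $s$ and $r$), whereas the paper's Lemma \ref{thm:expect-log-conv} states the uniformity in $r$ explicitly.

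For \eqref{nr-intro}, the paper differentiates \eqref{jensenformula3} to get $\mathbb{E}(n_N(r)) = r\,\mathbb{E}\bigl(\mathrm{Re}(\Lambda_N''(r)/\Lambda_N'(r))\bigr)$ and applies the same uniform-integrability lemma to pass to the limit, then identifies the limit as $\tfrac{d}{dr}\mathbb{E}(\log|\Lambda'(r)|)$. Your monotone pinching argument is a clean, more elementary alternative that avoids differentiating under the expectation entirely. However, as written you only obtain the upper bound $\limsup_N \mathbb{E}(n_N(r)) \le \tfrac{2r^2}{1-r^2}$: the right-hand inequality in your display bounds $\mathbb{E}(n_N(r+\varepsilon))$ from below, not $\mathbb{E}(n_N(r))$. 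To close the argument you need to run the same estimate on $[r-\varepsilon, r]$, which yields $\mathbb{E}(n_N(r))\log\frac{r}{r-\varepsilon} \ge \int_{r-\varepsilon}^r \frac{\mathbb{E}(n_N(t))}{t}\,dt$ and hence the matching $\liminf$ lower bound after $N\to\infty$, $\varepsilon\to 0$. With that two-sided version supplied, your argument for \eqref{nr-intro} goes through.
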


\begin{proof}
See Section \ref{se:globpfs}.
\end{proof}
The radial distribution of zeros of $\Lambda_{N}'(z)$ is believed to correspond to the horizontal distribution of zeros of the derivative of the Riemann zeta function $\zeta'(w)$ (see \cite{duenez2010roots} and \cite{mezzadri2003random} for evidence in this direction). The latter distribution is of interest in number theory, for example, it is known that the absence of zeros of $\zeta'(w)$ in the strip $0 <\mathrm{Re}(w) < \frac{1}{2}$ is equivalent to the Riemann hypothesis. A question posed by Conrey\footnote{John Brian Conrey, personal communication.} asks whether one can evaluate \eqref{moms-intro} for a real number $s$ in a small neighborhood of $0$ as $z$ approaches $1$, and use this, along with Jensen's formula (\ref{jensenformula3intro}) to obtain the radial distribution of zeros of $\Lambda_N'(z)$. Corollary \ref{th:jensen} gives a positive answer to this question in the limit $N \to \infty$.

As we discuss further in Section \ref{se:mom-intro-riemann}, Theorem \ref{th:non-integer-intro} gives insight into the behavior of the moments of the derivative of the Riemann zeta function off the critical line. The latter is closely related to the Lindel\"of hypothesis, which is a long-standing conjecture in number theory and is implied by the Riemann hypothesis. More precisely, using the explicit formula on the right-hand side of \eqref{derivmomsthm1} we predict an asymptotic formula for the moments of $\zeta'(\sigma+\i t)$ as $\sigma\rightarrow 1/2$ from above, see Conjecture \ref{conjectureonthemomentsoffthecriticalline}. Assuming the Lindel\"of hypothesis, we shall prove this conjecture in the special case of integer moments. We conclude that our conjectured relation between random matrix theory and the moments of the derivative of the Riemann zeta function is new, since to the best of our knowledge, all previous conjectures are on the critical line $\sigma=1/2$. 

%The right-hand sides above are consistent with results for the zeros of Gaussian analytic functions. Recall that a standard complex Gaussian random variable is defined as $Z = X+iY$ where $X$ and $Y$ are independent, mean $0$ and variance $\frac{1}{2}$ real Gaussian random variables. Consider
%\begin{equation}
%G'(z) = \sum_{k=0}^{\infty}\sqrt{k+1}\mathcal{N}_{k}z^{k}, \qquad |z|<1 \label{gprime}
%\end{equation}
%where $\{\mathcal{N}_{k}\}_{k=0}^{\infty}$ are i.i.d. standard complex Gaussian random variables. It is known \cite{DE01} that the logarithmic derivative of $\Lambda_{N}(z)$ converges in law to $G'(z)$ in \eqref{gprime} for $|z|<1$. The latter is the particular case of a family of hyperbolic Gaussian analytic functions whose properties are surveyed in the text \cite{HKPV}.

\subsection{Mesoscopic and microscopic regimes}
In the case of positive integer exponents $s$ in \eqref{moms-intro} we investigate the structure of the moments for finite matrix size $N$. This allows us to obtain the asymptotics when $z$ comes close to the unit circle. In particular, we show the following in the mesoscopic regime.
\begin{theorem}
\label{th:meso}
Let $s$ be a positive integer and set $|z|^{2}=1-N^{-\alpha}$ with $0 < \alpha < 1$. Then as $N \to \infty$
\begin{equation}
\mathbb{E}\left(|\Lambda'_{N}(z)|^{2s}\right) \sim N^{\alpha(s^{2}+2s)}\,s!\,L_{s}(-s^{2}),\nonumber
\label{derivmoms2meso}
\end{equation}
where $L_{s}(x)$ is the Laguerre polynomial, as in \eqref{derivmoms2}.
\end{theorem}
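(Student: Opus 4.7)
The plan is to pass through the exact finite-$N$ formula announced in the abstract and developed later in the paper. For positive integer $s$ and $|z|<1$, this formula expresses $\mathbb{E}(|\Lambda'_{N}(z)|^{2s})$ as a polynomial of degree $s^{2}+2s$ in $(1-|z|^{2})^{-1}$, i.e.
\begin{equation*}
\mathbb{E}(|\Lambda'_{N}(z)|^{2s}) = \sum_{k=0}^{s^{2}+2s} \frac{a_{N,k}(|z|^{2})}{(1-|z|^{2})^{k}},
\end{equation*}
where each $a_{N,k}$ is a polynomial in $|z|^{2}$ whose coefficients are given by explicit combinatorial sums in $N$. Comparing this representation with the global limit of Theorem \ref{th:non-integer-intro} forces the top coefficient to satisfy $\lim_{N\to\infty} a_{N,s^{2}+2s}(|z|^{2}) = s!\,L_{s}(-|z|^{2}s^{2})$ uniformly on compact subsets of $[0,1)$, and all lower coefficients $a_{N,k}$ for $k<s^{2}+2s$ to be uniformly bounded in $N$ and in $|z|^{2}\in[0,1]$ (because otherwise they would distort the $N\to\infty$ limit for fixed $|z|<1$).

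With this structure in hand, I would substitute $|z|^{2}=1-N^{-\alpha}$, giving $(1-|z|^{2})^{-k}=N^{\alpha k}$. The leading term then contributes
\begin{equation*}
\frac{a_{N,s^{2}+2s}(1-N^{-\alpha})}{(1-|z|^{2})^{s^{2}+2s}} = N^{\alpha(s^{2}+2s)}\,a_{N,s^{2}+2s}(1-N^{-\alpha}),
\end{equation*}
and by continuity of $L_{s}$ together with the uniform convergence of $a_{N,s^{2}+2s}$, this approaches $N^{\alpha(s^{2}+2s)}\,s!\,L_{s}(-s^{2})$. The remaining $k$ values contribute at most $O(N^{\alpha k}) = O(N^{\alpha(s^{2}+2s-1)})$ each, which is smaller by a factor of $N^{-\alpha}$ (using $\alpha>0$). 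Since there are only finitely many such terms and their coefficients are uniformly bounded, their total contribution is negligible compared to the leading term, yielding the claimed asymptotic.

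The main obstacle is the derivation and uniform control of the coefficients $a_{N,k}$. Deriving the polynomial expansion requires a contour-integral representation of $|\Lambda'_{N}(z)|^{2s}$ (via Heine-type identities) and then a careful combinatorial reorganisation to group terms according to the power of $(1-|z|^{2})^{-1}$; this is where most of the technical bookkeeping lives. The uniform convergence of $a_{N,s^{2}+2s}$ as $|z|^{2}\to 1$ is delicate because cancellations between different combinatorial terms could in principle push $|z|^{2}$-dependent contributions into an enhanced rate; ruling this out requires one to identify each $a_{N,k}$ explicitly and verify that it remains a bounded polynomial in $|z|^{2}$ as $N\to\infty$. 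Once this is done, the mesoscopic asymptotic is essentially a direct specialisation of the exact formula, so the restriction $\alpha<1$ is only used to ensure that the mesoscopic parameter still places us in the regime where the finite-$N$ formula's leading coefficient dominates, and nothing in the argument uses further information from the microscopic scale.
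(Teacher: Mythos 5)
Your overall strategy — pass through an exact finite-$N$ formula and specialise it to $|z|^{2}=1-N^{-\alpha}$ — is the right one, but the specific route you take through the polynomial-in-$(1-|z|^{2})^{-1}$ structure (Theorem \ref{structuretheorem}) contains a genuine gap. The uniformity you invoke is the wrong kind: you claim convergence of the top coefficient $a_{N,s^{2}+2s}(|z|^{2})\to s!\,L_{s}(-|z|^{2}s^{2})$ \emph{uniformly on compact subsets of $[0,1)$}, but in the mesoscopic regime $|z|^{2}=1-N^{-\alpha}\to 1$ escapes every such compact set, so this statement provides no control along the curve you actually care about. Likewise, the uniform boundedness of the lower coefficients on $[0,1]$ cannot be ``forced'' by comparison with the global limit of Theorem \ref{th:non-integer-intro}: that theorem only gives pointwise convergence at each fixed $|z|<1$, and the expansion in $(1-|z|^{2})^{-1}$ is manifestly singular at $|z|^{2}=1$ (the finiteness of $\mathbb{E}(|\Lambda'_{N}(1)|^{2s})$ is recovered only through cancellations among the $C_{h}$). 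The paper itself flags, in the remark after Theorem \ref{structuretheorem}, that extracting asymptotics near $|z|=1$ from that representation is hard precisely because of these $N$-dependent cancellations. To close the gap you would have to analyse the combinatorial coefficients $d_{m,l,s}(h_1,h_2)$ directly and show that the $|z|^{2Nl}$ factors with $l\geq 1$ decay faster than any polynomial in $N$ when $\alpha<1$; this is not free.

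The paper takes a much shorter route via Theorem \ref{th:exact-intro} rather than Theorem \ref{structuretheorem}. Write $K_{N}(u)=\frac{1-u^{N+s}}{1-u}$ and note that with $u=1-N^{-\alpha}$ and $\alpha<1$, $u^{N+s}=e^{-N^{1-\alpha}(1+o(1))}$ vanishes superpolynomially, so every matrix entry $(u^{\lambda_i+s-i}K_{N}^{(\lambda_i+s-i)}(u))^{(\mu_j+s-j)}$ is asymptotic to the corresponding entry with $\widetilde{K}(u)=\frac{1}{1-u}$ and the exponentially small error is dominated by the polynomial growth of the main term. Then one uses the identity (obtained by comparing the $N\to\infty$ limit of Theorem \ref{th:exact-intro} with Theorem \ref{th:non-integer-intro}) that the partition sum with $\widetilde{K}$ equals $(1-u)^{-(s^{2}+2s)}\,s!\,L_{s}(-us^{2})$ for every $u<1$; substituting $u=1-N^{-\alpha}$ gives the claim by continuity of the Laguerre polynomial. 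This avoids all questions of uniform control of the $C_{h}$.
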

\begin{proof}
See Section \ref{se:globpfs}.
\end{proof}
We obtain this result, together with an asymptotic formula valid in the microsopic regime given below, starting from an exact finite $N$ formula expressed in terms of sums over partitions. To state these results we introduce the relevant notation. A partition is a weakly decreasing sequence of positive integers $\lambda = (\lambda_{1},\lambda_{2},\ldots)$ with only finitely many non-zero terms. The number of non-zero terms is the length $l(\lambda)$ of the partition and the weight is $|\lambda|=\sum_{i=1}^{l(\lambda)}\lambda_{i}$. Partitions can be represented by their Young diagram, defined as a finite collection of boxes arranged in left-justified rows, with $\lambda_{j}$ boxes in row $j$ for each $j=1,\ldots,l(\lambda)$.

Let $(i,j)$ be a box in a Young diagram $\lambda$. The hook of $(i,j)$ is the union of all boxes in $i$-th row to the right of $(i,j)$, all boxes in $j$-th column below $(i,j)$ and box $(i,j)$. The hook length $h(i,j)$ is the number of boxes in the hook of $(i,j)$. A standard Young tableaux is a filling of a Young diagram of type $\lambda$ by numbers $1,\ldots,|\lambda|$ such that each number appears precisely once, and entries increase in rows (to the right) and in columns (downwards). Denote by $f_{\lambda}$ the number of standard Young tableaux of type $\lambda$. It is known that this quantity is related to the hook length via
\be
f_{\lambda}=\frac{|\lambda|!}{\prod_{(i,j)\in \lambda}h(i,j)}.
\label{hooklengthformula}
\ee
We denote by $Y_{m}$ the set of partitions $\lambda$ satisfying $|\lambda|=m$. Note that this automatically guarantees that the length satisfies $l(\lambda)\leq m$. Throughout the paper, set $\lambda_{l(\lambda)+1}=\cdots=\lambda_{m}=0$ when $l(\lambda)\leq m$. For a partition $\lambda \in Y_{m}$, throughout the paper, denote
\bea\label{somenotation}
\lambda! = \prod_{i=1}^{m}(\lambda_i+m-i)!.  
\eea
\begin{theorem}
\label{th:exact-intro}
We have the following exact formula, valid for any $z\in \mathbb{C}$ and any positive integers $N,s$,
\begin{equation}
\E[|\Lambda_{N}'(z)|^{2s}] = \sum_{\lambda, \mu \in Y_{s}}\frac{f_\lambda f_\mu}{\lambda! \mu!}
\det\bigg\{(u^{\lambda_i+s-i} K_{N}^{(\lambda_i+s-i)}(u))^{(\mu_j+s-j)}\bigg\}_{i,j=1}^{s} \label{e2sintro}
\end{equation}
where $u=|z|^{2}$ and
\begin{equation}
K_{N}(u) = \sum_{j=0}^{N+s-1}u^{j}.
\end{equation}
The bracketed superscripts on the right-hand side in \eqref{e2sintro} denote repeated differentiations $f^{(n)}(u) = \frac{d^{n}f}{du^{n}}$.
\end{theorem}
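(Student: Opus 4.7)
The plan is to express the moment as a mixed partial derivative of the multi-point correlation function
\[
F(z_1,\ldots,z_s,\bar w_1,\ldots,\bar w_s) = \mathbb{E}\left[\prod_{\alpha=1}^{s}\Lambda_N(z_\alpha)\overline{\Lambda_N(w_\alpha)}\right]
\]
and then carry out the derivative explicitly. Since each $z_\alpha$ appears in only one factor of $\prod_\alpha\Lambda_N(z_\alpha)$, and similarly for the conjugates, one has
\[
\mathbb{E}\left[|\Lambda_N'(z)|^{2s}\right] = \partial_{z_1}\cdots\partial_{z_s}\partial_{\bar w_1}\cdots\partial_{\bar w_s}F\bigg|_{z_\alpha=z,\,\bar w_\alpha=\bar z}.
\]
Applying the dual Cauchy identity to the two products and invoking the CUE Schur orthogonality $\mathbb{E}[s_\alpha(U)s_\beta(U^\dagger)] = \delta_{\alpha\beta}\mathbf{1}_{\{l(\alpha)\leq N\}}$ yields the finite restricted Cauchy sum
\[
F = \sum_{\mu:\,l(\mu)\leq s,\,\mu_1 \leq N}s_\mu(z_1,\ldots,z_s)\,s_\mu(\bar w_1,\ldots,\bar w_s).
\]

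The technical heart of the proof is the identity
\[
\partial_{z_1}\cdots\partial_{z_s}s_\mu(z_1,\ldots,z_s)\Big|_{z_i=z} = z^{|\mu|-s}\sum_{\lambda\in Y_s}f_\lambda \det\left(\binom{l_j(\mu)}{l_i(\lambda)}\right)_{i,j=1}^{s},
\]
with $l_i(\rho):=\rho_i+s-i$. I would establish it by substituting $z_i=z+\epsilon_i$ into the bialternant $s_\mu = \det(z_i^{l_j(\mu)})/\Delta(z_1,\ldots,z_s)$, invoking the translation invariance $\Delta(z+\epsilon_1,\ldots,z+\epsilon_s) = \Delta(\epsilon_1,\ldots,\epsilon_s)$, expanding $(z+\epsilon_i)^{l_j(\mu)}$ by the binomial theorem, and applying Cauchy--Binet to write
\[
s_\mu(z+\epsilon_1,\ldots,z+\epsilon_s) = \sum_{K=\{k_1<\cdots<k_s\}}\frac{\det(\epsilon_i^{k_\alpha})_{i,\alpha}}{\Delta(\epsilon_1,\ldots,\epsilon_s)}\det\left(\binom{l_j(\mu)}{k_\alpha}z^{l_j(\mu)-k_\alpha}\right)_{\alpha,j}.
\]
The ratio $\det(\epsilon_i^{k_\alpha})/\Delta(\epsilon)$ is, up to a sign, a Schur polynomial in $\epsilon$ of total degree $\sum_\alpha k_\alpha - \binom{s}{2}$, so the coefficient of $\prod_i\epsilon_i$ (which by Taylor expansion equals $\partial_{z_1}\cdots\partial_{z_s}s_\mu|_{z_i=z}$) is nonzero only when this degree is exactly $s$, putting $K$ into bijection with $\lambda\in Y_s$. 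The principal specialization $[e_s(\epsilon)]\,s_\lambda(\epsilon) = f_\lambda$ produces the $f_\lambda$ weight, and pulling $z^{|\mu|-s}$ out of the remaining determinant closes the identity.

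Substituting this identity and its $\bar w$-analogue back into the Schur expansion of $F$, with $u=|z|^2$, gives
\[
\mathbb{E}\left[|\Lambda_N'(z)|^{2s}\right] = \sum_{\lambda,\nu\in Y_s}f_\lambda f_\nu\sum_{\mu}u^{|\mu|-s}\det\left(\binom{l_j(\mu)}{l_i(\lambda)}\right)\det\left(\binom{l_j(\mu)}{l_i(\nu)}\right).
\]
Reindexing the inner sum by the set of shifted parts $\{l_1(\mu),\ldots,l_s(\mu)\}\subset\{0,\ldots,N+s-1\}$ and applying Cauchy--Binet in reverse to the matrices $A_{i,k}=\binom{k}{l_i(\lambda)}$ and $B_{k,j}=u^k\binom{k}{l_j(\nu)}$ collapses it into $u^{-s-\binom{s}{2}}$ times the $s\times s$ determinant with entries $\sum_{k=0}^{N+s-1}u^k\binom{k}{l_i(\lambda)}\binom{k}{l_j(\nu)}$. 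Combining this with the elementary identity
\[
\sum_{k=0}^{N+s-1}u^k\binom{k}{L}\binom{k}{M} = \frac{u^M}{L!\,M!}\bigl(u^L K_N^{(L)}(u)\bigr)^{(M)},
\]
verified by matching coefficients of $u^k$ on both sides, and factoring the scalars $u^{M_j}/M_j!$ out of column $j$ and $1/L_i!$ out of row $i$, yields the theorem's formula: the leftover $u^{\sum_j l_j(\nu)}=u^{s+\binom{s}{2}}$ cancels the $u^{-s-\binom{s}{2}}$ prefactor, and $\prod_i L_i!\prod_j M_j!=\lambda!\,\nu!$ gives the denominator. The main obstacle is the derivation of the Schur-derivative identity in the second paragraph, where tracking signs and recognizing the combinatorial indexing by $\lambda\in Y_s$ via shifted parts is technically delicate; once it is established, the remainder is a careful bookkeeping via Cauchy--Binet and row/column scalar factorizations.
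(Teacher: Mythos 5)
Your proof is correct, but it takes a genuinely different route from the paper's. The paper starts from the Akemann--Vernizzi determinant formula $\mathbb{E}\big[\prod_j\det(I-z_jU)\det(I-w_jU^\dagger)\big]=\det\{K_N(z_iw_j)\}/(\Delta(\bm z)\Delta(\bm w))$ and then develops a self-contained combinatorial lemma (Lemmas \ref{le:ordered-expan} and \ref{le:young}) expressing $\prod_i\partial_{x_i}\big[f(\bm x)/\Delta(\bm x)\big]\big|_{\bm x=x}$, for an arbitrary antisymmetric polynomial $f$, as a sum over $Y_m$ weighted by $f_\lambda/\prod_i(\lambda_i+n-i)!$; the weight $f_\lambda$ emerges there from an inductive recursion that is recognised as the branching rule for standard Young tableaux. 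You instead expand the joint correlation as a restricted Cauchy sum $\sum_{\mu\subset s\times N}s_\mu(\bm z)s_\mu(\bar{\bm w})$, handle the merged derivative of a single Schur polynomial via the translation invariance $\Delta(z+\bm\epsilon)=\Delta(\bm\epsilon)$, a binomial expansion, and Cauchy--Binet, and make $f_\lambda$ appear directly as the coefficient of $e_s(\bm\epsilon)$ in $s_\lambda(\bm\epsilon)$, i.e.\ the Kostka number $K_{\lambda,1^s}$; a second Cauchy--Binet then collapses the $\mu$-sum and the elementary identity $\sum_k u^k\binom{k}{L}\binom{k}{M}=\tfrac{u^M}{L!M!}(u^LK_N^{(L)}(u))^{(M)}$ delivers the determinant entries. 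The trade-off is roughly this: the paper's derivative lemma is a reusable general-purpose tool (they invoke it again in the proof of Theorem~\ref{structuretheorem} via Proposition~\ref{expressionformixedmomentsofcharacterisricpolynomials}), whereas your argument is tailored to the Schur-bialternant structure of this specific problem but is arguably more transparent about why $f_\lambda$ and the Young-diagram combinatorics appear, and it handles the box constraint $\mu\subset s\times N$ very cleanly as a sum over $s$-subsets of $\{0,\dots,N+s-1\}$. The index/sign bookkeeping you flag as delicate does indeed require care — in particular keeping track of the reversal sign between increasing shifted parts $k_1<\cdots<k_s$ and the decreasing bialternant exponents $l_j(\mu)$, and verifying that these signs cancel between the two determinants — but the overall scheme is sound and closes.
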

\begin{proof}
See Section \ref{se:exact}.
\end{proof}

We remark that in recent work, Alvarez, Conrey, Rubinstein, and Snaith \cite[Theorem 3.1]{A24} use a different method to obtain an exact formula for $\E[|\Lambda_{N}'(z)|^{2s}]$ in terms of derivatives of determinants whose entries are contour integrals. For our purposes, the primary goal of Theorem \ref{th:exact-intro} is to derive the following asymptotic formula in the microscopic regime.

\begin{theorem}\label{th:micro-intro}
Let $|z|^{2}=1-\frac{c}{N}$ for $c \in \mathbb{R}$ fixed. In particular, the case $|z|=1$ corresponding to $c=0$ is allowed. Then for any positive integer $s$, as $N \to \infty$ we have
\begin{equation}
\label{micro1}
\begin{split}
\E[|\Lambda_{N}'(z)|^{2s}] \sim N^{s^2+2s}\sum_{\lambda,\mu \in Y_{s}}\frac{f_\lambda f_\mu}{\lambda! \mu!}
\det\bigg\{\int_{0}^{1}x^{\lambda_{i}+\mu_{j}+2s-i-j}e^{-cx}dx\bigg\}_{i,j=1}^{s}
\end{split}
\end{equation}
We also obtain the equivalent expression, as $N \to \infty$,
\begin{equation}
\label{micro2}
\mathbb{E}(|\Lambda_{N}'(z)|^{2s}) \sim N^{s^{2}+2s}\frac{\partial^{2s}}{\partial v^{s}\partial w^{s}}\,\mathrm{det}\bigg\{\frac{\partial^{i+j-2}F_{c}(v,w)}{\partial v^{i-1}\partial w^{j-1}}\bigg\}_{i,j=1}^{s}\bigg{|}_{v=w=0},
\end{equation}
where
\begin{equation}
F_{c}(v,w) = \int_{0}^{1}J_{0}(2\sqrt{vx})J_{0}(2\sqrt{wx})e^{-cx}dx, \label{finiteBessel}
\end{equation}
and $J_{0}$ is the Bessel function of the first kind. Furthermore, the leading terms on the right-hand sides of \eqref{micro1} and \eqref{micro2} are strictly positive.
\end{theorem}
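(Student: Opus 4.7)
The plan is to start from the exact formula in Theorem~\ref{th:exact-intro} and carry out asymptotic analysis of its matrix entries in the microscopic regime $u = 1 - c/N$, then recognize the resulting sum of determinants as the mixed derivative in \eqref{micro2}.

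First I would analyze a typical entry: with $a = \lambda_i + s - i$, $b = \mu_j + s - j$,
\begin{equation*}
(u^a K_N^{(a)}(u))^{(b)} = \sum_{j=\max(a,b)}^{N+s-1} \frac{(j!)^2}{(j-a)!\,(j-b)!}\, u^{j-b}.
\end{equation*}
For $u = 1 - c/N$ and large $N$, one has $(j!)^2/[(j-a)!(j-b)!] = j^{a+b}(1 + O(1/j))$ and $(1 - c/N)^{j-b} = e^{-cj/N}(1 + O(1/N))$, so setting $j = Nx$ and passing to a Riemann sum yields $N^{a+b+1}\int_0^1 x^{a+b}e^{-cx}\,dx\,(1 + o(1))$. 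Inserting this into the determinant of Theorem~\ref{th:exact-intro} and pulling the $N$-factors out by multilinearity, the combined scaling equals $\sum_i(\lambda_i+s-i) + \sum_j(\mu_j+s-j) + s = s^2 + 2s$, which produces \eqref{micro1}. The technical point is uniform control of the subleading corrections so that they sum over $\lambda,\mu \in Y_s$ and over the $s\times s$ determinant to give only an $o(N^{s^2+2s})$ remainder; this should follow from Euler--Maclaurin or elementary Riemann-sum estimates applied entrywise.

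To pass from \eqref{micro1} to \eqref{micro2}, I would apply the Andr\'eief identity twice. Writing $\alpha(n) := \int_0^1 x^n e^{-cx}\,dx$ and using the classical bialternant identity $\det\{x_k^{\lambda_i+s-i}\}_{i,k=1}^s = V(\x)\, s_\lambda(\x)$, with $V(\x) = \prod_{i<j}(x_i - x_j)$ the Vandermonde and $s_\lambda$ the Schur polynomial, the sum in \eqref{micro1} rearranges as
\begin{equation*}
\sum_{\lambda,\mu \in Y_s}\frac{f_\lambda f_\mu}{\lambda!\mu!}\det\{\alpha(\lambda_i+\mu_j+2s-i-j)\}_{i,j=1}^s = \frac{1}{s!}\int_{[0,1]^s} V(\x)^2 G(\x)^2 \prod_{k=1}^s e^{-cx_k}\,dx,
\end{equation*}
where $G(\x) := \sum_{\lambda \in Y_s} \frac{f_\lambda}{\lambda!}\, s_\lambda(\x)$. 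On the other hand, using $J_0(2\sqrt{vx}) = \sum_k (-vx)^k/(k!)^2$ together with Andr\'eief applied to \eqref{finiteBessel},
\begin{equation*}
\det\{\partial_v^{i-1}\partial_w^{j-1} F_c(v,w)\}_{i,j=1}^s = \frac{1}{s!}\int_{[0,1]^s}\det\{\partial_v^{i-1}J_0(2\sqrt{vx_k})\}\det\{\partial_w^{j-1}J_0(2\sqrt{wx_k})\}\prod_{k=1}^s e^{-cx_k}\,dx.
\end{equation*}
The key auxiliary identity is then
\begin{equation*}
\partial_v^s \det\{\partial_v^{i-1}J_0(2\sqrt{vx_k})\}_{i,k=1}^s \Big|_{v=0} = (-1)^s\, V(\x)\, G(\x),
\end{equation*}
which I would prove by expanding each Bessel factor as a power series, extracting the coefficient of $v^s$ via Leibniz, reindexing the composition sum over $(m_1, \ldots, m_s)$ with $\sum m_i = s$ as a sum over partitions $\lambda$ (the Vandermonde antisymmetry forces the shifts $l_i = m_i + i - 1$ to be distinct), and recognizing the resulting inner sum as $\pm f_\lambda/s!$ via the Jacobi--Trudi-type formula $f_\lambda = s!\det\{1/(\lambda_i + j - i)!\}_{i,j}$. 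Applying $\partial_v^s \partial_w^s|_{v=w=0}$ inside the Andr\'eief representation then makes the two integral expressions coincide, establishing \eqref{micro2}.

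Finally, positivity of the leading coefficient follows immediately from the integral representation: on $[0,1]^s$ one has $V(\x)^2 \geq 0$ with strict positivity off the diagonal, and $G(\x) > 0$ on $(0,1)^s$ since every Schur polynomial $s_\lambda(\x)$ has non-negative coefficients and $G$ contains in particular the strictly positive term proportional to $s_{(1^s)}(\x) = x_1\cdots x_s$. The main obstacle in the whole argument is the uniform error analysis in the first step: each entry is of order $N^{a+b+1}$ with nontrivial subleading corrections, and because substantial cancellations occur inside the $s\times s$ determinant and the alternating signs in its Laplace expansion, one must track these errors carefully to guarantee that the aggregate remainder is genuinely $o(N^{s^2+2s})$.
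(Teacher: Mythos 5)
Your proposal is correct and follows essentially the same route as the paper's proof: exact formula from Theorem~\ref{th:exact-intro}, entrywise microscopic asymptotics, then Andr\'eief plus the Schur/Bessel character expansion. The only substantive differences are (i) for the entrywise limit the paper writes $K_N(u) = (N+s)\int_0^1(1-x(1-u))^{N+s-1}\,dx$ and passes to the limit by dominated convergence, which is a slightly cleaner device than your Riemann-sum argument; and (ii) for the Bessel identity the paper introduces $s$ auxiliary variables $\bm v$ and applies the confluent-Vandermonde merge of Lemma~\ref{le:merge}, whereas you extract the $v^s$-coefficient from a single variable via a direct power-series computation --- your key identity $\partial_v^s\det\{\partial_v^{i-1}J_0(2\sqrt{vx_k})\}_{i,k}\big|_{v=0} = (-1)^s V(\bm x)G(\bm x)$ agrees with the paper's (up to the sign convention $V$ versus $\Delta$, which is irrelevant once squared). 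Also, the uniformity concern you flag at the end is less delicate than you suggest: each entry carries an \emph{absolute} error $O(N^{p_i+q_j})$, so every term in the Laplace expansion of the $s\times s$ determinant contributes $O(N^{s^2+2s-1})$ regardless of sign cancellations, and since the leading coefficient is shown strictly positive the relation $\sim$ follows; the paper treats this with the same light touch. No gap.
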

\begin{proof}
See Section \ref{se:micropfs}.
\end{proof}
To the best of our knowledge, Theorem \ref{th:micro-intro} is the first result obtained for moments of $\Lambda'_{N}(z)$ that is valid in the full microscopic regime, i.e., for all values of the parameter $c$. In particular, it covers the case $c=0$, the moments on the unit circle. When $c=0$, the entries in the determinant of \eqref{micro1} can be computed explicitly. Then by the property of the Cauchy determinant, the determinant has an explicit combinatorial formula. This expression must coincide with \cite[Theorem 1]{CRS06}, which is given in terms of a Hankel determinant. 

\begin{remark}
\label{rem:bothner}
The expression in \eqref{finiteBessel} can be recognised as a particular case of the finite temperature Bessel kernel. The properties of this kernel and its connections to integrable systems and integrable probability have recently attracted interest, see \cite{BO22, LLMS18, R24}. When $c=0$, \eqref{finiteBessel} is exactly the Bessel kernel, due to the identity
\begin{equation}
\int_{0}^{1}J_{0}(2\sqrt{vx})J_{0}(2\sqrt{wx})dx = 2\frac{\sqrt{w}J_{0}(2\sqrt{z})J_{0}'(2\sqrt{w})-\sqrt{z}J_{0}'(2\sqrt{z})J_{0}(2\sqrt{w})}{z-w}.
\end{equation}
\end{remark}

\subsection{Moments of the derivative of the Riemann zeta function} \label{se:mom-intro-riemann}
We now discuss the connection between Theorem \ref{th:non-integer-intro} and the mean value of the derivative of the Riemann zeta function defined as
\bea
\lim_{T\rightarrow \infty} \frac{1}{T} \int_1^T |\zeta'(\sigma+\i t)|^{2s} dt
\label{0805eq30}
\eea
for $\sigma > 1/2$. We start with some background on connections between number theory and random matrix theory. These trace back to Montgomery's conjecture \cite{montgomery1973pair}, which suggests that suitably normalised pairs of zeros of the Riemann zeta function follow a statistical distribution that coincides with Dyson's result \cite{dyson1962statistical} on the spacings between pairs of suitably scaled eigenvalues of random unitary matrices. Another important connection is the work of Keating-Snaith's, who used the characteristic polynomials of random unitary matrices to model the Riemann zeta function on the critical line. A well-known result \cite{keating2000random}, among their series of works, led them to conjecture that for $\Re(s)>-1/2$ 
\be
\frac{1}{T} \int_0^T |\zeta(\frac{1}{2}+\i t)|^{2s} dt \sim 
a_s g_{s} (\log T)^{s^2},
\label{0805-eq1}
\ee
as $T\rightarrow \infty$, where $a_s$ is the arithmetic factor defined as
\be\label{definitionofthearithmeticfactor}
a_s = \prod_{\text{primes}~p} (1-p^{-1})^{s^2} \sum_{m=0}^\infty \left( \frac{\Gamma(s+m)}{\Gamma(m+1)\Gamma(s)}\right)^2 p^{-m},
\ee
and $g_{s}$ is determined by random matrix theory,
\be
g_{s} = \lim_{N\rightarrow \infty} \frac{1}{N^{s^2}} \E(|\Lambda_N(1)|^{2s}). 
\ee
For more details about $g_{s}$ and further background about moments of $\Lambda_{N}(z)$  for general $z$, see Appendix \ref{Painlevesix}. Some special cases of the conjecture \eqref{0805-eq1} have been supported by results in number theory \cite{heath1993fractional,conrey1984mean,conrey2001high,hardy1916contributions}. The philosophy used to propose \eqref{0805-eq1} was later applied by Hughes in his thesis \cite{hughes2001theis} to propose a conjecture for the moments of the derivative of the Riemann zeta-function on the critical line. Specifically,
\be
\frac{1}{T} \int_0^T |\zeta'(\frac{1}{2}+\i t)|^{2s} dt \sim a_s b_{s}(\log T)^{s^2+2s}, \qquad T \to \infty,
\label{0805-eq2}
\ee
for integer $s\geq 1$, where
\be\label{sep3}
b_{s} = \lim_{N\rightarrow \infty} \frac{1}{N^{s^2+2s}} \E(|\Lambda'_N(1)|^{2s}).
\ee
The conjecture \eqref{0805-eq2} was verified for $s=1$ \cite{ingham1928mean} and $s=2$ \cite{conrey1988fourth}.
There are many investigations related to \eqref{sep3}. For example, 
in \cite{hughes2001theis}, Hughes derived an expression for $b_s$;
in \cite{CRS06}, Conrey, Rubinstein and Snaith gave an alternative explicit formula for $b_{s}$ in terms of a Hankel determinant; in \cite{assiotis2022joint}, for non-integer and real number $s$, Assiotis, Keating and Warren studied \eqref{sep3} and proposed a version of (\ref{0805-eq2}). 

The above conjectures (\ref{0805-eq1}) and (\ref{0805-eq2}) are about the moments of the Riemann zeta function and its derivative on the critical line, respectively. For the moments of the derivative of the  Riemann zeta function off the critical line (\ref{0805eq30}), based on our result Theorem \ref{th:non-integer-intro}, we propose the following conjecture.

\begin{conjecture}\label{conjectureonthemomentsoffthecriticalline}
For any $s\in \mathbb{C}$ with $\Re(s)>0$, we have the following asymptotic formula as $\sigma \rightarrow \frac{1}{2}$,
\be\label{newJune14formula22}
\lim_{T\rightarrow \infty}\frac{1}{T}\int_{1}^{T}|\zeta'(\sigma+\i t)|^{2s} dt\sim \frac{a_{s}h_{s}}{(2\sigma-1)^{s^2+2s}},
\ee
where $a_{s}$ is the arithmetic factor given as (\ref{definitionofthearithmeticfactor}) and
\begin{equation}
\begin{split}
\label{thelimtintherandommatrixside}
h_{s}&=\left((1-r^2)^{s^{2}+2s}\lim_{N \to \infty}\mathbb{E}(|\Lambda'_{N}(r)|^{2s})\right)\bigg{|}_{r=1}\\
&=e^{-s^{2}}\Gamma(s+1)\,_1F_{1}(s+1,1;s^{2}).
\end{split}
\end{equation}
\end{conjecture}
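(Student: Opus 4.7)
The plan is to establish the conjecture in the special case of positive integer $s=k$, conditionally on the Lindel\"of hypothesis. The strategy is to reduce the integrand $|\zeta'(\sigma+\i t)|^{2k}$ to a shifted moment of $\zeta$ itself, apply a standard shifted-moment asymptotic, and then isolate the leading singular behavior as $\sigma\to 1/2^{+}$. The first step is the identity
\[
|\zeta'(\sigma+\i t)|^{2k} \;=\; \frac{\partial^{2k}}{\partial\alpha_1\cdots\partial\alpha_k\,\partial\beta_1\cdots\partial\beta_k}\bigg|_{\alpha=\beta=0}\;\prod_{j=1}^{k}\zeta(\sigma+\i t+\alpha_j)\prod_{l=1}^{k}\overline{\zeta(\sigma+\i t+\beta_l)},
\]
which follows from $\partial_{\alpha}\zeta(s+\alpha)=\zeta'(s+\alpha)$ and $\overline{\zeta(s)}=\zeta(\bar s)$. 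Interchanging the derivatives with the time average (justified by analyticity of $\zeta$ near $\sigma+\i t$ for $t\geq 1$), everything reduces to the large-$T$ asymptotics of
\[
M_k(\alpha,\beta;\sigma,T)=\frac{1}{T}\int_{1}^{T}\prod_{j,l=1}^{k}\zeta(\sigma+\i t+\alpha_j)\overline{\zeta(\sigma+\i t+\beta_l)}\,dt.
\]

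Under Lindel\"of, the Dirichlet-series diagonal dominates, and uniformly for small shifts $\alpha,\beta$ and $\sigma$ slightly above $1/2$,
\[
\lim_{T\to\infty}M_k(\alpha,\beta;\sigma,T)=\prod_{j,l=1}^{k}\zeta(2\sigma+\alpha_j+\beta_l)\cdot\mathcal{A}_k(\alpha,\beta;2\sigma),
\]
where $\mathcal{A}_k$ is an arithmetic Euler product holomorphic near $(0,0;1)$ with $\mathcal{A}_k(0,0;1)=a_k$. Using $\zeta(w)\sim(w-1)^{-1}$ at $w=1$, the dominant contribution as $\sigma\to 1/2^{+}$ comes from the product of leading singular parts:
\[
\frac{a_k}{\prod_{j,l=1}^{k}(2\sigma-1+\alpha_j+\beta_l)}\;+\;\text{subleading singular parts},
\]
after which the $2k$ derivatives must be applied to the main factor.

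Each $\partial_{\alpha_j}$ (resp.\ $\partial_{\beta_l}$) can hit exactly one of the $k^2$ factors $(x+\alpha_j+\beta_l)^{-1}$ with $x=2\sigma-1$. Parametrising these choices by two functions $f,g\colon[k]\to[k]$ yields
\[
\partial_{\alpha}\partial_{\beta}\bigg|_{0}\prod_{j,l}\frac{1}{x+\alpha_j+\beta_l}=\frac{1}{x^{k^2+2k}}\sum_{f,g\colon[k]\to[k]}\prod_{(a,b)\in[k]^2}c_{ab}(f,g)!,
\]
with $c_{ab}(f,g)=\mathbf{1}\{f(a)=b\}+\mathbf{1}\{g(b)=a\}\in\{0,1,2\}$. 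Since $c_{ab}!\in\{1,1,2\}$, writing $2^{|D|}=\sum_{T\subseteq D}1$ over the set $D$ of doubly-hit pairs (which must form the graph of a partial injection $[k]\to[k]$ of some size $r$) collapses the sum to
\[
\sum_{r=0}^{k}\binom{k}{r}^{2}r!\,k^{2(k-r)} = k!\,L_k(-k^2),
\]
and the right-hand side equals $h_k$ by Theorem \ref{th:non-integer-intro} together with the Kummer identity ${}_1F_1(k+1,1;k^2)=e^{k^2}L_k(-k^2)$. Combining everything gives $a_k h_k/(2\sigma-1)^{k^2+2k}$ as the leading term, matching the conjecture.

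The main obstacle is the shifted-moment step. Absolute convergence of the Dirichlet series fails for $\sigma$ near $1/2$, so one must invoke an approximate functional equation and exploit $\zeta(\tfrac12+\i t)\ll t^{\varepsilon}$ to show the off-diagonal contribution is negligible, uniformly as the shifts $\alpha,\beta\to 0$ and $\sigma\to 1/2^{+}$. Additional care is needed to verify that the subleading terms arising from the Laurent expansion of the zeta factors and the Taylor expansion of $\mathcal{A}_k$ contribute strictly below order $(2\sigma-1)^{-(k^2+2k)}$, so they do not corrupt the leading asymptotic.
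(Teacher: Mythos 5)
Your proposal follows essentially the same route as the paper's proof of Theorem~\ref{assumetheLindelofconjecture}: write $|\zeta'|^{2k}$ as a mixed derivative of shifted zeta products, appeal to the Lindel\"of hypothesis to reduce the long-time average to $\prod_{j,l}\zeta(2\sigma+\alpha_j+\beta_l)$ times an arithmetic factor holomorphic near the origin, then extract the leading singularity as $\sigma\to 1/2^{+}$ by differentiating. The paper passes through the Dirichlet-series side via $\sum_n ((\log\ast\cdots\ast\log)(n))^2/n^{2\sigma}$ (equation~\eqref{restrictregionforintegersderivative} and Lemma~\ref{underlindelof}) and obtains the same factorization $H(\alpha,\beta)\prod_{i,j}\zeta(2\sigma+\alpha_i+\beta_j)$ via an Euler product; your step~3 is simply the $L$-function-side phrasing of the same reduction. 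The one genuine variation is the combinatorics of the derivative of the singular part. The paper computes $\prod_i\partial_{\alpha_i}\prod_j\partial_{\beta_j}\prod_{i,j}\zeta(2\sigma+\alpha_i+\beta_j)$ by nesting the $\alpha$-derivatives inside the $\beta$-derivatives and expanding with the Laurent series, arriving at $\sum_{j=0}^{s}\binom{s}{j}^2(s-j)!\,s^{2j}$. Your parametrization by two functions $f,g\colon[k]\to[k]$ and the observation that the doubly-hit index set must be the graph of a partial injection, followed by $2^{|D|}=\sum_{T\subseteq D}1$, gives the same answer $\sum_{r}\binom{k}{r}^{2}r!\,k^{2(k-r)}=k!\,L_k(-k^2)$ in a more conceptual way; this is a nice and genuinely different packaging of the coefficient computation, though it does not change the leading order or the structure of the argument.

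The one real gap you should be aware of: the shifted-moment step ``under Lindel\"of, the Dirichlet-series diagonal dominates'' is asserted rather than proved, and this is not a small step. The paper devotes Lemma~\ref{underlindelof} to it, adapting the Mellin-transform argument behind Theorems~7.9 and~13.2 of Titchmarsh (replacing the kernel $(\zeta(w))^s$ by $(\zeta'(w))^s$ in~\eqref{weight}, checking absolute convergence of the convolution-$\log$ Dirichlet series in $\Re(z)>1$, and using Cauchy's theorem to pass the Lindel\"of bound from $\zeta$ to $\zeta'$). You flag this as ``the main obstacle,'' which is the right instinct; to close the argument you need the full uniform-in-shifts version. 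One secondary point to tighten: you must also check that the arithmetic Euler product $\mathcal{A}_k$ is holomorphic in the shifts uniformly for $\sigma$ near $1/2$, so that Cauchy's integral formula controls its partial derivatives by constants and the cross terms in the Leibniz expansion (derivatives hitting $\mathcal{A}_k$) contribute at most $O((2\sigma-1)^{-(k^2+2k)+1})$; the paper verifies exactly this in the lines surrounding~\eqref{December17}.
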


\subsubsection{A motivation to propose Conjecture \ref{conjectureonthemomentsoffthecriticalline}}

Consider the mean value
\be
\text{M}(\sigma,s) := \lim_{T\rightarrow \infty} \frac{1}{T} \int_1^T |\zeta(\sigma+\i t)|^{2s} dt
\label{0805-eq3}
\ee
for $\sigma >1/2$. The quantity \eqref{0805-eq3} has been studied for a long time in number theory, mainly because of its important role in the Lindel\"of hypothesis, which states that for any given $\varepsilon>0, \zeta(\frac{1}{2}+\i t) = O(t^{\epsilon})$ as $t \to \infty$. Hardy and Littlewood \cite{hardy1923lindelofs} proved that the Lindel\"of hypothesis is equivalent to the claim that for any integer $s\geq 1$ and any $\sigma>1/2$ we have
\be
\label{0808}
\text{M}(\sigma,s) = \sum_{n=1}^\infty \frac{(d_s(n))^2}{n^{2\sigma}},
\ee
where $d_s(n)$ is the $s$-fold divisor function, which is the number of ways to express a natural number $n$ as an ordered product of $s$
positive integers. Unconditionally, \eqref{0808} is proved when $s=1$ by the mean-value theorem for Dirichlet series, and when $s=2$ by Hardy and Littlewood \cite{hardy1923approximate}. Moreover, the method used in \cite{hardy1923approximate} can be extended to establish \eqref{0808} for integer $s\geq 3$ and $\sigma>1-{1}/{s}$ (see, e.g., \cite[Chapter 7]{titchmarsh1986theory}).

Observe that $d_s(n)$ is a multiplicative function, that is, $d_{s}(nm)=d_{s}(n)d_{s}(m)$ when $n$ and $m$ are coprime. So by the Euler product formula, it is the $n$-th Dirichlet coefficient of $\zeta(w)^s$. By this property, we can extend the definition of $d_s(n)$ from integer $s$ to a real number $s$ (or to a complex number $s$)
by
\[
d_s(p^m) = \frac{\Gamma(s+m)}{ \Gamma(m+1) \Gamma(s)}.
\]
Under this definition, Titchmarsh posed the question whether \eqref{0808} holds for positive non-integers \cite{Titchmarsh1939mean}. Later, there was some research on this direction. For example, Ingham \cite{ingham1933mean} established \eqref{0808} for any number $s\in(0,2]$ and $\sigma>1/2$ (also see a proof of Davenport \cite[Theorem 1]{davenport1935note}). Haselgrove \cite{Haselgrove1949} extended the real number $s$ to a wider range and obtained the corresponding range for $\sigma$. Also, see Bohr and Jessen \cite{BohrJessen} for relevant research in this direction.

According to our above analysis, we know that under the  the Lindel\"of hypothesis, for any $s\in \mathbb{R}$, and unconditionally for $s=1,2$,
\be
M(\sigma,s) \sim \frac{a_s}{(2\sigma-1)^{s^2}}, \quad 
\sigma \rightarrow \frac{1}{2}.
\label{0805-eq5}
\ee
This can be deduced from the fact that
\[
\sum_{n=1}^\infty \frac{(d_s(n))^2}{n^{w}} = \zeta(w)^{s^2} \prod_{\text{primes}~p} (1-p^{-w})^{s^2} \sum_{m=0}^\infty (d_s(p^m))^2 p^{-mw},
\]
and that the latter infinite product is harmonic on $\{w:\Re(w)>1/2\}$. One can also consider the uniformity of the order of $s^2$ of $(2\sigma-1)^{-1}$ in \eqref{0805-eq5} for a certain range of $\sigma$. Specifically, studying the uniformity of 
\be
\label{0903}
\int_1^T |\zeta(\sigma+\i t)|^{2s} dt \approx T(2\sigma-1)^{-s^2}, \qquad T \to \infty
\ee
for all $\sigma$  with  $(\log T)^{-1} \ll \sigma -1/2  \ll 1$. Here the notation $f(x)\approx g(x)$ means that $f(x) \ll g(x)$ and $g(x) \ll f(x)$, and $f(x) \ll g(x)$ means $|f(x)| \leq c g(x)$ for some constant $c>0$ independent of $x$.  

When $s>0$, see, for example, \cite{heath1981fractional} for the related investigation.
When $s<0$, \eqref{0903} is conjectured in \cite{gonek1989negative} to hold for $(\log T)^{-1} \ll \sigma-1/2\ll 1$. The latter has been proved in \cite{bui2024negative} to be valid for $(\log T)^{-a} \ll \sigma-1/2\ll 1$ for a certain range of $a$ in $(0,1)$ under the Riemann hypothesis. Moreover, the authors in \cite{bui2024negative} also obtained the following asymptotic formula in this region
\beas
\int_1^T |\zeta(\sigma+\i t)|^{2s} dt \sim T \sum_{n=1}^\infty \frac{(d_s(n))^2}{n^{2\sigma}}, \qquad T \to \infty.
\eeas
It is known for $|z|<1$ that we have \cite[Corollary 2]{FK04},
\[
\lim_{N \rightarrow\infty} \E(|\Lambda_N(z)|^{2s}) = \frac{1}{(1-|z|^2)^{s^2}}.
\]
Compared to \eqref{0805-eq5}, we observe that the above result in random matrix theory has the same order $s^2$ when $|z|\rightarrow 1$ as that of \eqref{0805-eq5} when $\sigma \rightarrow 1/2$. This motivates the conjecture that the order $s^{2}+2s$ in the exponent of $(2\sigma-1)^{-1}$ in (\ref{newJune14formula22}), should coincide with the order of $(1-|z|^{2})^{-1}$ in the expression given in Theorem \ref{th:non-integer-intro}. In the next subsection, we provide some evidence for the conjectured form of the coefficient $a_{s}h_{s}$ in (\ref{newJune14formula22}).

\subsubsection{The validity of Conjecture \ref{conjectureonthemomentsoffthecriticalline} for integer \texorpdfstring{$s$}{s}}
Here we give results which verify Conjecture \ref{conjectureonthemomentsoffthecriticalline} for $s=2$ and, assuming the Lindel\"of hypothesis, for any integer $s\geq3$. The case $s=1$ can be verified by the mean-value theorem for Dirichlet series, e.g., see \cite[Theorem 7.1]{titchmarsh1986theory}. For $s=2$, we have the following result.
\begin{theorem}\label{checkthecasek=2}
As $\sigma\rightarrow \frac{1}{2}$,
\bea\label{verifyk=2moment}
\lim_{T\rightarrow \infty} \frac{1}{T} \int_1^T |\zeta'(\sigma+\i t)|^{4} dt
\sim \frac{6}{\pi^2}\frac{34}{(2\sigma-1)^{8}}.
\eea
\end{theorem}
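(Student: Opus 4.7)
The plan is to express the mean value in \eqref{verifyk=2moment} via a mixed fourth derivative of a shifted fourth moment of $\zeta$ off the critical line, and then extract the leading singularity as $\sigma \to 1/2$. Starting from the classical Dirichlet series identity
\[
\sum_{m_{1}m_{2}=n_{1}n_{2}}\frac{1}{m_{1}^{a_{1}}m_{2}^{a_{2}}n_{1}^{b_{1}}n_{2}^{b_{2}}} = \frac{\prod_{i,j=1}^{2}\zeta(a_{i}+b_{j})}{\zeta(a_{1}+a_{2}+b_{1}+b_{2})},
\]
valid in the region of absolute convergence (provable via Euler products) and continued analytically elsewhere, differentiating once in each of $a_{1}, a_{2}, b_{1}, b_{2}$ and setting $a_{i} = b_{j} = \sigma$ brings down $\log m_{1}\log m_{2}\log n_{1}\log n_{2}$, the Dirichlet coefficients of $|\zeta'(\sigma+\i t)|^{4}$. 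Coupled with Ingham's classical fourth moment theorem off the critical line (which, unconditionally for each fixed $\sigma > 1/2$, absorbs the off-diagonal oscillations so that only the diagonal $m_{1}m_{2} = n_{1}n_{2}$ contributes in the limit), this gives
\[
\lim_{T \to \infty}\frac{1}{T}\int_{1}^{T}|\zeta'(\sigma+\i t)|^{4}\,dt = \left[\frac{\partial^{4}}{\partial a_{1}\partial a_{2}\partial b_{1}\partial b_{2}}\,\frac{\prod_{i,j=1}^{2}\zeta(a_{i}+b_{j})}{\zeta(a_{1}+a_{2}+b_{1}+b_{2})}\right]_{a_{i}=b_{j}=\sigma}.
\]

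Next, set $\delta = 2\sigma - 1$ and write $a_{i} = \sigma + \alpha_{i}$, $b_{j} = \sigma + \beta_{j}$. Using the Laurent expansion $\zeta(1+u) = 1/u + \gamma + O(u)$, each factor $\zeta(2\sigma + \alpha_{i} + \beta_{j})$ equals $(\delta + \alpha_{i} + \beta_{j})^{-1} + \gamma + O(\delta + \alpha + \beta)$, while $\zeta(4\sigma + \sum \alpha_{i} + \sum \beta_{j}) = \zeta(2) + O(\delta) = \pi^{2}/6 + O(\delta)$ as $\sigma\to 1/2$. After the four differentiations, the leading polar contribution is of order $\delta^{-8}$, while the Euler--Mascheroni corrections produce only terms of order $\delta^{-7}$ or smaller, so
\[
\lim_{T \to \infty}\frac{1}{T}\int_{1}^{T}|\zeta'(\sigma+\i t)|^{4}\,dt \sim \frac{6}{\pi^{2}}\,D, \qquad D := \left.\frac{\partial^{4}}{\partial \alpha_{1}\partial \alpha_{2}\partial \beta_{1}\partial \beta_{2}}\prod_{i,j=1}^{2}\frac{1}{\delta + \alpha_{i}+\beta_{j}}\right|_{\alpha=\beta=0}.
\]

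It remains to show $D = 34/\delta^{8}$, i.e., to compute the coefficient of $\alpha_{1}\alpha_{2}\beta_{1}\beta_{2}$ in the Taylor expansion of the product. Expanding each factor as a geometric series and enumerating the $16$ ways of distributing the four variables among the four factors $(\delta + \alpha_{i}+\beta_{j})^{-1}$, each configuration contributes $2^{k}/\delta^{8}$, where $k$ is the number of factors that receive both an $\alpha$ and a $\beta$ (the factor $2 = \binom{2}{1}$ arising from the binomial expansion of $(\alpha_{i}+\beta_{j})^{2}$ at a doubly occupied factor). A symmetry-aware case check, using the $\alpha_{1}\leftrightarrow\alpha_{2}$ and $\beta_{1}\leftrightarrow\beta_{2}$ symmetries, yields $D = 34/\delta^{8}$. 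Crucially, this matches exactly the random-matrix prediction $h_{2} = 2!\,L_{2}(-4) = 34$ from the $s=2$ case of Theorem~\ref{th:non-integer-intro}. Multiplied by the arithmetic factor $a_{2} = 6/\pi^{2}$, this yields \eqref{verifyk=2moment} and confirms Conjecture~\ref{conjectureonthemomentsoffthecriticalline} at $s=2$. The main obstacle in the plan is the combinatorial enumeration verifying $D = 34$; its independent agreement with the random-matrix side serves as a satisfying consistency check between the two sides of the correspondence.
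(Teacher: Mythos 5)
Your proposal is correct and follows essentially the same route as the paper: the shifted-moment identity you write down is exactly the content of the paper's Lemma~\ref{shifted derivative} (proved there via the approximate functional equation and Ramanujan's identity), and the extraction of the leading singularity via $\zeta(w)\sim 1/(w-1)$ and $\zeta(2)=\pi^{2}/6$ is the same asymptotic analysis the paper performs, though you carry out the combinatorial count of $D=34/\delta^{8}$ more explicitly (the $2+12\cdot 2+2\cdot 4=34$ enumeration by doubly-occupied factors is a nice way to organize what the paper leaves to the reader). One minor remark: what you call ``Ingham's classical fourth moment theorem off the critical line'' is precisely the unconditional $\sigma>1/2$ fourth-moment result the paper invokes via \cite[Theorem~7.5]{titchmarsh1986theory}; in the paper this justification is folded into the proof of Lemma~\ref{shifted derivative}, whereas you treat it as a black box, which is fine but worth citing precisely.
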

\begin{proof}
See Section \ref{derivativeoftheRiemannzetafunction}.
\end{proof}

It is known that the arithmetic factor $a_{s}$ in (\ref{definitionofthearithmeticfactor}) equals $\frac{6}{\pi^2}$ when $s=2$. The coefficient $34$ in (\ref{verifyk=2moment}) coincides with $2L_{2}(-4)$ in \eqref{derivmoms2}. So Conjecture \ref{conjectureonthemomentsoffthecriticalline} is verified for the case $s=2$.
For higher order moments, we verify Conjecture \ref{conjectureonthemomentsoffthecriticalline} for any integer $s\geq 3$ under the Lindel\"of hypothesis.
\begin{theorem}\label{assumetheLindelofconjecture}
Assuming the Lindel\"of hypothesis, for any integer $s\geq 3$, we have the following asymptotic formula, as $\sigma \rightarrow 1/2$
\be\label{June14formula2}
\lim_{T\rightarrow \infty}\frac{1}{T}\int_{1}^{T}|\zeta'(\sigma+\i t)|^{2s} dt\sim a_{s}\frac{s!L_{s}(-s^2)}{(2\sigma-1)^{s^2+2s}},
\ee
where $a_{s}$ is the arithmetic factor given in \eqref{definitionofthearithmeticfactor} and $L_{s}$ is the Laguerre polynomial given in \eqref{definitionoflaguarrepolynomial}.
\end{theorem}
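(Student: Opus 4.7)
The strategy is to reduce the problem to computing a shifted mean value of $\zeta$, where under the Lindel\"of hypothesis a Hardy-Littlewood-type identity is available, and then to read off the singular behaviour as $\sigma\to 1/2$ from the resulting product of $\zeta$-factors times a regular Euler-product factor. I would begin by introducing $2s$ auxiliary shifts $\alpha_1,\dots,\alpha_s,\beta_1,\dots,\beta_s$ and using the identity
\begin{equation*}
|\zeta'(\sigma+\i t)|^{2s}=\left.\frac{\partial^{2s}}{\partial\alpha_1\cdots\partial\alpha_s\,\partial\beta_1\cdots\partial\beta_s}\prod_{k=1}^{s}\zeta(\sigma+\alpha_k+\i t)\prod_{k=1}^{s}\zeta(\sigma+\beta_k-\i t)\right|_{\alpha=\beta=0}.
\end{equation*}
Averaging over $t\in[1,T]$ and pulling the derivatives outside the integral (justified by uniform bounds on the shifted integrand under Lindel\"of) reduces the problem to evaluating the shifted moment
\begin{equation*}
J_T(\alpha,\beta;\sigma)=\frac{1}{T}\int_{1}^{T}\prod_{k=1}^{s}\zeta(\sigma+\alpha_k+\i t)\prod_{k=1}^{s}\zeta(\sigma+\beta_k-\i t)\,dt,
\end{equation*}
and then applying the prescribed $(2s)$-fold derivative at $\alpha=\beta=0$.

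Expanding each $\zeta$-factor as a Dirichlet series and integrating in $t$ picks out the diagonal $\prod n_k=\prod m_k$; the Lindel\"of hypothesis controls the off-diagonal contribution uniformly in $\alpha,\beta$ in a fixed complex neighbourhood of the origin, by the same mechanism that underlies the $\zeta$-version of the Hardy-Littlewood identity (equation \eqref{0808}). The diagonal sum then factors into an Euler product:
\begin{equation*}
\lim_{T\to\infty}J_T(\alpha,\beta;\sigma)=\prod_{i,j=1}^{s}\zeta(2\sigma+\alpha_i+\beta_j)\cdot A(\alpha,\beta;\sigma),
\end{equation*}
where $A$ is analytic in $(\alpha,\beta)$ near $0$, jointly continuous at $\sigma=1/2$, and satisfies $A(0,0;1/2)=a_s$ from \eqref{definitionofthearithmeticfactor}. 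The uniformity legitimises passing the derivatives through the limit. Using $\zeta(w)\sim(w-1)^{-1}$ near $w=1$ and rescaling $\alpha_i=(2\sigma-1)x_i$, $\beta_j=(2\sigma-1)y_j$, only the polar part of each $\zeta$-factor contributes to the leading asymptotic as $\sigma\to 1/2$, giving
\begin{equation*}
\lim_{T\to\infty}\frac{1}{T}\int_{1}^{T}|\zeta'(\sigma+\i t)|^{2s}\,dt\sim\frac{a_s\,C_s}{(2\sigma-1)^{s^2+2s}},\qquad C_s:=\left.\partial_{x_1}\cdots\partial_{x_s}\partial_{y_1}\cdots\partial_{y_s}\prod_{i,j=1}^{s}\frac{1}{1+x_i+y_j}\right|_{x=y=0}.
\end{equation*}

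The final step is to identify $C_s=s!\,L_s(-s^2)=h_s$. Via the integral representation $(1+x_i+y_j)^{-1}=\int_0^\infty e^{-(1+x_i+y_j)t_{ij}}\,dt_{ij}$, the constant $C_s$ equals $\mathbb{E}\bigl[\prod_{i}(\sum_{j}t_{ij})\prod_{j}(\sum_{i}t_{ij})\bigr]$ with the $t_{ij}$ i.i.d.\ unit exponentials. Expanding each linear form as a sum over functions $[s]\to[s]$ and using $\mathbb{E}[t_{ij}^{m}]=m!$ converts $C_s$ into $\sum_{\sigma,\tau\colon[s]\to[s]}2^{\#\{i\,:\,\tau(\sigma(i))=i\}}$; inclusion-exclusion on the size of a compatible fixed-point set collapses this to $\sum_{k=0}^{s}\binom{s}{k}s!\,s^{2(s-k)}/(s-k)!$, which after reindexing $k\mapsto s-k$ matches $s!\sum_{k=0}^{s}\binom{s}{k}s^{2k}/k!=s!\,L_s(-s^2)$. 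The main obstacle is the uniform off-diagonal control in the shifted Hardy-Littlewood step: one needs the convergence $J_T\to J$ to hold uniformly in a complex neighbourhood of $\alpha=\beta=0$ with enough regularity to commute the $T\to\infty$ limit with the $(2s)$-fold partial derivative, using only Lindel\"of as input. Once this uniform estimate is secured, the Euler-product factorisation, the extraction of the pole, and the combinatorial identification of $C_s$ are routine.
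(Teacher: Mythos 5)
Your proposal follows the same overall strategy as the paper's proof: (a) use the Lindel\"of hypothesis to pass to a Hardy--Littlewood-type Dirichlet-series mean value, (b) introduce shifts $\alpha_i,\beta_j$ so that the shifted sum Euler-factorizes as $\prod_{i,j}\zeta(2\sigma+\alpha_i+\beta_j)$ times an arithmetic factor $H$ (your $A$) holomorphic and bounded near $\alpha=\beta=0$ for $\sigma>\tfrac12$, (c) observe that only the polar parts of the $\zeta$-factors contribute to the leading $\sigma\to\tfrac12$ singularity, and (d) identify the leading coefficient as $s!\,L_s(-s^2)$. The differences are in execution and are worth noting. First, the paper introduces the shifts only \emph{after} converting the integral to the Dirichlet series $\sum_n((\log*\cdots*\log)(n))^2 n^{-2\sigma}$ via Lemma~\ref{underlindelof}, so the shifts enter as a purely formal bookkeeping device on the arithmetic side and no uniform-in-$T$ control of shifted moments of $\zeta$ is required; you instead push the derivatives through the $t$-integral first, which is a slightly stronger interchange that would itself need a short uniformity argument under Lindel\"of (you correctly flag this as the main technical obstacle). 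Second, and more interestingly, your extraction of the leading constant differs: the paper computes $\prod_i\partial_{\alpha_i}\prod_j\partial_{\beta_j}\prod_{i,j}\zeta(2\sigma+\alpha_i+\beta_j)$ directly via Leibniz, tracking how many derivatives land on the logarithmic-derivative factor versus the product, and then lets $\sigma\to\tfrac12$; you instead rescale $\alpha_i=(2\sigma-1)x_i$, $\beta_j=(2\sigma-1)y_j$, reduce to the $\sigma$-free constant $C_s=\partial_{x_1}\cdots\partial_{y_s}\prod_{i,j}(1+x_i+y_j)^{-1}\big|_0$, and identify it via the exponential integral representation and a count over pairs of maps $\sigma,\tau:[s]\to[s]$. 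Your combinatorial derivation of $C_s=\sum_{k=0}^s\binom{s}{k}\frac{s!}{(s-k)!}s^{2(s-k)}=s!\,L_s(-s^2)$ is correct (the weight $2^{\#\{i:\tau(\sigma(i))=i\}}$ is right, and the ``sum over subsets'' inclusion step is sound since $\sigma$ is automatically injective on any compatible set). This route is arguably cleaner than the paper's Leibniz computation and also gives an independent probabilistic interpretation of the random-matrix constant $h_s=s!L_s(-s^2)$. So: same skeleton, genuinely different and pleasant combinatorial finish, with one analytic interchange left as an acknowledged but fillable gap that the paper handles instead via Lemma~\ref{underlindelof} and a Cauchy-integral argument on $H$.
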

\begin{proof}
See Section \ref{derivativeoftheRiemannzetafunction}.
\end{proof}

In the proof of the above two results, to understand the asymptotic behavior \eqref{0805eq30} when $\sigma$ approaches $1/2$, we use that
\bea\label{restrictregionforintegersderivative}
\lim_{T\rightarrow \infty} \frac{1}{T} \int_1^T |\zeta'(\sigma+\i t)|^{2s} dt
=\sum_{n=1}^\infty \frac{((\log*\cdots*\log)(n))^2}{n^{2\sigma}}.
\eea
In the above, there are $s$ convolutions in $(\log*\cdots*\log)(n)$ and
for two arithmetic functions $f(n)$ and $g(n)$, their convolution is defined by $f*g(n)=\sum_{d|n}f(d)g(n/d)$.
Equation \eqref{restrictregionforintegersderivative} holds for any $\sigma>1/2$ when $s=1,2$, and holds for any $\sigma>1-1/s$ when $s\geq 3$ is an integer. In addition, \eqref{restrictregionforintegersderivative} holds for all integer $s$ and for any $\sigma>1/2$ assuming the Lindel\"of hypothesis. Compared to the right-hand side of \eqref{0808}, the main difference is that the function $(d_s(n))^2$ in \eqref{0808} is multiplicative, while the function $((\log*\cdots*\log)(n))^2$ is not multiplicative. This explains why different methods are required to analyze the asymptotic behavior when $\sigma \to 1/2$. For $(d_s(n))^2$, we use the Euler product, while for $((\log*\cdots*\log)(n))^2$, we use derivatives of Dirichlet series generated by appropriate multiplicative functions.

For \eqref{0808}, it is natural to extend $(d_s(n))^2$ to real numbers $s$ because of the multiplicity. However, it is not so clear how to extend $((\log*\cdots*\log)(n))^2$ to general real values of $s$. So even if we assume the Lindel\"of hypothesis, it is still not clear how to prove Conjecture \ref{conjectureonthemomentsoffthecriticalline} when $s$ is real. It is also interesting to study the uniformity of 
\[
\int_1^T |\zeta'(\sigma+\i t)|^{2s} dt \sim T\sum_{n=1}^\infty \frac{((\log*\cdots*\log)(n))^2}{n^{2\sigma}}
\]
for positive integer $s$ and $\sigma$ in a certain range. Theorem \ref{th:meso} predicts that the above uniformity holds when $(\log T)^{-\alpha}\ll \sigma-1/2\ll 1$ for any $\alpha$ with $0<\alpha <1$, which corresponds to the mesoscopic regime. This regime has been studied at the level of linear statistics of the Riemann zeros, see e.g., \cite{B10, R14} for results and discussion. On microscopic scales, our random matrix result in Theorem \ref{th:micro-intro} predicts that $\int_1^T |\zeta'(\sigma+\i t)|^{2s} dt \approx T(2\sigma-1)^{-s^2-2s}$ uniformly for $(\log T)^{-1}\ll \sigma-1/2\ll 1$.

\subsection{An alternative explicit formula for finite matrix size}

Here we present an alternative explicit formula for $\mathbb{E}[|\Lambda_{N}'(z)|^{2s}]$ to the one given in Theorem \ref{th:exact-intro}. 
Our motivation here is to seek a formula for finite matrix size such that we can easily (compared with Theorem \ref{th:exact-intro}) recover our limit case (i.e., \eqref{derivmomsthm1} in Theorem \ref{th:non-integer-intro}) as the matrix size goes to infinity for positive integer $s$. First, we introduce two functions in the form of determinants. For any integers $N,s\geq 1$, introduce the block matrix
\begin{equation}
\label{defofA}
A(\z,\w):= \begin{pmatrix} V_{\bm{z}} & Y_{\bm{z}}\\
Y_{\bm{w}} & V_{\bm{w}}
\end{pmatrix},
\end{equation}
where $(V_{\bm{z}})_{ij}=z_{i}^{j-1}$ and $(Y_{\bm{z}})_{ij} = z_{i}^{N+2s-j}$ for $i,j=1,\ldots,s$. Further, define
\bea\label{defofG}
G(\z,\w)=\frac{A(\z,\w)}{\Delta(\z)\Delta(\w) }, \qquad \Delta(\z) = \det(V_{\bm{z}}).
\eea
We remark that $G(\bm{z},\bm{w})$ is a linear combination of products of Schur polynomials  of $\z$ and those of $\w$ by the Laplace expansion. We shall provide more properties of $A(\bm{z},\bm{w})$ and $G(\bm{z},\bm{w})$ in Section \ref{se:exact}.

%In fact,
%by the Laplace expansion along the first $s$ rows of the matrix involved in (\ref{defofA}), it is not hard to check that $G(\bm{z},\bm{w})$ is a linear combination of products of Schur polynomials of $\z$ and those of $\w$. 

\begin{theorem}\label{structuretheorem}
Let $z\in \mathbb{C}$ with $|z|\neq 1$ and $s>0$ be an integer. Then 
\bea\label{structureexpression}
\E[|\Lambda_N'(z)|^{2s}]
= \sum_{h=0}^{\infty} C_h(N,|z|) \frac{1}{(1-|z|^2)^{s^2+2s-h}}.
\eea
Here for any integer $h\geq0$,
\be\label{definitionofC}
C_h(N,|z|)=2\sum_{\substack{h_{2}>h_{1} \geq 0\\h_{1}+h_{2}=h}}a_{h_{1},h_{2}}(|z|)b_{h_{1},h_{2}}(N,|z|)+\sum_{\substack{h_{2} \geq 0 \\ 2h_{2}=h}}a_{h_{2},h_{2}}(|z|)b_{h_{2},h_{2}}(N,|z|),
\ee
where
\bea\label{defofa}
a_{h_{1},h_{2}}(|z|)
&=&\frac{1}{h_{1}!h_{2}!}\frac{1}{\Gamma(h_{2}-h_{1}+1)}\nonumber\\
&&\times \, \frac{(\Gamma(s+1))^2}{\Gamma(s-h_{2}+1)}e^{-s^2|z|^2} {}_1F_1(s+1-h_{1},h_{2}-h_{1}+1;s^{2}|z|^2),
\eea
and
\be\label{defofb}
b_{h_{1},h_{2}} (N,|z|) = (-s|z|)^{h_{2}-h_{1}} 
\prod_{j=1}^{h_2} \frac{\partial}{\partial w_j}  \prod_{i=1}^{h_1} \frac{\partial}{\partial z_i} G(\z,\w)\Big|_{\z=\w=-|z|}.
\ee
Moreover,

(i) For any integer $h\geq 0$, $C_h(N,|z|)$ is a polynomial in $|z|^2$
with coefficients that are polynomials in $N$. The degree of $C_h(N,|z|)$ as a polynomial in $|z|^2$ is $Ns+s^2+s-h$ and the leading coefficient is given as \eqref{leading coefficient}.

(ii) When $|z|<1$, we have
\bea\label{C0}
\lim_{N\rightarrow \infty} C_0(N,|z|) = e^{-s^{2}|z|^2}\Gamma(s+1)\,{}_1F_1(s+1,1;s^{2}|z|^2),
\eea
and for $h\neq 0$,
\bea\label{otherC}
\lim_{N\rightarrow \infty} C_h(N,|z|) = 0.
\eea

(iii) 
\bea\label{bh1h2}
b_{h_{1},h_{2}}(N,|z|)=s^{h_{2}-h_{1}}\sum_{m=0}^{\infty}\sum_{l=\min\{1,h_{1}+h_{2}\}}^s
d_{m,l,s}(h_{1},h_{2})|z|^{2Nl- s^2+s -2h_{1}+2m},
\eea
where $d_{m,l,s}(h_{1},h_{2})$ 
are expressed explicitly as certain partition sums in \eqref{defineofdbarml} and \eqref{defofdml1new}.
\end{theorem}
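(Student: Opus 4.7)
The plan is to obtain $\E[|\Lambda_N'(z)|^{2s}]$ from the multi-point joint moment
\[
M_N(\z,\w) := \E\left[\prod_{i=1}^s \Lambda_N(z_i) \prod_{j=1}^s \overline{\Lambda_N(w_j)}\right]
\]
by the diagonal differentiation identity
\[
\E[|\Lambda_N'(z)|^{2s}] = \prod_{i=1}^s \partial_{z_i} \prod_{j=1}^s \partial_{\bar w_j}\, M_N(\z,\w)\Big|_{z_i = z,\, \bar w_j = \bar z},
\]
which, by translation invariance of the Haar measure, reduces to the case $z = -r$ with $r = |z|$. A Cauchy--Binet (Bump--Gamburd) type computation expresses $M_N$ in the factored form
\[
M_N(\z,\w) = \frac{G(\z,\w)}{\prod_{i,j=1}^{s}(1 - z_i \bar w_j)},
\]
where $G(\z,\w) = \det A(\z,\w)/(\Delta(\z)\Delta(\bar\w))$ is the regular polynomial part as in \eqref{defofG}, with $A$ the block matrix \eqref{defofA}.

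I would then apply the higher-order Leibniz rule to distribute the $2s$ derivatives between $G$ and the singular factor. If $h_1$ of the $z$-derivatives and $h_2$ of the $\bar w$-derivatives hit $G$, the remaining $s-h_1$ and $s-h_2$ derivatives act on $\prod_{i,j}(1-z_i\bar w_j)^{-1}$. After specializing at $\z=\w=-r$, the singular factor on the diagonal evaluates to $(1-r^2)^{-s^2}$ and each further derivative lowers the exponent via $\partial_x^n(1-x)^{-a} = (a)_n (1-x)^{-a-n}$. Collecting produces the expansion \eqref{structureexpression}, where the exponent $s^2+2s-h$ comes from $h = h_1+h_2$ derivatives hitting $G$ and $2s-h$ hitting the singular factor. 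The multi-variable sums arising on the singular side reorganize, after Kummer's transformation ${}_1F_1(a,b;x) = e^{x}\,{}_1F_1(b-a,b;-x)$, into the confluent hypergeometric series with parameters $(s+1-h_1, h_2-h_1+1)$ and prefactor $e^{-s^2 r^2}$, producing $a_{h_1,h_2}(r)$ as in \eqref{defofa}; the derivatives of $G$ at the diagonal directly give $b_{h_1,h_2}(N,r)$ as in \eqref{defofb}. The restriction $h_1\leq h_2$ in \eqref{definitionofC} together with the factor of $2$ encodes the $(h_1,h_2)\leftrightarrow (h_2,h_1)$ symmetry of the expansion, while $\Gamma(h_2-h_1+1)^{-1}$ enforces $h_2\geq h_1$.

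For (i), $G(\z,\w)$ is polynomial in the $z_i,w_j$ with top total degree controlled by the $Y$-blocks of $A$ (whose entries are monomials of degree $N+2s-1$); dividing by the Vandermondes and applying the prescribed derivatives at $\z=\w=-r$ yields $C_h(N,r)$ as a polynomial in $r^2$ of degree $Ns + s^2 + s - h$, and the leading coefficient comes from the top-degree contribution of the $Y$-blocks. For (ii), when $r<1$ the entries of $Y_\z$ at $\z=-r$ decay as $r^N$, so $A(\z,\w) \to \mathrm{diag}(V_\z,V_\w)$ and $G(\z,\w)\to 1$ uniformly in a neighborhood of $\z=\w=-r$; hence $b_{h_1,h_2}(N,r)\to 0$ unless $h_1=h_2=0$, in which case $b_{0,0}(N,r)\to 1$, yielding \eqref{C0}--\eqref{otherC}. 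For (iii), a Laplace expansion of $\det A(\z,\w)$ along the first $s$ rows rewrites $G$ as a bilinear combination of Schur polynomials in $\z$ and $\w$ (corresponding to the choice of $l$ columns drawn from $Y$-type minors); the derivative action at $\z=\w=-r$ then produces the partition sums $d_{m,l,s}(h_1,h_2)$ together with the exponent $2Nl - s^2 + s - 2h_1 + 2m$ of $|z|$ recorded in \eqref{bh1h2}.

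The main obstacle is the second step, specifically the derivation of $a_{h_1,h_2}(r)$. Identifying the precise confluent hypergeometric series with its correct parameters and exponential prefactor, and tracking the symmetry-breaking responsible for the restriction $h_1\leq h_2$, require careful bookkeeping of the multi-variable Leibniz expansion together with a targeted application of Kummer's transformation.
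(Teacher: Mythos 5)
Your plan is, in essence, the paper's proof. The paper proceeds by (a) factoring the multi-point average as $F\cdot G$ (Lemma \ref{alternative expression2}), (b) distributing the $2s$ derivatives by the Leibniz rule and exploiting the $\z\leftrightarrow\w$ symmetry to reduce to $h_1\leq h_2$ with a factor of~$2$ (Lemma \ref{the expression for the expectation of characteristic polynomials}), (c) computing derivatives of $F$ on the merged diagonal to obtain $a_{h_1,h_2}$ (Lemma \ref{mix of F}), and (d) Laplace-expanding $\det A$ into a bilinear combination of Schur polynomials to get the partition sums behind $b_{h_1,h_2}$ for items (i) and (iii) (Propositions \ref{proaboutb00} and \ref{forgeneralh1h2}); your argument for (ii) via the decay of the $Y$-blocks is also the one used. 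The ``main obstacle'' you flag --- extracting the ${}_1F_1$ parameters and exponential prefactor in $a_{h_1,h_2}$ --- is closed in the paper by first running the binomial-expansion argument of Lemma \ref{th:derivmomsint} on $F$ to get the finite sum $\sum_{k}\binom{s-h_2}{k}\binom{s-h_1}{h_2-h_1+k}(s-k-h_2)!(s^2|z|^2)^k$, recognising it as the generalized Laguerre polynomial $(s-h_2)!\,L_{s-h_2}^{(h_2-h_1)}(-s^2|z|^2)$, and then rewriting via Kummer's transformation --- exactly the device you anticipated.
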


\begin{proof}
See Section \ref{proof the second main result}.
\end{proof}

\begin{remark}\rm{
Note that the function on the right-hand side of (\ref{defofa}) is analytic when $\Re(s)>-1$.
When $s$ is a positive integer, $1/\Gamma(s-h_{2}+1)=0$ in the right hand-side of (\ref{defofa}) if $h_{2}\geq s+1$, so the summation over $h_{1},h_{2}$ in (\ref{defofa}) and $h$ in \eqref{definitionofC} can be restricted to $0\leq h_{1}<h_{2}\leq s$ and $0\leq h\leq 2s$.}
\end{remark}

Compared to Theorem \ref{th:exact-intro}, it is challenging to use \eqref{structureexpression} to obtain the leading order and a compact form of the leading coefficient in the microscopic regime as obtained in Theorem \ref{th:micro-intro}. This mainly because it is not straightforward to explicitly obtain the asymptotic formula when $|z|^2=1-c/N$, due to cancellations in some $N$-terms in $C_{h}(N,|z|)$.

From (i) in the above theorem, we see that for positive integer $s$, (\ref{moms-intro}) is a polynomial in $\frac{1}{1-|z|^2}$ with coefficients that are polynomials in $|z|^2$. The conclusion (ii) provides a different approach to that of Theorem \ref{th:non-integer-intro} for positive integers $s$. Note that $a_{h_1, h_2}(|z|)$ in the expression of $C_h(N,|z|)$ has an explicit expression \eqref{defofa}, which is analytic for $\Re(s)>-1$. So by (\ref{structureexpression}), to obtain an explicit formula for (\ref{moms-intro}) it suffices to express $b_{h_1,h_2}(N,|z|)$ explicitly. This is the purpose of (iii). Especially, $b_{0,0}(N,|z|)=(1-|z|^2)^{s^2}\E[|\Lambda_N(|z|)|^{2s}]$ by  Lemma \ref{alternative expression2}, which can be represented in terms of a solution of the $\sigma$-Painlev\'e VI equation and can be analytically extended to $s\in \mathbb{C}$ with $\Re(s)>-1$, see \eqref{pvimn} and Theorem \ref{lem:PVI}. These properties also hold for $C_0(N,|z|)$ due to
$C_0(N,|z|) = \Gamma(s+1) e^{-s^2 |z|^2} {}_1F_{1}(s+1, 1; s^2 |z|^2) \cdot b_{0,0}(N,|z|)$ by \eqref{definitionofC}.

% Our result provides a probable method on how to extend (\ref{moms-intro}) to non-integers $s$ efficiently. Here by ``efficiently'' we mean that the explicit formula of (\ref{moms-intro}) for non-integers $s$ should be practicable for deducing the radial distribution of zeros of derivative of CUE characteristic polynomials through Jensen's formula. Specifically speaking, from our result, to realize this goal, it suffices to find an expression 
% for any integers $h_{1}\geq 0,h_{2}\geq 0$, which is analytic in $\{s\in \mathbb{C}: \Re(s)>-1/2\}$ and takes the same value as
%  \bea\label{essential extension}
% \prod_{j=1}^{h_2} \frac{\partial}{\partial w_j}  \prod_{i=1}^{h_1} \frac{\partial}{\partial z_i} G(\z,\w)\Big|_{\z=-r,\w=-r}
% \eea
% when $s$ is a positive integer. Combining with the analytic expression in (\ref{defofa}) for $a_{h_{1},h_{2}}(r)$, we can give an analytic expression for $C_{h}(N,r)$ by (\ref{definitionofC}) for any integer $h\geq 0$. If these analytic expressions (as functions of $s$) have the property that the series (\ref{structureexpression}) converges  in  $\{s\in \mathbb{C}: \Re(s)>-1/2\}$, then after replacing $C_{h}(N,r)$ by these expressions in the series (\ref{structureexpression}), this formula maybe give an analytic extension of  $\E[|\Lambda_N'(z)|^{2s}]$ to the range $\{s\in \mathbb{C}: \Re(s)>-1/2\}$ by Carlson's theorem.

%We will explain the idea of proving Theorem \ref{structuretheorem} and put the details in Section \ref{proof the second main result}. 

\section*{Acknowledgements}
We express our gratitude to Thomas Bothner for pointing out to us the content of Remark \ref{rem:bothner}. We would like to thank Marius Tiba for valuable discussions on some combinatorial properties of Young tableaux. N. S. is grateful for support from the Royal Society, grant URF\textbackslash R\textbackslash231028. 
F. W. was supported by NSF grant DMS-1854398.  Both authors are grateful to John Brian Conrey for encouragement and discussions about the problems addressed in this paper.

\section{Global regime and non-integer moments}
\label{se:glob}
In this section, we approximate the finite $N$ characteristic polynomial in Theorems \ref{th:non-integer-intro} and \ref{th:jointmoms-intro} by a limiting object constructed in terms of a certain Gaussian field. Then we compute the associated joint moments of the limit to give the formulas in Theorems \ref{th:non-integer-intro} and \ref{th:jointmoms-intro}. Later in Section \ref{se:mainpfs} we establish uniform integrability bounds in order to show convergence of expectations to these moments.
\subsection{A limiting Gaussian field}
Inside the unit disc $|z|<1$, the characteristic polynomial can be expanded as
\be
G_{N}(z) = \log \Lambda_{N}(z) = -\sum_{k=1}^{\infty}\frac{\mathrm{Tr}(U^{-k})z^{k}}{k}, \qquad |z|<1.
\ee
The latter can also be written as a linear statistic $G_{N}(z) = \sum_{j=1}^{N}\log(1-ze^{-i\theta_{j}})$ where $\{e^{i\theta_{j}}\}_{j=1}^{N}$ are the eigenvalues of $U$. Similarly, for the logarithmic derivative $G_{N}'(z)$, we have
\begin{equation}
    G_{N}'(z) = -\sum_{k=1}^{N}\frac{e^{-i\theta_{j}}}{1-ze^{-i\theta_{j}}} = -\sum_{k=1}^{\infty}\mathrm{Tr}(U^{-k})z^{k-1}, \qquad |z| < 1.
\end{equation}
For a fixed $0 < |z|< 1$, the above linear statistics are smooth functions of the eigenvalues. Joint convergence in distribution of such linear statistics is a consequence of the strong Szeg\"{o} limit theorem for Toeplitz determinants, see \cite{K97} and references therein. The latter also implies that for any fixed positive integer $M$, we have the convergence in distribution
\begin{equation}
 \bigg\{\frac{\mathrm{Tr}(U^{-k})}{\sqrt{k}}\bigg\}_{k=1}^{M} \overset{d}{\longrightarrow} \{\mathcal{N}_{k}\}_{k=1}^{M}, \qquad N \to \infty,
 \end{equation}
 where $\{\mathcal{N}_{k}\}_{k=1}^{M}$ are i.i.d. standard complex normal random variables\footnote{Recall that a standard complex normal random variable is defined such that its real and imaginary parts are independent normal random variables with mean $0$ and variance $\frac{1}{2}$ each.}, see \cite{DS94}. For more background on the relation between the strong Szeg\"{o} limit theorem and convergence in distribution of linear statistics, see \cite{DE01,hughes2001characteristic}. These results immediately imply the following.
\begin{lemma}
\label{le:conv-law}
Define
\begin{equation}\label{gfield}
G(z) = \sum_{k=1}^{\infty}\frac{\mathcal{N}_{k}}{\sqrt{k}}\,z^{k} \quad \textrm{and} \quad G'(z) = \sum_{k=1}^{\infty}\sqrt{k}\,\mathcal{N}_{k}\,z^{k-1}, \qquad |z|<1.
\end{equation}
Fix $|z_1| < 1$ and $|z_2|<1$. We have the joint convergence in distribution
\begin{equation}
(G_{N}(z_1),G_{N}'(z_2)) \overset{d}{\longrightarrow} (G(z_1),G'(z_2)), \qquad N \to \infty.
\end{equation}
\end{lemma}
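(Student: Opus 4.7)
The plan is to combine the Diaconis--Shahshahani convergence for finitely many traces, recalled in the paragraph preceding the lemma, with a Taylor-tail truncation argument. For a positive integer $M$, introduce the truncated partial sums
\begin{equation*}
G_N^{(M)}(z_1)=-\sum_{k=1}^{M}\frac{\Tr(U^{-k})}{k}\,z_1^{k},\qquad H_N^{(M)}(z_2)=-\sum_{k=1}^{M}\Tr(U^{-k})\,z_2^{k-1},
\end{equation*}
together with their Gaussian analogues $G^{(M)}(z_1),H^{(M)}(z_2)$, obtained by replacing each $\Tr(U^{-k})$ with $\sqrt{k}\,\mathcal{N}_{k}$. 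Since the pair $(G_N^{(M)}(z_1),H_N^{(M)}(z_2))$ is a fixed linear function of the finite collection $\{\Tr(U^{-k})/\sqrt{k}\}_{k=1}^{M}$, the cited $N\to\infty$ joint convergence of these traces together with the continuous mapping theorem yields
\begin{equation*}
(G_N^{(M)}(z_1),H_N^{(M)}(z_2))\overset{d}{\longrightarrow}(G^{(M)}(z_1),H^{(M)}(z_2)),\qquad N\to\infty,
\end{equation*}
for each fixed $M$.

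Next I would control the tails
\begin{equation*}
R_N^{(M)}(z_1):=G_N(z_1)-G_N^{(M)}(z_1),\qquad S_N^{(M)}(z_2):=G_N'(z_2)-H_N^{(M)}(z_2)
\end{equation*}
in $L^{2}$, uniformly in $N$. The standard CUE identity $\E[\Tr(U^{-j})\overline{\Tr(U^{-k})}]=\delta_{jk}\min(k,N)$ gives in particular $\E|\Tr(U^{-k})|^{2}\leq k$ for all $k,N\geq 1$, so orthogonality produces
\begin{equation*}
\E|R_N^{(M)}(z_1)|^{2}\leq\sum_{k>M}\frac{|z_1|^{2k}}{k},\qquad \E|S_N^{(M)}(z_2)|^{2}\leq\sum_{k>M}k\,|z_2|^{2(k-1)}.
\end{equation*}
Both bounds are independent of $N$ and tend to $0$ as $M\to\infty$ since $|z_1|,|z_2|<1$; a direct variance computation using the independence of the $\mathcal{N}_{k}$ gives identical bounds for the Gaussian tails $G(z_1)-G^{(M)}(z_1)$ and $G'(z_2)-H^{(M)}(z_2)$, and in particular shows that the series \eqref{gfield} defining $G$ and $G'$ converge almost surely and in $L^{2}$ for $|z_i|<1$.

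Finally I would conclude with a standard Slutsky-type approximation. Given a bounded Lipschitz test function $\varphi$ on $\mathbb{C}^{2}$ and any $\varepsilon>0$, Markov's inequality combined with the tail bounds above lets me choose $M$ so that the untruncated and truncated pairs agree within $\varepsilon$ outside an event of probability at most $\varepsilon$, uniformly in $N$; the finite-$M$ convergence above then gives $\E\varphi(G_N(z_1),G_N'(z_2))\to\E\varphi(G(z_1),G'(z_2))$ after letting $M\to\infty$, which is the required joint convergence in distribution. The only mildly delicate point is ensuring that the tail bound on $R_N^{(M)}$ and $S_N^{(M)}$ is uniform in $N$, and this is exactly what the moment identity $\E|\Tr(U^{-k})|^{2}\leq k$ provides; beyond this, the argument is routine and I do not expect a substantive obstacle.
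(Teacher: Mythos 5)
Your proposal is correct, and it takes a genuinely different route from the paper. The paper dispenses with the argument in one line by invoking the strong Szeg\H{o} limit theorem for smooth linear statistics: since, for fixed $|z_1|,|z_2|<1$, the maps $\theta\mapsto\log(1-z_1 e^{-\i\theta})$ and $\theta\mapsto -e^{-\i\theta}/(1-z_2 e^{-\i\theta})$ are smooth on the circle, $(G_N(z_1),G_N'(z_2))$ is a pair of smooth linear statistics and the joint Gaussian limit, together with its covariance, follows directly from that theorem; the Diaconis--Shahshahani convergence of finitely many traces is mentioned as a complementary formulation of the same circle of ideas. You instead avoid the strong Szeg\H{o} machinery altogether: you truncate both series at level $M$, apply only the finite-dimensional Diaconis--Shahshahani result plus the continuous mapping theorem, and then use the exact CUE moment identity $\E|\Tr(U^{-k})|^2=\min(k,N)\le k$ to get $L^2$ tail bounds that are uniform in $N$, closing with a standard three-epsilon Slutsky argument. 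What your route buys is a self-contained, elementary proof requiring only second-moment information (you never need convergence of all cumulants); what the paper's route buys is brevity and the flexibility to treat general smooth linear statistics in one stroke. Both are sound; yours makes the uniform-in-$N$ tail control explicit, which is exactly the ``mildly delicate point'' you flagged, and the bound $\E|\Tr(U^{-k})|^2\le k$ handles it cleanly.
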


Therefore, to approximate $\Lambda_{N}(z)$, we introduce
\be
\Lambda(z) = e^{G(z)} = e^{\sum_{k=1}^{\infty}\frac{\mathcal{N}_{k}}{\sqrt{k}}\,z^{k} }.
\ee
We approximate $\Lambda_{N}'(z)$, for $|z|<1$ fixed, in terms of $\Lambda'(z) = G'(z)e^{G(z)}$ and replace the average in Theorem \ref{th:jointmoms-intro} with the expectation of
\begin{equation}
    \bigg{|}\frac{\Lambda'(z_2)}{\Lambda(z_2)}\bigg{|}^{2h}|\Lambda(z_1)|^{2s} = |G'(z_2)|^{2h}e^{sG(z_1)+s\overline{G(z_1)}}. \label{lambdatog}
\end{equation}
Note that $Q = (G(z_1),G'(z_2))$ is a centered multivariate complex normal vector. It is characterised by its $2 \times 2$ covariance matrix $\Gamma$ whose entries are easily obtained from \eqref{gfield}. We obtain the following correlation structure:
\begin{equation} \label{gamz1z2-intro}
\begin{split}
  \Gamma_{11} &= \mathbb{E}(|G(z_1)|^{2}) =  -\log(1-|z_1|^{2}),\\
   \Gamma_{12} &=  \mathbb{E}(G(z_1)\overline{G'(z_2)}) =  \frac{z_1}{1-z_1\overline{z_2}},\\
    \Gamma_{22} &= \mathbb{E}(|G'(z_2)|^{2}) = \frac{1}{(1-|z_2|^{2})^{2}},
\end{split}
\end{equation} 
and $\Gamma_{21} = \overline{\Gamma_{12}}$. From this, the appearance of the confluent hypergeometric function in Theorem \ref{th:jointmoms-intro} is a consequence of the following Gaussian computation.
\begin{lemma}
\label{le:non-integer-comp}
Let $|z_1|<1$ and $|z_2|<1$. For any $s \in \mathbb{C}$ and any $h \in \mathbb{C}$ with $\mathrm{Re}(h)>-1$, we have
\begin{equation}
\mathbb{E}\left(\bigg{|}\frac{\Lambda'(z_2)}{\Lambda(z_2)}\bigg{|}^{2h}|\Lambda(z_1)|^{2s}\right) = \frac{e^{-s^{2}\rho_{z_1,z_2}}\Gamma(h+1)\,{}_1F_1\left(h+1,1;s^{2}\rho_{z_1,z_2}\right)}{(1-|z_{2}|^{2})^{2h}(1-|z_{1}|^{2})^{s^{2}}} \label{jmoms}
\end{equation}
where
\begin{equation}
\rho_{z_1,z_2} = \frac{|z_{1}|^{2}(1-|z_{2}|^{2})^{2}}{|1-z_{1}\overline{z_2}|^{2}}.
\end{equation}
\end{lemma}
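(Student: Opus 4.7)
The plan is to compute $I := \mathbb{E}\bigl(|\Lambda'(z_2)/\Lambda(z_2)|^{2h}|\Lambda(z_1)|^{2s}\bigr)$ by first rewriting it via \eqref{lambdatog} as the Gaussian expectation
\bes
I = \mathbb{E}\bigl(|G'(z_2)|^{2h}\,e^{sG(z_1)+s\overline{G(z_1)}}\bigr),
\ees
and then reducing it, through a conditional decomposition, a Cameron--Martin shift, and a classical noncentral chi-squared moment formula, to the right-hand side of \eqref{jmoms}.

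The first step is to decompose the jointly complex Gaussian pair from \eqref{gamz1z2-intro} as $G'(z_2)=\alpha G(z_1)+W$, with $\alpha=\overline{\Gamma_{12}}/\Gamma_{11}$, where $W$ is a centered complex Gaussian independent of $G(z_1)$ with variance $\Gamma_{22}-|\Gamma_{12}|^2/\Gamma_{11}$. The second step, performed initially for real $s$, is to absorb the factor $e^{sG(z_1)+s\overline{G(z_1)}}$ by completing the square in the density of $G(z_1)$, which amounts to the real translation $G(z_1)\mapsto G(z_1)+s\Gamma_{11}$ (legitimate because $\Gamma_{11}=-\log(1-|z_1|^2)\in\mathbb{R}$). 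This produces the prefactor $e^{s^2\Gamma_{11}}=(1-|z_1|^2)^{-s^2}$, and transforms $\alpha G(z_1)+W$ into $G'(z_2)+s\alpha\Gamma_{11}=G'(z_2)+s\overline{\Gamma_{12}}$. Hence for real $s$,
\bes
I = (1-|z_1|^2)^{-s^2}\,\mathbb{E}\bigl(|G'(z_2)+s\overline{\Gamma_{12}}|^{2h}\bigr).
\ees

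The third step is to evaluate the shifted Gaussian moment $\mathbb{E}(|Z+\mu|^{2h})$ for a centered complex Gaussian $Z$ of variance $\sigma^2=\Gamma_{22}$ and shift $\mu=s\overline{\Gamma_{12}}$. Since $2|Z+\mu|^2/\sigma^2$ has a noncentral chi-squared distribution with two degrees of freedom and noncentrality $2|\mu|^2/\sigma^2$, the standard fractional-moment identity for that distribution (which can also be verified directly by expanding $e^{2\operatorname{Re}(\mu\bar{z})/\sigma^2}$ as a power series and integrating term by term against $|z|^{2h}e^{-|z|^2/\sigma^2}$) gives
\bes
\mathbb{E}(|Z+\mu|^{2h})=\sigma^{2h}\,\Gamma(h+1)\,e^{-|\mu|^2/\sigma^2}\,{}_1F_1\bigl(h+1,1;|\mu|^2/\sigma^2\bigr),
\ees
valid under the hypothesis $\operatorname{Re}(h)>-1$, which is precisely the integrability condition on $|z|^{2h}$ at the origin. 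Substituting $\sigma^{2h}=(1-|z_2|^2)^{-2h}$ and $|\mu|^2/\sigma^2=s^2\rho_{z_1,z_2}$ from \eqref{gamz1z2-intro} yields the formula \eqref{jmoms} for real $s$. Finally, the extension to $s\in\mathbb{C}$ is by analytic continuation, using that both sides of \eqref{jmoms} are entire in $s$: the left side because $e^{sG(z_1)+s\overline{G(z_1)}}=e^{2s\operatorname{Re}G(z_1)}$ is holomorphic in $s$ with magnitude controlled by a Gaussian moment, and the right side as a composition of entire functions of $s^2$.

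The main obstacle is the justification of Step 2 for complex $s$: the function $(x,W)\mapsto|\alpha x+W|^{2h}$ is not holomorphic in $x$, so one cannot directly shift the integration variable into the complex plane. We avoid this difficulty by establishing the identity first for $s\in\mathbb{R}$, where the completion of the square is a harmless real translation, and then invoking analyticity in $s$ on both sides to conclude. The noncentral chi-squared moment formula is classical, but should be recorded carefully to match the precise normalization of the ${}_1F_1$ factor appearing in \eqref{jmoms}.
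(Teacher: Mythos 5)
Your proposal is correct and reaches the stated formula by an equivalent but more probabilistic route than the paper's. The paper works directly with the joint density $f_Q$ of $(G(z_1),G'(z_2))$, completes the square in $w_1$, rescales $w_2$, and then expands the exponential in a double power series to produce $\Gamma(h+1)\,{}_1F_1(h+1,1;\cdot)$; you instead regress $G'(z_2)=\alpha G(z_1)+W$ onto $G(z_1)$, absorb $e^{2s\operatorname{Re}G(z_1)}$ via a real Cameron--Martin shift, and recognize the remaining quantity $\mathbb{E}(|Z+\mu|^{2h})$ as a (fractional) moment of a noncentral chi-squared. The shifted-moment identity you invoke is in fact the same computation as the paper's display \eqref{steps}, just repackaged: the integral $\frac{1}{\pi}\int|w|^{2h}e^{-|w|^2+a w+\bar a\bar w}\,d^2w$ is the Fourier--Laplace dual of $\mathbb{E}(|Z+\bar a|^{2h})e^{|a|^2}$, and both produce $\Gamma(h+1)\,{}_1F_1(h+1,1;|a|^2)$ by the same term-by-term integration. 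Both proofs also finish with the identical device of proving the identity for real $s$ and extending by analyticity in $s$ (you correctly flag that the shift is only a harmless real translation when $s\in\mathbb{R}$, which is exactly the paper's reason for the initial restriction). Your framing makes the noncentral chi-squared structure explicit, which is conceptually cleaner, while the paper's direct double integral is more self-contained. Either way the calculation is sound; just be sure, if you write this up, to verify the noncentral moment identity by the series expansion you mention rather than citing it for general complex $h$, since textbook statements are usually given for real or integer exponents.
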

\begin{proof}
Since both sides of \eqref{jmoms} are entire functions of $s$, it suffices to prove the result for any $s \in \mathbb{R}$ and $\mathrm{Re}(h)>-1$. By the definitions in \eqref{gfield}, the mean vector and relation matrix of $Q = (G(z_1),G'(z_2))$ are identically zero. Then $Q$ is characterised by its covariance matrix $\Gamma$ in \eqref{gamz1z2-intro}. Defining $D = \det(\Gamma)$, the joint density of $Q$ is
\begin{equation}
\begin{split}
f_{Q}(w_1,w_2) &= \frac{1}{\pi^{2}D}\,\mathrm{exp}\left(-\bm{w}^{\dagger}\Gamma^{-1}\bm{w}\right)\\
&=  \frac{1}{\pi^{2}D}\,\mathrm{exp}\left(-\frac{1}{D}\left(|w_1|^{2}\Gamma_{22}-\overline{w_1}w_2\Gamma_{12}-\overline{w_2}w_{1}\overline{\Gamma_{12}}+|w_{2}|^{2}\Gamma_{11}\right)\right). \label{jQ}
\end{split}
\end{equation}
By expression \eqref{lambdatog}, the left-hand side of \eqref{jmoms} is
\begin{equation}
I=\int_{\mathbb{C}}d^{2}w_{2}\,\int_{\mathbb{C}}d^{2}w_{1}\,|w_{2}|^{2h}\,e^{sw_{1}+s\overline{w_{1}}}\,f_{Q}(w_1,w_2).
\end{equation}
In the integral over $w_{1}$, we complete the square. We write the relevant terms in the exponential as
\begin{equation}
-\frac{\Gamma_{22}}{D}\left(\bigg{|}w_{1}-\frac{\overline{w_2\Gamma_{12}}+sD}{\Gamma_{22}}\bigg{|}^{2}\right) + \frac{|\overline{w_2\Gamma_{12}}+sD|^{2}}{\Gamma_{22}D}.
\end{equation}
Then the integral over $w_{1}$ gives a contribution $D/\Gamma_{22}$. The remaining terms inside the exponential equal $\Gamma_{22}^{-1}$ multiplied by
\begin{equation}
    -\frac{|w_{2}|^{2}}{D}\left(\Gamma_{11}\Gamma_{22}-|\Gamma_{12}|^{2}\right) + sw_{2}\Gamma_{12}+s\overline{w_{2}\Gamma_{12}}+s^{2}D.
\end{equation}
Recalling $D=\det(\Gamma)$ and scaling $w_{2} \to w_{2}\sqrt{\Gamma_{22}}$ shows that
\begin{equation}
I = \Gamma_{22}^{h}e^{\frac{s^{2}D}{\Gamma_{22}}}\frac{1}{\pi}\,\int_{\mathbb{C}}d^{2}w_{2}\,|w_{2}|^{2h}e^{-|w_{2}|^{2}+\frac{sw_{2}\Gamma_{12}}{\sqrt{\Gamma_{22}}}+\frac{s\overline{w_{2}\Gamma_{12}}}{\sqrt{\Gamma_{22}}}}.
\end{equation}
To compute the integral above, let $\xi = \frac{\Gamma_{12}}{\sqrt{\Gamma_{22}}}$ and expand the exponential as
\begin{equation}
\begin{split}
&\frac{1}{\pi}\,\int_{\mathbb{C}}d^{2}w_{2}|w_{2}|^{2h}e^{-|w_2|^{2}+s\xi w_{2}+s\overline{\xi w_{2}}}\\
&=\sum_{k_1=0}^{\infty}\sum_{k_2=0}^{\infty}\frac{(s\xi)^{k_1}(s\overline{\xi})^{k_2}}{(k_1)!(k_2)!}\frac{1}{\pi}\int_{\mathbb{C}}d^{2}w_{2}|w_{2}|^{2h}e^{-|w_2|^{2}}(w_{2})^{k_1}(\overline{w_2})^{k_2}\\
&=\sum_{k_1=0}^{\infty}\sum_{k_2=0}^{\infty}\frac{(s\xi)^{k_1}(s\overline{\xi})^{k_2}}{(k_1)!(k_2)!}\delta_{k_1,k_2}\Gamma\left(h+\frac{k_1+k_2}{2}+1\right)\\
&=\sum_{k=0}^{\infty}\frac{\Gamma(h+k+1)}{(k!)^{2}}\,s^{2k}|\xi|^{2k}\\
&=\Gamma(h+1)\,{}_1F_1(h+1,1;s^{2}|\xi|^{2}). \label{steps}
\end{split}
\end{equation}
Therefore
\begin{equation}
I = \Gamma_{22}^{h}e^{\frac{s^{2}D}{\Gamma_{22}}}\Gamma(h+1)\,{}_1F_1\left(h+1,1;s^{2}\frac{|\Gamma_{12}|^{2}}{\Gamma_{22}}\right) \label{two-point-result}.
\end{equation}
From the matrix elements of $\Gamma$ in \eqref{gamz1z2-intro} we have 
\begin{equation}
\begin{split}
\Gamma_{22}^{h}e^{\frac{s^{2}D}{\Gamma_{22}}}= \frac{1}{(1-|z_{2}|^{2})^{2h}(1-|z_{1}|^{2})^{s^{2}}}\,e^{-s^{2}\frac{|\Gamma_{12}|^{2}}{\Gamma_{22}}} \label{eqn1gcomp}
\end{split}
\end{equation}
where
\begin{equation}
\frac{|\Gamma_{12}|^{2}}{\Gamma_{22}} = \frac{|z_{1}|^{2}(1-|z_{2}|^{2})^{2}}{|1-z_{1}\overline{z_2}|^{2}}. \label{eqn2gcomp}
\end{equation}
Inserting \eqref{eqn1gcomp} and \eqref{eqn2gcomp} into \eqref{two-point-result} completes the proof.
\end{proof}
We now consider an alternative derivation in the special case $h=s$ and $s \in \mathbb{N}$. We first note that in this case, the formula obtained in \eqref{jmoms} reduces to \eqref{derivmoms2}. This follows immediately from the identity
\begin{equation}
s!\,{}_1F_1(s+1,1;x) =s!e^{x}L_{s}(-x). \label{1f1tolag}
\end{equation}
In the positive integer setting, a simpler proof can be given based on the identity
\beas
    &&\mathbb{E}(|\Lambda'(z)|^{2s}) \\
    &=&\frac{\partial}{\partial z_1}\ldots\frac{\partial}{\partial z_s}\frac{\partial}{\partial w_1}\ldots \frac{\partial}{\partial w_s}\mathbb{E}\left(\Lambda(z_1)\ldots \Lambda(z_s)\overline{\Lambda(\overline{w_1})}\ldots \overline{\Lambda(\overline{w_s})}\right)\bigg{|}_{\bm z=z, \bm w =\bar{z}},
\eeas
where the notation $\bm{z}=z$ means setting $z_{j}=z$ for each $j=1,\ldots,s$. The latter expectation can be computed by exploiting the logarithmic correlations of the Gaussian field $G$ in \eqref{gfield}. This leads to the identity
\begin{equation}
    \mathbb{E}\left(\Lambda(z_1)\ldots \Lambda(z_s)\overline{\Lambda(\overline{w_1})}\ldots \overline{\Lambda(\overline{w_s})}\right) = \prod_{i=1}^{s}\prod_{j=1}^{s}\frac{1}{1-z_iw_j}. \label{defofF}
\end{equation}
It turns out the needed derivatives of this can be performed and we find the following.
\begin{lemma}
\label{th:derivmomsint}
Let $s \in \mathbb{N}$. For any fixed $z$ with $|z|<1$, we have
\begin{equation}
    \mathbb{E}\left(|\Lambda'(z)|^{2s}\right) = \frac{1}{(1-|z|^{2})^{s^{2}+2s}}\,\sum_{k=0}^{s}\binom{s}{k}^{2}(s-k)!\,(|z|s)^{2k}. \label{derivmoms}
\end{equation}
\end{lemma}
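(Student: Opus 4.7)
The plan is to compute directly the $2s$ mixed partial derivatives on the right-hand side of the identity preceding \eqref{defofF}, namely
\begin{equation*}
\E(|\Lambda'(z)|^{2s}) = \prod_{i=1}^{s}\frac{\partial}{\partial z_{i}}\prod_{j=1}^{s}\frac{\partial}{\partial w_{j}}\,\prod_{i',j'=1}^{s}\frac{1}{1-z_{i'}w_{j'}}\bigg{|}_{\bm z=z,\,\bm w=\bar z}.
\end{equation*}
By the Leibniz rule, each derivative $\partial/\partial z_{i}$ must act on exactly one of the $s$ factors depending on $z_{i}$, a choice I encode by a function $\sigma:\{1,\ldots,s\}\to\{1,\ldots,s\}$; similarly the $w$-derivatives are recorded by a function $\tau$. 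For a given pair $(\sigma,\tau)$, a cell $(i,j)$ receives both a $z$- and a $w$-derivative precisely when $\sigma(i)=j$ and $\tau(j)=i$, so with $k(\sigma,\tau):=\#\{i:\tau(\sigma(i))=i\}$ there are $k$ doubly-hit cells, $s-k$ cells hit only by a $z$-derivative, $s-k$ hit only by a $w$-derivative, and $s^{2}-2s+k$ untouched cells.

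Specialising to $z_{i}=z$, $w_{j}=\bar z$ (so $z_{i}w_{j}=|z|^{2}$ everywhere), each untouched factor contributes $(1-|z|^{2})^{-1}$, each singly $z$- (resp. $w$-) hit factor contributes $\bar z(1-|z|^{2})^{-2}$ (resp. $z(1-|z|^{2})^{-2}$), and each doubly-hit factor contributes $(1+|z|^{2})(1-|z|^{2})^{-3}$ by a direct two-derivative computation. Collecting exponents gives the denominator $(1-|z|^{2})^{s^{2}+2s}$ and the combinatorial identity
\begin{equation*}
\E(|\Lambda'(z)|^{2s}) = \frac{1}{(1-|z|^{2})^{s^{2}+2s}}\sum_{\sigma,\tau:[s]\to[s]}|z|^{2(s-k(\sigma,\tau))}(1+|z|^{2})^{k(\sigma,\tau)}.
\end{equation*}

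The main obstacle is to evaluate this double sum. Setting $u=|z|^{2}$ and fixing $\sigma$ with multiplicity vector $m_{j}=|\sigma^{-1}(j)|$, one notes that $k(\sigma,\tau)=\sum_{j}[\tau(j)\in\sigma^{-1}(j)]$, so the inner sum over $\tau$ factorises across $j$ into
\begin{equation*}
\sum_{\tau}u^{s-k}(1+u)^{k} = u^{s}\prod_{j=1}^{s}\left(s+\frac{m_{j}}{u}\right).
\end{equation*}
Summing over $\sigma$ by the multinomial theorem then reduces to extracting the coefficient of $x^{s}$ in $s!\left((s+x/u)e^{x}\right)^{s}=s!(s+x/u)^{s}e^{sx}$, producing $s!\sum_{l=0}^{s}\binom{s}{l}s^{2l}u^{l}/l!$; this coincides with $\sum_{l=0}^{s}\binom{s}{l}^{2}(s-l)!(s|z|)^{2l}$ via the identity $\binom{s}{l}\cdot s!/l!=\binom{s}{l}^{2}(s-l)!$. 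An equivalent route is inclusion--exclusion on the set $\{i:\tau(\sigma(i))=i\}$: for any $S\subset[s]$ with $|S|=m$ the number of pairs $(\sigma,\tau)$ satisfying $\tau(\sigma(i))=i$ for all $i\in S$ equals $\tfrac{s!}{(s-m)!}s^{2(s-m)}$, since $\sigma|_{S}$ must be injective and $\tau$ is thereby forced on $\sigma(S)$, which upon summation recovers the same explicit formula.
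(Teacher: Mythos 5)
Your proof is correct, and it starts from the same identity \eqref{defofF} that the paper uses, but it organizes the Leibniz expansion of the $2s$ mixed derivatives quite differently. The paper first collapses the $z$-derivatives by merging $\bm z = z$, which yields the single-variable product $F(z,\bm w)H(z,\bm w)$ with $F(z,\bm w)=\prod_j(1-zw_j)^{-s}$ and $H(z,\bm w)=\bigl(\sum_l w_l/(1-zw_l)\bigr)^{s}$; since $F$ and $H$ are symmetric in $\bm w$, the remaining $w$-derivatives distribute according to a one-parameter binomial sum $\sum_{k}\binom{s}{k}$, and both $F_{w_1,\ldots,w_k}$ and $H_{w_1,\ldots,w_{s-k}}$ admit closed forms. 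You instead apply Leibniz to all $2s$ derivatives before merging, recording which of the $s^2$ cells each derivative hits via functions $\sigma,\tau:\{1,\ldots,s\}\to\{1,\ldots,s\}$, so that the result is a two-parameter sum over $(\sigma,\tau)$ graded by the number $k(\sigma,\tau)$ of doubly-hit cells; the per-cell contributions you quote are correct and yield the exponent $s^2+2s$. The remaining evaluation, whether by the generating-function reduction to $[x^s](s+x/u)^{s}e^{sx}$ or by the direct count of pairs with a prescribed coincidence set (which is really the binomial expansion of $(1+1/u)^{k}$ together with Fubini rather than a genuine inclusion--exclusion, as there are no alternating signs), is also correct and recovers $\sum_{k}\binom{s}{k}^2(s-k)!(s|z|)^{2k}$. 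The trade-off is that your route gives a transparent enumerative interpretation of the coefficient $\binom{s}{k}^2(s-k)!$ as a weighted count of $(\sigma,\tau)$ pairs, while the paper's ordering of the derivatives avoids the double sum and keeps the combinatorics to a single binomial identity.
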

\begin{proof}
Let
\bea\label{definitionnewadded}
F(\z,\w)=\prod_{i=1}^{s}\prod_{j=1}^{s}\frac{1}{1-z_iw_j}.
\eea
Derivatives are easily performed in $z_1,\ldots,z_s$ and merged yielding the identity
    \begin{equation}
\frac{\partial^{s}}{\partial z_1 \ldots \partial z_s} F(\bm z, \bm w)\bigg{|}_{\bm{z}=z} = F(z,\bm w)H(z,\bm w) \label{prod2terms}
\end{equation}
where
\begin{equation}
    F(z,\bm w) = \prod_{j=1}^{s}\frac{1}{(1-zw_j)^{s}},\quad \textrm{and} \quad H(z,\bm w) = \left(\sum_{l=1}^{s}\frac{w_{l}}{1-zw_l}\right)^{s}.
\end{equation}
To further differentiate with respect to $w_1,\ldots,w_s$ we use the product rule repeatedly on the right-hand side of \eqref{prod2terms}. After doing this for each of the $s$ derivatives, we obtain an expansion with $2^{s}$ different terms. When we merge variables $w_1,\ldots,w_s$, since $F$ and $H$ are both symmetric functions, we only need to keep track of the total number of derivatives acted either on $F$ or on $H$. We therefore obtain the binomial expansion
\begin{equation}
    \mathbb{E}\left(|\Lambda'(z)|^{2s}\right) = \sum_{k=0}^{s}\binom{s}{k}F_{w_1,\ldots,w_k}\bigg{|}_{\bm{w}=\bar{z}}H_{w_1,\ldots,w_{s-k}}\bigg{|}_{\bm{w}=\bar{z}}. \label{binexpan}
\end{equation}
Here the subscripts denote partial derivatives in the indicated variables. We have,
\begin{equation}
    \begin{split}
F_{w_1,\ldots,w_k}\bigg{|}_{\bm{w}=\bar{z}} &= \prod_{j=1}^{k}\frac{sz}{(1-zw_j)^{s+1}}\prod_{j=k+1}^{s}\frac{1}{(1-zw_j)^{s}}\bigg{|}_{\bm{w}=\bar{z}}\\
&=\frac{(sz)^{k}}{(1-|z|^{2})^{k+s^{2}}}. \label{Fderiv}
\end{split}
\end{equation}
Similarly, 
\begin{equation}
    \begin{split}
H_{w_1,\ldots,w_{s-k}}\bigg{|}_{\bm{w}=\bar{z}} &= \prod_{j=1}^{s-k}\frac{1}{(1-zw_j)^{2}}\left(\sum_{l=1}^{s}\frac{w_l}{1-zw_l}\right)^{k}\,\frac{s!}{k!}\bigg{|}_{\bm{w}=\bar{z}}\\
&= (1-|z|^{2})^{-2(s-k)}\left(\frac{s\bar{z}}{1-|z|^{2}}\right)^{k}\binom{s}{k}(s-k)!. \label{Hderiv}    
    \end{split}
\end{equation}
Now inserting \eqref{Hderiv} and \eqref{Fderiv} into \eqref{binexpan} completes the proof.
\end{proof}
The strategy to prove Theorems \ref{th:non-integer-intro} and \ref{th:jointmoms-intro} is now the following. By Lemma \ref{le:conv-law}, we know that $G_{N}(z_{2}) := \log\Lambda_{N}(z_2)$ and $G_{N}'(z_{1})$ converge jointly in distribution to $G(z_{2})$ and $G'(z_{1})$. Then it remains to verify a uniform integrability criterion to conclude convergence of the required expectations. To establish the latter in the negative moment case $-1<h<0$, a separate Fourier analytic technique is applied in the following subsection. Uniform integrability in the other ranges and the final proofs will be given in Section \ref{se:mainpfs}.

\subsection{Bounds on negative moments}
The goal of this section is to prove the following.
\begin{theorem}
\label{thm:negmomsbnd}
For any $a$ with $0\leq a<2$, let $r=|z|<1$ be fixed and $N > 4$. Then there is a constant $C$ depending only on $a$ and $r$ such that
\begin{equation}
\mathbb{E}(|G_{N}'(r)|^{-a}) \leq C \label{log-deriv-bnd}
\end{equation}
and
\begin{equation}
\mathbb{E}(|\Lambda'_{N}(r)|^{-a}) \leq C. \label{neg-mom-bnd}
\end{equation}
\end{theorem}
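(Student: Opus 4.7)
The plan is to derive both bounds from uniform-in-$N$ Fourier control of the characteristic function of $G_N'(r)$. The starting point is the layer-cake representation
\[
\mathbb{E}(|G_N'(r)|^{-a}) = a\int_0^\infty \epsilon^{-a-1}\mathbb{P}(|G_N'(r)|<\epsilon)\,d\epsilon,
\]
combined with the observation that the integrand is integrable at infinity for any $a>0$ because $\mathbb{P}(|G_N'(r)|<\epsilon)\le 1$. Convergence at $\epsilon=0$ will be forced by the elementary density bound
\[
\mathbb{P}(|G_N'(r)|<\epsilon) \le \pi \epsilon^{2} \sup_{w \in \mathbb{C}} p_{G_N'(r)}(w) \le \frac{\epsilon^{2}}{4\pi} \int_{\mathbb{C}} |\varphi_N(\xi)|\,d^2\xi,
\]
where $\varphi_N(\xi) = \mathbb{E}(e^{i\,\mathrm{Re}(\bar{\xi} G_N'(r))})$ is the characteristic function and the last step is Fourier inversion. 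The quadratic decay in $\epsilon$ is precisely what makes the layer-cake integral converge at $0$ whenever $a<2$.

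The crux is then a uniform bound $\sup_N \|\varphi_N\|_{L^1(\mathbb{C})} < \infty$. By Lemma \ref{le:conv-law}, $\varphi_N(\xi) \to e^{-|\xi|^{2}/(4(1-r^{2})^{2})}$ pointwise, so one expects a Gaussian upper bound $|\varphi_N(\xi)| \le C_1 e^{-c_1 |\xi|^{2}}$ with $C_1, c_1 > 0$ depending only on $r$. Such a bound can be extracted from Heine's identity, which expresses $\varphi_N(\xi)$ as an $N \times N$ Toeplitz determinant with smooth, $N$-independent symbol $f_\xi(\theta) = \exp(-i\,\mathrm{Re}(\bar{\xi}\, e^{-i\theta}/(1 - r e^{-i\theta})))$; since $r$ is bounded away from $1$, the Fourier coefficients of $\log f_\xi$ decay exponentially in their index, and the strong Szeg\"{o} asymptotics together with standard cumulant bounds for smooth linear statistics on the CUE (e.g.\ Diaconis--Shahshahani, Johansson) produce the Gaussian decay with $N$-independent constants. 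This gives \eqref{log-deriv-bnd}.

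To deduce \eqref{neg-mom-bnd} I use the factorization $|\Lambda_N'(r)| = |\Lambda_N(r)| \cdot |G_N'(r)|$ together with H\"older's inequality
\[
\mathbb{E}(|\Lambda_N'(r)|^{-a}) \le \mathbb{E}(|\Lambda_N(r)|^{-pa})^{1/p}\, \mathbb{E}(|G_N'(r)|^{-qa})^{1/q},
\]
choosing $q > 1$ sufficiently close to $1$ that $qa < 2$ (possible since $a < 2$), so the second factor is controlled by the argument above. The first factor is uniformly bounded in $N$: for $|r|<1$ strictly, $\log|\Lambda_N(r)|$ is a smooth CUE linear statistic whose variance is bounded in $N$ (converging to $-\log(1-r^2)$), so all its exponential moments are uniformly bounded, yielding $\mathbb{E}(|\Lambda_N(r)|^{-pa}) = O(1)$ for any $p$ (compatible with the Keating--Snaith limit $(1-r^2)^{-(pa/2)^{2}}$).

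The main obstacle will be establishing the Gaussian decay of $\varphi_N(\xi)$ uniformly in both $N$ and $\xi$, particularly in the large-$|\xi|$ regime. Pointwise convergence from Lemma \ref{le:conv-law} handles bounded $\xi$, but the tail requires cumulant or Toeplitz estimates that critically exploit the smoothness of $f_\xi$ (guaranteed by $r$ being bounded away from $1$), and this is where the bulk of the technical work concentrates.
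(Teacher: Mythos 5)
Your overall architecture mirrors the paper's: a layer-cake/integration-by-parts representation of the negative moment, an anticoncentration bound on $\mathbb{P}(|G_N'(r)|<\epsilon)$ of order $\epsilon^2$ deduced from the characteristic function, and a H\"older factorization of $\Lambda_N'=\Lambda_N\cdot G_N'$ to pass from \eqref{log-deriv-bnd} to \eqref{neg-mom-bnd}. Where you diverge is in the anticoncentration step: you bound the sup of the density of $G_N'(r)$ by $\frac{1}{(2\pi)^2}\|\varphi_N\|_{L^1(\mathbb{C})}$, whereas the paper invokes Hal\'asz's small-ball inequality, which only requires control of the characteristic function on the \emph{truncated} region $|\xi_1|,|\xi_2|<y^{-1}$ and does not presuppose the existence of a density. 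In this problem the distinction is ultimately harmless because $\varphi_N$ does turn out to be uniformly in $L^1$, but the Hal\'asz route is cleaner.

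The genuine gap is the claimed uniform Gaussian bound $|\varphi_N(\xi)|\le C_1 e^{-c_1|\xi|^2}$ with $N$-independent constants. This is false for finite $N$: the characteristic function of a linear statistic over $N$ points cannot decay faster than polynomially in $\xi$ at fixed $N$ (indeed, writing it as a Toeplitz determinant of size $N$ with symbol of modulus one, the decay one can extract from stationary phase is of order $\xi^{-N/4}$, not $e^{-c\xi^2}$). The paper's Lemmas \ref{lem:high-fourier} and \ref{lem:klem} together with Corollary \ref{lem:intermediate-fourier} reveal the actual picture: Gaussian decay holds only for $|\xi|\lesssim N$; for $N\le |\xi| < N^8$ the best bound is the $\xi$-independent plateau $e^{-cN^2}$; and for $|\xi|>N^8$ the decay is polynomial, $C^N N^{-N/2}\xi^{-N/4}$. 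Your proposed derivation via ``strong Szeg\H{o} asymptotics together with standard cumulant bounds'' cannot repair this: Szeg\H{o} gives pointwise convergence of $\varphi_N(\xi)$ for fixed $\xi$, and Johansson-type cumulant bounds (precisely what the paper's Lemma \ref{lem:klem} encodes) saturate at the $e^{-cN^2}$ plateau once $|\xi|\gtrsim N$. You correctly flag the large-$\xi$ regime as the hard part, but the target estimate you aim for there is not a true statement; the right move is to split into the three regimes as the paper does, using Johansson's bound for $|\xi|<N^8$ and a stationary-phase/Hadamard argument on the Toeplitz determinant for $|\xi|>N^8$, then verify that the sum of the three contributions to $\int|\varphi_N|$ is uniformly bounded. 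Your H\"older step for \eqref{neg-mom-bnd} is correct and matches the paper.
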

We begin by describing the general strategy, prove some preliminary lemmas and then we give the proof of Theorem \ref{thm:negmomsbnd} at the end of the section. We note that $G_{N}'(r)$ has the form of a linear statistic
\begin{equation}
G_{N}'(r) = \frac{\Lambda_{N}'(r)}{\Lambda_{N}(r)} = X_{N}(f)
\end{equation}
where $X_{N}(f) = \sum_{j=1}^{N}f(\theta_j)$ and
\begin{equation}
    f(\theta) = -\frac{e^{-\i\theta}}{1-re^{-\i\theta}}. \label{f-form}
\end{equation}
We start from
\begin{equation}
    \mathbb{E}(|X_{N}(f)|^{-a}) = \int_{0}^{\infty}x^{-a}\,\frac{d}{dx}\mathbb{P}(|X_{N}(f)| < x)\,dx.
\end{equation}
Integrating by parts, we have
\begin{equation}
\begin{split}
\mathbb{E}(|X_{N}(f)|^{-a}) &= -\lim_{y \to 0}y^{-a}\mathbb{P}(|X_{N}(f)|<y) + a\int_{0}^{\infty}y^{-a-1}\mathbb{P}(|X_{N}(f)|<y)dy.
\end{split}
\end{equation}
We can drop the part of the integral where $y>1$ because $y^{-a-1}$ is integrable on $[1,\infty)$ for $a>0$. So 
\begin{equation}
    \begin{split}
        \mathbb{E}(|X_{N}(f)|^{-a}) \leq \lim_{y \to 0}y^{-a}\mathbb{P}(|X_{N}(f)|<y)+a\int_{0}^{1}y^{-a-1}\mathbb{P}(|X_{N}(f))|<y)dy+C. \label{smalldev}
    \end{split}
\end{equation}
To bound the integral in \eqref{smalldev} for any $a<2$, we need a bound on $\mathbb{P}(|X_{N}(f))|<y)$ on the order $y^{2}$. This also implies that $\lim_{y \to 0}y^{-a}\mathbb{P}(|X_{N}(f)|<y) = 0$. To obtain the bound, we use the following `small ball' inequality
\begin{equation}
    \mathbb{P}(|X_{N}(f)| \leq y) \leq y^{2}\int_{|\xi_{1}|<y^{-1}}\int_{|\xi_{2}|<y^{-1}}d\xi_{1}\,d\xi_{2}\,|\mathbb{E}(e^{\i\xi_1 \mathrm{Re}(X_{N}(f))+\i\xi_{2}\mathrm{Im}(X_{N}(f))})|, \label{halasz}
    \end{equation}
see \cite[Sec 5]{H77}. For $\xi_{1},\xi_{2} \in \mathbb{R}$, define
\begin{equation}\label{defofpsi}
\psi_{N}(\xi_{1},\xi_{2}) := \mathbb{E}(e^{\i\xi X_{N}(g)})
\end{equation}
where $\xi = |\xi_{1}|+|\xi_{2}|$ and
\begin{equation}
    g(\theta) = \frac{\xi_{1}\mathrm{Re}(f(\theta))+\xi_{2}\mathrm{Im}(f(\theta))}{|\xi_1|+|\xi_2|}. \label{gdef}
\end{equation}
Although the above definition of $g(\theta)$ depends on $\xi_{1}$, $\xi_{2}$, for convenience of notation, in the following we still use $g(\theta)$. Then \eqref{halasz} becomes
\begin{equation}
\label{fourierT}
\mathbb{P}(|X_{N}(f)| \leq y) \leq y^{2}\int_{|\xi_{1}|<y^{-1}}\int_{|\xi_{2}|<y^{-1}}d\xi_{1}\,d\xi_{2}\,|\psi_{N}(\xi_{1},\xi_{2})|.
\end{equation}
Due to the factor $y^{2}$ in \eqref{fourierT}, it will be sufficient to control the double integral uniformly in $N$ and $y$. To do this we adopt the strategy in Johansson \cite{K97} where precise bounds are obtained for the case of trigonometric polynomial $g$. Since our $g$ defined in (\ref{gdef}) is not a trigonometric polynomial, we discuss here some of the similarities and differences in the proof.

We start by proving a similar result to \cite[Lemma 2.14]{K97} which bounds the high frequency contributions to the characteristic function. If $g$ is a polynomial of fixed degree, $g'$ can at worst have a fixed number of zeros. Then the result follows from a standard stationary phase approximation given in \cite[Chapter 6]{BH86}. As our function $g$ is not a polynomial, the behaviour of zeros of $g'$ is less immediate, so we will carefully check that it satisfies the conditions needed in the stationary phase approximation. We also take extra care because our $g$ depends on $\xi_{1},\xi_{2}$ and our bounds need to be uniform in these parameters.

\begin{lemma}
\label{lem:high-fourier}
Let $0<r<1$. If $\xi>N^{8}$, then there is a constant $C>0$ depending only on $r$ such that for all $N$ we have
\begin{equation}
    |\psi_{N}(\xi_{1},\xi_{2})| \leq C^{N}N^{-N/2}\xi^{-N/4}.
\end{equation}
\end{lemma}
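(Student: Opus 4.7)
The plan is to express $\psi_N(\xi_1,\xi_2)$ as a Toeplitz determinant via Heine's identity and then combine Hadamard's inequality with uniform stationary-phase bounds on its Fourier entries. Integrating the CUE joint eigenvalue density against $\prod_j e^{i\xi g(\theta_j)}$ gives
\begin{equation*}
\psi_N(\xi_1,\xi_2)=\det\bigl(\hat F(j-k)\bigr)_{j,k=0}^{N-1},\qquad F(\theta):=e^{i\xi g(\theta)},
\end{equation*}
so only the Fourier coefficients $\hat F(m)$ with $|m|\le N-1$ are needed. Writing $\alpha=(\xi_1-i\xi_2)/\xi$, the inequality $\sqrt{\xi_1^2+\xi_2^2}\ge(|\xi_1|+|\xi_2|)/\sqrt{2}$ places $|\alpha|$ in $[1/\sqrt{2},1]$, and with $f(\theta)=-1/(e^{i\theta}-r)$ the function $g(\theta)=\mathrm{Re}(\alpha f(\theta))$ is real-analytic with every derivative uniformly bounded in $\alpha$ in terms of $r$ alone.

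The core step is to analyse the stationary points of $\phi(\theta):=\xi g(\theta)-m\theta$ uniformly in $\alpha$ and in the range $|m|\le N-1$. A direct calculation gives $g'(\theta)=\mathrm{Re}\bigl(i\alpha e^{i\theta}/(e^{i\theta}-r)^2\bigr)$, and I would show that the argument of $H(\theta):=i\alpha e^{i\theta}/(e^{i\theta}-r)^2$ satisfies $(d/d\theta)\arg H(\theta)=(r^2-1)/(1-2r\cos\theta+r^2)<0$ with total variation $-2\pi$ on $[0,2\pi]$. Hence $g'=0$ has exactly two solutions $\theta_0^{\pm}$, and since $\xi>N^8$ forces $|m/\xi|\le N^{-7}$, the implicit function theorem locates the solutions of $\phi'(\theta)=0$ at points $\theta_*^{\pm}$ within $O(N^{-7})$ of $\theta_0^{\pm}$. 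Next, using the identity $f''/f'=-i(e^{i\theta}+r)/(e^{i\theta}-r)$ and the fact that $\alpha f'(\theta_0^{\pm})$ is purely imaginary at a zero of $g'$, I would derive the explicit $\alpha$-independent lower bound
\begin{equation*}
|g''(\theta_0^{\pm})|\ge\frac{1-r}{\sqrt{2}\,(1+r)^{3}}=:c(r)>0,
\end{equation*}
so that $|\phi''(\theta_*^{\pm})|\ge\tfrac{1}{2}c(r)\,\xi$ for $N$ sufficiently large.

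With non-degenerate stationary points and uniformly bounded higher derivatives $\|g^{(k)}\|_\infty\le C_k(r)$, the classical second-derivative stationary phase expansion from \cite[Ch.~6]{BH86} applied to the at most two stationary points yields the uniform entry bound
\begin{equation*}
|\hat F(m)|\le C(r)\,\xi^{-1/2},\qquad |m|\le N-1,
\end{equation*}
with $C(r)$ depending only on $r$; on the complementary intervals $|\phi'|$ is bounded below by a positive constant and a routine integration-by-parts estimate produces a strictly smaller contribution. Plugging this into Hadamard's inequality,
\begin{equation*}
|\psi_N|\le\prod_{j=0}^{N-1}\Bigl(\sum_{k=0}^{N-1}|\hat F(j-k)|^{2}\Bigr)^{1/2}\le\bigl(C(r)\sqrt{N/\xi}\bigr)^N=C(r)^N\,N^{N/2}\,\xi^{-N/2}.
\end{equation*}
The hypothesis $\xi>N^8$ gives $N^{2N}\le\xi^{N/4}$, so $N^{N/2}\xi^{-N/2}\le N^{-3N/2}\xi^{-N/4}\le N^{-N/2}\xi^{-N/4}$, which produces the claimed bound. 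I expect the main obstacle to be the uniform non-degeneracy of the stationary points, in particular the $\alpha$-independent lower bound on $|g''(\theta_0^{\pm})|$: this requires using the specific rational structure of $f$, and it is precisely the point where the argument for trigonometric polynomials in \cite{K97} does not directly transfer to the non-polynomial setting.
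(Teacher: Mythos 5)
Your proof follows the same overall architecture as the paper's: write $\psi_N$ as a Toeplitz determinant, bound it by Hadamard's inequality, and control the Fourier coefficients of the symbol by a uniform-in-parameters stationary phase estimate, with $\xi>N^{8}$ providing the slack needed to turn $N^{aN}\xi^{-N/2}$ into $N^{-N/2}\xi^{-N/4}$. Where you diverge is in two places, and in both you are doing something genuinely cleaner than the paper. First, to count and locate the zeros of $g'$ the paper writes $g'$ explicitly and derives a quadratic in $\sin\theta^{*}$; you instead observe that $\arg H(\theta)$ is strictly monotone with total variation $-2\pi$, which immediately gives exactly two simple zeros and, via $|g''(\theta_{0})| = |H(\theta_{0})|\cdot|(\arg H)'(\theta_{0})|$, the fully explicit lower bound $|g''(\theta_{0}^{\pm})|\ge(1-r)/(\sqrt{2}(1+r)^{3})$ — I checked this and it is correct, whereas the paper simply asserts that the lower bound "is not hard to check". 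Second, the paper treats $\xi g$ as the phase and $e^{-\mathrm{i}k\theta}$ as amplitude, producing the $k$-dependent bound $|\hat h_{k}|\le C(|k|+1)\xi^{-1/2}$; you put the whole expression $\xi g(\theta)-m\theta$ into the phase and get the sharper uniform bound $|\hat F(m)|\le C(r)\xi^{-1/2}$. Each approach works: the paper's is simpler because the stationary points of $\xi g$ do not move with $m$, while yours is tighter but requires you to track how the stationary points of $\phi_{m}$ drift and to verify uniform non-degeneracy and separation under the perturbation $|m/\xi|<N^{-7}$.

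The one point where your write-up has a small gap is that your implicit-function-theorem control of the drifted stationary points, and the lower bound $|\phi'|\gtrsim\xi$ on the complementary intervals, are stated to hold ``for $N$ sufficiently large'' — but the lemma asserts a bound for all $N$. You cannot fall back on the trivial bound $|\psi_{N}|\le1$ for the finitely many small $N$, because for $\xi>N^{8}$ the claimed right-hand side $C^{N}N^{-N/2}\xi^{-N/4}$ can be much smaller than $1$. The easy fix is to revert, for $N\le N_{0}(r)$, to the paper's variant in which $e^{-\mathrm{i}m\theta}$ is treated as amplitude: this gives $|\hat F(m)|\le C(r)(1+|m|)\xi^{-1/2}\le C(r)N_{0}\xi^{-1/2}$, which is still $O(\xi^{-1/2})$ with an $r$-dependent constant since $|m|\le N-1<N_{0}$. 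With that one patch your argument is complete and matches the paper's conclusion.
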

\begin{proof}
Let $h(\theta) = e^{\i \xi g(\theta)}$. We write the characteristic function as the Toeplitz determinant
\begin{equation}
\psi_{N}(\xi_{1},\xi_{2})= \mathrm{det}\bigg\{\hat{h}_{j-k}\bigg\}_{j,k=0}^{N-1}
\end{equation}
where $\hat{h}_{k} = \frac{1}{2\pi}\int_{0}^{2\pi}h(\theta)e^{-\i k\theta}\,d\theta$. By Hadamard's inequality
\begin{equation}
    |\psi_{N}(\xi_{1},\xi_{2})| \leq \prod_{j=1}^{N}\left(\sum_{k=1}^{N}|\hat{h}_{j-k}|^{2}\right)^{\frac{1}{2}}. \label{had}
\end{equation}
Now we apply the stationary phase approximation to the Fourier integral $\hat{h}_{k}$. Using \eqref{gdef}, we write out $g(\theta)$ explicitly as
\begin{equation*}
    g(\theta) = \frac{\eta_{1}(r-\cos(\theta))+\eta_{2}\sin(\theta)}{r^{2}+1-2r\cos(\theta)} \label{gexplicit}
\end{equation*}
where 
\begin{equation}
    \eta_{1} = \frac{\xi_{1}}{|\xi_{1}|+|\xi_{2}|}, \qquad \eta_{2} = \frac{\xi_{2}}{|\xi_{1}|+|\xi_{2}|}.
\end{equation}
We have
\begin{equation}
\label{gprimeform}
    g'(\theta) = \frac{\eta_{2}(1+r^{2})\cos(\theta)+\eta_{1}(1-r^{2})\sin(\theta)-2\eta_{2}r}{(r^{2}+1-2r\cos(\theta))^{2}}.
\end{equation}
To find stationary points, we look for values $\theta^{*}$ such that the numerator above vanishes. Since $\xi_{2}$ appears as an integration variable in \eqref{fourierT}, we may assume that $\eta_{2} \neq 0$ in what follows. Then $\sin(\theta^*)$ satisfies the following equation:
\bea\label{equationforthezero}
\Big(\frac{\eta_{1}^2}{\eta_{2}^2}(1-r^2)^2+(1+r^2)^2\Big)(\sin (\theta^*))^2-4\frac{\eta_{1}}{\eta_{2}}r(1-r^2)\sin(\theta^*)-(1-r^2)^2=0.
\eea
Combining with
\beas
(1+r^2)\cos(\theta^*)+\frac{\eta_{1}}{\eta_{2}}(1-r^2)\sin(\theta^*)=2r,
\eeas
we conclude that $g'(\theta)=0$ has at most two isolated zeros. Next we check the order of the zero, i.e. that the second derivative stays non-zero. It is straightforward algebra to check that
\begin{equation*}
    g''(\theta^{*}) = -\frac{\sin(\theta^{*})(1-r^{2})^{2}}{(r^2+1-2r\cos(\theta^*))^3}\frac{(\eta_{1}^{2}+\eta_{2}^{2})}{\eta_{2}}.
\end{equation*}
By (\ref{equationforthezero}), it is not hard to check that $|g''(\theta^{*})|>c$ for some absolute constant $c>0$ depending only on $r$.

Hence the conditions for applying the stationary phase approximation are satisfied. Applying standard results from \cite[Chapter 6]{BH86}, we obtain the bound
\begin{equation}
    |\hat{h}_{k}| \leq C\frac{|k|+1}{\xi^{1/2}}.
\end{equation}
Inserting this into Hadamard's inequality \eqref{had} we find
\begin{equation}
\label{xibnd}
\begin{split}
    |\psi_{N}(\xi_{1},\xi_{2})| &\leq C^{N}\prod_{j=1}^{N}\left(\sum_{k=1}^{N}(|k-j|+1)^{2}/\xi\right)^{\frac{1}{2}}\\
    &\leq C^{N}\left(\sum_{k=1}^{N}(k+1)^{2}/\xi\right)^{\frac{N}{2}}\\
    &\leq C^{N}N^{-N/2}\xi^{-N/4}\,\left(N^{2N}\xi^{-N/4}\right).
    \end{split}
\end{equation}
Since $\xi > N^{8}$, we have $N^{2N}\xi^{-N/4} \leq 1$. This completes the proof of the Lemma.
\end{proof}

\begin{lemma}
\label{lem:klem}
Let $g : [0,2\pi] \to \mathbb{R}$ be a smooth, periodic function with period $2\pi$. Introduce the quantities
\begin{equation}
\begin{split}
\label{quantities}
b &= \frac{1}{2\pi}\int_{0}^{2\pi}(g'(\theta))^{2}d\theta,\\
h(\theta) &= g'(\theta)-g'(0),\\
k(\theta) &=  (g'(\theta))^{2}-g'(\theta)g'(0)-b,\\
A(k) &= \sum_{n=1}^{\infty}n|\hat{k}_{n}|^{2}, \qquad \hat{k}_{n} = \frac{1}{2\pi}\int_{0}^{2\pi}k(\theta)e^{-\i n\theta}d\theta,
\end{split}
\end{equation}
and $d_{1} = \left\lVert(h')^{2}\right\rVert_{\infty} := \sup_{\theta \in [0,2\pi]}|h'(\theta)|^{2}$. Then for any $\xi \in \mathbb{R}$, we have
\begin{equation}
|\mathbb{E}(e^{i\xi \sum_{j=1}^{N}g(\theta_{j})})| \leq \mathrm{exp}\left(-\frac{b^{2}\xi^{2}}{4(A(k)\xi^{2}/N^{2}+d_{1}(1+\xi/N))}\right) \label{k-bound}
\end{equation}
where the expectation taken with respect to the eigenvalues $\{e^{i\theta_{j}}\}_{j=1}^{N}$ of the CUE.
\end{lemma}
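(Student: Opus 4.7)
The plan is to adapt the characteristic function bound of Johansson \cite{K97}, which was originally stated for trigonometric polynomial test functions, to the smooth periodic function $g$ in \eqref{gdef}. The starting point is the Heine--Szeg\H{o} identity, which gives
\begin{equation*}
\mathbb{E}\!\left[e^{i\xi\sum_{j=1}^N g(\theta_j)}\right] = \det T_N(e^{i\xi g}),
\end{equation*}
where $T_N(f) = (\hat f_{j-k})_{j,k=0}^{N-1}$ is the Toeplitz matrix with symbol $f$. Rather than evaluating this determinant directly, I would aim to derive a differential inequality for $\log|\psi(\xi)|^2$ in $\xi$ whose integration yields the stated Gaussian-type bound.

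The first step is to exploit the translation invariance $U \mapsto e^{it}U$ of the Haar measure: this gives $\psi(\xi)=\mathbb{E}[e^{i\xi\sum_j g(\theta_j+t)}]$ for any $t$. Averaging over $t$, Taylor-expanding the integrand to second order in $t$, and using the Diaconis--Shahshahani moment identities $\mathbb{E}[\operatorname{Tr}(U^k)\overline{\operatorname{Tr}(U^l)}]=\min(|k|,N)\delta_{k,l}$ to control the resulting trace variances, one recovers a leading contribution of size $-b^2\xi^2 N$ coming from the variance of the linear statistic, together with error terms. The Fourier expansions of $g'(\theta)-g'(0)$ and of $(g'(\theta))^2 - g'(\theta)g'(0) - b$ are exactly what remain after subtracting the Gaussian approximation, which is why $h$, $k$, and the norms $d_1$, $A(k)$ of the statement appear naturally. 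The quantity $d_1=\|(h')^2\|_\infty$ controls pointwise oscillations after a shift in $t$, while $A(k)=\sum n|\hat k_n|^2$ is the $H^{1/2}$-type norm that quantifies the smoothed variance of the quadratic remainder. A Cauchy--Schwarz argument then packages these into an estimate of the form
\begin{equation*}
\frac{d}{d\xi}\log|\psi(\xi)|^2 \;\leq\; -\frac{b^2\,\xi}{A(k)\xi^2/N^2 + d_1(1+\xi/N)},
\end{equation*}
and integrating from $0$ to $\xi$ delivers \eqref{k-bound}.

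The main obstacle is that, in contrast with \cite{K97}, our $g$ is not a trigonometric polynomial and has infinitely many nonzero Fourier modes. One must therefore truncate the Fourier series of $g'$ at a frequency of order $N$ and show that the tail contributes only corrections absorbed into $A(k)/N^2$ and $d_1/N$. This in particular requires checking that the interchange of the $\xi$-differentiation with the Fourier expansion is legitimate under the given smoothness, and that $A(k)$ and $d_1$ really are finite for the explicit $g$ in \eqref{gdef}, which is immediate from the analyticity of $f$ in \eqref{f-form} on a neighborhood of the unit circle (for fixed $0<r<1$). Once the differential inequality is established, the final integration is routine, but the bookkeeping of which Fourier tails get absorbed into which term of the denominator is where the careful work lies.
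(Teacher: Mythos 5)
Your plan follows essentially the same approach the paper takes, which is simply to invoke the proof of \cite[Proposition 2.8]{K97}: translation invariance of the Haar measure, Diaconis--Shahshahani moment identities, Cauchy--Schwarz, and a differential inequality in $\xi$ that integrates to the stated Gaussian bound. Where you diverge is the ``main obstacle'' you identify: that because $g$ has infinitely many nonzero Fourier modes one should truncate $g'$ at frequency of order $N$ and track how the tail is absorbed into $A(k)/N^2$ and $d_1/N$. This is an unnecessary complication. Johansson's argument uses the trigonometric-polynomial hypothesis only to guarantee that $b$, $A(k)$, and $d_1$ are finite; the estimates themselves are phrased entirely in terms of these three quantities, and once $g$ is smooth and $2\pi$-periodic these are automatically finite ($A(k)<\infty$ because the Fourier coefficients of $k$ decay faster than any power of $n$, and $d_1<\infty$ because $h'$ is bounded). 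Every sum in the argument then converges absolutely --- helped, not hindered, by the $\min(|n|,N)$ weights coming from Diaconis--Shahshahani --- so no truncation or tail bookkeeping is required, and the bound follows verbatim. That is exactly what the paper's one-line proof asserts. The finiteness check you raise at the end is genuine work, but it belongs to the application of the lemma (Corollary \ref{lem:intermediate-fourier}, where the explicit $g$ of \eqref{gdef} is plugged in), not to the proof of the lemma itself, which is stated for arbitrary smooth periodic $g$.
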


\begin{proof}
This follows directly from \cite[Proof of Proposition 2.8]{K97}.
\end{proof}

\begin{corollary}
\label{lem:intermediate-fourier}
Let $0 < r < 1$ be fixed and $\xi = |\xi_{1}|+|\xi_{2}|$. If $\xi < N$, there is a constant $c>0$ depending only on $r$ such that
\begin{equation}
|\psi_{N}(\xi_{1},\xi_{2})| \leq e^{-c\xi_{1}^{2}-c\xi_{2}^{2}} \label{low-fourier}
\end{equation}
while if $\xi \geq N$, we have
\begin{equation}
|\psi_{N}(\xi_{1},\xi_{2})| \leq e^{-cN^{2}}. \label{mid-fourier}
\end{equation}
\end{corollary}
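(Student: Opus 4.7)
The plan is to apply Lemma \ref{lem:klem} to $g$ from \eqref{gdef} with careful tracking of the dependence on the parameters $\eta_{1}=\xi_{1}/\xi$ and $\eta_{2}=\xi_{2}/\xi$. The essential point is to show that $b$ is bounded below, and $A(k)$ and $d_{1}$ are bounded above, by constants depending only on $r$ and not on $\eta_{1},\eta_{2}$ (which satisfy $|\eta_{1}|+|\eta_{2}|=1$). Once this is done, \eqref{k-bound} converts into Gaussian-type tail bounds, and the two cases $\xi<N$ and $\xi\geq N$ follow by elementary case analysis of the denominator in the exponent.

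For the lower bound on $b$, I would exploit the Fourier expansion $f(\theta)=-e^{-\i\theta}/(1-re^{-\i\theta})=-\sum_{m=1}^{\infty}r^{m-1}e^{-\i m\theta}$, valid since $r<1$. Taking real and imaginary parts and substituting into \eqref{gdef} yields
\begin{equation*}
g(\theta)=\sum_{m=1}^{\infty}r^{m-1}\bigl(-\eta_{1}\cos(m\theta)+\eta_{2}\sin(m\theta)\bigr),
\end{equation*}
and term-by-term differentiation combined with Parseval's theorem gives
\begin{equation*}
b=\frac{\eta_{1}^{2}+\eta_{2}^{2}}{2}\sum_{m=1}^{\infty}m^{2}r^{2(m-1)}=\frac{(\eta_{1}^{2}+\eta_{2}^{2})(1+r^{2})}{2(1-r^{2})^{3}}.
\end{equation*}
Because $|\eta_{1}|+|\eta_{2}|=1$ forces $\eta_{1}^{2}+\eta_{2}^{2}\geq 1/2$, we obtain $b\geq b_{0}>0$ for a constant $b_{0}$ depending only on $r$. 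The upper bounds on $A(k)$ and $d_{1}$ are less delicate: the Fourier coefficients of $g$, $g'$, and $g''$ decay geometrically at rate $r$, uniformly in $\eta_{1},\eta_{2}\in[-1,1]$, so $g'$ and $g''$ are uniformly bounded in $\theta$, yielding $d_{1}=\|h'\|_{\infty}^{2}\leq C(r)$; similarly the Fourier coefficients of $k$ decay geometrically, giving $A(k)\leq C(r)$.

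With these uniform estimates in hand, I would substitute into \eqref{k-bound} and split into cases. When $\xi<N$, both $\xi/N<1$ and $\xi^{2}/N^{2}<1$, so the denominator on the right-hand side of \eqref{k-bound} is bounded above by $4(A(k)+2d_{1})$ and the exponent is at least $c\xi^{2}$ for a constant $c=c(r)>0$. Since $\xi^{2}=(|\xi_{1}|+|\xi_{2}|)^{2}\geq \xi_{1}^{2}+\xi_{2}^{2}$, this establishes \eqref{low-fourier}. When $\xi\geq N$, I would use $1+\xi/N\leq 2\xi/N$ and $\xi/N\geq 1$ to bound the denominator by $(A(k)+2d_{1})\xi^{2}/N^{2}$, so the exponent is at least $b_{0}^{2}N^{2}/(4(A(k)+2d_{1}))$, which gives \eqref{mid-fourier}.

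The main obstacle is the uniform positivity of $b$ over all admissible $(\eta_{1},\eta_{2})$: a crude Cauchy--Schwarz bound would lose the $\eta_{1}^{2}+\eta_{2}^{2}$ prefactor, so the explicit Fourier computation — which shows that the two orthogonal contributions $\eta_{1}\cos(m\theta)$ and $\eta_{2}\sin(m\theta)$ add in quadrature inside the $L^{2}$ norm of $g'$ — appears to be the cleanest route. Everything else reduces to a routine series of manipulations of a smooth, analytic periodic function.
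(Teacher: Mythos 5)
Your proposal is correct and follows essentially the same route as the paper: apply Lemma \ref{lem:klem} to the $g$ in \eqref{gdef}, show $b$ is bounded below and $A(k)$, $d_1$ bounded above uniformly in $(\eta_1,\eta_2)$, then split into the cases $\xi < N$ and $\xi \geq N$. The only cosmetic difference is that you compute $b$ via the geometric Fourier expansion of $f$ together with Parseval, whereas the paper integrates $(g')^2$ directly from the explicit rational expression \eqref{gprimeform}; both produce the same value $b=\tfrac{(\eta_1^2+\eta_2^2)(1+r^2)}{2(1-r^2)^3}$, and the remaining estimates coincide.
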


\begin{proof}
We apply Lemma \ref{lem:klem} with the choice of smooth function $g$ in \eqref{gdef}. We begin by making explicit the quantities appearing in \eqref{k-bound}. A direct computation using \eqref{gprimeform} shows that
\begin{equation}
b = \frac{(\eta_{1}^{2}+\eta_{2}^{2})(r^{2}+1)}{2(1-r)^{3}(1+r)^{3}}.
\end{equation}
Using $\xi^{2}(\eta_{1}^{2}+\eta_{2}^{2}) = \xi_{1}^{2}+\xi_{2}^{2}$ and the bound $\eta_{1}^{2}+\eta_{2}^{2} \geq \frac{1}{2}$ shows that $b^{2}\xi^{2} \geq c(\xi_{1}^{2}+\xi_{2}^{2})$ for some constant $c>0$. Since $g$ is a smooth function on the unit circle, the Fourier coefficients $\hat{k}_{n} = O(n^{-\ell})$ as $n \to \infty$, for any $\ell>0$. Hence, the factor $A(k)$ is finite. In terms of the quantities given in \eqref{quantities}, it is not hard to check that  
\be
d_{1} = \left\lVert(h')^{2}\right\rVert_{\infty} = \left\lVert(g'')^{2}\right\rVert_{\infty} \leq C(1-r)^{-12}
\ee
for some $C$ independent of $\xi_{1}$, $\xi_{2}$ and $r$. Using these bounds in \eqref{k-bound} with the assumption $\xi < N$ immediately gives \eqref{low-fourier}. For the regime $\xi \geq N$, rewrite \eqref{k-bound} as 
\begin{equation}
|\psi_{N}(\xi_{1},\xi_{2})| \leq \mathrm{exp}\left(-\frac{b^{2}N^{2}}{4(A(k)+d_{1}N^{2}/\xi^{2}+d_{1}N/\xi)}\right).
\end{equation}
Using that $b$ is bounded from below and $\xi \geq N$ completes the proof of \eqref{mid-fourier}.
\end{proof}

\begin{proof}[Proof of Theorem \ref{thm:negmomsbnd}]
Since our bounds for $\psi_{N}(\xi_1,\xi_2)$ only depend on $|\xi_{1}|+|\xi_{2}|$, it suffices to assume $\xi_{1}\geq0$ and $\xi_{2} \geq 0$. We split the integral in \eqref{fourierT} over the three regions $R_{1} = \{(\xi_1,\xi_2) \mid \xi < N\}, R_{2} = \{(\xi_1,\xi_2) \mid N \leq \xi < N^{8}\}$ and $R_{3} = \{(\xi_1,\xi_2) \mid \xi > N^{8}\}$ and denote the corresponding integrals $I_{1}$, $I_{2}$ and $I_{3}$. For $I_{3}$, we employ the bound of Lemma \ref{lem:high-fourier} and obtain $I_{3} \leq C^{N}N^{-N/2}(I_{3,1}+I_{3,2})$ where
\begin{equation}
\begin{split}
I_{3,1} &= \int_{0}^{N^{8}}\int_{N^{8}-\xi_{2}}^{\infty}(\xi_{1}+\xi_{2})^{-N/4}d\xi_{1}d\xi_{2}\\
I_{3,2} &= \int_{0}^{\infty}\int_{N^{8}}^{\infty}(\xi_{1}+\xi_{2})^{-N/4}d\xi_{1}d\xi_{2}.
\end{split}
\end{equation}
Both integrals above are bounded by $N^{15}N^{-2N}$ provided $N>8$ and consequently $I_{3} \leq C^{N}N^{-5N/2}$ for some (possibly adjusted) constant $C>0$.

The integral $I_{2}$ is also exponentially small because $|R_{2}| \leq N^{16}$ and by Lemma \ref{lem:intermediate-fourier} we have $|\psi_{N}(\xi_1,\xi_2)| \leq e^{-cN^{2}}$ on $R_{2}$. By the same Lemma
\begin{equation}
I_{1} \leq \int_{0}^{\infty}\int_{0}^{\infty}e^{-c(\xi_{1}^{2}+\xi_{2}^{2})}d\xi_{1}d\xi_{2} = \frac{\pi}{4c}.
\end{equation}
Putting all the bounds together, \eqref{fourierT} implies that $\mathbb{P}(|X_{N}(f)|<y) \leq \frac{1}{c}y^{2}$. Plugging this estimate into \eqref{smalldev} completes the proof of \eqref{log-deriv-bnd}. To show \eqref{neg-mom-bnd} H\"older's inequality gives
\begin{equation}
\mathbb{E}(|\Lambda_{N}'(r)|^{-a}) \leq \mathbb{E}\left(|G_{N}'(r)|^{-aq}\right)^{1/q}\mathbb{E}\left(|\Lambda_{N}(r)|^{-a\ell}\right)^{1/\ell}
\end{equation}
where $q = \frac{\ell}{\ell-1}>1$ and we choose $\ell$ large enough such that $aq < 2$. Then the first term above is bounded by \eqref{log-deriv-bnd}. By \cite[Corollary 2]{FK04}, the negative moments $\mathbb{E}\left(|\Lambda_{N}(r)|^{-a\ell}\right)$ converge as $N \to \infty$, hence they are bounded for $N$ sufficiently large, say $N>N_{0}$. Boundedness for any $N$ with $4<N\leq N_{0}$ can be obtained as follows. By (\ref{xibnd}), we have $|\psi_{N}(\xi_{1},\xi_{2})|$ is bounded by $C_{N_{0}}\xi^{-N/2}$ for any $(\xi_{1},\xi_{2})$ for some constant depending only on $N_{0}$. This can be used to bound the right-hand side of \eqref{fourierT} on the domain $(\xi_{1},\xi_{2}) \in \mathbb{R}_{+}^{2}\setminus [0,1]^{2}$. On the domain $(\xi_{1},\xi_{2}) \in [0,1]^{2}$ we use the trivial bound $|\psi_{N}(\xi_{1},\xi_{2})|\leq 1$. Then by (\ref{smalldev}) and (\ref{fourierT}), we conclude the boundedness of $\mathbb{E}(|G_{N}'(r)|^{-a})$ in the range $4<N\leq N_{0}$. By the above boundedness and the fact that $\Lambda_{N}(r)$ is bounded away from zero in this range, we conclude that $\mathbb{E}(|\Lambda_{N}'(r)|^{-a}) = \mathbb{E}(|G_{N}'(r)\Lambda_{N}(r)|^{-a})$ is bounded.
\end{proof}

\section{Moments of the derivative of the Riemann zeta function off the critical line}\label{derivativeoftheRiemannzetafunction}

The goal of this section is to verify Conjecture \ref{conjectureonthemomentsoffthecriticalline} for some special cases, i.e., Theorems \ref{checkthecasek=2} and \ref{assumetheLindelofconjecture}.

\begin{lemma}\label{shifted derivative}
Let $\alpha_{1},\alpha_{2}$ and $\beta_{1},\beta_{2}$ are sufficiently small complex numbers.
Then for any $\sigma> 1/2$,
\beas
&&\lim_{T\rightarrow \infty}\frac{1}{T}\int_{1}^{T}|\zeta'(\sigma+\i t)|^4dt=\frac{\partial} {\partial\alpha_{1}}\frac{\partial} {\partial\alpha_{2}}\frac{\partial }{\partial \beta_{1}}\frac{\partial }{\partial \beta_{2}}\\
&&\frac{\zeta(2\sigma+\alpha_{1}+\beta_{1})\zeta(2\sigma+\alpha_{1}+\beta_{2})\zeta(2\sigma+\alpha_{2}+\beta_{1})\zeta(2\sigma+\alpha_{2}+\beta_{2})}{\zeta(4\sigma+\alpha_{1}+\alpha_{2}+\beta_{1}+\beta_{2})}\Bigg|_{\alpha_{1}=\alpha_{2}=\beta_{1}=\beta_{2}=0}.
\eeas
\end{lemma}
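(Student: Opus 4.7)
The plan is to express $|\zeta'(\sigma+\i t)|^{4}$ as a fourth mixed partial derivative of a shifted product of zeta functions, justify interchanging this differentiation with the limit and the integration, and then evaluate the resulting shifted mean value by a direct Dirichlet series calculation. Using $\zeta'(w)=\partial_{\alpha}\zeta(w+\alpha)|_{\alpha=0}$ together with $\overline{\zeta(\sigma+\i t)}=\zeta(\sigma-\i t)$ for real $\sigma$, one obtains
\begin{equation*}
|\zeta'(\sigma+\i t)|^{4} = \frac{\partial^{4}}{\partial\alpha_{1}\,\partial\alpha_{2}\,\partial\beta_{1}\,\partial\beta_{2}}\bigl[\zeta(\sigma+\alpha_{1}+\i t)\zeta(\sigma+\alpha_{2}+\i t)\zeta(\sigma+\beta_{1}-\i t)\zeta(\sigma+\beta_{2}-\i t)\bigr]_{\alpha_{i}=\beta_{j}=0}.
\end{equation*}
Averaging over $t\in[1,T]$ and pulling the derivatives through the integral, the problem reduces to showing that the shifted mean value
\begin{equation*}
I(\alpha_{1},\alpha_{2},\beta_{1},\beta_{2}) := \lim_{T\to\infty}\frac{1}{T}\int_{1}^{T}\zeta(\sigma+\alpha_{1}+\i t)\zeta(\sigma+\alpha_{2}+\i t)\zeta(\sigma+\beta_{1}-\i t)\zeta(\sigma+\beta_{2}-\i t)\,dt
\end{equation*}
coincides with the quotient of zetas stated in the lemma.

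The second step is a diagonal Dirichlet series calculation of $I$. Expanding each zeta as its Dirichlet series and invoking the orthogonality $\lim_{T\to\infty}\frac{1}{T}\int_{1}^{T}(N/M)^{\i t}\,dt=\delta_{N,M}$ (after a Perron-type truncation to handle the non-absolutely convergent regime $\sigma>1/2$), $I$ reduces to the multiplicative diagonal sum
\begin{equation*}
I = \sum_{m_{1}m_{2}=n_{1}n_{2}}\frac{1}{m_{1}^{\sigma+\alpha_{1}}m_{2}^{\sigma+\alpha_{2}}n_{1}^{\sigma+\beta_{1}}n_{2}^{\sigma+\beta_{2}}}.
\end{equation*}
With $x=p^{-(\sigma+\alpha_{1})}$, $y=p^{-(\sigma+\alpha_{2})}$, $u=p^{-(\sigma+\beta_{1})}$, $v=p^{-(\sigma+\beta_{2})}$, the local factor at a prime $p$ equals, after elementary geometric summation and the identity $x(1-yu)(1-yv)-y(1-xu)(1-xv)=(x-y)(1-xyuv)$,
\begin{equation*}
\sum_{k=0}^{\infty}\frac{(x^{k+1}-y^{k+1})(u^{k+1}-v^{k+1})}{(x-y)(u-v)} = \frac{1-xyuv}{(1-xu)(1-xv)(1-yu)(1-yv)},
\end{equation*}
and forming the Euler product over all primes gives exactly the claimed right-hand side.

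The final step is to justify interchanging the fourth derivative with $\lim_{T\to\infty}$. The plan is to prove that $T^{-1}\int_{1}^{T}\prod_{i,j}\zeta(\sigma+\alpha_{i}+\i t)\zeta(\sigma+\beta_{j}-\i t)\,dt$ converges to $I(\alpha,\beta)$ uniformly on a polydisc $\{|\alpha_{i}|,|\beta_{j}|<\delta\}$ about the origin. Since the limit $I$ is manifestly analytic there by the Euler product representation, uniform convergence together with Cauchy's integral formula for derivatives permits exchanging $\lim_{T\to\infty}$ with $\partial_{\alpha_{i}}\partial_{\beta_{j}}$. The main obstacle I anticipate is controlling the off-diagonal contributions $m_{1}m_{2}\neq n_{1}n_{2}$ uniformly in the shifts throughout the range $\sigma>1/2$, where the underlying Dirichlet series are not absolutely convergent; this can be handled either by adapting Ingham's approximate functional equation argument for the fourth moment, or by promoting the unshifted theorem $T^{-1}\int_{1}^{T}|\zeta(\sigma+\i t)|^{4}\,dt\to\zeta(2\sigma)^{4}/\zeta(4\sigma)$ (valid for $\sigma>1/2$) to the shifted setting by treating the shifts as small analytic perturbations.
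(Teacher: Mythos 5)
Your proposal is correct and takes essentially the same route as the paper. The paper also reduces the fourth moment to the shifted diagonal Dirichlet series $\sum_{n_1n_2=n_3n_4}n_1^{-\sigma-\alpha_1}n_2^{-\sigma-\alpha_2}n_3^{-\sigma-\beta_1}n_4^{-\sigma-\beta_2}$ and evaluates the Euler factors via Ramanujan's identity (which is exactly your local geometric-sum simplification); the one cosmetic difference is that the paper first differentiates the approximate functional equation to get one for $\zeta'$ and runs the mean-value argument (a la Titchmarsh, Theorem 7.5) directly on the shifted $\zeta'$-product, rather than your order of first establishing a uniformly convergent shifted $\zeta$-mean-value and then pulling the four derivatives through via Cauchy's integral formula. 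Both orderings lead to the same uniformity obstacle you correctly flag, and the paper handles it exactly by the approximate-functional-equation adaptation you name as your first option.
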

\begin{proof}
By the approximate functional equation for the Riemann zeta function, 
\bea\label{approximate functional equation}
\zeta(s)=\sum_{n \leq x}n^{-s}+\chi(s)\sum_{n\leq y}n^{s-1}+O(x^{-\Re(s)}+|t|^{\frac{1}{2}-\Re(s)}y^{\Re(s)-1})
\eea
uniformly for $x,y\geq 1$ and $xy=\mathrm{Im}(s)/2\pi$, and $0<\Re(s)<1$.
Then by Cauchy's theorem, for $\sigma\geq 1/2$,
\bea\label{approximate functional equationforderivative}
\frac{d\zeta(s)}{ds}\Big|_{s=\sigma+\i t}&=&\sum_{n\leq x}\frac{d n^{-s}}{ds}\Big|_{s=\sigma+\i t}+\frac{d}{ds}\Big(\chi(s)\sum_{n\leq y}n^{s-1}\Big)\Big|_{s=\sigma+\i t}\nonumber\\
&&+O\Big((x^{-\frac{1}{2}}+y^{-\frac{1}{2}})\log |t|\Big).
\eea
Take $x=y=\sqrt{\frac{t}{2\pi}}$ for $t\geq 1$. Note that for $t\geq 1$,
\be\label{derivativeofchi}
\frac{\chi'(\sigma+\i t)}{\chi(\sigma+\i t)}=-\log \frac{s}{2\pi \i}+O(\frac{1}{|s|})=-\log\frac{t}{2\pi}+O(\frac{1}{t}).
\ee
So for $\sigma > 1/2$, by a similar argument to that of \cite[Theorem 7.5]{titchmarsh1986theory}, we have
\beas
&&\frac{1}{T}\int_{1}^{T}|\zeta'(\sigma+\i t)|^4dt\\
&=&\frac{1}{T}
\int_{1}^{T}\zeta'(\sigma+\alpha_{1}+\i t)\zeta'(\sigma+\alpha_{2}+\i t)\zeta'(\sigma+\beta_{1}-\i t)\zeta'(\sigma+\beta_{2}-\i t)dt\Big|_{\alpha_{1}=\alpha_{2}=\beta_{1}=\beta_{2}=0}\\
&=&\frac{\partial} {\partial\alpha_{1}}\frac{\partial} {\partial\alpha_{2}}\frac{\partial }{\partial \beta_{1}}\frac{\partial }{\partial \beta_{2}}\sum_{n_{1}n_{2}=n_{3}n_{4}}\frac{1}{n_{1}^{\sigma+\alpha_{1}}n_{2}^{\sigma+\alpha_{2}}n_{3}^{\sigma+\beta_{1}}n_{4}^{\sigma+\beta_{2}}}\Bigg|_{\alpha_{1}=\alpha_{2}=\beta_{1}=\beta_{2}=0}+O_{\epsilon}(T^{1-2\sigma+\epsilon}).
\eeas
Let
\beas
\sigma_{\alpha,\beta}(n)=\sum_{n_{1}n_{2}=n}n_{1}^{-\alpha}n_{2}^{-\beta}=n^{-\alpha-\beta}\sum_{n_{1}n_{2}=n}n_{1}^{\alpha}n_{2}^{\beta}=n^{-\alpha}\sum_{d|n}d^{\alpha-\beta}.
\eeas
Then
\beas
\sum_{\substack{n_{1},n_{2},n_{3},n_{4}=1\\n_{1}n_{2}=n_{3}n_{4}}}^{\infty}\frac{1}{n_{1}^{\sigma+\alpha_{1}}n_{2}^{\sigma+\alpha_{2}}n_{3}^{\sigma+\beta_{1}}n_{4}^{\sigma+\beta_{2}}}&=&\sum_{n=1}^{\infty}\frac{\sigma_{\alpha_{1},\alpha_{2}}(n)\sigma_{\beta_{1},\beta_{2}}(n)}{n^{2\sigma}}.
\eeas
Note that when $\sigma>1/2$, the series $\sum_{n=1}^{\infty}\frac{\sigma_{\alpha_{1},\alpha_{2}}(n)\sigma_{\beta_{1},\beta_{2}}(n)}{n^{2\sigma}}$ is convergent when $\alpha_{1},\alpha_{2},\beta_{1},\beta_{2}$ are in a sufficiently small neighborhood of $0$.  In the following, we shall use the Euler product to study the series. To do that, we mainly use the following Ramanujan's identity that
\beas
\sum_{n=1}^{\infty}\Big(\frac{\alpha^{n+1}-\beta^{n+1}}{\alpha-\beta}\Big)\Big(\frac{\gamma^{n+1}-\delta^{n+1}}{\gamma-\delta}\Big)x^{n}=\frac{1-\alpha\beta\gamma\delta x^2}{(1-\alpha\gamma x)(1-\alpha\delta x)(1-\beta\gamma x)(1-\beta\delta x)}.
\eeas
So we have
\beas
&&\sum_{n=1}^{\infty}\frac{\sigma_{\alpha_{1},\alpha_{2}}(n)\sigma_{\beta_{1},\beta_{2}}(n)}{n^{2\sigma}}
=\prod_{p}\Bigg(1+\sum_{m=1}^{\infty}\sigma_{\alpha_{1},\alpha_{2}}(p^m)\sigma_{\beta_{1},\beta_{2}}(p^m)p^{-2m\sigma}\Bigg)\\
&&=\frac{\zeta(2\sigma+\alpha_{1}+\beta_{1})\zeta(2\sigma+\alpha_{1}+\beta_{2})\zeta(2\sigma+\alpha_{2}+\beta_{1})\zeta(2\sigma+\alpha_{2}+\beta_{2})}{\zeta(4\sigma+\alpha_{1}+\alpha_{2}+\beta_{1}+\beta_{2})}.
\eeas
Combining the above, we obtain the desired result stated in this lemma.
\end{proof}

\begin{proof}[Proof of Theorem \ref{checkthecasek=2}]
Note that $\zeta(w)\sim \frac{1}{w-1}$ as $w\rightarrow 1$ and $\zeta(2)=\frac{\pi^2}{6}$.
Observe that $\zeta'(w)\sim -\frac{1}{(w-1)^2}$ as $w\rightarrow 1$ and $\zeta''(w)\sim \frac{2}{(w-1)^3}$ as $w\rightarrow 1$. The asymptotic formula (\ref{verifyk=2moment}) follows from Lemma \ref{shifted derivative}.
\end{proof}

\begin{lemma}\label{underlindelof}
Assume the truth of the Lindel\"of hypothesis, then for any integer $s\geq 3$,

\bea 
\lim_{T\rightarrow \infty} \frac{1}{T} \int_1^T |\zeta'(\sigma+\i t)|^{2s} dt
=\sum_{n=1}^\infty \frac{((\log*\cdots*\log)(n))^2}{n^{2\sigma}}
\eea
for any $\sigma>1/2$.
\end{lemma}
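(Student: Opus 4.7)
The plan is to generalize the shifted-derivative approach of Lemma \ref{shifted derivative} (which handled $s=2$ unconditionally) to arbitrary integer $s\geq 3$, with the Lindel\"of hypothesis playing the role that the approximate functional equation played at $s=2$. The starting point is the pointwise identity
\[
|\zeta'(\sigma+\i t)|^{2s}=\frac{\partial^{2s}}{\partial\alpha_1\cdots\partial\alpha_s\,\partial\beta_1\cdots\partial\beta_s}\prod_{j=1}^{s}\zeta(\sigma+\alpha_j+\i t)\prod_{j=1}^{s}\zeta(\sigma+\beta_j-\i t)\bigg|_{\bm\alpha=\bm\beta=0},
\]
valid pointwise in $t$. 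If the iterated derivative can be pulled past $\lim_{T\to\infty}\frac{1}{T}\int_1^T\cdots dt$, the problem reduces to evaluating the shifted $2s$-th moment of $\zeta$ and then differentiating in the shifts at zero.

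The key analytic step is to establish, under the Lindel\"of hypothesis, the shifted moment formula
\[
\lim_{T\to\infty}\frac{1}{T}\int_1^T\prod_{j=1}^{s}\zeta(\sigma+\alpha_j+\i t)\prod_{j=1}^{s}\zeta(\sigma+\beta_j-\i t)\,dt=\sum_{\substack{m_1,\ldots,m_s,\,n_1,\ldots,n_s\geq 1\\ m_1\cdots m_s=n_1\cdots n_s}}\prod_{j=1}^{s}\frac{1}{m_j^{\sigma+\alpha_j}n_j^{\sigma+\beta_j}}
\]
for $\sigma>1/2$ and sufficiently small complex $\bm\alpha,\bm\beta$. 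I would prove this by inserting the approximate functional equation \eqref{approximate functional equation} with $x=y=\sqrt{t/2\pi}$ into each $\zeta$-factor, expanding the product as a finite linear combination of shifted Dirichlet polynomials, and applying the mean value theorem for Dirichlet polynomials. The diagonal terms produce the claimed sum; off-diagonal terms and remainder terms of the approximate functional equation are controlled by the Lindel\"of bound $\zeta(\sigma+\i t)\ll (1+|t|)^{\epsilon}$ for $\sigma\geq 1/2$, which is exactly what pushes the range from $\sigma>1-1/s$ (the unconditional range, see \cite[Chapter 7]{titchmarsh1986theory}) down to $\sigma>1/2$. Once this formula is in hand, reorganize the right-hand side by $N=m_1\cdots m_s=n_1\cdots n_s$:
\[
\sum_{N=1}^{\infty}\frac{1}{N^{2\sigma}}\Bigg(\sum_{m_1\cdots m_s=N}\prod_{j=1}^{s}m_j^{-\alpha_j}\Bigg)\Bigg(\sum_{n_1\cdots n_s=N}\prod_{j=1}^{s}n_j^{-\beta_j}\Bigg),
\]
and apply $\partial_{\alpha_1}\cdots\partial_{\alpha_s}\big|_{\bm\alpha=0}$ termwise. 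Each $\partial_{\alpha_j}$ produces $-\log m_j$, so the first parenthesis becomes $(-1)^{s}(\log*\cdots*\log)(N)$, and the analogous $\beta$-differentiation yields the same factor. The two minus signs combine to $+1$, producing the stated sum.

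The main obstacle is justifying the interchange of the $T\to\infty$ limit and the iterated derivative. The natural route is via Vitali's theorem for holomorphic functions of several variables: it suffices to show that, viewed as a function of $(\bm\alpha,\bm\beta)$, the left-hand side of the shifted moment formula is uniformly bounded on a fixed complex polydisc $\{|\alpha_j|,|\beta_j|\leq\delta\}$ with $\delta>0$ independent of $T$. This local uniform boundedness is obtained by repeating the Dirichlet polynomial estimates above with complex shifts, where the Lindel\"of bound once more supplies uniform control on all error terms. Pointwise convergence at $\bm\alpha=\bm\beta=0$ together with this local uniform boundedness implies locally uniform convergence of derivatives of all orders, legitimizing the final evaluation at zero and completing the proof.
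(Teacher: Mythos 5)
Your approach is genuinely different from the paper's. The paper does not go through shifted moments of $\zeta$ at all: it directly adapts the proof of Titchmarsh's Theorem 13.2, replacing $\zeta$ by $\zeta'$ and $d_s(n)$ by $\pm(\log*\cdots*\log)(n)$ throughout. Concretely, it (i) replaces the Mellin representation $\sum_n d_s(n) n^{-z} e^{-\delta n} = \frac{1}{2\pi i}\int \Gamma(w-z)\zeta(w)^s\delta^{z-w}dw$ by the analogous one with $\zeta'(w)^s$ and the $s$-fold convolution of $\log$, using the bound $(\log*\cdots*\log)(n) \ll d_s(n)(\log n)^s$ for absolute convergence, and (ii) derives $\zeta'(\sigma+it) = O_\epsilon((1+|t|)^\epsilon)$ for $\sigma>1/2$ from the Lindel\"of bound via Cauchy's integral formula, giving the a priori bound $\frac{1}{T}\int_1^T|\zeta'(\sigma+it)|^{2s}dt = O_\epsilon(T^\epsilon)$ that plays the role of Titchmarsh's formula (7.9.2). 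There is no interchange of $T\to\infty$ with differentiation because the argument works with $\zeta'$ from the start.

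Your detour through the shifted moment of $\zeta$ has two gaps as written. The larger one is the proposed proof of the shifted moment formula. You say you would insert the approximate functional equation, apply the mean value theorem for Dirichlet polynomials, and let the Lindel\"of bound absorb the off-diagonal and remainder terms. That is not how the $\sigma>1/2$ result for $s\geq 3$ is proved, and it is not clear it can be. The unconditional approximate-functional-equation argument genuinely stops at $\sigma>1-1/s$; the Lindel\"of hypothesis does not simply ``clean up error terms'' in that computation. In Titchmarsh's proof of Theorem 13.2 the Lindel\"of hypothesis is used as an a priori upper bound $\int_1^T|\zeta(\sigma+it)|^{2s}dt = O_\epsilon(T^{1+\epsilon})$ that feeds into a contour-shift/Mellin-transform argument with a completely different structure; recovering the asymptotic for $\sigma>1/2$ is the content of the theorem, not a routine corollary of the unconditional method. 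You would have to actually carry out the Mellin-transform argument with shifts, and at that point you are doing essentially the paper's proof (for $\zeta$ rather than $\zeta'$) plus extra work. The smaller gap is the Vitali step: pointwise convergence at the single point $\bm\alpha=\bm\beta=0$ together with local uniform boundedness is not sufficient for Vitali's theorem; you need pointwise convergence on a set with an accumulation point (e.g.\ all small real shifts). This is fixable, but it needs to be stated. Overall the paper's route avoids both issues and is considerably more direct.
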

\begin{proof}
It follows from \cite[Theorem 13.2]{titchmarsh1986theory} that 
\bea \label{restrictregionforintegers} 
 \lim_{T\rightarrow \infty} \frac{1}{T} \int_1^T |\zeta(\sigma+\i t)|^{2s} dt = \sum_{n=1}^\infty \frac{(d_s(n))^2}{n^{2\sigma}}
\eea
holds for any $\sigma>1/2$ and any integer $s\geq 1$ under the Lindel\"of hypothesis. For the validity of (\ref{restrictregionforintegersderivative}) for any $\sigma>1/2$, it follows from a similar argument to that of (\ref{restrictregionforintegers}) by doing the following modifications. Firstly, replacing
\beas
\sum_{n=1}^{\infty}\frac{d_{s}(n)}{n^{z}}e^{-\delta n}=\frac{1}{2\pi i}\int_{2-i\infty}^{2+i\infty}\Gamma(w-z)(\zeta(w))^{s}\delta^{z-w}dw,
\eeas
which is in the paragraph below \cite[Theorem 7.9]{titchmarsh1986theory}, by
\bea\label{weight}
(-1)^{s}\sum_{n=1}^{\infty}\frac{(\log*\cdots*\log)(n)}{n^{z}}e^{-\delta n}&=&\frac{1}{2\pi i}\int_{2-i\infty}^{2+i\infty}\Gamma(w-z)(\zeta'(w))^{s}\delta^{z-w}dw,
\eea
for $\delta > 0$ and $1<\Re(z)<2$, where $(\log*\cdots*\log)(n)$ is $s$-convolutions of $\log n$. Here we use the estimate that the average order of $(\log*\cdots*\log)(n)$ is bounded by that of $d_{s}(n)$ times $(\log n)^{s}$. This ensures that the series $\sum_{n=1}^{\infty}\frac{(\log*\cdots*\log)(n)}{n^{z}}$ is absolutely  convergent for $\Re(z)>1$, which is required in the  lemma below \cite[Theorem 7.9]{titchmarsh1986theory}. Secondly, assume the Lindel\"of hypothesis, that is $\zeta(\sigma+\i t)=O_{\epsilon}((|t|+1)^{\epsilon})$ for any $\sigma >1/2$. By Cauchy's theorem,
\bea
\zeta'(\sigma+\i t)&=&\frac{1}{2\pi \i}\int_{\mathcal{C}}\frac{\zeta(w)}{(w-\sigma-\i t)^2}dw,
\eea
where $\mathcal{C}$ is the positively oriented circle $|w-\sigma-\i t|=\epsilon$, we have
$\zeta'(\sigma+\i t)=O_{\epsilon}((|t|+1)^{\epsilon})$ for any $\sigma > 1/2$. Then we have for any $\sigma>1/2$ and any integer $s\geq 0$,
\beas
\frac{1}{T}\int_{1}^{T}|\zeta'(\sigma+\i t)|^{2s}dt=O_{\epsilon}(T^{\epsilon}).
\eeas
We use this result to replace \cite[formula 7.9.2]{titchmarsh1986theory}.
\end{proof}

\begin{proof}[Proof of Theorem \ref{assumetheLindelofconjecture}]
By Lemma \ref{underlindelof} and assuming the Lindel\"of hypothesis, for any  $\sigma>1/2$,
\bea\label{restrictregionforintegersderivative2}
\lim_{T\rightarrow \infty} \frac{1}{T} \int_1^T |\zeta'(\sigma+\i t)|^{2s} dt
=\sum_{n=1}^\infty \frac{((\log*\cdots*\log)(n))^2}{n^{2\sigma}},
\eea
where $(\log*\cdots*\log)(n)$ is $s$-convolutions of $\log n$. 
Let $\alpha_{1},\ldots,\alpha_{s}$ and $\beta_{1},\ldots,\beta_{s}$ are sufficiently small complex numbers. We now consider the following series.
\beas
\sum_{m=1}^{\infty}\frac{\sigma_{\alpha_{1},\ldots,\alpha_{s}}(m)\sigma_{\beta_{1},\ldots,\beta_{s}}(m)}{m^{2\sigma}},
\eeas
where
\beas
\sigma_{\alpha_{1},\ldots,\alpha_{s}}(m)=\sum_{n_{1}\cdots n_{s}=m}n_{1}^{-\alpha_{1}}\cdots n_{s}^{-\alpha_{s}}.
\eeas
Observe  that for $\delta>0$ sufficiently small and $|\alpha_{i}|\leq \delta$ and $|\beta_{i}|\leq \delta$ for $i=1,\ldots,s$, $|\sigma_{\alpha_{1},\ldots,\alpha_{s}}(m)|\leq m^{\delta}d_{s}(m)$ and $d_{s}(m)$ is the $s$-fold divisor function. So in this region, the series is absolutely convergent uniformly with respect to $\alpha_{1},\ldots,\alpha_{s},\beta_{1},\ldots,\beta_{s}$. By (\ref{restrictregionforintegersderivative2}), we have
\bea\label{identity}
&&\lim_{T\rightarrow \infty} \frac{1}{T} \int_1^T |\zeta'(\sigma+\i t)|^{2s} dt\nonumber \\
&=&\prod_{i=1}^{s}\frac{\partial}{\partial \alpha_{i}}\prod_{j=1}^{s}\frac{\partial}{\partial \beta_{j}}\sum_{m=1}^{\infty}\frac{\sigma_{\alpha_{1},\ldots,\alpha_{s}}(m)\sigma_{\beta_{1},\ldots,\beta_{s}}(m)}{m^{2\sigma}}\Bigg|_{\alpha_{1}=\ldots=\alpha_{s}=0,\beta_{1}=\ldots=\beta_{s}=0}.
\eea
By the Euler product, we know that
\bea
&&\sum_{m=1}^{\infty}\frac{\sigma_{\alpha_{1},\ldots,\alpha_{s}}(m)\sigma_{\beta_{1},\ldots,\beta_{s}}(m)}{m^{2\sigma}}\nonumber\\ 
&=& \prod_{p} \Bigg( 1 + \sum_{n=1}^{\infty}\frac{1}{p^{2n\sigma}}  \sum_{\substack{a_1+\cdots+a_s=n \\ b_1+\cdots+b_s=n \\ a_i,b_j \geq 0}}
\frac{1}{p^{a_1\alpha_1}\cdots p^{a_s\alpha_s} p^{b_1\beta_1}\cdots p^{b_s\beta_s} } \Bigg) \label{statethereason}\\
&=&H(\alpha_1,\ldots,\alpha_s, \beta_1,\ldots, \beta_s) \prod_{i,j=1}^s \zeta(2\sigma + \alpha_i+\beta_j)\nonumber.
\eea
Note that (\ref{statethereason})$=\prod_{p}\Bigg(1+\sum_{i,j=1}^{s}p^{-2\sigma-\alpha_{i}-\beta_{j}}+O_{s}(p^{-4\sigma-4\min_{i,j}\{\alpha_{i},\beta_{j}\}+\epsilon})\Bigg)$ and \beas
\prod_{i,j=1}^s \zeta(2\sigma + \alpha_i+\beta_j)^{-1}=\Bigg(1-\sum_{i,j=1}^{s}p^{-2\sigma-\alpha_{i}-\beta_{j}}+O_{s}(p^{-4\sigma-4\min_{i,j}\{\alpha_{i},\beta_{j}\}})\Bigg).
\eeas
Then we have that $H(\alpha_1,\ldots,\alpha_s, \beta_1,\ldots, \beta_s)$ is absolutely convergent when $|\alpha_{i}|<\frac{1}{4}$ and $|\beta_{j}|<\frac{1}{4}$ for all $i,j=1,\ldots,s$.
Let
\bea\label{analytic in a wide region}
f(w)=\prod_{p}\Bigg((1-p^{-w})^{s^2}\sum_{m=0}^{\infty}d_{s}(p^m)^2p^{-mw}\Bigg).
\eea
Then $f(w)$ is holomorphic and bounded for $\{w:\Re(w)> \frac{1}{2}\}$. Let $\delta$ be a sufficiently small complex number. Observe that 
\bea\label{December17}
H(\delta,\ldots,\delta,\delta,\ldots, \delta)
=\left(\zeta(2\sigma+2\delta) \right)^{-s^2} \sum_{n=0}^\infty \frac{d_s(n)^2}{n^{2\sigma+2\delta}}=f(2\sigma+2\delta)
\eea
By (\ref{identity}) and (\ref{statethereason}),
\bea
&&  \hspace{-1cm}  \lim_{T\rightarrow \infty} \frac{1}{T} \int_1^T |\zeta'(\sigma+\i t)|^{2s}dt \nonumber
\\
&& \hspace{-1.4cm} =\prod_{i=1}^{s}\frac{\partial}{\partial \alpha_{i}}\prod_{j=1}^{s}\frac{\partial}{\partial \beta_{j}}H(\alpha_1,\ldots,\alpha_s, \beta_1,\ldots, \beta_s) \prod_{i,j=1}^s \zeta(2\sigma + \alpha_i+\beta_j)\Bigg|_{\substack{ \alpha_{1}=\ldots=\alpha_{s}=0 \\ \beta_{1}=\ldots=\beta_{s}=0}}.
\label{1002eq1}
\eea
On the other hand, for integers $0\leq m,n\leq s$,
\beas
&&\prod_{i=1}^{m}\frac{\partial}{\partial \alpha_{i}}\prod_{j=1}^{n}\frac{\partial}{\partial \beta_{j}}
\prod_{i,j=1}^s \zeta(2\sigma + \alpha_i+\beta_j)\Bigg|_{\substack{ \alpha_{1}=\ldots=\alpha_{s}=0 \\ \beta_{1}=\ldots=\beta_{s}=0}}\\
&=& \prod_{i=1}^m \frac{\partial}{\partial \alpha_{i}} 
\left(\prod_{j=1}^s \zeta(2\sigma+\alpha_j)\right)^n
\left(\sum_{k=1}^s \frac{\zeta'(2\sigma+\alpha_k)}{\zeta(2\sigma+\alpha_k)} \right)^n
\Bigg|_{\alpha_{1}=\ldots=\alpha_{s}=0} \\
&=&\sum_{j=0}^m \binom{m}{j} 
n^j \frac{n!}{j!} 
\left(\prod_{i=1}^j \zeta(2\sigma)^{n-1} \zeta'(2\sigma)\right)
\left(\prod_{i=j+1}^s \zeta(2\sigma)^{n} \right) \\
&& \times \, \left(\sum_{l=1}^s\frac{\zeta'(2\sigma)}{\zeta(2\sigma)}\right)^{n-m+j} \,
\prod_{l=1}^{m-j} \left( -\left(\frac{\zeta'(2\sigma)}{\zeta(2\sigma)} \right)^2 + 
\frac{\zeta''(2\sigma)}{\zeta(2\sigma)} \right).
\eeas
Note that $\zeta(s)$ only has a pole at $s=1$ of order 1 and the residue 1. So when $\sigma \rightarrow 1/2$, then
\bea
&&\prod_{i=1}^{m}\frac{\partial}{\partial \alpha_{i}}\prod_{j=1}^{n}\frac{\partial}{\partial \beta_{j}}
\prod_{i,j=1}^s \zeta(2\sigma + \alpha_i+\beta_j)\Bigg|_{\substack{ \alpha_{1}=\ldots=\alpha_{s}=0 \\ \beta_{1}=\ldots=\beta_{s}=0}} \nonumber \\
&&\sim \sum_{j=0}^{m}\binom{m}{j} n^j \frac{n!}{j!} s^{n-m+j} \frac{(-1)^{n-m}}{(2\sigma-1)^{ns+n+m}}.
\label{1002eq2}
\eea
By the holomorphic property of $H(\alpha_{1},\ldots,\alpha_{s},\beta_{1},\ldots,\beta_{s})$ in the neighborhood of $(0,\ldots,0)$, combining with the Cauchy integral formula in several complex variables, we conclude that any order of the partial derivatives of $H(\alpha_{1},\ldots,\alpha_{s},\beta_{1},\ldots,\beta_{s})$ with respect to $\alpha_{1},\ldots,\alpha_{s}$ and $\beta_{1},\ldots,\beta_{s}$ at $(0,\ldots,0)$ is a constant as $\sigma\rightarrow 1/2$.  
Hence, by \eqref{1002eq2}, in \eqref{1002eq1} the contribution for the leading order and the leading coefficient as $\sigma\rightarrow 1/2$ comes from
\beas
&&H(\alpha_1,\ldots,\alpha_s, \beta_1,\ldots, \beta_s)\Bigg|_{\substack{ \alpha_{1}=\ldots=\alpha_{s}=0 \\ \beta_{1}=\ldots=\beta_{s}=0}}\prod_{i=1}^{s}\frac{\partial}{\partial \alpha_{i}}\prod_{j=1}^{s}\frac{\partial}{\partial \beta_{j}}
\prod_{i,j=1}^s \zeta(2\sigma + \alpha_i+\beta_j)\Bigg|_{\substack{ \alpha_{1}=\ldots=\alpha_{s}=0 \\ \beta_{1}=\ldots=\beta_{s}=0}}.
\eeas
Note that $\lim_{\sigma\rightarrow \frac{1}{2}}H(0,\ldots,0)=a_{s}$ by (\ref{December17}), the arithmetic factor given as (\ref{definitionofthearithmeticfactor}). By \eqref{1002eq2},
\beas
&&\prod_{i=1}^{s}\frac{\partial}{\partial \alpha_{i}}\prod_{j=1}^{s}\frac{\partial}{\partial \beta_{j}}
\prod_{i,j=1}^s \zeta(2\sigma + \alpha_i+\beta_j)\Bigg|_{\substack{ \alpha_{1}=\ldots=\alpha_{s}=0 \\ \beta_{1}=\ldots=\beta_{s}=0}}\sim \sum_{j=0}^{s}\binom{s}{j}^2(s-j)!s^{2j} \frac{1}{(2\sigma-1)^{s^2+2s}},
\eeas
note that $\sum_{j=0}^{s}\binom{s}{j}^2(s-j)!s^{2j}=s!L_{s}(-s^2)$. This completes the proof of (\ref{June14formula2}).
\end{proof}

\section{Integer moments: finite matrix size properties}
\label{se:exact}
In this section, we shall obtain exact formulae for $\E\left[ |\Lambda_N'(z)|^{2s}  \right]$ as given by Theorem \ref{th:exact-intro} and prove Theorem \ref{structuretheorem}. Then in Section \ref{se:mainpfs} we apply Theorem \ref{th:exact-intro} to establish Theorems \ref{th:meso} and \ref{th:micro-intro}. We start with the identity for any $z \in \mathbb{C}$,
\begin{equation}
\E\left[ |\Lambda_N'(z)|^{2s}  \right] = \prod_{j=1}^{s}\frac{\partial}{\partial z_{j}}\frac{\partial}{\partial w_{j}}\mathbb{E}\left[\prod_{j=1}^{s}\det(I-z_{j}U)\det(I-w_{j}U^{\dagger})\right]\Bigg{|}_{\bm{z}=\bm{w}=|z|} \label{deriv-merge}
\end{equation}
where the notation refers to evaluating the vectors $\bm{z} = (z_{1},\ldots,z_{s})$ and $\bm{w} = (w_{1},\ldots,w_{s})$ at the point $w_{j}=z_{j}=|z|$ for all $j=1,\ldots,s$.  Here, we used the radial symmetry of the moments. The expectation in \eqref{deriv-merge} can be evaluated using the following result from \cite[Theorem 1]{akemann2003characteristic}.
\begin{lemma}\label{an expression for correlation}
For any given $\bm{z}, \bm{w} \in \mathbb{C}^{s}$, we have
\begin{equation}
\label{correlation}
\mathbb{E}\left[\prod_{j=1}^{s}\det(I-z_{j}U)\det(I-w_{j}U^{\dagger})\right]
=
\frac{\det\bigg\{K_{N}(z_i w_j)\bigg\}_{i,j=1}^{s}}{\Delta(\bm z)\Delta(\bm w)},
\end{equation}
where $K_{N}(r) = \sum_{\ell=0}^{N+s-1} r^\ell$ and
\begin{equation}
\label{vand}
    \Delta(\bm z) = \prod_{1 \leq j < k \leq s}(z_{k}-z_{j}) = \mathrm{det}\bigg\{z_{i}^{j-1}\bigg\}_{i,j=1}^{s}
\end{equation}
is the Vandermonde determinant.
\end{lemma}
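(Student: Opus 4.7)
The plan is to reduce the expectation to a Cauchy--Binet computation via Schur polynomial expansions. By the Weyl integration formula, the left-hand side of \eqref{correlation} equals the CUE average of the integrand $\prod_{j=1}^{s}\prod_{k=1}^{N}(1-z_{j}e^{\i\theta_{k}})(1-w_{j}e^{-\i\theta_{k}})$ over the eigenvalue angles $\theta_{1},\ldots,\theta_{N}$, and it suffices to evaluate this expectation.

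First, apply the dual Cauchy identity $\prod_{i,k}(1+x_{i}y_{k})=\sum_{\lambda}s_{\lambda}(x)s_{\lambda'}(y)$ separately to the $z$-product and to the $w$-product. Using homogeneity $s_{\lambda}(-z)=(-1)^{|\lambda|}s_{\lambda}(z)$ and the reality identity $s_{\lambda'}(e^{-\i\theta})=\overline{s_{\lambda'}(e^{\i\theta})}$, and noting that $s_{\lambda}(z_{1},\ldots,z_{s})$ vanishes unless $l(\lambda)\leq s$ while $s_{\lambda'}(e^{\i\theta_{1}},\ldots,e^{\i\theta_{N}})$ vanishes unless $\lambda_{1}\leq N$, the two expansions are automatically restricted to partitions $\lambda\subseteq(N^{s})$ fitting inside an $s\times N$ rectangle. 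Schur orthogonality under Haar measure, $\mathbb{E}[s_{\lambda'}(U)\overline{s_{\mu'}(U)}]=\delta_{\lambda\mu}$ for $l(\lambda'),l(\mu')\leq N$, then collapses the resulting double sum to its diagonal, giving
\[
\mathbb{E}\!\left[\prod_{j=1}^{s}\det(I-z_{j}U)\det(I-w_{j}U^{\dagger})\right]=\sum_{\lambda\subseteq(N^{s})}s_{\lambda}(\z)\,s_{\lambda}(\w).
\]

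Finally, insert the bi-alternant (Jacobi) formula $s_{\lambda}(\z)=\det(z_{i}^{\lambda_{j}+s-j})_{i,j=1}^{s}/\Delta(\z)$ and use the bijection between partitions $\lambda\subseteq(N^{s})$ and $s$-element subsets $\{\lambda_{j}+s-j:j=1,\ldots,s\}$ of $\{0,1,\ldots,N+s-1\}$. Applying the Cauchy--Binet identity to the two $s\times(N+s)$ matrices $(z_{i}^{m})$ and $(w_{i}^{m})$ with $m=0,\ldots,N+s-1$ yields
\[
\sum_{\lambda\subseteq(N^{s})}\det(z_{i}^{\lambda_{j}+s-j})\det(w_{i}^{\lambda_{j}+s-j})=\det\bigg\{\sum_{m=0}^{N+s-1}(z_{i}w_{j})^{m}\bigg\}_{i,j=1}^{s}=\det\bigg\{K_{N}(z_{i}w_{j})\bigg\}_{i,j=1}^{s},
\]
and division by $\Delta(\z)\Delta(\w)$ produces \eqref{correlation}. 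The argument is essentially algebraic; the only delicate points are bookkeeping—verifying that the sign factors $(-1)^{|\lambda|+|\mu|}$ cancel on the surviving diagonal $\lambda=\mu$, and checking that the dual Cauchy support coincides exactly with the range in which Schur orthogonality is non-trivial, so that no additional truncation is required.
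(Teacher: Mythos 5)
Your proof is correct, but note that the paper does not supply its own proof: it simply cites \cite[Theorem~1]{akemann2003characteristic} as a known result. Your argument is therefore a genuinely different route — a fully self-contained algebraic derivation via the dual Cauchy identity, Schur-function orthogonality under Haar measure, the bi-alternant formula, and Cauchy--Binet. This is the classical symmetric-function approach to moments of CUE characteristic polynomials (in the spirit of Bump--Gamburd), and it buys transparency: the range $\ell=0,\dots,N+s-1$ in $K_N$ falls directly out of the bijection between partitions $\lambda\subseteq(N^s)$ and $s$-subsets of $\{0,\dots,N+s-1\}$, and the restriction to that rectangle is seen to be automatic from the support of $s_\lambda(\bm z)$ and $s_{\lambda'}(e^{\i\theta})$. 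A few bookkeeping points you gloss over are worth noting explicitly if this were to be written up: with the paper's convention $\Delta(\bm z)=\det(z_i^{j-1})$, the bi-alternant formula as you state it carries an extra factor $(-1)^{s(s-1)/2}$, and the Cauchy--Binet step picks up a similar factor from reordering the column set $\{\lambda_j+s-j\}$ into increasing order; both of these signs occur squared (once for $\bm z$, once for $\bm w$) and hence cancel, as does the $(-1)^{|\lambda|+|\mu|}$ you do mention. Finally, the identity is first established on the dense open set where the $z_i$ (and the $w_j$) are pairwise distinct, and then extends to all of $\mathbb{C}^s\times\mathbb{C}^s$ since the left-hand side is a polynomial and the right-hand side has only removable singularities; this is implicit but should be said.
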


\subsection{Low order moments}
For some clarity, consider $s=1$. We write
\begin{equation}
\begin{split}
\mathbb{E}(|\Lambda_{N}'(z)|^{2}) = \frac{\partial}{\partial z_1}\frac{\partial}{\partial w_{1}}\mathbb{E}\left[\det(I-z_{1}U)\det(I-w_{1}U^{\dagger})\right]\bigg{|}_{z_{1}=w_{1}=|z|}. \label{hs1}
\end{split}
\end{equation}
By Lemma \ref{an expression for correlation} this is
\begin{equation}
\mathbb{E}(|\Lambda_{N}'(z)|^{2}) = \sum_{j=1}^{N}j^{2}|z|^{2(j-1)}. \label{1stmom}
\end{equation}
Then for every fixed $z$ with $0 \leq |z|<1$ we have the limit
\begin{equation}
\lim_{N \to \infty}\mathbb{E}(|\Lambda_{N}'(z)|^{2}) = \sum_{j=1}^{\infty}j^{2}|z|^{2(j-1)} = \frac{|z|^{2}+1}{(1-|z|^{2})^{3}}. \label{interiors1}
\end{equation}
Note this is consistent with \eqref{derivmoms2}. We briefly point out that one can easily compute asymptotics using \eqref{1stmom}. Setting $|z|=1$, we obtain
\begin{equation}
\mathbb{E}(|\Lambda_{N}'(1)|^{2}) = \sum_{j=1}^{N}j^{2} = \frac{N(N+1)(2N+1)}{6} \sim \frac{N^{3}}{3}. \label{unitcircles1}
\end{equation}
In the microscopic scaling it is simple to show from \eqref{1stmom}, e.g.\ using a Riemann sum approximation, that if $|z|^{2}=1-\frac{c}{N}$, then
\begin{equation}
\mathbb{E}(|\Lambda_{N}'(z)|^{2}) \sim N^{3}\int_{0}^{1}x^{2}e^{-cx}dx.
\end{equation}
Note that when $c=0$ this recovers \eqref{unitcircles1}.
\subsection{General case}
We now extend the previous example to the case of a general positive integer $s$. From \eqref{deriv-merge} and Lemma \ref{an expression for correlation} we obtain
\begin{equation}
\E\left[ |\Lambda_N'(z)|^{2s}  \right] = \prod_{j=1}^{s}\frac{\partial}{\partial z_{j}}\frac{\partial}{\partial w_{j}}\frac{\det\bigg\{\sum_{\ell=0}^{N+s-1} (z_i w_j)^{\ell}\bigg\}_{i,j=1}^{s}}{\Delta(\bm z)
\Delta(\bm w)}\bigg{|}_{\bm{z}=\bm{w}=z} \label{genform}
\end{equation}
The challenging issue is to correctly deal with the denominator in \eqref{genform} in the merged limit $\bm{z}=\bm{w}=|z|$. To do that, we need some preparations.
%We shall present a merge limit (\ref{2smom}). This is a corollary of a general result shown in Lemma \ref{le:young}. To prove this lemma, we need some preparations. 
%\ns{I suggest to remove this part about the merged limit and lemma. Instead, a bit earlier, I emphasized what our goal in this section is.}
\begin{lemma}\label{factortheorem}
Let $n\geq 1$ be an integer. Let $f(x_1,\ldots,x_n)$ be a multivariate anti-symmetric polynomial, that is, for any permutation
$\sigma$ of $\{1,2,\ldots,n\}$,
\bea
f(x_{\sigma(1)},\ldots,x_{\sigma(n)})
={\rm sign}(\sigma)
f(x_1,\ldots,x_n).
\label{0729}
\eea
Then
\[
\frac{f(x_{1},\ldots,x_{n})}{\prod_{i=1}^{n-1}(x_n-x_i)}
\]
is a polynomial of $x_1,\ldots,x_{n}$.
\end{lemma}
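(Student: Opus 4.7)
The plan is to exploit the anti-symmetry to show that each linear factor $(x_n-x_i)$ individually divides $f$, and then to combine these divisibilities using that the factors are pairwise coprime in $\mathbb{C}[x_1,\ldots,x_n]$. This is the standard route to the stronger statement that an anti-symmetric polynomial is divisible by the full Vandermonde $\Delta(\bm{x}) = \prod_{1\le i<j\le n}(x_j-x_i)$, and here we only need the portion $\prod_{i=1}^{n-1}(x_n-x_i)$, which is a single row of factors from $\Delta(\bm{x})$.

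First, I would fix $i \in \{1,\ldots,n-1\}$ and apply \eqref{0729} to the transposition $\sigma = (i\,n)$, which has sign $-1$. This yields
\begin{equation*}
f(x_1,\ldots,x_{i-1},x_n,x_{i+1},\ldots,x_{n-1},x_i) = -f(x_1,\ldots,x_n).
\end{equation*}
Specialising to $x_i = x_n$ gives $f = -f$ on the hypersurface $\{x_i = x_n\}$, so $f$ vanishes whenever $x_i = x_n$. Viewing $f$ as a polynomial in the single variable $x_n$ with coefficients in $\mathbb{C}[x_1,\ldots,x_{n-1}]$, the factor theorem (in the polynomial ring over a commutative ring, applied with the root $x_i$) implies that $(x_n-x_i)$ divides $f$ in $\mathbb{C}[x_1,\ldots,x_n]$.

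Next, I would observe that the polynomials $\{x_n - x_i : i=1,\ldots,n-1\}$ are pairwise coprime in the unique factorization domain $\mathbb{C}[x_1,\ldots,x_n]$, since they are distinct irreducible (degree one) polynomials, no two of which are associates. Consequently, their product $\prod_{i=1}^{n-1}(x_n - x_i)$ divides $f$. Thus $f / \prod_{i=1}^{n-1}(x_n-x_i)$ lies in $\mathbb{C}[x_1,\ldots,x_n]$, as claimed.

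There is no real obstacle here beyond care with the factor theorem in a polynomial ring over a domain (rather than a field), but this is standard since $\mathbb{C}[x_1,\ldots,x_{n-1}]$ is an integral domain and the linear polynomial $x_n - x_i$ is monic in $x_n$. If a cleaner argument is preferred, one can alternatively note that $f$ is anti-symmetric, hence divisible by the full Vandermonde $\Delta(\bm{x})$, so $f/\prod_{i=1}^{n-1}(x_n-x_i) = (f/\Delta(\bm{x})) \cdot \prod_{1\le i<j\le n-1}(x_j-x_i)$, which is a polynomial.
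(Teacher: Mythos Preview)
Your proof is correct and follows essentially the same approach as the paper: use anti-symmetry under the transposition $(i\,n)$ together with the factor theorem to see that each $(x_n-x_i)$ divides $f$. The only cosmetic difference is that the paper peels off the factors one at a time (showing $(x_i-x_n)$ divides $f/\prod_{j<i}(x_j-x_n)$ inductively), whereas you show each $(x_n-x_i)\mid f$ separately and then invoke pairwise coprimality in the UFD $\mathbb{C}[x_1,\ldots,x_n]$; your route is slightly cleaner but the content is the same.
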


\begin{proof}
By the factor theorem, for any two polynomials  $f(x_1,\ldots,x_n), g(x_2,\ldots,x_n)$, we have that $x_1-g$ is a factor of $f$ if and only if $f(g,x_2,\ldots,x_n)$ is a zero polynomial. For us, if we set $\sigma$ as a permutation of $1$ and $n$, then \eqref{0729} indicates that $f(x_n,x_2,\ldots,x_n)$ is a zero polynomial. Thus $x_1-x_n$ is a factor of $f$. By a similar argument, we have that $x_i-x_n$ is a factor of $\frac{f(x_{1},\ldots,x_{n})}{\prod_{j=1}^{i-1}(x_{j}-x_{n})}$ for any $i=2,\ldots,n$.
\end{proof}

\begin{lemma}
\label{le:ordered-expan}
Let $n\geq 1$ be an integer. Let
\bea\label{defofPh}
P_n=\{(q_1,\ldots,q_n): q_1+\ldots+q_n=\frac{n(n+1)}{2},q_1>\cdots>q_n\geq0 \}.
\eea
Let $\omega_{n}$ be the function from $P_{n}$ to the positive integers defined inductively by
\beas
\omega_{n}\Big((n,\ldots,2,1)\Big)=1
\eeas
and for $(q_{1},q_{2},\ldots,q_{n})\neq (n,\ldots,2,1)$(implying $q_{n}=0$), 
\bea\label{definition of omegas}
\omega_{n}\Big((q_{1},\ldots,q_{n-1},0)\Big)=\sum_{\substack{j=1\\q_{j}-q_{j+1}\geq 2}}^{n-1}\omega_{n-1}\Big((q_{1}-1,\ldots,q_{j}-2,q_{j+1}-1,\ldots,q_{n-1}-1)\Big)
\eea
and $\omega_{1}\big((1)\big)=1$. Here we set the default value as $P_{0}=\{\emptyset\}$ and $\omega_{0}(\emptyset)=1$.
Let $f(x_1,\ldots,x_n)$ be a multivariate anti-symmetric polynomial. Then
\bea
&& 
\prod_{i=1}^{n} \frac{\partial}{\partial x_i}
\frac{f(x_{1},\ldots,x_{n})}{\prod_{1\leq i<j\leq n} (x_i-x_j)} \Big|_{x_1=\cdots=x_n=x}\nonumber\\
&=&
\sum_{ (q_1,\ldots,q_{n})\in P_{n} }
\frac{\omega_{n}(q_1,\ldots,q_{n}) }{\prod_{i=1}^n q_i!} 
\prod_{i=1}^{n} \frac{\partial^{q_i}}{\partial x_i^{q_i}}
f(x_1,\ldots,x_n)
\Big|_{\bm x=x}\label{0729split2}.
\eea
\end{lemma}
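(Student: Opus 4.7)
The plan is to reduce the problem to a Taylor-coefficient extraction and evaluate it via Schur polynomials. First, substitute $\epsilon_i = x_i - x$. Since the Vandermonde determinant $V_n(\bm{x}) := \prod_{i<j}(x_i-x_j)$ is translation-invariant, $V_n(\bm{x}) = V_n(\bm{\epsilon})$, and by iterated application of Lemma \ref{factortheorem}, $g(\bm{x}) := f(\bm{x})/V_n(\bm{x})$ is a polynomial. Hence the LHS of \eqref{0729split2} equals the coefficient of the multilinear monomial $\epsilon_1 \cdots \epsilon_n$ in the Taylor expansion of $g(x\bm{1} + \bm{\epsilon})$ about $\bm{\epsilon} = \bm{0}$.

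Next, I Taylor-expand $f(x\bm{1}+\bm{\epsilon}) = \sum_{\bm{q} \in \mathbb{Z}_{\geq 0}^n} \frac{\bm{\epsilon}^{\bm{q}}}{\prod_i q_i!} D^{\bm{q}} f(x)$, where $D^{\bm{q}} f(x) := \prod_i \partial_{x_i}^{q_i} f |_{\bm{x} = x\bm{1}}$. The antisymmetry of $f$ forces $D^{\bm{q}}f(x)$ to change sign under any transposition of the entries of $\bm{q}$, so terms with any $q_i = q_j$ vanish, and grouping the remainder into $S_n$-orbits of strictly decreasing tuples yields
\begin{equation*}
f(x\bm{1}+\bm{\epsilon}) = \sum_{q_1 > q_2 > \cdots > q_n \geq 0} \frac{D^{\bm{q}} f(x)}{\prod_i q_i!} \det\bigl(\epsilon_i^{q_j}\bigr)_{i,j=1}^n.
\end{equation*}
Dividing by $V_n(\bm{\epsilon}) = \det(\epsilon_i^{j-1})$ produces a Schur-polynomial expansion
\begin{equation*}
g(x\bm{1}+\bm{\epsilon}) = \sum_{q_1 > \cdots > q_n \geq 0} \frac{D^{\bm{q}} f(x)}{\prod_i q_i!} \, s_{\lambda(\bm{q})}(\bm{\epsilon}), \qquad \lambda(\bm{q})_i := q_i - (n-i).
\end{equation*}
Since $s_\lambda$ is homogeneous of degree $|\lambda|$, only shapes with $|\lambda(\bm{q})| = n$ contribute to the coefficient of $\epsilon_1 \cdots \epsilon_n$; this is exactly the condition $\sum_i q_i = n(n+1)/2$ defining $P_n$. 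For such $\bm{q}$, the combinatorial formula for the Schur polynomial identifies the coefficient with the number of semistandard Young tableaux of shape $\lambda(\bm{q})$ and content $(1,1,\ldots,1)$, which is $f_{\lambda(\bm{q})}$, the number of standard Young tableaux of shape $\lambda(\bm{q})$.

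It then remains to verify that $\omega_n(\bm{q}) = f_{\lambda(\bm{q})}$, which I do by matching the recursion \eqref{definition of omegas} to the standard branching rule for SYT. The base case $\omega_n((n, n-1, \ldots, 1)) = 1$ matches $\lambda = (1^n)$, for which $f_{(1^n)} = 1$. For $\bm{q} \in P_n$ with $q_n = 0$, the identity $\lambda(\bm{q})_j - \lambda(\bm{q})_{j+1} = q_j - q_{j+1} - 1$ shows that the condition $q_j - q_{j+1} \geq 2$ in \eqref{definition of omegas} is equivalent to $\lambda(\bm{q})$ having a removable corner in row $j$, with the boundary case $j = n-1$ requiring $q_{n-1} \geq 2$, matching $\lambda(\bm{q})_{n-1} \geq 1$. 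A direct check confirms that the reduced tuple $(q_1 - 1, \ldots, q_j - 2, \ldots, q_{n-1} - 1)$ corresponds under the shift $\lambda'_i = q'_i - (n-1-i)$ to $\lambda(\bm{q})$ with that corner removed. Thus \eqref{definition of omegas} is exactly the branching rule $f_\lambda = \sum_\mu f_\mu$ summed over $\mu$ obtained from $\lambda$ by removing one removable corner, and induction yields $\omega_n(\bm{q}) = f_{\lambda(\bm{q})}$. The main technical step is this combinatorial identification, but it reduces to the elementary bookkeeping sketched above; the rest of the proof consists of standard symmetric-function manipulations.
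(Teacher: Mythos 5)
Your argument is correct and takes a genuinely different route from the paper's. The paper proves Lemma~\ref{le:ordered-expan} by induction on $n$: it fixes $x_1,\ldots,x_{n-1}$, differentiates in $x_n$ to pass to an $(n-1)$-variable antisymmetric polynomial $g$, and tracks how the Taylor coefficients of $g$ relate to those of $f$; the recursion defining $\omega_n$ then falls out of the inductive step, and only later, in Lemma~\ref{le:young}, does the paper recognize $\omega_n(\bm q)=f_{\lambda(\bm q)}$ via the branching rule. You instead bypass the induction entirely: translate to $\bm\epsilon=\bm x-x\bm 1$, Taylor-expand $f$, use antisymmetry at the diagonal to collapse the expansion into alternants $\det(\epsilon_i^{q_j})$, divide by the Vandermonde to get Schur polynomials $s_{\lambda(\bm q)}$, and read off the coefficient of $\epsilon_1\cdots\epsilon_n$ as the SSYT count $f_{\lambda(\bm q)}$ by homogeneity plus the combinatorial definition of $s_\lambda$. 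This directly produces the formula with $f_{\lambda(\bm q)}$ in place of $\omega_n(\bm q)$, and the remaining step is the same bookkeeping the paper does in Lemma~\ref{le:young}, namely checking that the $\omega_n$-recursion is the SYT branching rule under the shift $q_j=\lambda_j+n-j$. Your route is shorter and more conceptual, and also makes the appearance of $f_\lambda$ look inevitable rather than incidental; the paper's inductive route is more elementary and does not presuppose the bialternant or combinatorial formula for Schur polynomials. One small bookkeeping slip: you write $V_n(\bm\epsilon)=\det(\epsilon_i^{j-1})$, but with your convention $V_n(\bm x)=\prod_{i<j}(x_i-x_j)$ one actually has $V_n(\bm\epsilon)=\det(\epsilon_i^{n-j})=(-1)^{\binom n2}\det(\epsilon_i^{j-1})$. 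This does not affect the conclusion, since the bialternant formula $s_\lambda=\det(\epsilon_i^{\lambda_j+n-j})/\det(\epsilon_i^{n-j})$ is what you are really invoking, but the intermediate equality as written is off by a sign.
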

\begin{proof}
It is easy to see that (\ref{0729split2}) holds for $n=1$. 
Assume inductively that $(\ref{0729split2})$ holds for $n-1$ for any $n\geq 2$. We now prove the case $n$. For fixed $x_1, \ldots,x_{n-1}$, we treat $f$ (short for $f(x_{1},\ldots,x_{n})$) as a function of $x_n$ and Taylor expand at $x_n=x$. Introduce
\[
g :=\frac{\partial}{\partial x_n} \frac{f}{\prod_{i=1}^{n-1}(x_{i}-x_n)}\Big|_{x_n=x}.
\]
By the inductive assumption and the fact that $g$ is a multivariate polynomial by Lemma \ref{factortheorem}, we obtain that the left-hand side of (\ref{0729split2}) equals the following formula. 
\bea\label{computationaboutg}
&&\prod_{i=1}^{n-1}\frac{\partial}{\partial x_i} \frac{g}{\prod_{1\leq i<j\leq n-1}(x_i-x_j)}\Big|_{\bm x=x}\nonumber\\
&=&\sum_{ (p_1,\ldots,p_{n-1})\in P_{n-1} }
\frac{\omega_{n-1}(p_1,\ldots,p_{n-1}) }{\prod_{i=1}^{n-1} p_i!} 
\prod_{i=1}^{n-1} \frac{\partial^{p_i}}{\partial x_i^{p_i}}
g
\Big|_{\bm x=x}. \label{partialsg}
\eea
Now by definition of $g$
\bea
g = f\Big|_{x_n=x}
\left(\sum_{j=1}^{n-1}\frac{1}{(x_{j}-x)^2}\frac{1}{\prod_{i\neq j}(x_{i}-x)}\right) +\frac{\partial f}{\partial x_n}\Big|_{x_n=x}\frac{1}{\prod_{i=1}^{n}(x_{i}-x)}.
\label{07291012eq2}
\eea
Comparing the Taylor coefficients on both sides of \eqref{07291012eq2} gives the derivatives of $g$ in terms of $f$:
\begin{align}
&\frac{1}{\prod_{i=1}^{n-1}p_{i}!}\prod_{i=1}^{n-1}\frac{\partial^{p_i}}{\partial x_i^{p_i}}g\bigg{|}_{\bm x = x} = \sum_{j=1}^{n-1} \left(\prod_{i=1, i\neq j}^{n-1}\frac{1}{(p_{i}+1)!}\frac{\partial^{p_i+1}}{\partial x_i^{p_i+1}}\right)\frac{1}{(p_{j}+2)!}\frac{\partial^{p_j+2}}{\partial x_j^{p_j+2}}f\bigg{|}_{\bm x = x} \label{line1derivtaylor}\\
&+\prod_{i=1}^{n-1}\frac{1}{(p_{i}+1)!}\frac{\partial^{p_i+1}}{\partial x_i^{p_i+1}}\frac{\partial}{\partial x_{n}}f\bigg{|}_{\bm x = x}. \label{line2derivtaylor}
\end{align}
As $(p_{1},\ldots,p_{n-1})$ vary in $P_{n-1}$ then the orders of the derivatives on the right-hand side in \eqref{line1derivtaylor} and \eqref{line2derivtaylor} vary in the set $\mathcal{D}_{n} = \mathcal{D}_{n,0}\cup\mathcal{D}_{n,1}$ where 
\begin{equation}
\begin{split}
\mathcal{D}_{n,0} &= \cup_{j=1}^{n-1}\{(p_{1}+1,\ldots,p_{j}+2,\ldots,p_{n-1}+1,0) \mid (p_{1},\ldots,p_{n-1}) \in P_{n-1}\}\\
\mathcal{D}_{n,1} &= \{(p_{1}+1,\ldots,p_{n-1}+1,1) \mid (p_{1},\ldots,p_{n-1}) \in P_{n-1}\}.
\end{split}
\end{equation}
Because of the anti-symmetry of $f$, the non-zero contributions to \eqref{line1derivtaylor} and \eqref{line2derivtaylor} will only be a subset of $\mathcal{D}_{n}$ that we denote here $\tilde{\mathcal{D}}_{n}$. We now show that $\tilde{\mathcal{D}}_{n} = P_{n}$. From the inductive hypothesis, the elements of $\tilde{\mathcal{D}}_{n}$ partition the integer $n(n+1)/2$. When we consider $(q_{1},\ldots,q_{n})\in \mathcal{D}_{n}$, there are vectors whose two components are equal with the other vectors preserving the strictly decreasing order of the components. By anti-symmetry the terms with duplicate components do not contribute, so we just collect the latter vectors in $\tilde{\mathcal{D}}_{n}$. So we have $\tilde{\mathcal{D}}_{n}\subset P_{n}$. To see the converse, let $(q_{1},\ldots,q_{n})\in P_{n}$, if $q_{n}=1$, then $(n,n-1,\ldots,1)\in \mathcal{D}_{n,1}\cap\tilde{\mathcal{D}}_{n}$ and if $q_{n}=0$, there is some $j$ with $1\leq j\leq n-1$ such that $q_{j}-q_{j+1}\geq 2$, so $(q_{1},\ldots,q_{n})\in \mathcal{D}_{n,0}\cap\tilde{\mathcal{D}}_{n}$ and we conclude that $\tilde{\mathcal{D}}_{n} = P_{n}$. 

According to the above analysis and (\ref{computationaboutg}), we have 
\be
\prod_{i=1}^{n}\frac{\partial}{\partial x_i} \frac{f}{\prod_{1\leq i<j\leq n}(x_i-x_j)}\Big|_{\bm x=x} = \sum_{ (q_1,\ldots,q_{n})\in P_{n} }
\frac{c(q_1,\ldots,q_n)}{\prod_{i=1}^{n} q_i!} 
\prod_{i=1}^{n} \frac{\partial^{q_i}}{\partial x_i^{q_i}}
f
\Big|_{\bm x=x},
\ee
for some coefficients $c(q_1,\ldots,q_n)$. We can obtain the coefficients by comparing with \eqref{line1derivtaylor} and \eqref{line2derivtaylor} as follows. Suppose first that $q_{n}=1$. Then the only possible element of $P_{n}$ is $(q_{1},\ldots,q_{n})=(n,n-1,\ldots,1)$, corresponding exactly to $(p_{1},\ldots,p_{n-1})=(n-1,\ldots,1)$. So $c(q_1,\ldots,q_n)=\omega_{n-1}((n-1,n-2,\ldots,1))=1$. If $q_{n}\neq 1$, then $q_{n}=0$ since $(q_{1},\ldots,q_{n})\in P_{n}$. For such fixed $(q_{1},\ldots,q_{n-1},q_{n})$, by our relation between $\tilde{\mathcal{D}}_{n}$ and $P_{n}$, we have $(q_{1},\ldots,q_{n}) = (p_{1}+1,\ldots,p_{j}+2,\ldots,p_{n-1}+1,0)$ for some $j=1,\ldots,n-1$. Solving this for $p_{1},\ldots,p_{n-1}$ and inserting it into \eqref{partialsg} gives
\be
c(q_1,\ldots,q_{n})=\sum_{\substack{j=1\\q_{j}-q_{j+1}\geq 2}}^{n-1}\omega_{n-1}\Big((q_{1}-1,\ldots,q_{j}-2,q_{j+1}-1,\ldots,q_{n-1}-1)\Big),
\ee
which is the recursion given in (\ref{definition of omegas}).
\end{proof}
Next, we present a general form of Lemma \ref{le:ordered-expan} and express it in terms of quantities involved in Young diagrams.
\begin{lemma}\label{le:young}
With the same notation as Lemma \ref{le:ordered-expan}, we have for any $m=0,1,\ldots,n$,
\begin{equation}\label{one expression}
\begin{split}
&\prod_{i=1}^{m} \frac{\partial}{\partial x_i}
\frac{f(x_{1},\ldots,x_{n})}{\prod_{1\leq i<j\leq n} (x_i-x_j)} \Big|_{_{x_1=\cdots=x_n=x}}\\
&=\sum_{\lambda \in Y_{m}}
    \frac{f_\lambda}{\prod_{i=1}^{n}(\lambda_{i}+n-i)!}\prod_{i=1}^{n} \frac{\partial^{\lambda_i+n-i}}{\partial x_i^{\lambda_i+n-i}}
f(x_1,\ldots,x_n)\Big|_{x_1=\cdots=x_n=x}.
\end{split}
\end{equation}
Here we set $\lambda_{m+1}=\cdots=\lambda_{n}=0$ when $m\leq n-1$.
\end{lemma}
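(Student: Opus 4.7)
The plan is to argue by induction on $n$, splitting the inductive step into the two cases $m\leq n-1$ and $m=n$. The base case $n=1$ is trivial: $Y_0=\{\emptyset\}$ and $Y_1=\{(1)\}$, both sides reduce to $f(x)$ and $f'(x)$, and the statement holds.

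For the inductive step with $0\leq m\leq n-1$, observe that since we only differentiate in $x_1,\ldots,x_m$, we may first set $x_n=x$ without affecting the derivatives. The Vandermonde factors out as
\[
\prod_{1\leq i<j\leq n}(x_i-x_j)\Big|_{x_n=x} = \prod_{1\leq i<j\leq n-1}(x_i-x_j)\cdot \prod_{i=1}^{n-1}(x_i-x),
\]
and antisymmetry of $f$ guarantees that $h(x_1,\ldots,x_{n-1}) := f(x_1,\ldots,x_{n-1},x)/\prod_{i=1}^{n-1}(x_i-x)$ is an antisymmetric polynomial in $n-1$ variables (with $x$ a parameter). Applying the inductive hypothesis to $h$ with $n-1$ variables and $m$ derivatives expresses the left-hand side as $\sum_{\mu\in Y_m} f_\mu/\prod_{i=1}^{n-1}(\mu_i+n-1-i)!\cdot \prod_{i=1}^{n-1}\partial_{x_i}^{\mu_i+n-1-i}h|_{\bm x'=x}$. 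I then translate derivatives of $h$ into derivatives of $f$ by Taylor expansion: comparing coefficients in the identity $f(\bm x',x)=h(\bm x')\prod(x_i-x)$ around $\bm x'=x$ yields
\[
D^{\bm k} h\big|_x = \frac{1}{\prod_{i=1}^{n-1}(k_i+1)}\, D^{\bm k+\bm 1} f(\bm x',x)\big|_x.
\]
Plugging in $k_i=\mu_i+n-1-i$ converts $\prod(\mu_i+n-1-i)!\cdot \prod(\mu_i+n-i)$ into $\prod(\mu_i+n-i)!$, and since $\mu_n=0$ whenever $m<n$, the missing $i=n$ factor in the target expression contributes $\partial_{x_n}^{0}f/0!=f$, exactly completing the product.

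For the case $m=n$, I invoke Lemma \ref{le:ordered-expan} and match it with the claimed formula via the bijection $\lambda_i:=q_i-(n-i)$, which identifies $P_n$ with $Y_n$. What must be shown is that $\omega_n(q_1,\ldots,q_n)=f_\lambda$ under this bijection. The unique element with $q_n\geq1$, namely $(n,n-1,\ldots,1)$, maps to $\lambda=(1^n)$ with $f_{(1^n)}=1=\omega_n(n,\ldots,1)$. For tuples with $q_n=0$, the condition $q_j-q_{j+1}\geq 2$ appearing in the recursion \eqref{definition of omegas} is equivalent to $\lambda_j>\lambda_{j+1}$, i.e.\ the $j$-th row ending at a removable corner of $\lambda$; and the shift $(q_1,\ldots,q_j-2,q_{j+1}-1,\ldots,q_{n-1}-1)$ corresponds to the partition $\lambda-e_j$ (viewed as a partition of $n-1$ in its own $q$-coordinates). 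Hence the recursion for $\omega_n$ matches the standard branching rule $f_\lambda=\sum_{j:\lambda_j>\lambda_{j+1}} f_{\lambda-e_j}$ obtained by classifying SYTs according to the corner containing the largest entry, which finishes the identification.

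The main obstacle is the bookkeeping in the $m\leq n-1$ case: one has to verify that combining the inductive factor $\prod(\mu_i+n-1-i)!$ with the denominator $\prod(\mu_i+n-i)$ from the Taylor-coefficient identity yields precisely $\prod_{i=1}^{n}(\mu_i+n-i)!$ after accounting for the vacuous $i=n$ term. The identification of $\omega_n$ with $f_\lambda$ in the case $m=n$ is conceptually clean but requires carefully checking that both recursions sum over the same index set and shift the correct components.
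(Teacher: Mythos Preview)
Your argument is correct, and the identification of $\omega_n$ with $f_\lambda$ via the branching rule for standard Young tableaux is exactly the punchline used in the paper. The reduction step, however, is organized differently.

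The paper reduces from $n$ to $m$ variables in a single step: it sets $x_{m+1}=\cdots=x_n=x$ via Taylor expansion, defines
\[
g(\tilde{\bm x})=\frac{\prod_{i=m+1}^{n}\partial_{x_i}^{\,n-i}f\big|_{x_{m+1}=\cdots=x_n=x}}{\prod_{i=1}^{m}(x_i-x)^{\,n-m}},
\]
and then has to verify that $g$ is actually a polynomial, which requires a separate vanishing argument (showing that certain mixed derivatives of $f$ with a low-order index are zero by antisymmetry). After that, Lemma~\ref{le:ordered-expan} is applied to the $m$-variable function $g$, and the bijection $q_j=\lambda_j+m-j$ gives the result.

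Your induction on $n$ peels off one variable at a time. This makes the Taylor bookkeeping lighter: dividing by a single factor $\prod_{i}(x_i-x)$ yields the clean identity $D^{\bm k}h|_x=\big(\prod_i(k_i+1)\big)^{-1}D^{\bm k+\bm 1}f(\cdot,x)|_x$, and the polynomiality of $h$ is immediate from Lemma~\ref{factortheorem} without any auxiliary vanishing argument. The price is that you must handle the case $m=n$ separately (which you do by invoking Lemma~\ref{le:ordered-expan} directly). Both routes end at the same place; yours trades a one-shot reduction plus a vanishing lemma for a cleaner recursive step.
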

\begin{proof}
Using the Taylor expansion of $f$ at $(x_{1},\ldots,x_{m},x\ldots,x)$, we have 
\beas
\frac{f(\x)}{\Delta(\x)} \Big|_{x_{m+1}=\cdots=x_n=x}
=\prod_{i=m+1}^{n} \frac{1}{(n-i)!} 
\frac{g(\tilde{\x})}{\Delta(\tilde{\x})},
\eeas
where $\tilde{\x}=(x_{1},\ldots,x_{m})$, and
\beas
g(\tilde{\x}) =
\frac{\prod_{i=m+1}^{n}
\big(\frac{\partial}{\partial x_i}\big)^{n-i} f(\x)\Big|_{x_{m+1}=\cdots=x_n=x}} {\prod_{1\leq i\leq m}(x_{i}-x)^{n-m}}.
\eeas
Then $g(\tilde{\x})$ is a polynomial. In fact, we just need to prove 
\bea\label{901formula1}
\prod_{i=1}^{m}(\frac{\partial}{\partial x_i})^{a_{i}}\prod_{i=m+1}^{n}\big(\frac{\partial}{\partial x_i}\big)^{n-i}f(\x)\Big|_{\x=x}
\eea
is zero for any $(a_{1},\ldots,a_{m})$ with at least one $a_{i}$ less than $n-m$. Without loss of generality, we suppose that $a_{1}\leq n-m-1$, then 
\beas
&&(\ref{901formula1})=(\frac{\partial }{\partial x_{1}})^{a_{1}}(\frac{\partial}{\partial x_{n-a_{1}}})^{a_{1}}\Bigg(\prod_{i=2}^{m}(\frac{\partial}{\partial x_i})^{a_{i}}\prod_{\substack{i=m+1\\i\neq n-a_{1}}}^{n}\big(\frac{\partial}{\partial x_i}\big)^{n-i} f(\x)\Bigg)\Bigg|_{\x=x}.
\eeas
It is not hard to check that the function inside the bracket satisfies (\ref{0729}) for the permutation $x_{1}\leftrightarrow x_{n-a_{1}}$. So $(\ref{901formula1})=0$.
It is easy to see that $g(\tilde{\x})$ satisfies (\ref{0729}). We apply Lemma \ref{le:ordered-expan} to $g(\tilde{\x})$ to obtain
\beas
\prod_{i=1}^{m} \frac{\partial}{\partial x_i}
\frac{f(\x)}{\Delta(\x)} \Big|_{\x=x}
&=&\prod_{i=m+1}^{n}\frac{1}{(n-i)!}\sum_{(q_1,\ldots,q_m)\in P_m}
\frac{\omega_{m}(q_1,\ldots,p_{m})}{\prod_{i=1}^{m}q_{i}!}
\times \,
\prod_{i=1}^{m} \frac{\partial^{q_i}}{\partial x_i^{q_i}}
g(\tilde{\x})
\Big|_{\tilde{\x}=x}
\eeas
Combining above and the Taylor expansion of $g(\tilde{\x})$ at $\tilde{\x}=x$, 
we have
\bea\label{901formula3}
\prod_{i=1}^{m} \frac{\partial}{\partial x_i}
\frac{f(\x)}{\Delta(\x)} \Big|_{\x=x}
&=&\sum_{(q_1,\ldots,q_m)\in P_m}
\frac{\omega_{m}(q_1,\ldots,q_{m})}{\prod_{i=1}^{n}(q_{i}+n-m)!}
\prod_{i=1}^{n} \frac{\partial^{q_i+n-m}}{\partial x_i^{q_i+n-m}}
f(\x)
\Big|_{\x=x},
\eea
where $q_{i}=m-i$ for $i=m+1,\ldots,n$.
Consider the sum over $(q_{1},\ldots,q_{m}) \in P_{m}$ above  and to each term define $\{\lambda_{j}\}_{j=1}^{m}$ by
\bea\label{901formula4}
q_{j} = \lambda_{j}+m-j, \quad j=1,\ldots,m.
\eea
Then the strictly decreasing property of $\{q_{j}\}_{j=1}^{m}$ implies that $\{\lambda_{j}\}_{j=1}^{m}$ are a weakly decreasing sequence of non-negative integers. The condition $\sum_{j=1}^{m}q_{j} = m(m+1)/2$ becomes $\sum_{j=1}^{m}\lambda_{j}=m$.
Let
\beas
Q_m &=& \{(\lambda_1,\lambda_2,\ldots,\lambda_m): \lambda_1+\cdots+\lambda_m =m, \lambda_1\geq \cdots \geq \lambda_m \geq 0\}.
\eeas
So the weight function $\omega_m: P_m \rightarrow \{1,2,\ldots\}$ is correspondingly defined in $Q_m$. Then the recursion relation on $Q_m$ is
\[
\omega_m((1,1,\ldots,1))=1,
\]
and
\be
\omega_m((\lambda_1,\ldots,\lambda_{m-1},0))
=\sum_{\substack{j=1\\\lambda_{j} - \lambda_{j+1}\geq 1}}^{m-1} \omega_{m-1}((\lambda_1,\ldots,\lambda_{j}-1,\lambda_{j+1},\ldots,\lambda_{m-1})). \label{rr-explain}
\ee
Let $c$ be a corner of a Young diagram, that is a box with no neighbours below or to the right. Let $\lambda \setminus c$ be the Young diagram obtained from $\lambda$ by erasing $c$. Then \eqref{rr-explain} states that
\begin{equation}
\omega_{m}(\lambda) = \sum_{\text{c is a corner of $\lambda$}}\omega_{m-1}(\lambda \setminus c).
\end{equation}
This is precisely the recurrence relation obeyed by $f_{\lambda}$, the number of standard Young tableaux of type $\lambda$. We conclude that 
\be
\omega_{m}(q_{1},\ldots,q_{m}) = f_{\lambda}.
\label{0819eq1}
\ee
Plugging the above and (\ref{901formula4}) into (\ref{901formula3}), we obtain the claim in this lemma. 
This completes the proof.
\end{proof}

Now we complete the proof of Theorem \ref{th:exact-intro}.
\begin{theorem}[Restatement of Theorem \ref{th:exact-intro}]
\label{th:exact}
We have the following exact formula, valid for any $z\in \mathbb{C}$ and any positive integers $N,s$,
\begin{equation}
\E[|\Lambda_{N}'(z)|^{2s}] = \sum_{\lambda, \mu \in Y_{s}}\frac{f_\lambda f_\mu}{\lambda! \mu!}
\det\bigg\{(u^{\lambda_i+s-i} K_{N}^{(\lambda_i+s-i)}(u))^{(\mu_j+s-j)}\bigg\}_{i,j=1}^{s} \label{exact}
\end{equation}
where $u=|z|^{2}$ and
\begin{equation}\label{kndef}
K_{N}(u) = \sum_{j=0}^{N+s-1}u^{j}.
\end{equation}
The bracketed superscripts on the right-hand side in \eqref{e2sintro} denote repeated differentiations $f^{(n)}(u) = \frac{d^{n}f}{du^{n}}$.
\end{theorem}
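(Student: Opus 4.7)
The plan is to start from the identity \eqref{genform}, which comes from combining \eqref{deriv-merge} with Lemma \ref{an expression for correlation}, and to execute the merged-limit derivatives by applying Lemma \ref{le:young} twice in succession: once in the $\bm z$ variables and once in the $\bm w$ variables. Writing $r = |z|$ and $D(\bm z, \bm w) := \det\{K_N(z_i w_j)\}_{i,j=1}^{s}$, the task is to compute
$$
\prod_{i=1}^{s}\frac{\partial}{\partial z_{i}}\frac{\partial}{\partial w_{i}}\frac{D(\bm z,\bm w)}{\Delta(\bm z)\Delta(\bm w)}\bigg|_{\bm z = \bm w = r}.
$$
At each stage the anti-symmetry hypothesis of Lemma \ref{le:young} must be verified; this holds because swapping $z_i \leftrightarrow z_k$ swaps two rows of $D$ (respectively, $w_j \leftrightarrow w_\ell$ swaps two columns), and anti-symmetry in $\bm w$ is preserved after the first round of differentiation in $\bm z$.

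First I apply Lemma \ref{le:young} with $m = n = s$ to the $\bm z$ variables. Since $\partial^{\alpha}/\partial z_i^\alpha$ touches only row $i$ of $D$ and produces $w_j^{\alpha} K_N^{(\alpha)}(z_i w_j)$ in that row, the substitution $\bm z = r$ yields
$$
\prod_{i=1}^{s}\frac{\partial}{\partial z_i}\frac{D(\bm z,\bm w)}{\Delta(\bm z)}\bigg|_{\bm z = r} = \sum_{\lambda \in Y_s}\frac{f_\lambda}{\lambda!}\, \det\bigg\{w_j^{\lambda_i+s-i} K_N^{(\lambda_i+s-i)}(r w_j)\bigg\}_{i,j=1}^s,
$$
where $\lambda! = \prod_{i=1}^{s}(\lambda_i+s-i)!$ as in \eqref{somenotation}. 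Applying Lemma \ref{le:young} a second time to $\bm w$, with $\partial^{\beta}/\partial w_j^\beta$ now touching only column $j$, produces a double sum over $\lambda,\mu \in Y_s$ weighted by $f_\lambda f_\mu/(\lambda!\mu!)$, the inner object being the determinant whose $(i,j)$-entry is
$$
E_{ij} := \frac{\partial^{\mu_j+s-j}}{\partial w^{\mu_j+s-j}}\bigl[w^{\lambda_i+s-i} K_N^{(\lambda_i+s-i)}(rw)\bigr]\bigg|_{w = r}.
$$

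The last step is to put $E_{ij}$ into the shape required by \eqref{exact}. Changing variable via $u = rw$ gives $\partial_w = r\,\partial_u$ and $w^{\lambda_i+s-i} = r^{-(\lambda_i+s-i)}\,u^{\lambda_i+s-i}$, hence
$$
E_{ij} = r^{(\mu_j+s-j)-(\lambda_i+s-i)}\,\bigl(u^{\lambda_i+s-i}K_N^{(\lambda_i+s-i)}(u)\bigr)^{(\mu_j+s-j)}\Big|_{u=r^2}.
$$
The prefactor splits as a row factor $r^{-(\lambda_i+s-i)}$ times a column factor $r^{\mu_j+s-j}$, which pull out of the determinant by multilinearity, producing an overall $r^{\sum_j(\mu_j+s-j)-\sum_i(\lambda_i+s-i)} = r^{|\mu|-|\lambda|}$. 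Since $|\lambda|=|\mu|=s$, this is $1$, and the surviving determinant is exactly the one appearing in \eqref{exact} evaluated at $u=|z|^2$. Assembling the two partition sums gives the claim.

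The main obstacle is purely bookkeeping: verifying anti-symmetry at each application of Lemma \ref{le:young}, correctly tracking which variable each partial derivative in the resulting determinant acts on, and checking that the $r$-prefactors produced by the change of variable $u=rw$ cancel precisely because $|\lambda|=|\mu|=s$. No new analytic ingredient beyond Lemma \ref{le:young} is needed.
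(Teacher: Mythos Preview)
Your proof is correct and follows essentially the same route as the paper: apply Lemma \ref{le:young} successively in the $\bm z$ and $\bm w$ variables to \eqref{genform}, then identify the resulting derivatives as the determinant in \eqref{exact}. The paper states the final derivative computation in one line (``by writing the determinant as a sum over permutations''), while you spell it out via the change of variable $u=rw$ and the cancellation $r^{|\mu|-|\lambda|}=1$; both lead to the same conclusion.
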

\begin{proof}
We have
\begin{equation}
\E\left[ |\Lambda_N'(z)|^{2s}  \right] = \prod_{j=1}^{s}\frac{\partial}{\partial w_{j}}\frac{\partial}{\partial z_{j}}\frac{\det\bigg\{K_{N}(z_i w_j)\bigg\}_{i,j=1}^{s}}{\prod_{1\leq i<j \leq s}(z_j-z_i) 
\prod_{1\leq i<j \leq s}(w_{j}-w_{i})}\bigg{|}_{\bm{z}=\bm{w}=z} \label{genform2}.
\end{equation}
First note that the polynomial $\det\bigg\{K_{N}(z_i w_j)\bigg\}_{i,j=1}^{s}$ satisfies the anti-symmetry properties required in the hypothesis of Lemmas \ref{le:ordered-expan} and \ref{le:young}. Hence applying Lemma \ref{le:young} to  expression \eqref{genform2}, first in the variables $\bm z$ and once more in the variables $\bm w$ gives
\begin{equation}
\E[|\Lambda_{N}'(z)|^{2s}] = \sum_{\lambda, \mu \in Y_{s}}\frac{f_\lambda f_\mu}{\lambda! \mu!}\prod_{j=1}^{s}\frac{\partial^{\mu_{j}+s-j}}{\partial w_{j}^{\mu_{j}+s-j}}\frac{\partial^{\lambda_{j}+s-j}}{\partial z_{j}^{\lambda_{j}+s-j}}\det\bigg\{K_{N}(z_i w_j)\bigg\}_{i,j=1}^{s}\bigg{|}_{\bm z=\bm w = z} \label{2smom}
\end{equation}
The derivatives here are now straightforward to compute. By writing the determinant as a sum over permutations and performing the derivatives, we get
\begin{equation}
\begin{split}
&\prod_{j=1}^{s}\frac{\partial^{\mu_{j}+s-j}}{\partial w_{j}^{\mu_{j}+s-j}}\frac{\partial^{\lambda_{j}+s-j}}{\partial z_{j}^{\lambda_{j}+s-j}}\det\bigg\{K_{N}(z_i w_j)\bigg\}_{i,j=1}^{s}\bigg{|}_{\bm z=\bm w = z}\\
&= \det\bigg\{(u^{\lambda_i+s-i} K_{N}^{(\lambda_i+s-i)}(u))^{(\mu_j+s-j)}\bigg\}_{i,j=1}^{s}.
\end{split}
\end{equation}
Inserting this into \eqref{2smom} completes the proof.
\end{proof}

\subsection{Proof of Theorem \ref{structuretheorem}}\label{proof the second main result}
In the remaining part of this section, we shall prove Theorem \ref{structuretheorem}. Before it, we need some preparations.
We start from an alternative expression for the  left-hand side of (\ref{correlation}) in Lemma \ref{an expression for correlation}.

\begin{lemma} \label{alternative expression2}
Let $s\geq 1$ be an integer. Suppose $\bm{z}, \bm{w} \in \mathbb{C}^{s}$. Let $F(\bm z, \bm w)$ and $G(\bm z, \bm w)$ be given in \eqref{definitionnewadded} and \eqref{defofG}, respectively. 
\begin{equation}
\label{901correlation}
\mathbb{E}\left[\prod_{j=1}^{s}\det(I-z_{j}U)\det(I-w_{j}U^{\dagger})\right]
=F(-\w,-\z)G(-\w,-\z).
\end{equation}
\end{lemma}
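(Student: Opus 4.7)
The plan is to reduce \eqref{901correlation} to a polynomial identity of determinants and then verify that identity via Schur polynomial expansions. Starting from Lemma \ref{an expression for correlation}, the left-hand side of \eqref{901correlation} equals $\det\{K_N(z_iw_j)\}_{i,j=1}^{s}/(\Delta(\bm z)\Delta(\bm w))$. Since $\Delta(-\bm x) = (-1)^{s(s-1)/2}\Delta(\bm x)$, we have $\Delta(-\bm w)\Delta(-\bm z)=\Delta(\bm w)\Delta(\bm z)$, and substituting directly into \eqref{definitionnewadded} gives $F(-\bm w,-\bm z) = \prod_{i,j=1}^{s}(1-z_iw_j)^{-1}$ after a harmless relabeling of the product indices. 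Consequently \eqref{901correlation} is equivalent to the polynomial identity
\[
\prod_{i,j=1}^{s}(1-z_iw_j)\cdot\det\{K_N(z_iw_j)\}_{i,j=1}^{s} = \det A(-\bm w,-\bm z), \qquad (\star)
\]
which is what I will prove directly.

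To establish $(\star)$ I would first apply Cauchy--Binet to $\{K_N(z_iw_j)\}$, viewing this matrix as the product $ZW^{T}$ of the $s\times(N+s)$ matrices $Z_{i,\ell+1}=z_i^{\ell}$ and $W_{j,\ell+1}=w_j^{\ell}$ for $\ell=0,\ldots,N+s-1$. This yields the truncated Cauchy sum
\[
\det\{K_N(z_iw_j)\}_{i,j=1}^{s} = \Delta(\bm z)\Delta(\bm w)\sum_{\mu\subseteq(N^{s})}s_{\mu}(\bm z)s_{\mu}(\bm w),
\]
which I would combine with the dual Cauchy identity
\[
\prod_{i,j=1}^{s}(1-z_iw_j) = \sum_{\lambda\subseteq(s^{s})}(-1)^{|\lambda|}s_{\lambda}(\bm z)s_{\lambda'}(\bm w),
\]
to obtain an explicit Schur-basis expansion for the left-hand side of $(\star)$. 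On the other side, a Laplace expansion of $\det A(-\bm w,-\bm z)$ along its top $s$ rows writes it as a sum over $s$-element column subsets $S\subseteq\{1,\ldots,2s\}$ of products of two generalized Vandermonde minors: one in $-\bm w$ (top block, columns in $S$) and one in $-\bm z$ (bottom block, columns in $S^{c}$). Each such minor equals, up to a sign, a Schur polynomial in $-\bm w$ or $-\bm z$ times $\Delta(\bm w)$ or $\Delta(\bm z)$, respectively. Matching the two Schur expansions term by term then yields $(\star)$.

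The main obstacle will be the careful bookkeeping of signs, which enter from three sources: the ``reversed'' column ordering of the $Y$-blocks in $A$ (whose exponents $N+2s-j$ are decreasing in the column index $j$); the substitutions $\bm z\mapsto -\bm w$ and $\bm w\mapsto -\bm z$, which produce a factor $(-1)^{\sum e_{j}}$ on each generalized Vandermonde minor; and the alternating signs appearing in the dual Cauchy identity and in the Laplace expansion. To calibrate conventions cleanly, I would first verify $(\star)$ in the case $s=1$, where both sides reduce to $1-(z_1w_1)^{N+1}$, and then in the case $s=2,\ N=0$, where both sides equal $(z_1-z_2)(w_1-w_2)\prod_{i,j=1}^{2}(1-z_iw_j)$.
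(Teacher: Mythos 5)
Your route is genuinely different from the paper's, and the reduction step is sound, but the crux of the argument is missing. The paper proves \eqref{901correlation} by citing formula (2.11) of \cite{conrey2003autocorrelation}, which expresses the autocorrelation average directly as $S_{\langle N^s\rangle}(\z,\w)/\prod_i z_i^N$, a single Schur polynomial of the $2s$ concatenated variables; the factorization $F\cdot G$ then falls out of the decomposition $\Delta(\z,\w)=\Delta(\z)\Delta(\w)\prod_{i,j}(z_i-w_j)$ together with the observation that, after multiplying the $z$-rows of $A(\z^{-1},\w)$ by $\prod_i z_i^{N+2s-1}$, the matrix $A$ becomes the Schur numerator up to a fixed column permutation. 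You instead bypass the Conrey et al.\ formula entirely, reduce to the polynomial identity $(\star)$ via Lemma~\ref{an expression for correlation}, and attempt to re-derive $(\star)$ from Schur calculus. Your reduction and your $s=1$ and $s=2,\,N=0$ checks are correct.

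The gap is in ``matching the two Schur expansions term by term,'' and it is more serious than the sign bookkeeping you flag. After Cauchy--Binet and the dual Cauchy identity, the left side of $(\star)$ divided by $\Delta(\z)\Delta(\w)$ is the double sum
\begin{equation*}
\sum_{\lambda\subseteq(s^s)}\sum_{\mu\subseteq(N^s)}(-1)^{|\lambda|}\,s_\lambda(\z)s_\mu(\z)\,s_{\lambda'}(\w)s_\mu(\w),
\end{equation*}
which contains \emph{products} $s_\lambda(\z)s_\mu(\z)$ of two Schur polynomials in the same $s$ variables. The Laplace expansion of $\det A(-\w,-\z)/(\Delta(\z)\Delta(\w))$, on the other hand, is a single sum over column subsets $S$ of terms $\pm s_{\alpha(S)}(\w)\,s_{\beta(S)}(\z)$, each a product of one Schur polynomial in $\w$ and one in $\z$. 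There is no term-by-term bijection: to compare you must first expand the products via Littlewood--Richardson coefficients and then verify a nontrivial cancellation, and the proposal does not acknowledge this. Within your framework a cleaner path is available: if $z_iw_j=1$, the $j$-th top row and the $i$-th bottom row of $A(-\w,-\z)$ are proportional (the exponent patterns $e^{\mathrm{top}}_J$ and $e^{\mathrm{bot}}_J$ of the two blocks satisfy $e^{\mathrm{top}}_J+e^{\mathrm{bot}}_J=N+2s-1$ for every column $J$, so the proportionality constant $(-1)^{N+1}z_i^{N+2s-1}$ is $J$-independent), hence $\prod_{i,j}(1-z_iw_j)$ divides $\det A(-\w,-\z)$; one then identifies the quotient with $\det\{K_N(z_iw_j)\}$ by comparing degrees and a single coefficient, avoiding Schur calculus entirely.
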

\begin{proof}
It follows from formula (2.11) of \cite{conrey2003autocorrelation} that
\be
\E \Bigg[ \prod_{i=1}^s \prod_{j=1}^N (1+z_i^{-1} e^{-\i\theta_j}) (1 + w_i e^{\i\theta_j}) \Bigg]
= \frac{S_{\langle N^s\rangle} (\z,\w)}{\prod_{i=1}^s z_i^N},
\label{0901-eq1}
\ee
where $e^{\i \theta_{j}}$ are eigenvalues of a Haar-unitary random matrix, and the Schur polynomial
\beas
&& S_{\langle N^s\rangle} (\alpha_1,\ldots,\alpha_s, \alpha_{s+1},\ldots,\alpha_{2s}) \\
&=& \frac{1}{\prod_{1\leq i<j\leq 2s} (\alpha_i-\alpha_j)} \det
\begin{pmatrix}
\alpha_1^{N+2s-1}  & \cdots & \alpha_1^{N+s} & \alpha_1^{s-1} & \cdots & 1 \\
\vdots & \vdots & \vdots & \vdots & \vdots & \vdots\\
\alpha_{2s}^{N+2s-1} & \cdots & \alpha_{2s}^{N+s} & \alpha_{2s}^{s-1} & \cdots & 1 \\
\end{pmatrix}.
\eeas
From the above
and the decomposition $\Delta({\bm{z}},{\bm{w}})=\Delta({\bm{z}})\Delta({\bm{w}})\prod_{i=1}^{s}\prod_{j=1}^{s}(z_{i} -w_{j})$, we have $\eqref{0901-eq1}=F(\z^{-1},\w) G(\z^{-1},\w)$. Hence we have \eqref{901correlation}.
\end{proof}

\begin{lemma}\label{the expression for the expectation of characteristic polynomials}
Let $s\geq 1$ be an integer, and $z\in \mathbb{C}$ with $|z|\neq 1$.
 Let $F(\bm z, \bm w)$  and $G(\bm z, \bm w)$ be as in \eqref{definitionnewadded} and \eqref{defofG}, respectively.  We have
\bea
\label{structureformula3}
\E[|\Lambda_N'(z)|^{2s}]
&=& 2\sum_{\substack{h_{1},h_{2}\\0\leq h_{1}< h_{2}\leq s}} 
\binom{s}{h_1}\binom{s}{h_2}
\prod_{j=1}^{s-h_2} \frac{\partial}{\partial w_j}  \prod_{i=1}^{s-h_1} 
\frac{\partial}{\partial z_i} F(\z,\w)\Big|_{\substack{\z= \w=-|z|}}
\nonumber \\
&& \times \, \prod_{j=1}^{h_2}
 \frac{\partial}{\partial w_j}  
\prod_{i=1}^{h_1}\frac{\partial}{\partial z_i} G(\z,\w)
\Big|_{\substack{\z= \w=-|z| }} \nonumber \\
&+& \sum_{h_{2}=0}^{s}
\binom{s}{h_2}^2\prod_{j=1}^{s-h_2} \frac{\partial}{\partial w_j}  \prod_{i=1}^{s-h_2} \frac{\partial}{\partial z_i} F(\z,\w)\Big|_{\substack{\z= \w=-|z|}}\nonumber\\
&& \times \, \prod_{j=1}^{h_2}
 \frac{\partial}{\partial w_j}  
\prod_{i=1}^{h_2}\frac{\partial}{\partial z_i} G(\z,\w)
\Big|_{\substack{\z= \w=-|z|}}.
\eea
\end{lemma}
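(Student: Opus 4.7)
The plan is to start from the identity \eqref{deriv-merge}, substitute the closed form from Lemma \ref{alternative expression2}, and expand the $2s$ merged derivatives using the generalised Leibniz rule, grouping terms by how many derivatives land on $G$ versus on $F$.

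The first step is to record the two symmetries that drive the argument. From the explicit product in \eqref{definitionnewadded}, $F(\z,\w)$ is manifestly invariant under permutations of $\z$, under permutations of $\w$, and under the exchange $\z\leftrightarrow\w$. The same permutation invariance in each slot holds for $G(\z,\w)$ by direct inspection of the block structure of $A(\z,\w)$ together with the denominator $\Delta(\z)\Delta(\w)$. The exchange symmetry $G(\bm a,\bm b)=G(\bm b,\bm a)$ then follows from the fact that the left-hand side of \eqref{901correlation} is invariant under $\z\leftrightarrow\w$ (since the Haar measure is invariant under $U\mapsto U^\dagger$) combined with the already established symmetry of $F$.

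Next I would substitute \eqref{901correlation} into \eqref{deriv-merge} and apply Leibniz to the product $F(-\w,-\z)\,G(-\w,-\z)$. For each derivative $\partial_{z_i}$ and each $\partial_{w_j}$, one chooses whether it acts on $F$ or on $G$. Because both factors are permutation-invariant separately in the $\z$ and $\w$ slots, the contribution depends only on the number of derivatives that land on $G$; letting $h_1$ be the number landing on $G$ in the $z$-variables and $h_2$ the number in the $w$-variables produces a weight $\binom{s}{h_1}\binom{s}{h_2}$. The chain rule from the substitutions $-\z$, $-\w$ contributes an overall sign $(-1)^{2s}=1$, which is why no sign appears in \eqref{structureformula3}. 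Using the exchange symmetry of $F$ and $G$ at the symmetric evaluation point $\z=\w=-|z|$, I then rewrite the derivatives of $F(-\w,-\z)$ and $G(-\w,-\z)$ as derivatives of $F(\z,\w)$ and $G(\z,\w)$ in the precise form appearing in \eqref{structureformula3}.

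Finally, the exchange symmetry forces the summand to be invariant under $h_1\leftrightarrow h_2$: at the symmetric point $\z=\w=-|z|$, swapping the number of $z$-derivatives with the number of $w$-derivatives inside either the $F$-factor or the $G$-factor leaves the value unchanged. This collapses the double sum over $(h_1,h_2)\in\{0,\ldots,s\}^2$ into twice the off-diagonal part ($h_1<h_2$) plus the diagonal part ($h_1=h_2$), which is exactly \eqref{structureformula3}. I expect the only real bookkeeping hurdle to be the conversion of derivatives of $F(-\w,-\z)$ and $G(-\w,-\z)$ into the form involving $F(\z,\w)$ and $G(\z,\w)$, where the chain-rule signs and the relabelling of slots must cancel cleanly; once those symmetries are verified, the rest is a routine application of Leibniz.
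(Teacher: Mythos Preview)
Your proposal is correct and follows essentially the same approach as the paper: start from \eqref{deriv-merge}, insert the factorisation from Lemma \ref{alternative expression2}, expand via Leibniz using the permutation symmetry of $F$ and $G$ in each slot, and then fold the double sum using the exchange symmetry $F(\z,\w)=F(\w,\z)$, $G(\z,\w)=G(\w,\z)$ at $\z=\w=-|z|$. The paper's proof is simply a terser version of exactly this argument, with the chain-rule sign and the conversion from $F(-\w,-\z)G(-\w,-\z)$ to $F(\z,\w)G(\z,\w)$ absorbed silently into the displayed identity.
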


\begin{proof}
By the radial property of the moments we have $\E[|\Lambda_N'(z)|^{2s}]=\E[|\Lambda_N'(|z|)|^{2s}]$. Furthermore, by \eqref{deriv-merge} and Lemma \ref{alternative expression2}, we have
\[
\E\left[ |\Lambda_N'(z)|^{2s}  \right] = \prod_{j=1}^{s}\frac{\partial}{\partial z_{j}}\frac{\partial}{\partial w_{j}} F(\z,\w)G(\z,\w) \Big|_{\substack{\z= \w=-|z| }}.
\]
Notice that $F(\z,\w), G(\z,\w)$ are invariant under permutations of $\z$, and also invariant under permutations of $\w$. Moreover, $F(\z,\w)=F(\w,\z), G(\z,\w)=G(\w,\z)$. The claim now follows by applying these properties to the above equality.
\end{proof}

From (\ref{structureformula3}), to obtain an explicit formula or study the structure of $\E[|\Lambda_N'(z)|^{2s}]$, it is sufficient to study the partial derivatives of $F$ and $G$. For the partial derivative of $F$, we can express it as an analytic function of $s$ as follows.

\begin{lemma}\label{mix of F}
Let $z\in \mathbb{C}$ with $|z|\neq 1$. Let $s\geq 1, h_{1},h_{2}$ be integers with $0\leq h_{1}\leq h_{2}\leq s$. Then
\bea
&& \binom{s}{h_{1}}\binom{s}{h_{2}}\prod_{j=1}^{s-h_2} \frac{\partial}{\partial w_j} 
\prod_{i=1}^{s-h_1} \frac{\partial}{\partial z_i} F(\bm{z},\bm{w})\Big|_{\substack{\z=\w=-|z|}} \nonumber\\
&=&\frac{1}{h_{1}!h_{2}!}\frac{(-s|z|)^{h_{2}-h_{1}}}{(1-|z|^2)^{s^2+2s-h_1-h_2}} \frac{1}{\Gamma(h_{2}-h_{1}+1)}\frac{(\Gamma(s+1))^2}{\Gamma(s-h_{2}+1)}e^{-s^2 |z|^2} \nonumber\\
&& \times \, {}_1F_1(s+1-h_{1},h_{2}-h_{1}+1;s^{2} |z|^2)
\label{analytic extentension},
\eea
where ${}_1F_1(a,b;z)$ is the confluent hypergeometric function of the first kind.
Moreover, the expression in the right hand-side of \eqref{analytic extentension}  is analytic when $s\in \mathbb{C}$ and $\Re(s)> -1$.
\end{lemma}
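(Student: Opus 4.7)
The proof reduces to an explicit derivative computation plus the identification of a terminating sum as a confluent hypergeometric function. Write $r=|z|$, $m=s-h_1$, $n=s-h_2$, and $p=h_2-h_1=m-n\geq 0$. Since $F(\z,\w)=\prod_{i=1}^s f_i(\w)$ with $f_i(\w)=\prod_{j=1}^s(1-z_iw_j)^{-1}$ and each $z_i$ appears only in $f_i$, the logarithmic derivative identity gives $\prod_{i=1}^{m}\partial_{z_i}F = F\cdot\prod_{i=1}^{m}\sum_{j=1}^s w_j/(1-z_iw_j)$. Setting $\z=-r$ collapses this to $A(\w)\,B(\w)^m$ with $A(\w)=\prod_{j=1}^s(1+rw_j)^{-s}$ and $B(\w)=\sum_{j=1}^s w_j/(1+rw_j)$.

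Next, I apply $\prod_{j=1}^{n}\partial_{w_j}$ via Leibniz. Because $A$ and $B$ are both symmetric in $w_1,\dots,w_s$ and the evaluation point $\w=-r$ is symmetric, the $2^n$ subset terms collapse to
\[
\sum_{k=0}^{n}\binom{n}{k}\bigl(\partial_{w_1}\cdots\partial_{w_k}A\bigr)\bigl(\partial_{w_{k+1}}\cdots\partial_{w_n}B^m\bigr)\Big|_{\w=-r}.
\]
A single-variable calculation yields $\partial_{w_1}\cdots\partial_{w_k}A|_{\w=-r}=(-sr)^k/(1-r^2)^{s^2+k}$. For the $B^m$ piece I exploit the structure $B=\sum_l b(w_l)$ with $b(w)=w/(1+rw)$: applied to distinct indices, any Fa\`a di Bruno term with a block of size $\geq 2$ (i.e.\ involving $\partial_{w_j}^2B$ for distinct variables) vanishes, so only singleton-block trees survive, giving
\[
\partial_{w_1}\cdots\partial_{w_{n-k}}B^m\Big|_{\w=-r}=\frac{m!}{(m-n+k)!}\Big(\frac{-sr}{1-r^2}\Big)^{m-n+k}\frac{1}{(1-r^2)^{2(n-k)}},
\]
which is well-defined for every $k\in\{0,\dots,n\}$ since $m\geq n$.

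Collecting these and pulling out the common factor $(-sr)^p/(1-r^2)^{s^2+m+n}$ reduces the double derivative to
\[
\frac{(-sr)^p\,m!}{(1-r^2)^{s^2+m+n}}\sum_{k=0}^{n}\frac{n!\,(sr)^{2k}}{k!\,(n-k)!\,(p+k)!}.
\]
Rewriting $(p+k)!=p!\,(p+1)_k$ and $n!/(n-k)!=(-1)^k(h_2-s)_k$ identifies the sum as $p!^{-1}\cdot{}_1F_1(h_2-s,p+1;-s^2r^2)$. Kummer's transformation ${}_1F_1(a,b;z)=e^{z}\,{}_1F_1(b-a,b;-z)$ with $a=h_2-s$ and $b=p+1$ turns this into $e^{-s^2r^2}\,{}_1F_1(s+1-h_1,p+1;s^2r^2)$. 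Multiplying by $\binom{s}{h_1}\binom{s}{h_2}=(s!)^2/(h_1!h_2!\,m!\,n!)$ and using $s^2+2s-h_1-h_2=s^2+m+n$ together with $s!=\Gamma(s+1)$, $n!=\Gamma(s-h_2+1)$, $p!=\Gamma(h_2-h_1+1)$ recovers \eqref{analytic extentension}.

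For analyticity on $\{\Re(s)>-1\}$: the factors $(-sr)^p$, $(1-r^2)^{-s^2-2s+h_1+h_2}$, and $e^{-s^2r^2}$ are entire in $s$; the quotient $\Gamma(s+1)^2/\Gamma(s-h_2+1)$ equals $\Gamma(s+1)\prod_{j=0}^{h_2-1}(s-j)$, which is holomorphic on $\{\Re(s)>-1\}$ since $\Gamma(s+1)$ is; and ${}_1F_1(s+1-h_1,p+1;s^2r^2)$ is entire in $s$ as a normally convergent power series in $s^2r^2$ whose coefficients are polynomial in $s$. The main bookkeeping obstacle is the Kummer identification step: one must verify both that the finite sum is the correct terminating ${}_1F_1$ and that the post-Kummer first parameter equals $s+1-h_1$, which is the form required for the analytic extension in $s$.
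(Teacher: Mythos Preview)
Your proof is correct and follows essentially the same route as the paper. The paper reduces the derivative computation to Lemma~\ref{th:derivmomsint} (the $A(\w)B(\w)^m$ factorization and the binomial Leibniz expansion are exactly the computations \eqref{prod2terms}--\eqref{Hderiv}), arrives at the finite sum \eqref{structureofderivativeofF}, recognizes it as a generalized Laguerre polynomial $(s-h_2)!\,L_{s-h_2}^{(h_2-h_1)}(-s^2|z|^2)$, and then applies Kummer's transformation; your direct identification of the sum as a terminating ${}_1F_1(h_2-s,p+1;-s^2r^2)$ followed by Kummer is the same step without the Laguerre detour.
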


\begin{proof}
A similar argument to that of Lemma \ref{th:derivmomsint} leads to the following result
\bea\label{structureofderivativeofF}
&&\prod_{j=1}^{s-h_2} \frac{\partial}{\partial w_j} 
\prod_{i=1}^{s-h_1} \frac{\partial}{\partial z_i} F(\bm{z},\bm{w})\Big|_{\substack{\z=\w=-|z|}} \nonumber\\
&=&\frac{(-s|z|)^{h_{2}-h_{1}}}{(1-|z|^2)^{s^2+2s-h_1-h_2}}\sum_{k=0}^{s-h_2} \binom{s-h_2}{k} \binom{s-h_1}{h_2-h_1+k} (s-k-h_2)!(s^2|z|^2)^k.
\eea
Note that the generalized Laguerre polynomial 
$
L_{s}^{(\alpha)}(x)=\frac{x^{-\alpha}e^{x}}{s!}\frac{d^{s}}{dx^{s}}(e^{-x}x^{s+\alpha}),
$
for $\alpha \in \mathbb{C}$.
So by Kummer's transformation to ${}_1F_1(a,b;z)$, for any non-negative integer $s$,
\bea
&&\frac{\Gamma(s+1-h_{1})}{\Gamma(h_{2}-h_{1}+1)}e^{-s^2|z|^2}{}_1F_1(s+1-h_{1},h_{2}-h_{1}+1;s^{2}|z|^2)\nonumber\\
&=&(s-h_{2})!L_{s-h_{2}}^{(h_{2}-h_{1})}(-s^2|z|^2)\nonumber\\
&=&\sum_{k=0}^{s-h_2} \binom{s-h_2}{k} \binom{s-h_1}{h_2-h_1+k} (s-k-h_2)!(s^2|z|^2)^k \nonumber,
\eea
Note that $\frac{1}{\Gamma(h_{2}-h_{1}+1)}{}_1F_1(s+1-h_{1},h_{2}-h_{1}+1;s^{2}|z|^2)$ is an analytic function of $s$ with respect to $h_{1},h_{2}\in \mathbb{N}$, so we have the claim stated in this lemma.
\end{proof}
By (\ref{structureofderivativeofF}), we see that the derivative of $F$ at some point is almost a polynomial of $|z|^2$ except a factor of $|z|^{h_{2}-h_{1}}$. Now we shall evaluate 
\bea\label{beforeevaluation}
\prod_{j=1}^{h_2} \frac{\partial}{\partial w_j}  \prod_{i=1}^{h_1} \frac{\partial}{\partial z_i} G(\z,\w)
\eea
at the point $\z=-|z|$ and $\w=-|z|$.
In addition, we note that there is a factor $\frac{1}{\Gamma(s-h_{2}+1)}$ on the right hand-side of (\ref{analytic extentension}), this factor is zero when $h_{2}\geq s+1$ for any non-negative integer $s$. This leads to that the summation over $h_{1}$ and $h_{2}$ from $0$ to $s$ in (\ref{structureformula3}) can be written as from $0$ and $\infty$. The advantage of doing this is reducing the extension of $\E[|\Lambda_N'(z)|^{2s}]$ (from integer to real $s> -1$) to that of (\ref{beforeevaluation})
for integers $h_{1},h_{2}\geq 0$.
As a corollary of Lemma \ref{le:young}, we have the following representation of  
(\ref{beforeevaluation}) in terms of partitions and partial derivatives of $A({\bm{z}},{\bm{w}})$. 
\begin{proposition}
\label{expressionformixedmomentsofcharacterisricpolynomials}
Let $s\geq 1$ be an integer. Let $h_{1},h_{2}$ be integers with $0\leq h_{1}\leq s$, $0\leq h_{2}\leq s$.  Let $A(\bm z, \bm w)$  and $G(\bm z, \bm w)$ be as in \eqref{defofA} and \eqref{defofG}, respectively.
Then
\beas
&& 
\prod_{j=1}^{h_2} \frac{\partial}{\partial w_j}  \prod_{i=1}^{h_1} \frac{\partial}{\partial z_i} G(\z,\w)\Big|_{\z=\w=-|z|}\\
&=&
\sum_{ \lambda \in Y_{h_1}, \mu \in Y_{h_2} }
\frac{f_{\lambda} f_{\mu}}{\prod_{i=1}^{s}(\lambda_{i}+s-i)!(\mu_{i}+s-i)!}
\prod_{j=1}^{s} \frac{\partial^{\mu_i+s-i}}{\partial {w_i^{\mu_i+s-i}}}
\frac{\partial^{\lambda_i+s-i}}{\partial {z_i^{\lambda_i+s-i}}}
A(\z,\w)
\Big|_{\z=\w=-|z|}.
\eeas
Here we set $\lambda_{h_{1}+1}=\cdots=\lambda_{s}=0$ and $\mu_{h_{2}+1}=\cdots=\mu_{s}=0$.
\end{proposition}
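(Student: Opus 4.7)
The plan is a direct double application of Lemma~\ref{le:young}: first to the $\z$ variables with $\w$ held fixed, then to the $\w$ variables. Writing $G(\z,\w) = \det A(\z,\w)/(\Delta(\z)\Delta(\w))$, the key input is that $\det A(\z,\w)$ is a polynomial which is antisymmetric in $\z$ (for any fixed $\w$) and antisymmetric in $\w$ (for any fixed $\z$). Indeed, swapping $z_i \leftrightarrow z_k$ in $A$ interchanges rows $i$ and $k$ of the top block row $(V_{\z}, Y_{\z})$ and hence flips the sign of $\det A$; the case of $\w$ is identical via the bottom block row. These are exactly the hypotheses needed to invoke Lemma~\ref{le:young}.

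First, holding $\w$ fixed, I would apply Lemma~\ref{le:young} with $n = s$, $m = h_1$, and $f(\z) = \det A(\z,\w)$. Since $\Delta(\w)$ is constant in $\z$, it factors through the differentiation to give
\begin{equation*}
\prod_{i=1}^{h_1}\frac{\partial}{\partial z_i}\,G(\z,\w)\Big|_{\z = -|z|} = \frac{1}{\Delta(\w)}\sum_{\lambda \in Y_{h_1}}\frac{f_\lambda}{\prod_{i=1}^s(\lambda_i + s - i)!}\prod_{i=1}^s \frac{\partial^{\lambda_i+s-i}}{\partial z_i^{\lambda_i+s-i}}\det A(\z,\w)\Big|_{\z = -|z|}.
\end{equation*}

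Next, I would apply Lemma~\ref{le:young} in $\w$. Because $\z$-differentiation and evaluation at $\z = -|z|$ each commute with permutations of $\w$, each summand $\prod_{i=1}^s\partial_{z_i}^{\lambda_i+s-i}\det A(\z,\w)\big|_{\z = -|z|}$ remains a polynomial in $\w$ that is antisymmetric in $\w$. Since Lemma~\ref{le:young} is linear in its input $f$, I can apply it termwise with $n = s$, $m = h_2$, producing the sum over $\mu \in Y_{h_2}$ with weight $f_\mu/\prod_{j=1}^s(\mu_j + s - j)!$ and leaving behind all $2s$ partial derivatives acting on $\det A(\z,\w)$ evaluated at $\z = \w = -|z|$. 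Since the $\z$- and $\w$-derivatives commute, their order is immaterial, and the resulting double sum over $(\lambda,\mu) \in Y_{h_1}\times Y_{h_2}$ is precisely the right-hand side claimed in the proposition.

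The main obstacle is essentially bookkeeping: verifying the two antisymmetry properties of $\det A$ (immediate from the block structure) and justifying that the sequential procedure --- differentiate in $\z$, evaluate $\z = -|z|$, then differentiate in $\w$, evaluate $\w = -|z|$ --- yields the same object as the simultaneous specification on the left-hand side. The latter is automatic because derivatives in disjoint variable sets commute with each other and with the evaluation in the opposite set of variables, so no subtleties arise in the reduction.
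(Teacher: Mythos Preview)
Your proposal is correct and follows exactly the approach the paper intends: the proposition is stated there as an immediate corollary of Lemma~\ref{le:young}, obtained by applying that lemma once in the $\z$ variables and once in the $\w$ variables, using the separate antisymmetry of $\det A(\z,\w)$ in each set. Your write-up simply makes explicit the bookkeeping (block-row antisymmetry, commutation of derivatives in disjoint variables, termwise application) that the paper leaves implicit.
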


In the following, we shall study (\ref{beforeevaluation}) based on Proposition \ref{expressionformixedmomentsofcharacterisricpolynomials}. We start from $h_{1}=h_{2}=0$.

\begin{proposition}\label{proaboutb00}
Let $z\in \mathbb{C}^*$ with $|z|\neq 1$.  Let $s\geq 1$ and $N\geq 1$ be integers. Then
\bea\label{an expression for b00new}
G(\z,\w)
\Big|_{\z=\w=-|z|}=\sum_{m=0}^{\infty}
\sum_{l=0}^s 
d_{m,l,s}(0,0) |z|^{2Nl- s^2+s +2m},
\eea
where $d_{m,l,s}(0,0)$ is independent of $|z|$, which can be given in a combinatorial sum (see \eqref{defineofdbarml} in Appendix \ref{appendix: two combinatorial formulas} for an explicit formula).
\end{proposition}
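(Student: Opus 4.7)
The plan is to apply the Laplace (cofactor) expansion to $\det A(\z,\w)$ along its first $s$ rows, recognise each resulting summand (after dividing by $\Delta(\z)\Delta(\w)$) as a product of Schur polynomials in $\z$ and $\w$, and then specialise at the diagonal point $\z = \w = -|z|$.

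Explicitly, I would write
\[
\det A(\z,\w) = \sum_{I} \epsilon(I) \, M_I^{\mathrm{top}}(\z) \, M_I^{\mathrm{bot}}(\w),
\]
where $I$ ranges over $s$-element subsets of the column indices $\{1,\ldots,2s\}$ and $\epsilon(I)$ is the Laplace-expansion sign. Decompose $I = I_1 \sqcup I_2$ with $I_1\subseteq\{1,\ldots,s\}$ (columns inside $V_{\z}$ and $Y_{\w}$) and $I_2\subseteq\{s+1,\ldots,2s\}$ (columns inside $Y_{\z}$ and $V_{\w}$), and set $l=|I_2|\in\{0,1,\ldots,s\}$. The top minor $M_I^{\mathrm{top}}(\z)$ is a determinant of monomials $z_i^{a_j}$ whose exponents lie in $\{k-1:k\in I_1\}\subseteq\{0,\ldots,s-1\}$ and $\{N+3s-k:k\in I_2\}\subseteq\{N+s,\ldots,N+2s-1\}$; these two ranges are disjoint because $N\ge 1$. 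Hence, after a column permutation carrying a sign $\pm 1$, the ratio $M_I^{\mathrm{top}}(\z)/\Delta(\z)$ equals a Schur polynomial $s_{\lambda(I)}(\z)$ for an explicit partition $\lambda(I)$, and similarly $M_I^{\mathrm{bot}}(\w)/\Delta(\w) = \pm\, s_{\mu(I)}(\w)$. Applying the standard specialisation $s_\lambda(x,\ldots,x) = x^{|\lambda|}s_\lambda(1^s)$ with $x=-|z|$, each term contributes $(-|z|)^{|\lambda(I)|+|\mu(I)|}$ times the positive integer $s_{\lambda(I)}(1^s)\,s_{\mu(I)}(1^s)$. A direct count of the selected exponents, together with the $-s(s-1)$ correction from the two Vandermonde normalisations, gives
\[
|\lambda(I)|+|\mu(I)| = 2Nl - s^2 + s + 2m(I),
\]
where $m(I)\geq0$ is the non-negative integer $\sum_{k\in I_1}k-\sum_{k\in I_2}k+3sl+l-s+s(s-1)/2$, the non-negativity being equivalent to $|\lambda(I)|,|\mu(I)|\ge 0$. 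Grouping contributions by $(l,m)$ then yields the expansion claimed in \eqref{an expression for b00new}, with coefficient $d_{m,l,s}(0,0)$ a finite sum over admissible $I$ of the combined sign times $s_{\lambda(I)}(1^s)\,s_{\mu(I)}(1^s)$, matching the combinatorial expression recorded in \eqref{defineofdbarml}.

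The main obstacle is the sign bookkeeping. Three sources of signs must be combined consistently: the Laplace-expansion sign $\epsilon(I)$; the permutation sign needed to sort columns into increasing exponent order before invoking the standard Schur-polynomial formula; and the parity factor $(-1)^{|\lambda(I)|+|\mu(I)|}$ produced by specialising at $-|z|$ rather than $|z|$. All other steps—the Laplace expansion, the Vandermonde-to-Schur reduction, the non-negativity of $m(I)$, and the scalar specialisation $s_\lambda(x,\dots,x)=x^{|\lambda|}s_\lambda(1^s)$—are essentially routine once the sign analysis is under control; the identification of the resulting coefficient with \eqref{defineofdbarml} is then only a relabelling of the summation indices.
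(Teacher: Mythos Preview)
Your approach is essentially the paper's: both Laplace-expand $\det A(\z,\w)$ along the first $s$ rows and identify each summand's $|z|$-dependence, the only organisational difference being that you expand $G$ into Schur polynomials and then specialise at $-|z|$, whereas the paper first merges via the derivative formula of Proposition~\ref{expressionformixedmomentsofcharacterisricpolynomials} and then recognises the resulting minors as (monomials times) Vandermonde determinants---these are equivalent since $s_{\lambda}(1^{s})=\Delta(\lambda_{1}+s-1,\ldots,\lambda_{s})/\prod_{i=0}^{s-1}i!$. One small slip: your displayed formula for $m(I)$ carries an extra $+s(s-1)/2$; the correct expression is $m(I)=\sum_{k\in I_{1}}k-\sum_{k\in I_{2}}k+3sl+l-s$, as one checks in the case $l=0$, $I_{1}=\{1,\ldots,s\}$ where $m=s(s-1)/2$.
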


\begin{proof}
By Proposition \ref{expressionformixedmomentsofcharacterisricpolynomials},
\bea\label{03091}
G(\z,\w)
\Big|_{\substack{\z=\w=-|z|}}
=\prod_{i=1}^{s-1}\frac{1}{(i!)^2}\prod_{j=1}^s \frac{\partial^{j-1}}{\partial w_j^{j-1}} 
\prod_{i=1}^s \frac{\partial^{i-1}}{\partial z_i^{i-1}} A(\bm{z},\bm{w})
\Big|_{\substack{\z=\w=-|z|}},
\eea
with $A(\bm{z},\bm{w})$ defined in (\ref{defofA}). Then it is not hard to see that there are two matrices $B,C$, whose entries are powers of $-r$, such that
%Let $(i)_{j}$ be the falling factorial, that is $(i)_{j}=i (i-1) \cdots (i-j+1) = \frac{i!}{(i-j)!}$. Note that $(i)_{j}=0$ if $i<j$ and $(i)_{j}=1$ when $j=0$.
% Let $A=(a_{i,j})_{i=0,\ldots,s-1}^{j=0,\ldots,2s-1}$ given by 
% \[
% a_{i,j} = \begin{cases}
%     (j)_{i} (-r)^{j-i}, & i=0,\ldots,s-1, j=0,\ldots,s-1, \\
%      (N+j)_{i} (-r)^{N+j-i}, & i=0,\ldots,s-1, j=s,\ldots,2s-1. \\
% \end{cases}
% \]
% and
% $B=(b_{i,j})_{i=0,\ldots,s-1}^{j=0,\ldots,2s-1}$ given by 
% \[
% b_{i,j} = \begin{cases}
%     (N+2s-1-j)_{i} (-r)^{N+2s-1-j-i}, & i=0,\ldots,s-1, j=0,\ldots,s-1, \\
%      (2s-1-j)_{i} (-r)^{2s-1-j-i}, & i=0,\ldots,s-1, j=s,\ldots,2s-1. \\
% \end{cases}
% \]
% By exchanging $(s+j)$-th column with $(2s+1-j)$-th column of $A(\bm{z},\bm{w})$ for $j=1,\ldots, \lfloor s/2 \rfloor$,
% So we have
\beas
\prod_{j=1}^s \frac{\partial^{j-1}}{\partial w_j^{j-1}} 
\prod_{i=1}^s \frac{\partial^{i-1}}{\partial z_i^{i-1}} A(\bm{z},\bm{w})
\Big|_{\substack{\z=\w=-|z|}}=%(-1)^{\frac{s(s-1)}{2}}
\det\begin{pmatrix}
B \\
C
\end{pmatrix}.
\eeas
By applying Laplacian expansion to the first $s$ rows, we have
\bes 
\det\begin{pmatrix}
B \\
C
\end{pmatrix}= \sum_{S \subseteq \binom{[2s]}{s}} \text{sign}(S) \det(B_S) \det(C_{\overline{S}}),
\ees
where $\binom{[2s]}{s}$ is the set of all $s$-subset of $\{1,\ldots,2s\}$ and $\text{sign}(S)\in \{\pm 1\}$ is determined by $S$. Here $B_S$ is the $s\times s$ submatrix of $B$ whose columns are indexed by $S$, and similarly $C_{\bar{S}}$ is the $s\times s$ submatrix of $C$ whose columns are indexed by $\overline{S}$, the complement of $S$. Then direct computation shows that, up to a factor of $(-|z|)^{lN+i_1+\cdots+i_k+j_1+\cdots+j_l-\frac{s(s-1)}{2}}$, $\det(B_{S})$ and $\det(C_{\overline{S}})$ are Vandermonde determinants. Putting the above together, we obtain the claimed result.
\end{proof}
\begin{remark}
{\rm{
In (\ref{an expression for b00new}), the summation over $m$ is from $0$ to $\infty$. Actually from the definition of $d_{m,l,s}(0,0)$, for integer $s\geq 0$, we know that this summation is restricted on $m$ with $s(s-1)/2\leq m\leq (3s-1)s/2$.
}}
\end{remark}

Proposition \ref{proaboutb00} gives an explicit formula for (\ref{beforeevaluation}) in terms of partition sums when $h_{1}=h_{2}=0$. 
For general $h_{1},h_{2}$,
the method used above does not work well since we do not have Vandermonde determinants. 
% Instead of this, by Proposition \ref{expressionformixedmomentsofcharacterisricpolynomials}, there will appear the determinant as follows,
% \bea\label{generalized determinant}
% \det\begin{pmatrix}
% (i_{1})_{n_{1}},\ldots, (i_{k})_{n_{1}},(N+j_{1})_{n_{1}},\ldots,(N+j_{l})_{n_{1}}\\
% \vdots\\
% (i_{1})_{n_{i+1}},\ldots, (i_{k})_{n_{i+1}},(N+j_{1})_{n_{i+1}},\ldots,(N+j_{l})_{n_{i+1}}\\
% \vdots\\
% (i_{1})_{n_{s}},\ldots, (i_{k})_{n_{s}},(N+j_{1})_{n_{s}},\ldots,(N+j_{l})_{n_{s}}
% \end{pmatrix}_{i=0,\ldots,s-1},
% \eea
% where $n_{1}>n_{2}>\ldots >n_{s}\geq 0$ and $n_{1}+\cdots+n_{s}=\frac{s^2-s+2h_{1}}{2}$.
Instead, there will appear determinants that are closely related to the generalized Vandermonde determinant introduced in \cite{generalized}. 
Unfortunately, it is hard to obtain neat formulae for those determinants. To overcome this, we classify these determinants as coefficients of powers of $|z|$ in the polynomial expression of the partial derivatives of $A(\bm{z},\bm{w})$ with respect to $\bm{z}$ and $\bm{w}$ at $\bm{z}=\bm{w}=-|z|$. Then we use the property of the latter one to find its derivative with respect to $|z|$ at zero. This would give the corresponding coefficients of the powers of $|z|$.

\begin{proposition}\label{forgeneralh1h2}
Let $z\in \mathbb{C}^*$ with $|z|\neq 1$. Let $s\geq 1$ be an integer. Let $(h_1,h_2)\neq(0,0)$ be non-negative integers with $h_{1},h_{2}\leq s$. Then
\bea\label{an expression for bh1h2new}
&&\prod_{j=1}^{h_2}
 \frac{\partial}{\partial w_j}  
\prod_{i=1}^{h_1}\frac{\partial}{\partial z_i} G(\z,\w)
\Big|_{\substack{\z=\w=-|z|}}\label{an expression for bh1h2} \nonumber\\
&=&(-1)^{h_{1}+h_{2}}\sum_{m=0}^{\infty}\sum_{l=1}^s
d_{m,l,s}(h_{1},h_{2})|z|^{2Nl-(s^2-s+h_{1}+h_{2})+2m},
\eea
where $d_{m,l,s}(h_1,h_2)$ is independent of $|z|$, which can be given in a combinatorial sum (see \eqref{defofdml1new} in Appendix \ref{appendix: two combinatorial formulas} for an explicit formula).
\end{proposition}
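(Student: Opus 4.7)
The plan is to parallel the proof of Proposition \ref{proaboutb00}, but adapted to handle the more intricate situation where the residual determinants are no longer pure Vandermonde matrices. First, I would apply Proposition \ref{expressionformixedmomentsofcharacterisricpolynomials} to rewrite the left-hand side as a weighted sum over pairs of partitions $\lambda \in Y_{h_1}$, $\mu \in Y_{h_2}$ of the quantity
\[
\prod_{i=1}^{s}\frac{\partial^{\mu_i+s-i}}{\partial w_i^{\mu_i+s-i}}\frac{\partial^{\lambda_i+s-i}}{\partial z_i^{\lambda_i+s-i}} A(\bm z, \bm w)\Big|_{\bm z = \bm w = -|z|}.
\]
Since every entry of $A(\bm z, \bm w)$ is a monomial in $\bm z$ or $\bm w$, performing these derivatives and evaluating at $\bm z = \bm w = -|z|$ produces a $2s \times 2s$ matrix whose entries are explicit powers of $-|z|$ with combinatorial prefactors. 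Crucially, the $V_{\bm z}$ entries contribute only $N$-independent exponents in $|z|$, while the $Y_{\bm z}$ entries each carry a factor $|z|^{N}$ (and similarly for $V_{\bm w}$, $Y_{\bm w}$).

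Next, I would apply Laplace expansion on the top $s$ rows of the resulting $2s \times 2s$ determinant. Organising the subsets of size $s$ by $l$, the number of columns chosen from the $Y_{\bm z}$ block (which by the block symmetry equals the number of $Y_{\bm w}$ columns selected for the bottom block), each contribution factors out a power $(-|z|)^{2Nl + c(l, h_1, h_2, s)}$ from the $N$-dependent exponents, while the $V$-block pieces only contribute $N$-independent powers of $|z|$. In Proposition \ref{proaboutb00} (the case $h_1 = h_2 = 0$) the two residual $s \times s$ blocks were Vandermonde matrices with clean product formulae; for general $(h_1, h_2)$ they become generalised Vandermonde-type determinants with skipped exponents, and there is no compact closed form. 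Rather than try to close-form these, I would read off the remaining factor as a polynomial in $|z|^2$ and extract the coefficient of each power $|z|^{2m}$ by differentiating $m$ times with respect to $|z|^{2}$ at zero. This yields the coefficients $d_{m,l,s}(h_1,h_2)$ as explicit partition sums over the data indexing the Laplace expansion and the derivative multi-indices coming from $\lambda$ and $\mu$, matching formula \eqref{defofdml1new}. A straightforward exponent-counting argument then shows that the minimal power of $|z|$ contributed from the $l$-block is $2Nl - (s^2 - s + h_1 + h_2)$, giving the stated offset, and that the overall sign is $(-1)^{h_1 + h_2}$ after collecting the signs $(-|z|)^{\bullet}$.

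The main obstacle, and the key structural point distinguishing this from Proposition \ref{proaboutb00}, is showing that the $l = 0$ contribution vanishes when $(h_1, h_2) \neq (0, 0)$, so that the summation begins at $l = 1$ rather than at $l = 0$. This is where the partition condition enters: for any $\lambda \in Y_{h_1}$ with $h_1 \geq 1$ one has $\lambda_1 \geq 1$, so the derivative order $\lambda_1 + s - 1$ in row $1$ strictly exceeds the top exponent $s-1$ appearing in $V_{\bm z}$, annihilating every $V_{\bm z}$ entry of that row and forcing any non-vanishing Laplace term to use at least one $Y_{\bm z}$ column; the symmetric argument on the $\bm w$ side handles $h_2 \geq 1$. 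Combined with the previous bookkeeping, this completes the proof and produces the explicit form of $d_{m,l,s}(h_1, h_2)$ claimed in the appendix.
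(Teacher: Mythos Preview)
Your proposal is correct and follows essentially the same route as the paper: apply Proposition \ref{expressionformixedmomentsofcharacterisricpolynomials}, Laplace-expand the differentiated $2s\times 2s$ determinant along the top $s$ rows, observe that the residual blocks are generalized Vandermonde determinants without closed form, and therefore extract the coefficients $d_{m,l,s}(h_1,h_2)$ by differentiating in $|z|$ at zero. Your argument for the vanishing of the $l=0$ term (via $\lambda_1+s-1\geq s$ killing all $V_{\bm z}$ entries in the first row when $h_1\geq 1$, and symmetrically for $h_2\geq 1$) is exactly the structural point the paper relies on, and your exponent and sign bookkeeping is on target.
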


From the explicit expression of $d_{m,l,s}(h_1,h_2)$, we have that the summation over $m$ in \eqref{an expression for bh1h2new} is between $(s^2-s+2)/2$ and $(3s^2-s)/2$ when $s\geq 1$ is an integer.

Now we are ready to prove Theorem \ref{structuretheorem}.
\begin{proof}[Proof of Theorem \ref{structuretheorem}]
By Lemma \ref{mix of F}, 
\[
\binom{s}{h_1}\binom{s}{h_2}\prod_{j=1}^{s-h_2} \frac{\partial}{\partial w_j}  \prod_{i=1}^{s-h_1} \frac{\partial}{\partial z_i} F(\z,\w)\Big|_{\substack{\z=\w=-|z|}} 
=\frac{(-s|z|)^{h_{2}-h_{1}}  }{(1-|z|^2)^{s^2+2s-h_{1}-h_{2}}}a_{h_{1},h_{2}}(|z|),
\]
where $a_{h_{1},h_{2}}(|z|)$ is defined in (\ref{defofa}). Note that the factor $1/\Gamma(s-h_{2}+1)=0$ in $a_{h_{1},h_{2}}(|z|)$ when $h_{2}\geq s+1$ and $s$ is a non-negative integer. So we can rewrite (\ref{structureformula3}) as (\ref{structureexpression}) with $C_{h}(N,|z|)$ defined as (\ref{definitionofC}). Let $b_{h_{1},h_{2}}(N,|z|)$ be defined as
(\ref{defofb}). By Propositions \ref{proaboutb00} and \ref{forgeneralh1h2},
we have conclusion (iii) in this theorem. 

By (\ref{structureofderivativeofF}), for $(h_{1},h_{2})\neq (0,0)$,
$$
a_{h_{1},h_{2}}(|z|)b_{h_{1},h_{2}}(N,|z|)
=\binom{s}{h_1}\binom{s}{h_2}s^{h_{2}-h_{1}}\sum_{n=0}^{\infty}\sum_{l=1}^{s}\widetilde{d}_{n,l,s}(h_{1},h_{2})
|z|^{2Nl-s^2+s-2h_{1}+2n},
$$
where
\bes
\widetilde{d}_{n,l,s}(h_{1},h_{2}) = \sum_{k=0}^{\infty}d_{n-k,l,s}(h_{1},h_{2})s^{2k}\binom{s-h_2}{k} \binom{s-h_1}{h_2-h_1+k} (s-k-h_2)!
\ees
and 
$d_{m,l,s}(h_{1},h_{2})$ are coefficients in the expression of $b_{h_{1},h_{2}}(N,|z|)$.
So $C_{h}(N,|z|)$ is given by the following expression.
\bea
&&2\sum_{n=0}^{\infty}\sum_{l=1}^{s}\sum_{0\leq h_{1}<h/2}\binom{s}{h_1}\binom{s}{h-h_{1}}s^{h-2h_{1}}\widetilde{d}_{n,l,s}(h_{1},h-h_{1})
|z|^{2Nl- s^2+s -2h_{1}+2n}\nonumber\\
&&+\sum_{n=0}^{\infty}\sum_{l=1}^{s}\binom{s}{h/2}\binom{s}{h/2}\widetilde{d}_{n,l,s}(h/2,h/2)
|z|^{2Nl- s^2+s -h+2n}
\label{0819eq2}
\eea
Note that $\binom{m}{n}=0$ when $n>m$ and $d_{m,l,s}(h_{1},h_{2})$ are supported on non-negative integers $h_{1},h_{2}$ and finitely many $m$, so for $h\neq 0$, $C_{h}(N,|z|)$ is a polynomial in $|z|^2$ with coefficients polynomials of $N$. This conclusion also holds for $C_{0}(N,|z|)$ by a similar argument to the above. So we completes the proof of (i) in this theorem, except the part concerning the degree and the leading coefficient of $C_{h}(N,|z|)$ , which will be addressed in Proposition \ref{expressionofchnr} below. Since $a_{h_{1},h_{2}}(|z|)$ is independent of $N$, to prove (\ref{otherC}), it suffices to prove 
\bea\label{sufficeformula1}
\lim_{N\rightarrow \infty} G(\z,\w) \Big|_{\substack{\z=\w = - |z|}}=1.
\eea
and 
\bea\label{sufficeformula2}
\lim_{N\rightarrow \infty}\prod_{j=1}^{h_2}
 \frac{\partial}{\partial w_j}  
\prod_{i=1}^{h_1}\frac{\partial}{\partial z_i} G(\z,\w)
\Big|_{\substack{\z=\w=-|z| }} =0
\eea
for $(h_{1},h_{2})\neq (0,0)$.
For (\ref{sufficeformula1}), by Proposition \ref{expressionformixedmomentsofcharacterisricpolynomials},
\beas
 G(\z,\w) \Big|_{\substack{\z=\w = - |z| }} 
= \left(\prod_{i=0}^{s-1} \frac{1}{(i!)^2} \right)
\prod_{i=1}^s \frac{\partial^{i-1}}{\partial z_i^{i-1}}
\prod_{j=1}^s \frac{\partial^{j-1}}{\partial w_j^{j-1}} 
A(\z,\w) \Big|_{\substack{\z=\w = - |z|}}.
\eeas
We do Laplacian expansion on the corresponding matrix via the first $s$ columns and note that $|z|<1$, we obtain
\beas
&& \lim_{N\rightarrow \infty}G(\z,\w) \Big|_{\substack{\z=\w = - |z| }} \\ 
&=& \left(\prod_{i=0}^{s-1} \frac{1}{(i!)^2} \right)
\det( (z_i^{j-1})^{(i-1)} )_{i,j=1\ldots,s} \Big|_{ \z=-|z|  } 
\det( (w_i^{j-1})^{(i-1)} )_{i,j=1\ldots,s} \Big|_{ \w = -|z|  } \\
&=& 1.
\eeas
By Proposition \ref{expressionformixedmomentsofcharacterisricpolynomials} again, the left-hand side of (\ref{sufficeformula2}) equals
\[
\sum_{\substack{\lambda \in Y_{h_1} , \mu \in Y_{h_2} }} 
\frac{f_\lambda f_\mu}{\prod_{i=1}^{s}(\lambda_{i}+s-i)!(\mu_{i}+s-i)!} 
\prod_{i=1}^s  \frac{\partial^{u_j+s-j}}{\partial w_j^{u_j+s-j}} 
\frac{\partial^{\lambda_i+s-i}}{\partial z_i^{\lambda_i+s-i}}
\Delta(\z)\Delta(\w)\Big|_{\z=\w=-|z|}
% \det\begin{pmatrix}
% 1 & \cdots & z_1^{s-1} \\
% \vdots & \ddots & \vdots \\
% 1 & \cdots & z_s^{s-1} 
% \end{pmatrix}\Big|_{z_1=\cdots =z_s=-\sqrt{r}} \\
% && \times \,
% \det\begin{pmatrix}
% 1 & \cdots & w_{1}^{s-1} \\
% \vdots & \ddots & \vdots \\
% 1 & \cdots & w_{s}^{s-1}
% \end{pmatrix}\Big|_{w_1=\cdots =w_s=-\sqrt{r}}.
\]
It is not hard to check that the above formula is zero for any $(h_{1},h_{2})\neq (0,0)$. This proves conclusion (ii) in this theorem.
 % By a similar discussion to that of (\ref{diagonal term1}), we have that the above formula is zero when $(h_{1},h_{2})\neq (0,0)$. We are now left to prove equation (\ref{C0}). By (\ref{definitionofC}), $C_{0}(N,r)=a_{0,0}(r)b_{0,0}(N,r)$. Then  (\ref{C0}) follows from Proposition \ref{G0} and definitions of $a_{0,0}(r)$ and $b_{0,0}(N,r)$ in (\ref{defofa}) and (\ref{defofb}), respectively. 
%Now we have claim (ii) in this theorem and complete the proof of this theorem.
\end{proof}

\begin{proposition}
Let $h_1,h_2,s$ be non-negative integers with $h_1,h_2\leq s$.
Let $\lambda, \mu$ be Young diagrams of weight $h_1,h_2$ respectively. Let $n_i=\lambda_i+s-i$ and $m_j=\mu_j+s-j$ for $i,j=1,\ldots,s$.
% $\omega_{h}$, $P_{h}$ be defined as \eqref{definition of omegas} and \eqref{defofPh}, respectively.
% For any two vectors $(p_{1},\ldots,p_{h_{1}})\in P_{h_{1}}$ and $(q_{1},\ldots,q_{h_{2}})\in P_{h_{2}}$, set $n_i=p_i+s-h_1$ for $i=1,\ldots,h_1$ and $n_i=s-i$ for $i=h_1+1,\ldots,s$, and set $m_i=q_i+s-h_2$ for $i=1,\ldots,h_2$ and $m_i=s-i$ for $i=h_2+1,\ldots,s$.
Let $C_{h}(N,|z|)$ be defined as (\ref{structureexpression}). Then the highest power of $|z|^2$ in $C_{h}(N,|z|)$ is $Ns+s^2+s-h$ and the leading coefficient is given by the following formula. 
\bea\label{leading coefficient}
&&2s^{2s-h}\sum_{0\leq h_{1}<h/2}\binom{s}{h_1}\binom{s}{h-h_{1}}d_{\frac{(3s-1)s}{2},s,s}(h_{1},h-h_{1})\nonumber\\
&& + \, s^{2s-h}\binom{s}{h/2}^2d_{\frac{(3s-1)s}{2},s,s}(h/2,h/2),
\label{expressionofchnr}
\eea
where 
\beas
d_{\frac{(3s-1)s}{2},s,s}(h_{1},h_{2})
&=&(-1)^{s}
\sum_{\substack{\lambda \in Y_{h_1}, \mu \in Y_{h_2} }} 
\frac{f_\lambda f_\mu}{\prod_{i=1}^{s}n_{i}!m_{i}!} \Delta(\n) \Delta(\m) \\
&& \times \, \prod_{j=1}^{s}\frac{\big((N+s+j-1)!\big)^2}{(N+2s-n_{j}-1)!(N+2s-m_{j}-1)!} 
\eeas
and $\n=(n_1,\ldots,n_s), \m=(m_1,\ldots,m_s)$.
\end{proposition}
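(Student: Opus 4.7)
The plan is to isolate the top $|z|^2$-degree in the representation \eqref{0819eq2} of $C_h(N,|z|)$ derived during the proof of Theorem \ref{structuretheorem}, and to match its coefficient with \eqref{leading coefficient} by a Laplace expansion combined with a determinant identity. First I would pin down the degree. The support bound $m\le s(3s-1)/2$ for $d_{m,l,s}(h_1,h_2)$ (stated in the remarks after Propositions \ref{proaboutb00} and \ref{forgeneralh1h2}) together with the cutoff $\binom{s-h_2}{k}=0$ for $k>s-h_2$ appearing in the convolution defining $\widetilde d_{n,l,s}$ force $n\le s(3s-1)/2+s-h_2$ and $l\le s$; inserting these extrema into the $|z|^2$-exponent $Nl+n-h_1-\frac{s(s-1)}{2}$ yields $Ns+s^2+s-h$, which is independent of $h_1$, so every admissible $h_1\le h/2$ contributes to the leading coefficient.

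To extract the coefficient, at $n=n_{\max}:=s(3s-1)/2+s-h_2$ only the term with $k=s-h_2$ survives in the sum defining $\widetilde d_{n,l,s}(h_1,h_2)$, giving $\widetilde d_{n_{\max},s,s}(h_1,h_2)=s^{2(s-h_2)}\,d_{s(3s-1)/2,s,s}(h_1,h_2)$. Multiplying by the prefactor $\binom{s}{h_1}\binom{s}{h-h_1}s^{h-2h_1}$ collapses the $s$-power to $s^{2s-h}$, reproducing exactly the shape of \eqref{leading coefficient} once the stated expression for $d_{s(3s-1)/2,s,s}(h_1,h_2)$ is verified. For the latter, by Proposition \ref{expressionformixedmomentsofcharacterisricpolynomials} it suffices to extract the top $|z|$-power in $\prod_j\partial_{w_j}^{m_j}\prod_i\partial_{z_i}^{n_i}A(\z,\w)|_{\z=\w=-|z|}$ with $n_i=\lambda_i+s-i$, $m_j=\mu_j+s-j$, then sum over $\lambda\in Y_{h_1},\mu\in Y_{h_2}$ with weights $f_\lambda f_\mu/\prod n_i!m_i!$. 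Expanding $\det A$ by Laplace along the first $s$ rows, only the column set $J=\{s+1,\ldots,2s\}$ yields the maximal $|z|$-power (since $V$-block entries $z_i^{j-1}$ carry powers bounded by $s-1$ while $Y$-block entries $z_i^{N+2s-j}$ carry powers of order $N$); the Laplace sign is $(-1)^{s(2s+1)}=(-1)^s$, and after differentiating, evaluating at $z_i=-|z|$, and factoring powers of $-|z|$ out of rows and columns, all $(-1)$-contributions collapse (using $s(s-1)$ even) to $(-1)^{h_1+h_2}$, which combines with the Laplace sign and the $(-1)^{h_1+h_2}$ prefactor of Proposition \ref{forgeneralh1h2} to yield the claimed overall sign $(-1)^s$.

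The main obstacle is then the combinatorial determinant identity
\bes
\det\!\Big[\tfrac{(N+2s-j)!}{(N+2s-j-n_i)!}\Big]_{i,j=1}^s\;=\;\Delta(\n)\prod_{j=1}^s\frac{(N+s+j-1)!}{(N+2s-n_j-1)!},\qquad n_i=\lambda_i+s-i,
\ees
needed to convert the product of two determinants produced by the Laplace step into the product form appearing in \eqref{expressionofchnr}. I would prove it by first reducing to the case $\lambda_s=0$ via the factorization $(a_j)_{n_i}=(a_j)_{n_s}(a_j-n_s)_{n_i-n_s}$ with $a_j=N+2s-j$, using that $\prod_j (a_j)_{\lambda_s}=\prod_j (b_j)_{\lambda_s}$ because $\{a_j\}=\{b_j\}$ as sets; then handling $\lambda_s=0$ by the column operation $C_j\to C_j-C_{j+1}$, which via $(a)_n-(a-1)_n=n(a-1)_{n-1}$ makes the last row zero in the first $s-1$ columns and $1$ in the last, so that expanding along it produces a factor $\prod_{i=1}^{s-1}n_i$ times a determinant of the same shape but with $s$ replaced by $s-1$ and $N$ by $N+1$. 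Induction on $s$, together with $\Delta(\n)=\Delta(\n')\prod_{i=1}^{s-1}(n_i-n_s)$ for $\n'=(n_1,\ldots,n_{s-1})$, closes the recursion. Applying this identity to both the $\bm n$- and $\bm m$-determinants and substituting then yields \eqref{expressionofchnr}, completing the proof.
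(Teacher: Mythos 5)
Your plan follows the paper's proof almost step for step: isolate the top $|z|^2$-power from the expansion \eqref{0819eq2} (using that $d_{m,s,s}(h_1,h_2)$ is supported only at $m=\tfrac{(3s-1)s}{2}$ and that only $k=s-h_2$ survives in the convolution $\widetilde d_{n,l,s}$), then identify $d_{\frac{(3s-1)s}{2},s,s}(h_1,h_2)$ via the top-degree Laplace term of the $A$-matrix derivatives, and finally evaluate the resulting generalized Vandermonde-type determinant. The one genuine difference is that the paper simply cites the determinant evaluation (formulae (4.39)--(4.41) of \cite{CRS06}), whereas you propose to prove it from scratch by an inductive column-reduction; this is a legitimate alternative route that makes the argument self-contained at the cost of some extra work, and it also makes explicit the Laplace-expansion step which the paper passes over by invoking Proposition~\ref{expressionformixedmomentsofcharacterisricpolynomials}.

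Two cautionary sign points in your sketch. First, the identity as you write it,
\bes
\det\Big[\tfrac{(N+2s-j)!}{(N+2s-j-n_i)!}\Big]_{i,j=1}^s = \Delta(\n)\prod_{j=1}^s\tfrac{(N+s+j-1)!}{(N+2s-n_j-1)!},
\ees
is off by a factor $(-1)^{\lfloor s/2\rfloor}$ (a column-reversal sign) relative to the paper's determinant
$\det\big[(N+s+j-1)!/(N+s+j-1-n_i)!\big]_{i,j}$, which is the one that equals the stated right-hand side; for instance the $s=2$ case gives $(n_1-n_2)$ rather than $(n_2-n_1)$. Second, the recursion $\Delta(\n)=\Delta(\n')\prod_{i=1}^{s-1}(n_i-n_s)$ uses the Vandermonde convention $\prod_{i<j}(n_i-n_j)$, which is opposite to the paper's $\Delta(\n)=\prod_{i<j}(n_j-n_i)$ from \eqref{vand}, introducing another factor $(-1)^{s-1}$. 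Neither slip changes the final formula \eqref{expressionofchnr}, because the determinant identity is applied to both the $\n$ and the $\m$ factors and the extraneous sign squares away, but the intermediate claims as written are not correct and should be adjusted to a consistent convention before being inserted into the proof. Also note that for the maximal Laplace subset $J=\{s+1,\ldots,2s\}$ the complementary minor is the $Y_{\bm w}$-block (not $V_{\bm w}$), so both minors carry powers of order $N$; your parenthetical explanation should be rephrased to make this explicit.
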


\begin{proof}

By Propositions \ref{proaboutb00} and \ref{forgeneralh1h2}, we see that $d_{m,s,s}(h_{1},h_{2})\neq 0$ if and only if $m=\frac{(3s-1)s}{2}$. By \eqref{0819eq2}, the highest power of $|z|^2$ is $Ns+s^2+s-h$ by taking $n=s-h_{2}+\frac{(3s-1)s}{2}$, and the leading coefficient is (\ref{leading coefficient}).
Moreover, by Proposition  \ref{expressionformixedmomentsofcharacterisricpolynomials},
\bea\label{calculate thew leading term}
d_{\frac{(3s-1)s}{2},s,s}(h_{1},h_{2})
&=&(-1)^{s} \sum_{\substack{\lambda \in Y_{h_1} , \mu \in Y_{h_2} }} 
\frac{f_\lambda f_\mu}{\prod_{i=1}^{s}n_{i}!m_{i}!} 
 \det\Big(\frac{(N+s+j-1)!}{(N+s+j-1-n_{i})!}\Big)_{i,j=1,\ldots,s}\nonumber\\
&&\times\det\Big(\frac{(N+s+j-1)!}{(N+s+j-1-m_{i})!}\Big)_{i,j=1,\ldots,s}.
\eea
It is known that (see e.g., \cite[formulae (4.39)-(4.41)]{CRS06})
\beas
\det \Big(\frac{1}{(2s+1+l_{i}-i-j)!}\Big)_{i,j=1,\ldots,s}
=\prod_{i=1}^{s}\frac{1}{(2s-i+l_{i})!}\prod_{1\leq i<j\leq s}(l_{j}-l_{i}-j+i).
\eeas
Applying this to (\ref{calculate thew leading term}), we obtain the claim in this proposition.
\end{proof}

We remark that a similar argument to the above would give a similar explicit formula for the coefficient of $|z|^{2sN+2m}$ in $C_{h}(N,|z|)$ when $-\lfloor \frac{h}{2}\rfloor+s^2\leq m\leq s^2+s-h$. Moreover, these coefficients are polynomials of $N$ of degree $h$. For coefficients of other non-zero powers in  $C_{h}(N,|z|)$, they are polynomials of $N$ with degree at most $s^2-s+h$.

\section{Proof of the random matrix results: asymptotics}
\label{se:mainpfs}
In this section we complete the proofs of Theorems \ref{th:non-integer-intro}-\ref{th:meso} and Theorem \ref{th:micro-intro}.
\subsection{Global and mesoscopic regimes}
\label{se:globpfs}
As discussed in Section \ref{se:glob}, to prove convergence of expectations in Theorem \ref{th:non-integer-intro} we need to establish uniform integrability of the underlying random variables. For this we have the following bound valid for any positive integer $s$.
\begin{lemma}
\label{le:unif-bnd}
Let $0 \leq |z|<1$ be fixed and $s$ a positive integer. Then $\mathbb{E}(|\Lambda'_{N}(z)|^{2s})$ is bounded in $N$.
\end{lemma}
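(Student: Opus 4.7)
The plan is to invoke the exact finite-$N$ formula of Theorem \ref{th:exact-intro} and show that every ingredient in it is uniformly bounded as $N \to \infty$, at which point boundedness of the moment follows since the sum is over finitely many partitions.

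Set $u = |z|^2 \in [0,1)$. Recall $K_N(u) = \sum_{j=0}^{N+s-1} u^j$, so that for any non-negative integer $k$,
\begin{equation*}
K_N^{(k)}(u) = \sum_{j=k}^{N+s-1} \frac{j!}{(j-k)!}\, u^{j-k} \;\leq\; \sum_{j=k}^{\infty}\frac{j!}{(j-k)!}\, u^{j-k} \;=\; \frac{k!}{(1-u)^{k+1}}.
\end{equation*}
Because $u < 1$ is fixed, this bound is independent of $N$. First I would use this to argue that for each fixed pair $(i,j)$ and each $\lambda,\mu \in Y_s$, the entry
\begin{equation*}
\bigl(u^{\lambda_i+s-i}\, K_N^{(\lambda_i+s-i)}(u)\bigr)^{(\mu_j+s-j)}
\end{equation*}
is a finite linear combination (with combinatorial coefficients depending only on $s$, $\lambda$, $\mu$) of terms of the form $u^a K_N^{(b)}(u)$ with non-negative integers $a,b \leq 2s-1$. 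By the pointwise bound above each such term is uniformly bounded in $N$ for $u$ in any compact subset of $[0,1)$.

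Next, since the determinant of an $s \times s$ matrix with entries bounded by some constant $M = M(s, |z|)$ is bounded by $s!\, M^s$, each determinant in \eqref{e2sintro} is bounded uniformly in $N$. Finally, the outer sum in \eqref{e2sintro} ranges over $\lambda, \mu \in Y_s$, a finite set depending only on $s$, and the weights $f_\lambda f_\mu / (\lambda!\,\mu!)$ are $N$-independent constants. Combining these observations yields a bound on $\E[|\Lambda'_N(z)|^{2s}]$ that depends only on $s$ and $|z|$, proving the lemma.

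There is no real obstacle here — the work was already done in establishing Theorem \ref{th:exact-intro}, and the argument is essentially a geometric-series estimate. (As an alternative one could invoke Theorem \ref{structuretheorem} and note from part (ii) that each coefficient $C_h(N,|z|)$ has a finite limit as $N \to \infty$ and is therefore bounded in $N$, while $(1-|z|^2)^{-(s^2+2s-h)}$ is a fixed constant; this gives the same conclusion.)
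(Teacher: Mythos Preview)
Your proposal is correct and follows the paper's own approach exactly: invoke Theorem~\ref{th:exact-intro} and use the uniform boundedness of $K_N$ (and its derivatives) for fixed $u=|z|^2\in[0,1)$. The only slip is the claimed bound $b\le 2s-1$ on the derivative order of $K_N$; Leibniz actually gives $b\le(\lambda_i+s-i)+(\mu_j+s-j)\le 4s-2$, but this is harmless for the argument.
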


\begin{proof}
This follows from the exact result in \eqref{exact} and the uniform boundedness of the function $K_{N}(r)$ for fixed $0\leq r<1$.
\end{proof}
We now extend the bound to non-integer exponents.

\begin{lemma}[Uniform integrability]
\label{le:ui}
Assume that $h$ and $s$ are complex numbers such that $\mathrm{Re}(h)>-1$. Consider the quantity
\begin{equation}
X_{N} = \bigg{|}\frac{\Lambda'_{N}(z_2)}{\Lambda_{N}(z_2)}\bigg{|}^{2h}|\Lambda_{N}(z_1)|^{2s}.\label{XN}
\end{equation}
Then for any $\delta>0$ fixed, $|z_1| < 1-\delta$ and $|z_{2}|<1-\delta$ and for $\epsilon>0$ small enough and for $N$ sufficiently large, there exists a constant $C>0$ independent of $N$ and $z_{1},z_{2}$ such that
\begin{equation}
\mathbb{E}(|X_{N}|^{1+\epsilon}) < C. \label{uibnd}
\end{equation}
\end{lemma}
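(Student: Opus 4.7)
The plan is to decompose $|X_N|^{1+\epsilon}$ using the identity $\Lambda_N'(z_2)/\Lambda_N(z_2) = G_N'(z_2)$ valid inside the unit disc, so that
\begin{equation*}
|X_N|^{1+\epsilon} = |G_N'(z_2)|^{2\alpha(1+\epsilon)}\,|\Lambda_N(z_1)|^{2\beta(1+\epsilon)},
\end{equation*}
with $\alpha := \mathrm{Re}(h) > -1$ and $\beta := \mathrm{Re}(s) \in \mathbb{R}$. Applying H\"older's inequality with conjugate exponents $p,q>1$, it then suffices to bound $\mathbb{E}(|G_N'(z_2)|^{2\alpha(1+\epsilon)p})$ and $\mathbb{E}(|\Lambda_N(z_1)|^{2\beta(1+\epsilon)q})$ separately, uniformly in $N$ and in $|z_j|\leq 1-\delta$.

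For the factor involving $\Lambda_N(z_1)$, I would invoke \cite[Corollary 2]{FK04}, which asserts that $\mathbb{E}(|\Lambda_N(z_1)|^{2\gamma}) \to (1-|z_1|^2)^{-\gamma^2}$ as $N\to\infty$ for every real $\gamma$, with the convergence locally uniform on the open disc. Setting $\gamma = \beta(1+\epsilon)q$ yields both the uniform bound in $N$ and uniformity in $z_1$ with $|z_1|\leq 1-\delta$.

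For the factor involving $G_N'(z_2)$ the analysis splits according to the sign of $\alpha$. When $\alpha\geq 0$, I would apply H\"older a second time to
\begin{equation*}
|G_N'(z_2)|^{2\alpha(1+\epsilon)p} = |\Lambda_N'(z_2)|^{2\alpha(1+\epsilon)p}\,|\Lambda_N(z_2)|^{-2\alpha(1+\epsilon)p},
\end{equation*}
bounding the positive moments of $\Lambda_N'(z_2)$ via Lemma \ref{le:unif-bnd} (after majorising the real exponent by the next positive integer using Jensen) and the negative moments of $\Lambda_N(z_2)$ again via \cite[Corollary 2]{FK04}. Uniformity in $z_2$ on $|z_2|\leq 1-\delta$ is visible from the proof of Lemma \ref{le:unif-bnd}, since the finite-$N$ formula involves $K_N(r)$, which satisfies $K_N(r)\leq (1-r)^{-1}$. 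When $-1<\alpha<0$, I would instead apply Theorem \ref{thm:negmomsbnd} directly with $a = 2|\alpha|(1+\epsilon)p$, using that $|\alpha|<1$ allows us to first fix $p>1$ close to $1$ and then $\epsilon>0$ small enough so that $|\alpha|(1+\epsilon)p < 1$, which is safely inside the range $a<2$ of Theorem \ref{thm:negmomsbnd}.

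The main obstacle is the regime where $\alpha$ is close to $-1$, where Theorem \ref{thm:negmomsbnd} is at the edge of its applicability. Here the strict inequality $\mathrm{Re}(h)>-1$ furnishes precisely the slack required to accommodate the two small losses coming from the H\"older exponent $p$ and the uniform-integrability enlargement $1+\epsilon$. Finally, uniformity of the constants in $r=|z_2|\leq 1-\delta$ can be read off the proof of Theorem \ref{thm:negmomsbnd}, since the relevant $r$-dependent quantities (for example $d_1\leq C(1-r)^{-12}$ and the stationary-phase lower bound $|g''(\theta^*)|\geq c(r)>0$) are continuous in $r$ and therefore uniformly controlled on compact subsets of $[0,1)$.
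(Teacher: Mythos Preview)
Your proposal is correct and follows essentially the same approach as the paper: separate the factors by H\"older, control positive and negative moments of $\Lambda_N$ via \cite[Corollary 2]{FK04}, reduce positive moments of $\Lambda_N'$ to the integer case via Jensen and Lemma~\ref{le:unif-bnd}, and handle the negative range $-1<\mathrm{Re}(h)<0$ by choosing the H\"older exponent close to~$1$ so that Theorem~\ref{thm:negmomsbnd} applies. The only cosmetic difference is that the paper treats the case $h\geq 0$ by taking $\epsilon=1$ and applying Cauchy--Schwarz twice, whereas you keep a general H\"older split throughout; your additional remarks on uniformity in $|z_j|\leq 1-\delta$ are a useful supplement that the paper's proof leaves implicit.
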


\begin{proof}
Since $|X_{N}|$ only depends on the real parts of $h$ and $s$, we will check \eqref{uibnd} for real $h>-1$ and $s \in \mathbb{R}$. If $h \geq 0$, by Jensen's inequality it suffices to check that \eqref{uibnd} is satisfied with $\epsilon=1$. By the Cauchy-Schwarz inequality 
\begin{equation}
\begin{split}
\mathbb{E}(X_{N}^{2}) &\leq \mathbb{E}(|\Lambda_{N}'(z_2)|^{8h}|)^{1/2}\mathbb{E}(|\Lambda_{N}(z_2)|^{-8h}|\Lambda_{N}(z_1)|^{8s})^{1/2}\\
& \leq \mathbb{E}(|\Lambda_{N}'(z_2)|^{8h}|)^{1/2}\mathbb{E}(|\Lambda_{N}(z_2)|^{-16h})^{1/4}\mathbb{E}(|\Lambda_{N}(z_1)|^{16s})^{1/4}.
\end{split}
\end{equation}
The second two expectations above, including the negative moments of $\Lambda_{N}$, were shown to be uniformly bounded in \cite[Corollary 2]{FK04}. It suffices to show the same for $\mathbb{E}(|\Lambda_{N}'(z_2)|^{8h}|)^{1/2}$. If $0 \leq 8h \leq 1$ then $|\Lambda_{N}'(z_2)|^{8h} \leq |\Lambda_{N}'(z_2)|^{2}+1$ and the latter has bounded expectation by \eqref{1stmom} and \eqref{interiors1}. If $8h >1$, then by Jensen's inequality there exists a positive integer $n>4h$ such that
\begin{equation}
\mathbb{E}(|\Lambda_{N}'(z_2)|^{8h}|)^{1/2} \leq \mathbb{E}(|\Lambda_{N}'(z_2)|^{2n}|)^{2h/n}
\end{equation}
and this is uniformly bounded by Lemma \ref{le:unif-bnd}. This completes the proof for the range $h \geq 0$ and $s \in \mathbb{R}$. For the range $-1 < h < 0$, we write $X_{N}$ as
\begin{equation}
X_{N} = |G_{N}'(z_2)|^{2h}|\Lambda_{N}(z_1)|^{2s}
\end{equation}
and apply H\"older's inequality, yielding
\begin{equation}
\mathbb{E}(X_{N}^{1+\epsilon}) \leq \mathbb{E}(|G_{N}'(z_2)|^{2h(1+\epsilon)q})^{1/q}\mathbb{E}(|\Lambda_{N}(z_1)|^{2s(1+\epsilon)\ell})^{1/\ell}
\end{equation}
where $q = \frac{\ell}{\ell-1}$. Then choosing $\ell$ large enough and $\epsilon>0$ small enough, Theorem \ref{thm:negmomsbnd} shows that $\mathbb{E}(|G_{N}'(z_2)|^{2h(1+\epsilon)q})$ is bounded as required. The second expectation above is bounded for any $s \in \mathbb{R}$, again by \cite[Corollary 2]{FK04}.
\end{proof}

\begin{proof}[Proof of Theorems \ref{th:non-integer-intro} and \ref{th:jointmoms-intro}]
The joint convergence in distribution in Lemma \ref{le:conv-law} and the continuous mapping theorem imply that the quantity $X_{N}$ defined in \eqref{XN} converges in distribution to
\begin{equation}
X = \bigg{|}\frac{\Lambda'(z_2)}{\Lambda(z_2)}\bigg{|}^{2h}|\Lambda(z_1)|^{2s}.
\end{equation}
Then the uniform integrability of Lemma \ref{le:ui} implies that $\lim_{N \to \infty}\mathbb{E}(X_{N}) = \mathbb{E}(X)$ where $\mathbb{E}(X)$ was computed in Lemma \ref{le:non-integer-comp}. The latter gives the final expressions in Theorems \ref{th:non-integer-intro} and \ref{th:jointmoms-intro}, with the positive integer case \eqref{derivmoms2} following from identity \eqref{1f1tolag}.
\end{proof}

\begin{proof}[Proof of Theorem \ref{th:meso}]
We first claim that for any fixed $z$ with $|z|<1$, the following exact identity holds:
\begin{equation}
\label{equal limit}
    \begin{split}
\sum_{\lambda, \mu \in Y_{s}}\frac{f_\lambda f_\mu}{\lambda! \mu!}
&\det\bigg\{(u^{\lambda_i+s-i} \widetilde{K}^{(\lambda_i+s-i)}(u))^{(\mu_j+s-j)}\bigg\}_{i,j=1}^{s}\\
&= \frac{1}{(1-|z|^{2})^{s^{2}+2s}}\,s!\,L_{s}(-|z|^{2}s^{2}),
    \end{split}
\end{equation}
where $\widetilde{K}(u)=\frac{1}{1-u}$ and $u = |z|^{2}$. This follows, since by Theorem \ref{th:exact-intro}, the left-hand side of the above equation is 
$
\lim_{N\rightarrow \infty}\mathbb{E}(|\Lambda'_{N}(z)|^{2s}) 
$ and by Theorem \ref{th:non-integer-intro} the latter coincides with \eqref{derivmoms2}. Next, if $|z|^2=1-N^{-\alpha}$ with $0<\alpha<1$, by Theorem \ref{th:exact-intro},
\beas
\mathbb{E}(|\Lambda'_{N}(z)|^{2s}) 
\sim\sum_{\lambda, \mu \in Y_{s}}\frac{f_\lambda f_\mu}{\lambda! \mu!}
\det\bigg\{(u^{\lambda_i+s-i} \widetilde{K}^{(\lambda_i+s-i)}(u)^{(\mu_j+s-j)}\bigg\}_{i,j=1}^{s},
\eeas 
as $N \to \infty$. Now applying identity \eqref{equal limit}, we obtain
\begin{equation}
    \begin{split}
\mathbb{E}(|\Lambda'_{N}(z)|^{2s}) &\sim  \frac{1}{(1-|z|^{2})^{s^{2}+2s}}s!L_{s}(-|z|^{2}s^{2})\\
&\sim N^{\alpha(s^{2}+2s)}s!L_{s}(-s^{2}).
\end{split}
\end{equation}
\end{proof}
Next we give the proof of Theorem \ref{th:jensen}. We will need the following two lemmas.
\begin{lemma}
\label{lem:moms-linstat}
Consider the linear statistic $X_{N}(f) = \sum_{j=1}^{N}f(e^{\i\theta_{j}})$ where $e^{\i\theta_1},\ldots,e^{\i\theta_{N}}$ are eigenvalues of a Haar distributed unitary matrix $U$. Consider the three test functions for a fixed $0\leq r<1$,
\begin{equation}
g_{1}(e^{\i\theta}) = \frac{e^{-\i\theta}}{1-re^{-\i\theta}}, \qquad g_{2}(e^{\i\theta}) = \frac{e^{-2\i\theta}}{(1-re^{-\i\theta})^{2}}
\end{equation}
and $g_{3}(e^{\i\theta}) = \log(1-re^{-\i\theta})$ with principal branch of the log. Then for $f \in \{g_{1},g_{2},g_{3}\}$, $X_{N}(f)$ converges in distribution to a centered complex normal random variable. Furthermore, all moments of $|X_{N}(f)|$ are bounded, i.e. for any given non-negative integer $m$ there is an absolute constant $C$, independent of $N$, such that
\begin{equation}
\mathbb{E}(|X_{N}(f)|^{2m}) \leq  C \label{momsbnd}.
\end{equation}
\end{lemma}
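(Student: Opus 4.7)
The plan is to expand $X_N(f)$ as a Fourier series in the traces $\Tr(U^{-k})$ and reduce both claims to standard estimates on these traces, exploiting the exponential decay of $\hat{f}_k$ when $r<1$.

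A direct geometric/logarithmic expansion gives
\begin{equation*}
g_1(e^{\i\theta}) = \sum_{k\geq 1} r^{k-1}e^{-\i k\theta}, \quad g_2(e^{\i\theta}) = \sum_{k\geq 2} (k-1)r^{k-2} e^{-\i k\theta}, \quad g_3(e^{\i\theta}) = -\sum_{k\geq 1} \frac{r^k}{k} e^{-\i k\theta},
\end{equation*}
so in all three cases $X_N(f) = \sum_{k\geq 1} \hat{f}_k\, \Tr(U^{-k})$ with $|\hat{f}_k| \leq C k r^k$. In particular $\sum_k |\hat{f}_k| k^B < \infty$ for every $B\geq 0$, which provides the decay needed for every step below.

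For the distributional convergence, the cases $f=g_1$ and $f=g_3$ follow directly from Lemma \ref{le:conv-law} because $X_N(g_1) = -G_N'(r)$ and $X_N(g_3) = G_N(r)$. For $f=g_2$ (and as a unified alternative treatment of all three) I would apply the Diaconis--Shahshahani theorem: for any fixed $K$, the vector $(\Tr(U^{-k}))_{k=1}^{K}$ converges jointly in distribution to $(\sqrt{k}\,\mathcal{N}_k)_{k=1}^{K}$, with $\mathcal{N}_k$ i.i.d.\ standard complex normal. Truncating $X_N(f)$ at level $K$ and controlling the tail $\sum_{k>K}\hat{f}_k\,\Tr(U^{-k})$ in $L^2$ by $\E|\Tr(U^{-k})|^2 \leq k$ together with $|\hat{f}_k|\leq Ckr^k$, uniformly in $N$, yields the centered complex Gaussian limit $\sum_k \hat{f}_k \sqrt{k}\,\mathcal{N}_k$.

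For the uniform $L^{2m}$ bound I would apply Minkowski and split at $k=N$. For $k\leq N$ the Diaconis--Shahshahani identity $\E|\Tr(U^{-k})|^{2m} = m!\,k^m$ (valid in the window $mk\leq N$) extends to $\E|\Tr(U^{-k})|^{2m}\leq C_m k^m$ for all $k\leq N$ via Weingarten or character-sum arguments, giving $\|\Tr(U^{-k})\|_{L^{2m}} \leq C_m\sqrt{k}$ uniformly in $N$; the corresponding sum is bounded by $C_m\sum_{k\geq 1}|\hat{f}_k|\sqrt{k}<\infty$. For $k>N$ the trivial bound $|\Tr(U^{-k})|\leq N$ combined with the exponential decay of $\hat{f}_k$ makes the tail $\sum_{k>N}|\hat{f}_k|N \leq C N^2 r^N$ uniformly bounded (in fact exponentially small). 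The main technical obstacle is the moment estimate $\E|\Tr(U^{-k})|^{2m}\leq C_m k^m$ in the intermediate regime $N/(2m)<k\leq N$, where the Diaconis--Shahshahani window has closed; this requires either Weingarten calculus or a character-sum argument, but is by now classical in the CUE literature.
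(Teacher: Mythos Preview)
Your approach is correct but takes a genuinely different route from the paper. The paper's proof is essentially a one-line citation: it invokes Soshnikov's result \cite{S00} that for any $C^{1}$ test function on the circle, all cumulants of $X_{N}(f)$ converge as $N\to\infty$ to those of a centered Gaussian. Since moments are fixed polynomials in the cumulants, convergence of all cumulants immediately gives convergence (and hence uniform boundedness) of all moments. No Fourier truncation, no trace-moment estimates, and no case analysis on $k$ versus $N$ are needed; the smoothness of $g_{1},g_{2},g_{3}$ for fixed $r<1$ is the only input.

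Your route via the trace expansion and Minkowski is more hands-on and has the merit of being self-contained. One remark: the obstacle you flag in the intermediate window $N/m<k\leq N$ is not actually needed. If you split at $k=N/m$ rather than at $k=N$, then for $k\leq N/m$ the Diaconis--Shahshahani identity applies exactly, while for $k>N/m$ the crude deterministic bound $|\Tr(U^{-k})|\leq N$ combined with $|\hat f_{k}|\leq Ckr^{k}$ already gives
\[
\sum_{k>N/m}|\hat f_{k}|\,\|\Tr(U^{-k})\|_{L^{2m}}\ \leq\ CN\sum_{k>N/m}kr^{k}\ \leq\ C'N^{2}r^{N/m}\ \longrightarrow\ 0,
\]
so no Weingarten or character-sum argument is required. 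With this adjustment your proof is complete and elementary; the paper's version simply trades that computation for a black-box cumulant result.
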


\begin{proof}
First note that the mean of the corresponding linear statistics are identically zero, $\mathbb{E}(X_{N}(f)) = \frac{1}{2\pi}\int_{0}^{2\pi}f(e^{\i \theta})d\theta = 0$ in each case. The stated Lemma follows from \cite[Lemma 2]{S00} that for any $C^{1}$ function $f$, the cumulants of $X_{N}(f)$ converge to those of a normal random variable as $N \to \infty$. The second cumulant (the variance) in particular converges to a finite constant. In our case, each $f \in \{g_{1},g_{2},g_{3}\}$ and their real and imaginary parts are smooth functions on the unit circle for fixed $r$ with $0\leq r<1$. Hence the moments of $X_{N}(f)$ converge to those of a centered normal random variable with finite variance and are uniformly bounded. Although the results of \cite{S00} are stated for a real valued test function, we can apply it to the real and imaginary parts of $X_{N}(f)$ and deduce \eqref{momsbnd}.
\end{proof}

\begin{lemma}
\label{thm:expect-log-conv}
The following limits hold uniformly for any $r$ belonging to a fixed compact subset of $r \in [0,1)$
\begin{equation}
\lim_{N \to \infty}\mathbb{E}(\log|\Lambda'_{N}(r)|) = \mathbb{E}(\log|\Lambda'(r)|) \label{conv1}
\end{equation}
and
\begin{equation}
\lim_{N \to \infty}\mathbb{E}\left(\mathrm{Re}\left(\frac{\Lambda''_{N}(r)}{\Lambda'_{N}(r)}\right)\right) = \mathbb{E}\left(\mathrm{Re}\left(\frac{\Lambda''(r)}{\Lambda'(r)}\right)\right). \label{conv2}
\end{equation}
\end{lemma}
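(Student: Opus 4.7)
The plan is to handle both limits in parallel. In each case I will identify a convenient algebraic decomposition of the random variable into pieces that are functionals of $G_N$ and its first two derivatives, use the joint convergence in distribution of these objects to their Gaussian counterparts, and then upgrade convergence in distribution to convergence of expectations via uniform integrability estimates. The inputs for uniform integrability are precisely Lemma \ref{lem:moms-linstat} (for positive polynomial moments of linear statistics) and Theorem \ref{thm:negmomsbnd} (for negative moments of $|G_N'(r)|$).

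For \eqref{conv1}, I would start from the decomposition
\begin{equation*}
\log|\Lambda_N'(r)| \;=\; \log|G_N'(r)| + \log|\Lambda_N(r)| \;=\; \log|G_N'(r)| + \mathrm{Re}\,G_N(r).
\end{equation*}
By Lemma \ref{le:conv-law} and the continuous mapping theorem, the right-hand side converges in distribution to $\log|G'(r)| + \mathrm{Re}\,G(r)$; the limit is a.s.\ finite because $G'(r)$ is a non-degenerate complex Gaussian, as seen from \eqref{gamz1z2-intro}. To establish $(1+\epsilon)$-uniform integrability I would use $|\log x|^{1+\epsilon}\le C_\epsilon(x^{\delta}+x^{-\delta})$ for a small $\delta>0$, so that
\begin{equation*}
\mathbb{E}\bigl(|\log|G_N'(r)||^{1+\epsilon}\bigr) \;\le\; C_\epsilon\bigl(\mathbb{E}(|G_N'(r)|^{\delta}) + \mathbb{E}(|G_N'(r)|^{-\delta})\bigr);
\end{equation*}
the first expectation is bounded by Lemma \ref{lem:moms-linstat} with test function $g_1$, and the second by Theorem \ref{thm:negmomsbnd} provided $\delta<2$. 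The other summand $\mathrm{Re}\,G_N(r)$ is the real part of the linear statistic of $g_3$, whose all polynomial moments are bounded by Lemma \ref{lem:moms-linstat}.

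For \eqref{conv2}, I would use the identity obtained by differentiating $\Lambda_N=e^{G_N}$ twice,
\begin{equation*}
\frac{\Lambda_N''(r)}{\Lambda_N'(r)} \;=\; G_N'(r) + \frac{G_N''(r)}{G_N'(r)}.
\end{equation*}
Here $G_N''(r) = -\sum_{j=1}^{N} e^{-2i\theta_j}/(1-re^{-i\theta_j})^2$ is the linear statistic of $-g_2$. Joint convergence of $(G_N'(r),G_N''(r))$ to the analogous pair $(G'(r),G''(r))$ obtained by formal differentiation of \eqref{gfield} follows from the same argument underlying Lemma \ref{le:conv-law} (i.e.\ the joint CLT for $\{\mathrm{Tr}(U^{-k})\}_{k\ge 1}$), so the continuous mapping theorem yields convergence in distribution of $\Lambda_N''/\Lambda_N'$. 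For uniform integrability of the ratio I would apply H\"older with conjugate exponents $p$ slightly larger than $1$ and $q=p/(p-1)$,
\begin{equation*}
\mathbb{E}\!\left(\left|\frac{G_N''(r)}{G_N'(r)}\right|^{1+\epsilon}\right) \;\le\; \mathbb{E}\!\left(|G_N''(r)|^{(1+\epsilon)p}\right)^{\!1/p}\,\mathbb{E}\!\left(|G_N'(r)|^{-(1+\epsilon)q}\right)^{\!1/q},
\end{equation*}
choosing $\epsilon>0$ small and $q$ close to $1$ so that $(1+\epsilon)q<2$. The first factor is bounded by Lemma \ref{lem:moms-linstat}, the second by Theorem \ref{thm:negmomsbnd}, and $\mathbb{E}(|G_N'(r)|^{1+\epsilon})$ is likewise bounded. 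Together these give uniform integrability of $\Lambda_N''(r)/\Lambda_N'(r)$, hence convergence of its expected real part to the Gaussian analogue.

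Uniformity in $r$ over a compact $K\subset [0,1)$ is the delicate point and is where I expect the main work to lie. All the quantitative inputs above depend on $r$ only through powers of $(1-r)^{-1}$ and on $r$ staying bounded above; these are uniformly controlled on $K$, so the $(1+\epsilon)$-moment bounds are uniform in $r\in K$. Uniform convergence of expectations then follows by a standard subsequence argument: if it failed there would exist $r_{N_k}\to r_\star\in K$ realizing a persistent gap, and joint convergence of the $(G_N,G_N',G_N'')$-triple evaluated along $r_{N_k}\to r_\star$ together with the uniform integrability bounds would contradict the limit computed at $r_\star$. Verifying that Theorem \ref{thm:negmomsbnd} indeed produces constants that are continuous in $r$ on $[0,1)$ is essentially bookkeeping in the proof of that theorem, not a new estimate.
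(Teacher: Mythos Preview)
Your approach is essentially identical to the paper's: the same decomposition $\log|\Lambda_N'(r)|=\log|G_N'(r)|+\mathrm{Re}\,G_N(r)$ for \eqref{conv1}, the same identity $\Lambda_N''/\Lambda_N'=G_N'+G_N''/G_N'$ for \eqref{conv2}, and the same combination of Lemma~\ref{lem:moms-linstat} with Theorem~\ref{thm:negmomsbnd} to obtain $(1+\epsilon)$-uniform integrability. The only difference is cosmetic: the paper bounds $|\log|G_N'(r)||^{1+\epsilon}$ by $c_{\epsilon,a}(|G_N'(r)|^2+|G_N'(r)|^{-a})$ rather than your $C_\epsilon(|G_N'(r)|^\delta+|G_N'(r)|^{-\delta})$, and your discussion of uniformity in $r$ is more explicit than the paper's.

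One slip to fix: in your H\"older step for \eqref{conv2} you say ``$p$ slightly larger than $1$'' and then ``$q$ close to $1$'', but with $q=p/(p-1)$ these are incompatible. You need $p$ \emph{large} so that $q$ is close to $1$ and $(1+\epsilon)q<2$; equivalently, swap the roles you assigned to $p$ and $q$. The paper does exactly this, taking the exponent on $|G_N''|$ large and the exponent on $|G_N'|^{-1}$ close to $1$.
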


\begin{proof}
By Lemma \ref{le:conv-law} we have the convergence in distribution of $\Lambda'_{N}(r)$ to $\Lambda'(r)$. So it will suffice to show uniform integrability. For \eqref{conv1}, a trivial bound on the logarithm gives
\begin{equation}
\begin{split}
&|\log|\Lambda'_{N}(r)||^{1+\epsilon} \leq  2|\log|G_{N}'(r)||^{1+\epsilon}+2|G_{N}|^{1+\epsilon}\\
&\leq c_{\epsilon,a}(|G'_{N}(r)|^{2}+|G_{N}'(r)|^{-a})+ 2|G_{N}|^{1+\epsilon}\label{log-bnd}
\end{split}
\end{equation}
for some constants $a, c_{\epsilon,a}>0$ with $0<a<1$. Taking expectations in \eqref{log-bnd}, the first and third terms in \eqref{log-bnd} are bounded by Lemma \ref{lem:moms-linstat} and the middle term by Theorem \ref{thm:negmomsbnd}. Hence for any $\epsilon>0$, there exists a constant $C>0$ independent of $N$ such that
\begin{equation}
\mathbb{E}(|\log|\Lambda'_{N}(r)||^{1+\epsilon}) < C.
\end{equation}
This establishes uniform integrability and hence \eqref{conv1}. 
%
%Now convergence of the expectation is shown in the standard way. Let $X_{N} = \log|\Lambda'_{N}(r)|$ and $X = \log|\Lambda'(r)|$. Let $X^{(K)}_{N} = X_{N}1_{|X_{N}|<K}+K1_{|X_{N}| \geq K}$ and similarly for $X^{(K)}$. Notice that $|X^{(K)}_{N}-X_{N}| \leq |X_{N}|1_{|X_{N}|>K} \leq |X_{N}|^{1+\epsilon}K^{-\epsilon}$. Then 
%\begin{equation}
%\begin{split}
%|\mathbb{E}(X_{N})-\mathbb{E}(X)| &\leq \mathbb{E}(|X_{N}-X^{(K)}_{N}|) + |\mathbb{E}(X^{(K)}_{N})-\mathbb{E}(X^{(K)})|\\
%&+|\mathbb{E}(X^{(K)})-\mathbb{E}(X)|. \label{tightness-arg}
%\end{split}
%\end{equation}
%The middle term above converges to zero because $X^{(K)}_{N}$ is a bounded and continuous function of $\Lambda'_{N}(r)$ and $\Lambda'_{N}(r) \overset{d}{\to} \Lambda'(r)$ as $N \to \infty$. The first term is bounded by $\mathbb{E}(|X_{N}|^{1+\epsilon})K^{-\epsilon} \leq CK^{-\epsilon}$ by uniform integrability, with the same bound for the third term in \eqref{tightness-arg}. Taking the limit $N \to \infty$ followed by $K \to \infty$, all three terms in \eqref{tightness-arg} tend to zero and this completes the proof of \eqref{conv1}.

For \eqref{conv2}, convergence in distribution of $r\frac{d}{dr}\log |G_{N}(r)|$ follows similarly as in Lemma \ref{le:conv-law}, see also Lemma \ref{lem:moms-linstat}. For uniform integrability, H\"older's inequality shows that
\begin{equation}
\begin{split}
&\mathbb{E}\left(\bigg{|}\mathrm{Re}\left(\frac{\Lambda_{N}''(r)}{\Lambda_{N}'(r)}\right)\bigg{|}^{1+\epsilon}\right) \leq \mathbb{E}\left(\bigg{|}\frac{\Lambda_{N}''(r)}{\Lambda_{N}'(r)}\bigg{|}^{1+\epsilon}\right) = \mathbb{E}\left(\bigg{|}\frac{G_{N}''(r)}{G_{N}'(r)}+G_{N}'(r)\bigg{|}^{1+\epsilon}\right)\\
&\leq 2(\mathbb{E}(|G_{N}''(r)|^{\ell(1+\epsilon)}))^{1/\ell}(\mathbb{E}(G_{N}'(r)|^{-q(1+\epsilon)}))^{1/q}+2\mathbb{E}(|G_{N}'(r)|^{1+\epsilon})
\end{split}
\end{equation}
where $\Lambda_{N} = e^{G_{N}}$ and $q=\frac{\ell}{\ell-1}$. We bound the first and third terms above using Lemma \ref{lem:moms-linstat} while the middle term is bounded by choosing $\ell$ large enough that $q(1+\epsilon)<2$ and applying Theorem \ref{thm:negmomsbnd}.
\end{proof}

\begin{proof}[Proof of Corollary \ref{th:jensen}]
Starting from the expression in \eqref{jensenformula3intro}, we have
\be
\label{jensenformula3}
\int_{0}^{r}\frac{\mathbb{E}(n_{N}(t))}{t}dt = \mathbb{E}(\log|\Lambda_{N}'(r)|)-\mathbb{E}(\log|\Lambda_{N}'(0)|).
\ee
By Lemma \ref{thm:expect-log-conv}, for any fixed $0 \leq r < 1$
\begin{equation}
\lim_{N \to \infty}\mathbb{E}(\log|\Lambda'_{N}(r)|) = \mathbb{E}(\log|\Lambda'(r)|).
\end{equation}
A direct computation using Theorem \ref{th:non-integer-intro} shows
	\begin{equation}
	\label{derivativeoflimitatzero}
	\begin{split}
	\mathbb{E}(\log|\Lambda'(r)|) - \mathbb{E}(\log|\Lambda'(0)|) &= \frac{d}{ds} \mathbb{E}\left[|\Lambda'(r)|^{s}\right]\Bigg|_{s=0}-\frac{d}{ds} \mathbb{E}\left[|\Lambda'(0)|^{s}\right]\Bigg|_{s=0}\\
	&=-\log(1-r^2).
	\end{split}
	\end{equation}
To show \eqref{nr-intro} we differentiate \eqref{jensenformula3} to obtain
\begin{equation}
\mathbb{E}(n_{N}(r)) = r\frac{d}{dr}\mathbb{E}(\log|\Lambda'_{N}(r)|) = r\mathbb{E}\left(\mathrm{Re}\left(\frac{\Lambda''_{N}(r)}{\Lambda'_{N}(r)}\right)\right). \label{arg-princ}
\end{equation}
By Lemma \ref{thm:expect-log-conv} the expectation \eqref{arg-princ} converges to
\begin{equation}
\mathbb{E}\left(\mathrm{Re}\left(\frac{\Lambda''(r)}{\Lambda'(r)}\right)\right) = \frac{d}{dr}\mathbb{E}(\log|\Lambda'(r)|) = \frac{2r}{1-r^{2}}
\end{equation}
where the last equality follows from \eqref{derivativeoflimitatzero}.
\end{proof}
We remark that formula \eqref{arg-princ}
also follows from applying the argument principle to the holomorphic function $\Lambda'_{N}(r)$ and taking expectations. 

\subsection{Microscopic regime}
\label{se:micropfs}
In this section we complete the proof of Theorem \ref{th:micro-intro} concerning the scaling $|z|^{2}=1-\frac{c}{N}$ with $c \in \mathbb{R}$. We start with the exact result \eqref{exact}
\begin{equation}
\E[|\Lambda_{N}'(z)|^{2s}] = \sum_{\lambda, \mu \in Y_{s}}\frac{f_\lambda f_\mu}{\lambda! \mu!}
\det\bigg\{(u^{\lambda_i+s-i} K_{N}^{(\lambda_i+s-i)}(u))^{(\mu_j+s-j)}\bigg\}_{i,j=1}^{s}, \label{exact2}
\end{equation}
where $u=|z|^{2}$ and $K_{N}(u) = \sum_{j=0}^{N+s-1}u^{j}$.
\begin{proof}[Proof of asymptotics \eqref{micro1}]
In \eqref{exact2}, we represent $K_{N}(u)$ in the form
\begin{equation}
K_{N}(u) = \frac{1-u^{N+s}}{1-u} = (N+s)\int_{0}^{1}(1-x(1-u))^{N+s-1}dx.
\end{equation}
Let $p_{i} = \lambda_{i}+s-i$ and $q_{j} = \mu_{j}+s-j$. Then
\begin{equation}
u^{p_{i}} K_{N}^{(p_i)}(u)= \frac{(N+s)!}{(N+s-p_{i}-1)!}u^{p_{i}}\int_{0}^{1}x^{p_{i}}(1-x(1-u))^{N+s-1-p_{i}}dx. \label{qjderivs}
\end{equation}
When we take the derivative of \eqref{qjderivs} $q_{j}$ times, we can choose to act derivatives on the first term $u^{p_{i}}$ or the second integral term above. The leading contribution will come from applying all the derivatives to the second term as it produces the largest powers of $N$. Setting $u=1-\frac{c}{N}$ and letting $N \to \infty$, by the dominated convergence theorem we have
\begin{equation}
\begin{split}
(u^{p_{i}} K_{N}^{(p_i)}(u))^{(q_j)} &\sim \frac{(N+s)!}{(N+s-p_{i}-q_{j}-1)!}u^{p_{i}}\int_{0}^{1}x^{p_{i}+q_{j}}(1-cx/N)^{N+s-1-p_{i}-q_{j}}dx\\
&\sim N^{p_{i}+q_{j}+1}\int_{0}^{1}x^{p_{i}+q_{j}}e^{-cx}dx. 
\end{split}
\end{equation}
Recall that since $|\lambda|=|\mu|=s$, we have that $\{p_{j}\}_{j=1}^{s}$ and $\{q_{j}\}_{j=1}^{s}$ partition the integer $s(s+1)/2$. The factor $N^{p_{i}+q_{j}+1}$ drops out of the determinant and gives the power
\begin{equation}
N^{\sum_{i=1}^{s}(p_{i}+q_{i}+1)} = N^{s^{2}+2s}.
\end{equation}
Hence we obtain, as $N \to \infty$
\begin{equation}
 \det\bigg\{(u^{p_{i}} K_{N}^{(p_{i})}(u))^{(q_{j})}\bigg\}_{i,j=1}^{s} \sim N^{s^{2}+2s} \det\bigg\{\int_{0}^{1}x^{p_{i}+q_{j}}e^{-cx}dx \bigg\}_{i,j=1}^{s}. \label{hence}
\end{equation}
Inserting \eqref{hence} into \eqref{exact} completes the proof.
\end{proof}
The following two lemmas will be used in the proof of Theorem \ref{th:micro-intro}.

\begin{lemma}[Andr\'eief identity]
\label{le:andre}
For a domain $D$ and two sets of integrable functions $\{f_{j}(x)\}_{j=1}^{s}$ and $\{g_{j}(x)\}_{j=1}^{s}$ we have
\begin{equation}
\det\bigg\{\int_{D}f_{i}(x)\,g_{j}(x)\,dx\bigg\}_{i,j=1}^{s} = \frac{1}{s!}\int_{D^{s}}\,\det\{f_{j}(x_i)\}_{i,j=1}^{s}\,\det\{g_{j}(x_i)\}_{i,j=1}^{s}\,dx_{1}\cdots dx_{s}
\end{equation}
\end{lemma}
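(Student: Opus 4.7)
The plan is to prove the Andréief identity by directly expanding the two determinants on the right-hand side using the Leibniz formula and collapsing the resulting double sum over $S_{s} \times S_{s}$ to a single sum that matches the Leibniz expansion of the left-hand side.

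First, I would write each of the two determinants inside the integral as a signed sum over permutations:
\begin{equation*}
\det\{f_{j}(x_{i})\}_{i,j=1}^{s} = \sum_{\sigma \in S_{s}} \mathrm{sgn}(\sigma)\prod_{i=1}^{s}f_{\sigma(i)}(x_{i}), \qquad \det\{g_{j}(x_{i})\}_{i,j=1}^{s} = \sum_{\tau \in S_{s}} \mathrm{sgn}(\tau)\prod_{i=1}^{s}g_{\tau(i)}(x_{i}).
\end{equation*}
Multiplying the two and using Fubini's theorem (which is justified by the integrability hypothesis on $\{f_j\}$, $\{g_j\}$), the $s$-fold integral factorises:
\begin{equation*}
\frac{1}{s!}\int_{D^{s}}\det\{f_{j}(x_{i})\}\det\{g_{j}(x_{i})\}\,dx_{1}\cdots dx_{s} = \frac{1}{s!}\sum_{\sigma,\tau \in S_{s}}\mathrm{sgn}(\sigma)\mathrm{sgn}(\tau)\prod_{i=1}^{s}\int_{D}f_{\sigma(i)}(x)g_{\tau(i)}(x)\,dx.
\end{equation*}

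Next, I would perform the key reindexing. For each fixed $\sigma$, change the product index via $j=\sigma(i)$ and introduce $\pi = \tau\sigma^{-1}$, so that $\tau(i) = \pi(j)$ and $\mathrm{sgn}(\sigma)\mathrm{sgn}(\tau) = \mathrm{sgn}(\pi)$. The product over $i$ then becomes a product over $j$:
\begin{equation*}
\prod_{i=1}^{s}\int_{D}f_{\sigma(i)}(x)g_{\tau(i)}(x)\,dx = \prod_{j=1}^{s}\int_{D}f_{j}(x)g_{\pi(j)}(x)\,dx,
\end{equation*}
which is independent of $\sigma$. Hence
\begin{equation*}
\frac{1}{s!}\sum_{\sigma,\pi \in S_{s}}\mathrm{sgn}(\pi)\prod_{j=1}^{s}\int_{D}f_{j}(x)g_{\pi(j)}(x)\,dx = \sum_{\pi \in S_{s}}\mathrm{sgn}(\pi)\prod_{j=1}^{s}\int_{D}f_{j}(x)g_{\pi(j)}(x)\,dx,
\end{equation*}
since the sum over $\sigma$ contributes a factor $|S_{s}|=s!$ that cancels the prefactor. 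Finally, recognising the right-hand side as the Leibniz expansion of $\det\{\int_{D}f_{i}(x)g_{j}(x)\,dx\}_{i,j=1}^{s}$ completes the proof.

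There is no real obstacle here; the only subtlety is bookkeeping the sign $\mathrm{sgn}(\sigma)\mathrm{sgn}(\tau) = \mathrm{sgn}(\sigma)\mathrm{sgn}(\pi\sigma) = \mathrm{sgn}(\pi)$ correctly after the change of variables. The hypotheses on $D$ and the $f_j,g_j$ are exactly what is needed to invoke Fubini to interchange the integration with the finite sum over permutations.
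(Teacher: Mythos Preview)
Your proof is correct; this is the standard derivation of the Andr\'eief identity via Leibniz expansion and the substitution $\pi=\tau\sigma^{-1}$. The paper itself does not supply a proof of this lemma---it is quoted as a classical result and used as a tool---so there is nothing to compare against.
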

\begin{lemma}
\label{le:merge}
Let $s\geq 1$ be an integer and $B(z,w)$ be any sufficiently smooth function of two variables $z$ and $w$. Then we have
\begin{equation}
\frac{\det\bigg\{B(z_i,w_j)\bigg\}_{i,j=1}^{s}}{\Delta(\bm z)\Delta(\bm w)}\bigg{|}_{\bm z =z, \bm w = w} = \left(\prod_{i=0}^{s-1}\frac{1}{(i!)^{2}}\right)\,\det\bigg\{\frac{\partial^{i+j-2}B(z,w)}{\partial z^{i-1}\,\partial w^{j-1}}\bigg\}_{i,j=1}^{s}. \label{mergeakevern}
\end{equation}
\end{lemma}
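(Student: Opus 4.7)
My plan is to reduce Lemma \ref{le:merge} to a classical confluent-Vandermonde identity, applied once in the $\bm z$ variables and once in the $\bm w$ variables. The single-variable identity I will invoke is that, for any $s$ sufficiently smooth functions $f_{1},\ldots,f_{s}$,
\begin{equation*}
\lim_{z_{1},\ldots,z_{s}\to z}\frac{\det(f_{j}(z_{i}))_{i,j=1}^{s}}{\Delta(\bm z)} \;=\; \frac{1}{\prod_{i=0}^{s-1}i!}\,\det\bigl(f_{j}^{(i-1)}(z)\bigr)_{i,j=1}^{s}.
\end{equation*}

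I would establish this confluent identity by Taylor expansion: writing $f_{j}(z_{i}) = \sum_{k\geq 0}(z_{i}-z)^{k}f_{j}^{(k)}(z)/k!$ exhibits the $s\times s$ matrix $(f_{j}(z_{i}))$ as a product $\widetilde V\,A$, where $\widetilde V_{ik}=(z_{i}-z)^{k}/k!$ has columns indexed by $k\geq 0$ and $A_{kj}=f_{j}^{(k)}(z)$. The Cauchy--Binet formula gives
\begin{equation*}
\det(f_{j}(z_{i})) \;=\; \sum_{K}\det(\widetilde V_{\cdot,K})\,\det(A_{K,\cdot}),
\end{equation*}
summed over size-$s$ subsets $K=\{k_{1}<\cdots<k_{s}\}\subset\mathbb{Z}_{\geq 0}$. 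Each minor factors as $\det(\widetilde V_{\cdot,K}) = \bigl(\prod_{j}k_{j}!\bigr)^{-1}\det((z_{i}-z)^{k_{j}}) = \bigl(\prod_{j}k_{j}!\bigr)^{-1}\Delta(\bm z)\,s_{\lambda(K)}(\bm z - z)$, with $s_{\lambda(K)}$ the Schur polynomial indexed by the strict partition $K$. Since $s_{\lambda(K)}$ is a homogeneous polynomial of positive degree unless $K=\{0,1,\ldots,s-1\}$ (in which case $s_{\lambda(K)}\equiv 1$), dividing by $\Delta(\bm z)$ and sending $\bm z\to z$ isolates a single term in the Cauchy--Binet sum, giving the claimed identity.

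Next I would apply the confluent identity to the statement of Lemma \ref{le:merge}. Fixing $\bm w$ and taking $f_{j}(z) := B(z,w_{j})$ collapses the $\bm z$ variables:
\begin{equation*}
\lim_{\bm z\to z}\frac{\det(B(z_{i},w_{j}))}{\Delta(\bm z)} \;=\; \frac{1}{\prod_{i=0}^{s-1}i!}\det\!\bigg(\frac{\partial^{i-1}B(z,w_{j})}{\partial z^{i-1}}\bigg)_{i,j=1}^{s}.
\end{equation*}
Setting $g_{i}(w):=\partial^{i-1}B(z,w)/\partial z^{i-1}$ and applying the same identity in the $\bm w$ variables then collapses the $w_{j}$'s, and since $g_{i}^{(j-1)}(w) = \partial^{i+j-2}B(z,w)/\partial z^{i-1}\partial w^{j-1}$, the combined constant works out to $\prod_{i=0}^{s-1}(i!)^{-1}\cdot\prod_{j=0}^{s-1}(j!)^{-1}=\prod_{i=0}^{s-1}(i!)^{-2}$, recovering \eqref{mergeakevern} exactly.

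The main technical point is the vanishing of the Schur polynomial $s_{\lambda(K)}$ at the origin whenever $K\neq\{0,1,\ldots,s-1\}$, which explains why only a single minor in the Cauchy--Binet expansion contributes in the limit. One could equivalently bypass Schur polynomials altogether and prove the confluent-Vandermonde identity by a direct L'H\^opital argument, peeling off one variable at a time; either approach is routine once the matrix factorization is in place. A minor auxiliary observation is that the double limit $\bm z\to z$, $\bm w\to w$ can be evaluated iteratively since $\det(B(z_{i},w_{j}))/\Delta(\bm z)$ extends smoothly through the diagonal $\bm z=z$ (it is a polynomial in $\bm z$ modulo the Taylor series of $B$).
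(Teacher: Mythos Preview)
Your proof is correct. The paper states Lemma~\ref{le:merge} without proof, treating it as a standard identity, so there is no ``paper's proof'' to compare against. Your route via Taylor expansion and Cauchy--Binet is a clean, self-contained way to establish the single-variable confluent identity and then iterate it in the $\bm w$ variables.

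It is worth noting, though, that the single-variable identity you invoke is precisely the $m=0$ case of the paper's Lemma~\ref{le:young}: taking $f(\bm z)=\det(f_{j}(z_{i}))$ (which is anti-symmetric) and $m=0$ there gives $Y_{0}=\{\varnothing\}$, $f_{\varnothing}=1$, and $\lambda_{i}=0$ for all $i$, whence
\[
\frac{\det(f_{j}(z_{i}))}{\Delta(\bm z)}\bigg|_{\bm z=z}=\frac{1}{\prod_{i=1}^{s}(s-i)!}\prod_{i=1}^{s}\frac{\partial^{s-i}}{\partial z_{i}^{s-i}}\det(f_{j}(z_{i}))\bigg|_{\bm z=z}=\frac{1}{\prod_{i=0}^{s-1}i!}\det\bigl(f_{j}^{(i-1)}(z)\bigr)_{i,j=1}^{s}.
\]
So within the logic of the paper one could also obtain Lemma~\ref{le:merge} by two applications of Lemma~\ref{le:young}. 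Your Schur-polynomial argument has the advantage of being independent of the inductive machinery developed for Lemma~\ref{le:ordered-expan}; the trade-off is that the Cauchy--Binet expansion over infinite index sets $K$ needs a one-line justification (truncate the Taylor series at order $s$ and bound the remainder, or observe that only $K=\{0,\ldots,s-1\}$ survives in the limit), which you allude to but do not spell out.
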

We now complete the proof of Theorem \ref{th:micro-intro}.
\begin{proof}[Proof of asymptotics \eqref{micro2} and strict positivity]
Let $b_{s}(c)$ denote the leading term in Theorem \ref{th:micro-intro}, formula \eqref{micro1}, that is
\begin{equation}
b_{s}(c) = \sum_{\lambda,\mu \in Y_{s}}\frac{f_\lambda f_\mu}{\lambda! \mu!}
\det\bigg\{ \int_{0}^{1}x^{2s+\lambda_{i}-i+\mu_{j}-j}e^{-cx}dx\bigg\}_{i,j=1}^{s}.
\end{equation}
Using Lemma \ref{le:andre}, we write the above determinant as
\begin{equation}
\begin{split}
&\det\bigg\{\int_{0}^{1}x^{2s+\lambda_{i}+\mu_{j}-i-j}e^{-cx}dx\bigg\}_{i,j=1}^{s} \\
&= \frac{1}{s!}\int_{[0,1]^{s}}\det\bigg\{x_{i}^{\lambda_{j}+s-j}\bigg\}_{i,j=1}^{s}\det\bigg\{x_{i}^{\mu_{j}+s-j}\bigg\}_{i,j=1}^{s}\prod_{j=1}^{s}e^{-cx_{j}}d\bm x,
\end{split}
\end{equation}
where $d\bm x=dx_{1}\cdots dx_{s}$.
The determinants in the integrand above can be recognised in terms of the Schur polynomials, due to their definition
\begin{equation}
s_{\lambda}(\bm x) = \frac{\det\bigg\{x_{i}^{\lambda_{j}+s-j}\bigg\}_{i,j=1}^{s}}{\Delta(\bm x)}.
\end{equation}
We obtain
\begin{equation}
b_{s}(c) = \frac{1}{s!}\int_{[0,1]^{s}}\left(\sum_{\lambda \in Y_{s}}\frac{f_{\lambda}}{\lambda!}s_{\lambda}(\bm x)\right)^{2}\prod_{j=1}^{s}e^{-cx_{j}}\big(\Delta(\bm x)\big)^{2}d\bm x. \label{sumschur}
\end{equation}
By Schur positivity, the integrand in expression \eqref{sumschur} is positive on a set of full measure and therefore we obtain the stated claim about strict positivity. To compute the sum over partitions, we use the known relation between the hook length and the Schur polynomial evaluated at $1_{s} = (1,\ldots,1)$ with $1$ appearing $s$ times, in the form
\begin{equation}
f_{\lambda} = \frac{s_{\lambda}(1_{s})}{\lambda!}\prod_{i=0}^{s}i!. \label{hookform}
\end{equation}
Then the sum in \eqref{sumschur} is
\begin{equation}
\sum_{\lambda \in Y_{s}}\frac{f_{\lambda}}{\lambda!}s_{\lambda}(\bm x) = \left(\prod_{i=0}^{s}i!\right)\frac{(-1)^{s}}{s!}\frac{\partial^{s}}{\partial v^{s}}\sum_{\lambda, l(\lambda)\leq s}\frac{s_{\lambda}(1_{s})}{(\lambda!)^{2}}s_{\lambda}(\bm x)(-v)^{|\lambda|}\bigg{|}_{v=0},
\label{genfn}
\end{equation}
where we introduced a generating function in the variable $v$ to enforce the constraint $|\lambda|=s$. We now replace $s_{\lambda}(1_{s})(-v)^{|\lambda|}$ with $s_{\lambda}(-\bm v)$ where $\bm v$ consists of $s$ new variables. Due to homogeneity of the Schur polynomials, we then recover the desired quantity after taking $\bm v = (v,v,\ldots,v)$ in the end (as before, we denote this operation $\bm v = v$ for convenience). Now the sum over partitions in \eqref{genfn} can be done by standard character expansion techniques. In particular, it follows from \cite[Appendix B]{B00} and the Cauchy-Binet identity that 
\begin{equation}
\begin{split}
\sum_{\lambda, l(\lambda)\leq s}\frac{s_{\lambda}(\bm x)s_{\lambda}(-\bm v)}{(\lambda !)^{2}}=\frac{\det\bigg\{\sum_{\ell=0}^{\infty}\frac{(-x_{i}v_{j})^{\ell}}{(\ell!)^{2}}\bigg\}_{i,j=1}^{s}}{\Delta(\bm x)\Delta(\bm v)}.
\end{split}
\end{equation}
The function inside the above determinant is the series definition of the Bessel function of the first kind
\begin{equation}
J_{0}(2\sqrt{x}) = \sum_{j=0}^{\infty}\frac{(-x)^{j}}{(j!)^{2}}.
\end{equation}
Applying the above procedure to both sums over partitions in \eqref{sumschur}, we obtain 
\begin{equation}
b_{s}(c) = \frac{1}{s!}\left(\prod_{i=0}^{s-1}i!\right)^{2}\frac{\partial^{2s}}{\partial v^{s}\partial w^{s}}Y(v,w)\bigg{|}_{v=w=0}, \label{Yzw}
\end{equation}
where
\bea
Y(v,w) &=& \int_{[0,1]^{s}}\frac{\det\bigg\{J_{0}(2\sqrt{x_{i}v_{j}})\bigg\}_{i,j=1}^{s}\det\bigg\{J_{0}(2\sqrt{x_{i}w_{j}})\bigg\}_{i,j=1}^{s}}{\Delta(\bm v)\Delta(\bm w)}\prod_{j=1}^{s}e^{-cx_{j}}d\bm x\bigg{|}_{\bm v=v, \bm w = w} \nonumber \\
&=&s!\frac{\det\bigg\{\int_{0}^{1}J_{0}(2\sqrt{xv_{i}})J_{0}(2\sqrt{xw_{j}})e^{-cx}dx\bigg\}_{i,j=1}^{s}}{\Delta( \bm v)\Delta( \bm w)}\bigg{|}_{\bm v = v, \bm w = w}\label{Fredholmdeterminant}\\
&=&s!\left(\frac{1}{\prod_{i=0}^{s-1}i!}\right)^{2}\det\bigg\{\frac{\partial^{i+j-2}}{\partial v^{i-1}\partial w^{j-1}}\int_{0}^{1}J_{0}(2\sqrt{xv})J_{0}(2\sqrt{xw})e^{-cx}dx\bigg\}_{i,j=1}^{s},
\nonumber
\eea
and we applied Lemmas \ref{le:andre} and \ref{le:merge}. Inserting the above into \eqref{Yzw} completes the proof.
\end{proof}
We remark that after an appropriate shift and scaling of the parameters $v_{1},\ldots,v_{s}$, $w_{1},\ldots,w_{s}$, the determinant appearing in (\ref{Fredholmdeterminant}) when $c=0$ provides a solution of the integrable partial differential equation studied in \cite{R24}, see \cite[formulae (1.5) and (1.15)]{R24} for more details.

\appendix

\section{Moments of CUE characteristic polynomials}\label{Painlevesix}
Here we review results on moments of the characteristic polynomial in the CUE with an emphasis on connections to Painlev\'e equations. Analogously to \eqref{moms-intro}, we define the moments
\begin{equation}
M_{N}(s,z) = \int_{U(N)}|\Lambda_{N}(z)|^{2s}d\mu. \label{cuemoms}
\end{equation}
The best known result for these moments is perhaps in the case $|z|=1$.
\begin{theorem}[Keating and Snaith \cite{keating2000random}]
For any $s \in \mathbb{C}$ with $\mathrm{Re}(s)>-1/2$, we have
\begin{equation}
M_{N}(s,e^{\i\theta}) = \prod_{j=1}^{N}\frac{\Gamma(j)\Gamma(j+2s)}{(\Gamma(j+s))^{2}}, \qquad \theta \in [0,2\pi). \label{ksprod}
\end{equation}
Furthermore in the limit $N \to \infty$ we have
\begin{equation}
M_{N}(s,e^{\i\theta}) \sim N^{s^{2}}\frac{G^{2}(1+s)}{G(1+2s)}, \label{barnesgasympt}
\end{equation}
where $G(z)$ is the Barnes G-function.
\end{theorem}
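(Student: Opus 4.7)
The plan is to prove the explicit product \eqref{ksprod} and then extract the asymptotic \eqref{barnesgasympt} from it. By radial invariance of Haar measure, $M_N(s,e^{i\theta})$ does not depend on $\theta$, so it suffices to work with $z=1$. Applying Weyl's integration formula on $U(N)$ reduces the Haar integral to one over the eigenphases,
\[
M_N(s,1)=\frac{1}{N!(2\pi)^N}\int_{[0,2\pi]^N}\prod_{j=1}^N|1-e^{i\theta_j}|^{2s}\prod_{1\leq j<k\leq N}|e^{i\theta_j}-e^{i\theta_k}|^2\,d\theta_1\cdots d\theta_N,
\]
which converges precisely when $\mathrm{Re}(s)>-1/2$, the range in which the weight $|1-e^{i\theta}|^{2s}$ is locally integrable.

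Next I would evaluate this integral. The cleanest route is through Heine's identity, which rewrites the symmetric integral as the Toeplitz determinant $\det\{\hat f_{j-k}\}_{j,k=0}^{N-1}$, where $\hat f_n$ is the $n$-th Fourier coefficient of $f(\theta)=|1-e^{i\theta}|^{2s}$ and is computable in closed form as a ratio of Gamma functions by means of the beta integral. One then diagonalises this Toeplitz determinant via the orthogonal polynomials on the unit circle associated to the weight $f$ (these are of Jacobi type, and their norms are explicit), at which point the determinant collapses to a product of norming constants which simplifies to \eqref{ksprod}. Alternatively one can invoke the Morris constant-term identity (a trigonometric Selberg integral at $\gamma=1$) and read off the same product directly. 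In either case, the identity is first established in a subdomain of the half-plane and then extended to all $\mathrm{Re}(s)>-1/2$ by analytic continuation, since both sides are manifestly holomorphic in $s$ there.

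For the asymptotic, the key step is to rewrite the product using the Barnes $G$-function via $\prod_{j=1}^N\Gamma(j+\alpha)=G(N+1+\alpha)/G(1+\alpha)$ (and $G(1)=1$), obtaining
\[
M_N(s,1)=\frac{G(1+s)^2}{G(1+2s)}\cdot\frac{G(N+1)\,G(N+1+2s)}{G(N+1+s)^2}.
\]
The prefactor is already the desired constant in \eqref{barnesgasympt}, so it remains to show the $N$-dependent ratio is asymptotic to $N^{s^2}$. I would plug the classical expansion $\log G(z+1)=\tfrac{z^2}{2}\log z-\tfrac{3z^2}{4}+\tfrac{z}{2}\log(2\pi)+O(\log z)$ into the three factors weighted by $(+1,+1,-2)$ and expand each $\log(N+\alpha)$ in powers of $\alpha/N$. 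The cancellations are symmetric in the choice $\alpha\in\{0,s,2s\}$ with those weights, annihilating the terms of orders $N^2\log N$, $N^2$, $N\log N$, $N$ and extracting the coefficient $s^2$ of $\log N$ together with constant contributions from $-3z^2/4$ that are precisely cancelled by the subleading part of $\tfrac{z^2}{2}\log z$. The outcome is $\log[G(N+1)G(N+1+2s)/G(N+1+s)^2]=s^2\log N+o(1)$, which yields \eqref{barnesgasympt} after exponentiation. Conceptually this is routine; the only real bookkeeping challenge lies in tracking these Barnes $G$ cancellations to the right order so that no spurious non-vanishing constant survives.
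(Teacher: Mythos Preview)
The paper does not supply its own proof of this theorem: it is stated as a known result with a citation to Keating and Snaith and then used as background for the appendix. Your proposal is a correct and standard reconstruction of the argument --- Weyl integration followed by the Morris/Selberg evaluation (or equivalently Heine plus the explicit Jacobi-type orthogonal polynomials on the circle), and then the Barnes $G$ asymptotics --- and is essentially what one finds in the original Keating--Snaith paper. The paper here does verify the integer case $s\in\mathbb{N}$ later in the appendix (equation \eqref{mneval}) via a Selberg integral identity, which is consistent with the trigonometric Selberg route you outline, but there is no separate proof to compare against for general complex $s$.
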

We now discuss the form of the moments with $|z| \neq 1$, starting with the integer case $s \in \mathbb{N}$ and then generalizing to $s \in \mathbb{C}$. A particular case of the Selberg integral that will be useful in what follows is
\begin{equation}
\begin{split}
&\int_{[0,1]^{s}}\prod_{j=1}^{s}dt_{j}\,t_{j}^{a}(1-t_{j})^{b}|\Delta(\bm t)|^{2}\\
&= \prod_{j=0}^{s-1}\frac{\Gamma(a+1+j)\Gamma(b+1+j)\Gamma(1+(j+1))}{\Gamma(a+b+2+(s+j-1))}, \label{selberg}
\end{split}
\end{equation}
where $\Delta(\bm{t})$ is the Vandermonde determinant \eqref{vand}. The integral \eqref{selberg} is valid for $\mathrm{Re}(a) > -1$ and $\mathrm{Re}(b)>-1$, see e.g.\, \cite[Chapter 4]{F10book} for an introduction and generalizations. 
\begin{theorem}
For positive integer moments $s \in \mathbb{N}$ and any $z \in \mathbb{C}$, we have
\begin{equation}
M_{N}(s,z) = \frac{1}{c_{s,N}}\int_{[0,1]^{s}}\prod_{j=1}^{s}dt_{j}\,(1-t_{j}(1-|z|^{2}))^{N}\Delta^{2}(\bm{t}) \label{moms-form}
\end{equation}
where the normalization constant is
\begin{equation}
c_{s,N} = \int_{[0,1]^{s}}\prod_{j=1}^{s}dt_{j}\,(1-t_{j})^{N}\Delta^{2}(\bm{t}) = \prod_{j=0}^{s-1}\frac{\Gamma(N+j+1)\Gamma(j+1)\Gamma(j+2)}{\Gamma(N+s+j+1)}. \label{cdefmoms}
\end{equation}
\end{theorem}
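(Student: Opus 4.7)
The plan is to prove this classical identity by expanding both sides in a Schur-polynomial basis and matching coefficients. By translation-invariance of Haar measure, $M_N(s,z)$ depends only on $r := |z| \geq 0$, so we may set $z = r$. For the left-hand side I would apply the dual Cauchy identity to $\det(I - rU^\dagger)^s = \prod_{j=1}^{N}(1 - re^{-\i\theta_j})^s$, with $s$ copies of the variable $-r$ and the $N$ eigenvalues of $U^\dagger$, yielding
\begin{equation*}
\det(I - rU^\dagger)^s = \sum_{\substack{\lambda:\, l(\lambda) \leq s \\ \lambda_1 \leq N}} (-r)^{|\lambda|} s_\lambda(1^s)\, s_{\lambda'}(U^\dagger),
\end{equation*}
and similarly for $\det(I - rU)^s$, which is the complex conjugate for real $r$. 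Schur orthogonality $\int_{U(N)} s_{\lambda'}(U^\dagger) \overline{s_{\mu'}(U^\dagger)}\, d\mu = \delta_{\lambda\mu}$ then collapses the resulting double sum to
\begin{equation*}
M_N(s,r) = \sum_{\substack{\lambda:\, l(\lambda) \leq s \\ \lambda_1 \leq N}} r^{2|\lambda|}\, s_\lambda(1^s)^2.
\end{equation*}

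For the right-hand side, I would first use $(1-(1-r^2)t)^N = ((1-t) + r^2 t)^N = \sum_{k=0}^{N}\binom{N}{k} r^{2k} t^k (1-t)^{N-k}$ to write the integrand as a power series in $r^2$. The Andreief identity then expresses the $s$-fold integral as $s!\,\det(B_{ij}(r))$ with $B_{ij}(r) = \int_0^1 (1-(1-r^2)t)^N t^{i+j-2}\,dt$, whose entries are partial Beta integrals. Expanding and regrouping the multi-index sum by partitions (justified by the symmetry of $\Delta^2$), each coefficient of $r^{2m}$ becomes a sum of Schur-weighted Jacobi integrals $\int_{[0,1]^s} s_\lambda(\bm{t})^2 \prod_j (1-t_j)^N \Delta^2(\bm{t})\, d\bm{t}$, which can be evaluated in closed form using the Selberg/Aomoto formula. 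Combined with the principal-specialisation identity $s_\lambda(1^s) = \prod_{(i,j) \in \lambda} (s+j-i)/h_\lambda(i,j)$ and the hook-length formula, this matches the right-hand side with the left-hand side term by term.

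The main obstacle is the combinatorial identification in the second step: the truncation $\lambda_1 \leq N$ on the LHS sum must emerge on the RHS from the fact that $(1-(1-r^2)t)^N$ is a polynomial of degree exactly $N$ in $t$. A clean way to handle this is via a finite Cauchy identity of Gessel--Viennot type, which reformulates the constrained Schur sum in $s$ variables as a determinant of Schur polynomials and then pairs with the Andreief representation of the integral via a Jacobi--Trudi expansion. The normalising constant $c_{s,N}$ is then fixed by specialising to $r = 0$: on the one hand $M_N(s,0) = 1$, while on the other the integrand reduces to $\prod_j (1-t_j)^N \Delta^2(\bm{t})$, whose Selberg evaluation \eqref{selberg} reproduces the product formula \eqref{cdefmoms}.
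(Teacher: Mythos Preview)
The paper does not prove this result here; its proof is a one-line citation of external references. Your attempt to supply a self-contained argument is therefore welcome, and the first half is correct: the dual Cauchy identity together with Schur orthogonality on $U(N)$ gives
\[
M_N(s,r)=\sum_{\substack{l(\lambda)\le s\\ \lambda_1\le N}} r^{2|\lambda|}\,s_\lambda(1^s)^2,
\]
which is the standard Schur-sum form of the CUE moments. The normalisation check at $r=0$ is also fine.

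The gap is in your treatment of the integral side. After the binomial expansion $(1-(1-r^2)t)^N=\sum_{k}\binom{N}{k}r^{2k}t^{k}(1-t)^{N-k}$, taking the product over $j=1,\dots,s$ produces weights $\prod_j t_j^{k_j}(1-t_j)^{N-k_j}$ in which the exponent on each $(1-t_j)$ is $N-k_j$, not a fixed $N$. Symmetrising in $\bm t$ does not collapse this to integrals of the form $\int s_\lambda(\bm t)^2\prod_j(1-t_j)^N\,\Delta^2(\bm t)\,d\bm t$; no such reduction exists. Separately, even if it did, the Selberg/Aomoto/Kadell formulae evaluate $\int s_\lambda(\bm t)\prod_j t_j^{a}(1-t_j)^{b}\,\Delta^2(\bm t)\,d\bm t$ with a \emph{single} Schur polynomial in the integrand, not $s_\lambda(\bm t)^2$; there is no closed Selberg-type evaluation for the latter. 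So both the claimed form of the integrals and the proposed tool to evaluate them are incorrect as written, and the closing appeal to a ``finite Cauchy identity of Gessel--Viennot type'' is too vague to repair this.

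A variant that does go through: expand $\prod_j(1-(1-r^2)t_j)^N$ by dual Cauchy with $N$ copies of $-(1-r^2)$, giving $\sum_\lambda(-(1-r^2))^{|\lambda|}s_{\lambda'}(1^N)\,s_\lambda(\bm t)$ with the same constraints $l(\lambda)\le s$, $\lambda_1\le N$. Now a single $s_\lambda(\bm t)$ sits in the integrand, Kadell's extension of Selberg evaluates $\int_{[0,1]^s}s_\lambda(\bm t)\,\Delta^2(\bm t)\,d\bm t$ explicitly, and what remains is a hook-product identity matching the resulting polynomial in $1-r^2$ against $\sum_\lambda r^{2|\lambda|}s_\lambda(1^s)^2$. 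This last step still needs work, but at least every ingredient is a known formula.
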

\begin{proof}
This is a particular case of moment formulae established for a class of non-Hermitian matrices generalizing the CUE, see \cite[Theorem 1.10]{DS22} and \cite[Theorem 1.1]{SS22} with parameter $M=N$.
\end{proof}
In the previously mentioned case $|z|=1$, formula \eqref{moms-form} reduces to
\begin{equation}
\begin{split}
M_{N}(s,e^{\i\theta}) &= \frac{1}{c_{s,N}}\int_{[0,1]^{s}}\prod_{j=1}^{s}dt_{j}\,\Delta^{2}(\bm{t}) =  \frac{1}{c_{s,N}}\prod_{j=0}^{s-1}\frac{\Gamma(j+1)^{2}\Gamma(j+2)}{\Gamma(s+j+1)}\\
&= \prod_{j=1}^{s}\frac{\Gamma(j)\Gamma(N+s+j)}{\Gamma(s+j)\Gamma(N+j)}, \label{mneval}
\end{split}
\end{equation}
where we used \eqref{selberg} with $a=b=0$ and inserted \eqref{cdefmoms}. This is consistent with \eqref{ksprod} when $s \in \mathbb{N}$. The asymptotics as $N \to \infty$ of expression \eqref{mneval} follows immediately and gives
\begin{equation}
M_{N}(s,e^{\i\theta}) \sim N^{s^{2}}\prod_{j=1}^{s}\frac{\Gamma(j)}{\Gamma(s+j)}, \qquad N \to \infty, \label{uc-asympt}
\end{equation}
which is consistent with \eqref{barnesgasympt} and also appears in \cite{keating2000random}. 

For parameters $a, b$ with $\mathrm{Re}(a)>-1$, $\mathrm{Re}(b)>-1$, consider the joint density on $s$ points $t_{1},\ldots,t_{s} \in [0,1]$ proportional to
\begin{equation}
\prod_{j=1}^{s}t_{j}^{a}(1-t_{j})^{b}\Delta(\bm t)^{2}. \label{jue}
\end{equation}
The density \eqref{jue} is commonly known in random matrix theory as the Jacobi unitary ensemble, see e.g.\ \cite[Chapter 3]{F10book}. Note that the normalization constant for this density is Selberg's integral \eqref{selberg}. Given eigenvalues $t_{1},\ldots,t_{s}$ distributed according to \eqref{jue}, let $t^{(a,b)}_{\mathrm{max}} = \mathrm{max}(t_{1},\ldots,t_{s})$ denote the largest eigenvalue. By the symmetry $M_{N}(s,z) = |z|^{2sN}M_{N}(s,1/z)$, we may assume without loss of generality that $|z| < 1$. We change variables $t_{j} \to t_{j}/(1-|z|^{2})$ for each $j=1,\ldots,s$ in \eqref{moms-form} and obtain
\begin{equation}
M_{N}(s,z) = (1-|z|^{2})^{-s^{2}}\mathbb{P}(t^{(0,N)}_{\mathrm{max}} \leq 1-|z|^{2}). \label{juemax}
\end{equation}
The largest eigenvalue distribution in the Jacobi ensemble has been studied by several authors. For example, applying the main results of \cite{HS99,FW04} to the right-hand side of \eqref{juemax} gives
\begin{equation}
M_{N}(s,z) = (1-|z|^{2})^{-s^{2}}\mathrm{exp}\left(-\int_{1-|z|^{2}}^{1}\frac{\sigma^{(\mathrm{VI})}_{N,s}(t)-c_{1}^{2}t+\frac{c_{1}^{2}+c_{2}^{2}}{2}}{t(1-t)}\,dt\right), \label{pvimn}
\end{equation}
where $\sigma^{(\mathrm{VI})}_{N,s}(t)$ satisfies the Jimbo-Miwa-Okamoto $\sigma$-form of the Painlev\'e VI equation
\begin{equation}
\sigma'(t(1-t)\sigma'')^{2}-\left(\sigma'(2\sigma+(1-2t)\sigma')+c_{1}^{2}c_{2}^{2}\right)^{2}+(\sigma'-c_{1})^{2}(\sigma'-c_{2})^{2} = 0,
\end{equation}
with parameters
\begin{equation}
c_{1} = s+\frac{N}{2}, \qquad c_{2} = \frac{N}{2}.
\end{equation}
Generalizing the asymptotics \eqref{uc-asympt}, we now consider the scaling $|z|^{2}=1-\frac{c}{N}$ for a constant $c \in \mathbb{R}$. As discussed in the introduction, this may be viewed as the microscopic regime.
\begin{theorem}[Microscopic limit of CUE moments]
\label{th:micro-cue-mn}
Let $|z|^{2}=1-c/N$ with $c\in \mathbb{R}$ fixed and $s \in \mathbb{N}$. Then as $N \to \infty$, we have
\begin{equation}
M_{N}(s,z) \sim N^{s^{2}}\prod_{j=1}^{s}\frac{1}{\Gamma(j)\Gamma(j+1)}\int_{[0,1]^{s}}\prod_{j=1}^{s}dt_{j}\,e^{-ct_{j}}\Delta(\bm t)^{2}. \label{micro-moms}
\end{equation}
\end{theorem}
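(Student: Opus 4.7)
The plan is to directly analyze the exact Selberg-type integral representation \eqref{moms-form} in the regime $|z|^2 = 1-c/N$, by (i) passing to the limit inside the integral, and (ii) computing the $N \to \infty$ asymptotics of the normalization constant $c_{s,N}$.

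First I would substitute $1-|z|^2 = c/N$ into \eqref{moms-form} to obtain
\begin{equation*}
M_N(s,z) = \frac{1}{c_{s,N}}\int_{[0,1]^s}\prod_{j=1}^{s} \Bigl(1-\tfrac{ct_j}{N}\Bigr)^{N}\Delta(\bm t)^{2}\,dt_1\cdots dt_s.
\end{equation*}
For $N$ sufficiently large (depending on $c$) and $t_j\in[0,1]$, the factor $1-ct_j/N$ is positive and $(1-ct_j/N)^N \to e^{-ct_j}$ pointwise, indeed uniformly on $[0,1]^s$. To justify interchanging limit and integral, I would apply dominated convergence: if $c\geq 0$, the integrand is bounded by $\Delta(\bm t)^2$; if $c<0$, use the elementary bound $(1+|c|t_j/N)^N \leq e^{|c|t_j}\leq e^{|c|}$ so the integrand is bounded by $e^{s|c|}\Delta(\bm t)^2$, which is integrable on $[0,1]^s$. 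Therefore
\begin{equation*}
\int_{[0,1]^s}\prod_{j=1}^{s}\Bigl(1-\tfrac{ct_j}{N}\Bigr)^{N}\Delta(\bm t)^{2}\,d\bm t \;\longrightarrow\; \int_{[0,1]^s}\prod_{j=1}^{s}e^{-ct_j}\,\Delta(\bm t)^{2}\,d\bm t.
\end{equation*}

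Next I would extract the asymptotics of the constant $c_{s,N}$ from the explicit formula \eqref{cdefmoms}. Using the standard ratio $\Gamma(N+j+1)/\Gamma(N+s+j+1)\sim N^{-s}$ as $N\to\infty$ for each fixed $j=0,\ldots,s-1$, one obtains
\begin{equation*}
c_{s,N} \;\sim\; N^{-s^{2}}\prod_{j=0}^{s-1}\Gamma(j+1)\Gamma(j+2) \;=\; N^{-s^2}\prod_{j=1}^{s}\Gamma(j)\Gamma(j+1).
\end{equation*}
Combining this with the previous step yields \eqref{micro-moms}.

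There is essentially no hard obstacle; the only point requiring slight care is the dominated convergence step, and in particular choosing a suitable integrable majorant uniformly for $N$ large (the case $c<0$ being the one that needs the elementary exponential bound above). As a consistency check, one may verify that at $c=0$ the right-hand side of \eqref{micro-moms} reduces, via \eqref{selberg} with $a=b=0$, to $N^{s^2}\prod_{j=1}^{s}\Gamma(j)/\Gamma(s+j)$, in agreement with the Keating--Snaith asymptotic \eqref{uc-asympt}.
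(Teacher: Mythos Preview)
Your proposal is correct and follows essentially the same approach as the paper: substitute $|z|^2=1-c/N$ into the Selberg-type representation \eqref{moms-form}, use $(1-ct_j/N)^N\to e^{-ct_j}$ together with the asymptotics $c_{s,N}\sim N^{-s^2}\prod_{j=1}^{s}\Gamma(j)\Gamma(j+1)$. In fact you supply more detail than the paper's one-line proof, in particular the dominated convergence justification and the explicit majorant for $c<0$.
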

\begin{proof}
Inserting $|z|^{2}=1-\frac{c}{N}$ into formula \eqref{moms-form} and using the limit $(1-ct_{j}/N)^{N} \to e^{-ct_{j}}$ as $N \to \infty$, we obtain the stated result.
\end{proof}
We can also consider the corresponding global and mesoscopic regimes. For the global regime, see also \cite{hughes2001characteristic} and \cite[Corollary 2]{FK04}.
\begin{theorem}[Global and mesoscopic limit of CUE moments]
\label{th:mnglob}
Let $0 \leq |z| < 1$ be fixed and $s \in \mathbb{N}$. Then
\begin{equation}
\lim_{N \to \infty}M_{N}(s,z) = (1-|z|^{2})^{-s^{2}}. \label{global-moms}
\end{equation}
In the mesoscopic regime $|z|^{2} = 1-N^{-\alpha}$ with $0<\alpha<1$, we have
\begin{equation}
M_{N}(s,z) \sim N^{s^{2}\alpha}, \qquad N \to \infty. \label{meso-moms}
\end{equation}
\end{theorem}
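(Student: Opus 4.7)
The plan is to start from the exact Selberg-type representation \eqref{moms-form} and extract leading-order asymptotics by rescaling. Writing $r = 1 - |z|^2$ and changing variables $w_j = Nr\, t_j$ in the numerator integral of \eqref{moms-form}, I obtain
\begin{equation*}
\int_{[0,1]^s}\prod_{j=1}^s(1-rt_j)^N\Delta(\bm t)^2\,d\bm t = (Nr)^{-s^2}\int_{[0,Nr]^s}\prod_{j=1}^s(1-w_j/N)^N\Delta(\bm w)^2\,d\bm w,
\end{equation*}
using that $\Delta(\bm t)^2$ has total degree $s(s-1)$ and the Jacobian contributes an additional $(Nr)^{-s}$. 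In both regimes of interest one has $Nr \to \infty$: trivially for fixed $r$, and because $Nr = N^{1-\alpha}\to\infty$ in the mesoscopic case.

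Next I would pass to the limit by dominated convergence. Pointwise $(1-w_j/N)^N \to e^{-w_j}$, and the elementary bound $(1-w_j/N)^N \leq e^{-w_j}$ on $[0,N]$ provides domination by the integrable function $\prod_j e^{-w_j}\Delta(\bm w)^2$. The Laguerre specialisation of Selberg's integral then gives
\begin{equation*}
\int_{[0,\infty)^s}\prod_{j=1}^s e^{-w_j}\Delta(\bm w)^2\,d\bm w = \prod_{j=0}^{s-1}\Gamma(j+1)\Gamma(j+2).
\end{equation*}
Combining with the asymptotic $c_{s,N} \sim N^{-s^2}\prod_{j=0}^{s-1}\Gamma(j+1)\Gamma(j+2)$, which follows from \eqref{cdefmoms} via the ratio $\Gamma(N+j+1)/\Gamma(N+s+j+1) \sim N^{-s}$, one obtains $M_N(s,z) \sim r^{-s^2} = (1-|z|^2)^{-s^2}$, yielding \eqref{global-moms} for fixed $r$ and \eqref{meso-moms} when $r = N^{-\alpha}$ with $0<\alpha<1$.

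The only subtle point is verifying the dominated convergence in the mesoscopic regime, where the domain $[0,N^{1-\alpha}]^s$ grows strictly slower than $N^s$. This is still routine because the dominating integrand $e^{-w_j}\Delta(\bm w)^2$ has exponential decay, so the tail contribution from $\max_j w_j > N^{1-\alpha}$ is $o(1)$ uniformly in $N$ whenever $\alpha<1$. An alternative route, which I might use as a sanity check, is to invoke \eqref{juemax} directly and establish $\mathbb{P}(t_{\max}^{(0,N)} \leq r) \to 1$ via a Markov/first-moment bound of the shape $\mathbb{P}(t_{\max}^{(0,N)} > r) \leq s\,\mathbb{E}[t_1]/r = O((Nr)^{-1})$, which produces the same conclusion.
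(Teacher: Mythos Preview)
Your proof is correct and follows essentially the same route as the paper: the same rescaling $w_j = N(1-|z|^2)t_j$ in \eqref{moms-form}, the same Laguerre limit $(1-w_j/N)^N\to e^{-w_j}$, and the same cancellation of the Selberg/Laguerre constant against the asymptotics of $c_{s,N}$. You supply more detail than the paper does (the explicit domination and the tail remark), and your alternative check via \eqref{juemax} is a nice addition, but the core argument is identical.
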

\begin{proof}
Making the change of variables $t_{j} \to t_{j}/((1-|z|^{2})N)$ in \eqref{moms-form}, we take the limit $N \to \infty$ using that $(1-t_{j}/N)^{N} \to e^{-t_{j}}$ as $N \to \infty$. This limiting procedure works provided that $(1-|z|^{2})N \to \infty$ as $N \to \infty$ which precisely covers the global and mesoscopic scales. After computing asymptotics of the normalization constant in \eqref{cdefmoms}, the leading coefficient is exactly cancelled by the integral
\begin{equation}
\int_{[0,\infty)^{s}}\prod_{j=1}^{s}dt_{j}\,e^{-t_{j}}\Delta(\bm t)^{2} = \prod_{j=1}^{s}\Gamma(j)\Gamma(j+1).
\end{equation}
\end{proof}
Regarding Theorem \ref{th:micro-cue-mn}, notice that if $c=0$ in \eqref{micro-moms}, application of \eqref{selberg} with $a=b=0$ recovers \eqref{uc-asympt}. For $c > 0$, make the change of variables $t_{j} \to t_{j}/c$ for $j=1,\ldots,s$ in \eqref{micro-moms}. Then expression \eqref{micro-moms} can again be identified as the largest eigenvalue distribution, this time in the Laguerre unitary ensemble, again see \cite[Chapter 3]{F10book} for background. By results of \cite[Section V]{TW94}, see also equation (1.43) in \cite{FW02} with $a=\mu=0$, the latter can be rewritten in terms of Painlev\'e transcendents. Applying these results to \eqref{micro-moms} we obtain
\begin{equation}
M_{N}(s,z) \sim (N/c)^{s^{2}}\mathrm{exp}\left(-\int_{c}^{\infty}\frac{\sigma^{(\mathrm{V})}_{s}(t)}{t}dt\right), \label{micro-pv}
\end{equation}
where $\sigma^{(\mathrm{V})}_{s}(t)$ satisfies the Jimbo-Miwa-Okamoto $\sigma$-form of the Painlev\'e V equation
\begin{equation}
(t\sigma'')^{2}-[\sigma-t\sigma'+2(\sigma')^{2}+2s\sigma']^{2}+(4\sigma')^{2}(s+\sigma')^{2}=0.
\end{equation}
Notice that in the right-hand sides of formulae \eqref{pvimn}, \eqref{global-moms}, \eqref{meso-moms} and \eqref{micro-pv} there is no immediate obstacle to letting $s$ take non-integer values and it is natural to ask if these results continue to hold. The limit \eqref{global-moms} is known for any fixed $s \in \mathbb{C}$ from an application of the strong Szeg\H{o} limit theorem, see \cite[Corollary 2]{FK04}. By again reducing to known relations with Painlev\'e equations, representations \eqref{micro-pv} and \eqref{pvimn} can also be extended to a wider range of $s$. 
\begin{theorem}
The asymptotic formula \eqref{micro-pv} holds for any $s \in \mathbb{C}$ with $\mathrm{Re}(s)>-\frac{1}{2}$.
\end{theorem}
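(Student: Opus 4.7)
The plan is to upgrade the derivation of \eqref{micro-pv}, which was established for $s \in \mathbb{N}$ via the Laguerre unitary ensemble interpretation, to complex $s$ with $\mathrm{Re}(s) > -1/2$ by an analytic continuation argument.

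First I would extend the Painlev\'e VI representation \eqref{pvimn} from positive integers to general $s$ with $\mathrm{Re}(s) > -1/2$. For $|z| < 1$, the moment $M_N(s,z)$ is a Toeplitz determinant with smooth, strictly positive symbol $|1-ze^{-i\theta}|^{2s}$ and is therefore entire in $s$. The Forrester--Witte $\tau$-function theory identifies such Toeplitz determinants with solutions of the $\sigma$-Painlev\'e VI equation for general complex $s$, the relevant branch being pinned down by its boundary behavior at $t = 0$ (which is itself analytic in $s$). Since both sides of \eqref{pvimn} are analytic in $s$ on $\{\mathrm{Re}(s) > -1/2\}$ and agree for $s \in \mathbb{N}$, they agree throughout that half-plane by the identity theorem.

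Next I would perform the Painlev\'e VI to Painlev\'e V confluence limit. Setting $t = 1 - \tau/N$ in \eqref{pvimn} with parameters $c_1 = s + N/2$, $c_2 = N/2$, a rescaling of the $\sigma$-Painlev\'e VI equation produces, in the limit $N \to \infty$, the $\sigma$-Painlev\'e V equation satisfied by $\sigma^{(V)}_s$. Tracking the subtraction $-c_1^2 t + (c_1^2 + c_2^2)/2$ in the integrand shows that the part of the exponential integral that is polynomial in $N$ produces precisely the prefactor $(N/c)^{s^2}$, while the residual part becomes $\int_c^\infty \sigma^{(V)}_s(\tau)/\tau \, d\tau$ after the change of variables to $\tau$.

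The hard part will be justifying the confluence limit rigorously with uniformity strong enough to pass to the limit inside the exponential integral. One needs locally uniform convergence $\sigma^{(VI)}_{N,s}(1 - \tau/N) \to \sigma^{(V)}_s(\tau)$ on compact subsets of $(0, \infty)$, together with tail control both as $\tau \to 0^+$ (to account for the $(N/c)^{s^2}$ extraction) and as $\tau \to \infty$ (where the known exponential decay of $\sigma^{(V)}_s$ ensures integrability). An alternative softer route which bypasses explicit handling of the transcendents is to establish uniform-in-$N$ bounds on $M_N(s,z) N^{-s^2}$ on compact subsets of $\{\mathrm{Re}(s) > -1/2\}$ directly from the Toeplitz determinant structure, and then apply Vitali's convergence theorem in conjunction with the already-known integer case to conclude.
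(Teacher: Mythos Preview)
Your route is genuinely different from the paper's and contains both a flaw and an unresolved hard step.

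First, the flaw: in your step~1 you invoke ``the identity theorem'' to pass from agreement on $s\in\mathbb{N}$ to agreement on $\{\mathrm{Re}(s)>-1/2\}$. The positive integers have no accumulation point in that half-plane, so the identity theorem does not apply. You would need Carlson's theorem together with suitable growth bounds, or a direct argument. (In fact the paper's Theorem~\ref{lem:PVI} establishes the Painlev\'e~VI representation \eqref{pvimn} for $\mathrm{Re}(s)>-1$ by a direct contour-deformation argument, not by continuation from integers, so this step can be repaired by citing that result.)

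Second, the unresolved hard step: you correctly flag that justifying the PVI$\to$PV confluence with uniform control in $N$, or alternatively obtaining uniform-in-$N$ bounds on $N^{-s^2}M_N(s,z)$ for Vitali, is the crux. But neither is routine: the symbol $|1-ze^{-i\theta}|^{2s}$ develops a Fisher--Hartwig singularity as $|z|\to1$, and uniform control of Toeplitz determinants through such a transition is exactly the content of a Riemann--Hilbert analysis. The paper bypasses your confluence argument entirely and instead writes $M_N(s,z)$ as a Toeplitz determinant with symbol $(e^{i\theta}-z)^s(e^{i\theta}-1/z)^s e^{-i\theta s}e^{-i\pi s}$ and then cites Theorem~1.4 of Claeys--Its--Krasovsky \cite{CIK11}, which gives the asymptotics of precisely such determinants (two merging singularities) directly in terms of a $\sigma$-Painlev\'e~V solution, valid for $\mathrm{Re}(s)>-\tfrac12$. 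That citation absorbs all of the hard analysis you would otherwise have to reproduce.
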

\begin{proof}
By Weyl's integration formula,
\begin{equation}
M_{N}(s,z) =\frac{1}{(2\pi)^{N}N!}\int_{[0,2\pi]^{N}}\prod_{j=1}^{N}d\theta_{j}\,|1-ze^{-\i\theta_{j}}|^{2s}|\Delta(\bm{e^{\i\theta}})|^{2}. \label{weylcik}
\end{equation}
Now use the identity $|1-ze^{-\i\theta}|^{2s} = (e^{\i\theta}-z)^{s}(e^{\i\theta}-\frac{1}{z})^{s}e^{-\i\theta s}e^{-\i\pi s}z^{s}$ where the branch is chosen so that as a function of $w=e^{\i\theta}$, the latter is analytic in the slit plane $\mathbb{C}\setminus([0,z]\cup[1/z,\infty))$. Using standard identities for the CUE we can write \eqref{weylcik} as a Toeplitz determinant
\begin{equation}
M_{N}(s,z) = z^{sN}\det\bigg\{\frac{1}{2\pi}\int_{0}^{2\pi}m(e^{\i\theta},z)e^{\i(k-j)\theta}d\theta\bigg\}_{k,j=0}^{N-1} \label{tdetmn}
\end{equation}
with symbol
\begin{equation}
m(e^{\i\theta},z) = (e^{\i\theta}-z)^{s}\left(e^{\i\theta}-\frac{1}{z}\right)^{s}e^{-\i\theta s}e^{-\i\pi s}.
\end{equation}
Asymptotics of Toeplitz determinants with this symbol were analysed in \cite{CIK11} uniformly in $z$ (in equation (1.7) of \cite{CIK11}, take parameters $\alpha=s$, $\beta=0$ and $V \equiv 0$, $z=e^{t}$). In particular, application of Theorem 1.4 in \cite{CIK11} to the Toeplitz determinant \eqref{tdetmn} combined with identity (1.43) in \cite{FW02} with parameters $a=\mu=0$ gives the asymptotics \eqref{micro-pv} for any $s \in \mathbb{C}$ with $\mathrm{Re}(s)>-\frac{1}{2}$.
\end{proof}
\begin{theorem}
\label{lem:PVI}
Representation \eqref{pvimn} is valid for any $s, z \in \mathbb{C}$ with $\mathrm{Re}(s)>-1$ and $|z|<1$.
\end{theorem}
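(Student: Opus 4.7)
The plan is to prove Theorem \ref{lem:PVI} by analytic continuation in $s$. For fixed $z$ with $|z|<1$, both sides of \eqref{pvimn} will be shown to define holomorphic functions of $s$ on the half-plane $\{\mathrm{Re}(s)>-1\}$; since they already coincide at every positive integer $s$ by the Selberg-integral derivation leading to \eqref{juemax} and the results of \cite{HS99,FW04}, one then promotes this agreement to the whole half-plane by a Carlson-type uniqueness argument.

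First I would verify analyticity of the left-hand side. Applying Weyl's integration formula writes $M_N(s,z)$ as an $N$-fold integral whose integrand $\prod_j|1-ze^{-\i\theta_j}|^{2s}$ is, for $|z|<1$, uniformly bounded above by $(1+|z|)^{2N\mathrm{Re}(s)}$ and below by $(1-|z|)^{2N\mathrm{Re}(s)}$ on $[0,2\pi]^N$. Dominated convergence then justifies differentiation in $s$, so $M_N(s,z)$ is entire in $s$; the same bounds give polynomial growth on horizontal half-lines and at most exponential growth of type $2N\log\frac{1+|z|}{1-|z|}$ on vertical lines, which is harmless for a Carlson argument.

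Second, I need analyticity of the right-hand side. The prefactor $(1-|z|^2)^{-s^2}$ is entire, so it suffices to handle the exponential integral. For general complex $s$ I would define $\sigma^{(\mathrm{VI})}_{N,s}(t)$ as the specific solution of the $\sigma$-Painlev\'e VI equation (with parameters $c_1=s+N/2$, $c_2=N/2$) selected by the boundary behaviour at $t=1$ inherited from the Toeplitz-determinant representation \eqref{tdetmn}; analyticity in $s$ then comes either from the $\sigma$-PVI representations of Toeplitz determinants with Fisher-Hartwig symbols developed by Forrester-Witte (see \cite{forrester2006boundary,FW04,basor2019representation}) or, more directly, from the Riemann-Hilbert analysis of \cite{CIK11}, both of which apply in the full range $\mathrm{Re}(s)>-1$. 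Alternatively, one may invoke Painlev\'e's theorem on the algebraic nature of movable singularities, together with analytic dependence of the boundary data at $t=1$ on $s$, to conclude that the selected branch is holomorphic in $s$.

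Finally, with both sides holomorphic on $\{\mathrm{Re}(s)>-1\}$ and agreeing on $\mathbb{N}$, Carlson's theorem gives equality throughout the half-plane, provided subexponential growth on vertical lines (with type strictly less than $\pi$) and polynomial growth on the positive real axis. The needed bounds for the left-hand side were observed above; for the right-hand side they follow from the same Toeplitz-determinant representation, since the right-hand side of \eqref{pvimn} is, by the Riemann-Hilbert/PVI derivation, an alternative expression for exactly this determinant. I expect the main obstacle to be the second step: cleanly identifying the correct $\sigma$-PVI transcendent for non-integer $s$ and establishing its holomorphy and uniform growth in $s$ on vertical strips. The cleanest route is to skip an explicit discussion of $\sigma^{(\mathrm{VI})}_{N,s}$ and work directly with the Toeplitz determinant \eqref{tdetmn} through the Fisher-Hartwig Riemann-Hilbert problem of \cite{CIK11}, which simultaneously gives analyticity in $s$, the growth bounds required for Carlson's theorem, and the identification with the $\sigma$-PVI transcendent parametrising \eqref{pvimn}.
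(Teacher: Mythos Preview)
Your approach via Carlson's theorem is genuinely different from the paper's, but it has a structural weakness that you yourself half-identify. The crux is step two: to even \emph{define} the right-hand side of \eqref{pvimn} for non-integer $s$ you must specify which Painlev\'e VI transcendent $\sigma^{(\mathrm{VI})}_{N,s}$ is meant, and your proposal to pin it down ``by the boundary behaviour at $t=1$ inherited from the Toeplitz-determinant representation'' is close to circular. If you invoke the Forrester--Witte or Riemann--Hilbert machinery to identify a Toeplitz determinant with a $\sigma$-PVI $\tau$-function for general $s$, then you have already established \eqref{pvimn} directly---since $M_N(s,z)$ \emph{is} such a Toeplitz determinant---and the Carlson step becomes redundant. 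Conversely, if you do not invoke that machinery, you have no independent definition of the right-hand side on which to run an analytic continuation argument. Your ``cleanest route'' paragraph effectively concedes this and points toward the paper's strategy.

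The paper's proof bypasses analytic continuation entirely. Starting from the Weyl integral \eqref{weylcik}, it writes $|1-z/w|^{2s}=(1-z/w)^{s}(1-zw)^{s}$ with a suitable branch, substitutes $w_j\mapsto -zw_j$, and then uses Cauchy's theorem to deform the contour $C/z$ back to the unit circle $C$ (this is the only place where the hypothesis $\mathrm{Re}(s)>-1$ enters, to guarantee integrability near the branch point). The result is an integral of the form
\[
M_N(s,z)=\frac{1}{(2\pi\i)^N N!}\int_{C^N}\prod_{j=1}^N\frac{dw_j}{w_j}\,w_j^{-s/2}|1+w_j|^{s}(1+z^2 w_j)^{s}\,|\Delta(\bm w)|^2,
\]
which matches exactly the circular Jacobi-type average in \cite[Proposition~12 and (3.7)]{FW04} with parameters $\mu=s$, $a=-N-s$, $b=s$. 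Forrester--Witte's $\sigma$-PVI characterisation applies for these general complex parameters, giving \eqref{pvimn} at once. The advantage of this route is that it never requires a separate argument for analyticity or growth of the PVI side: the transformation is exact and valid for all $s$ with $\mathrm{Re}(s)>-1$ simultaneously.
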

\begin{proof}
This follows from more general results given in \cite[Theorem 5.3]{DS22} with parameters $\kappa=0$ and $\gamma=2s$, however we give here a proof in the present context of the CUE. Without loss of generality we take $0<z\leq 1$ real. Let $C$ be the unit circle oriented counter clockwise. Starting from \eqref{weylcik}, we use the identity $|1-z/w|^{2s} = (1-z/w)^{s}(1-zw)^{s}$ for any $|w|=1$. We choose the branch cut for the powers so that the function $w \to (1-z/w)^{s}(1-zw)^{s}$ is analytic in the domain $\mathbb{C}\setminus([0,z]\cup[1/z,\infty))$. Then \eqref{weylcik} equals
\begin{equation}
\frac{1}{(2\pi i)^{N}N!}\int_{C^{N}}\prod_{j=1}^{N}\frac{dw_{j}}{w_{j}}\,(1-z/w_{j})^{s}(1-zw_{j})^{s}\prod_{1 \leq i < j \leq N}(w_{j}-w_{i})(w_{j}^{-1}-w_{i}^{-1}). \label{weyl}
\end{equation}
Now we make the change of variables $w_{j} \to -w_{j}z$ for $j=1,\ldots,N$, so that \eqref{weyl} becomes
\begin{equation}
    \begin{split}
\frac{1}{(2\pi i)^{N}N!}\int_{(C/z)^{N}}\prod_{j=1}^{N}\frac{dw_{j}}{w_{j}}(1+1/w_{j})^{s}(1+z^{2}w_{j})^{s}\prod_{1 \leq i < j \leq N}(w_{j}-w_{i})(w_{j}^{-1}-w_{i}^{-1}),
\end{split}
\end{equation}
and the powers now have a cut on $(-\infty,-1/z^{2}]\cup[-1,0]$. We apply Cauchy's theorem to deform the integration contour $C/z$ close to the unit circle $C$ and by taking a limit we can replace $C/z$ with $C$. Here we need $\mathrm{Re}(s)>-1$ to ensure integrability and $\mathrm{Re}(s)>-\frac{1}{2}$ if $z=1$. This gives 
\begin{equation}
M_{N}(s,z) = \frac{1}{(2\pi \i)^{N}N!}\int_{C^{N}}\prod_{j=1}^{N}\frac{dw_{j}}{w_{j}}\,w_{j}^{-s/2}|1+w_{j}|^{s}(1+z^{2}w_{j})^{s}|\Delta(\bm{w})|^{2}, \label{cuepvi}
\end{equation}
where we used the identity $(1+1/w_{j})^{s} = |1+w_{j}|^{s}w_{j}^{-s/2}$. The connection with Painlev\'e VI now follows from comparison with \cite[Proposition 12 and Equation (3.7)]{FW04} with their parameters $\mu=s$, $a=-N-s$, $b=s$. This shows that the integral \eqref{cuepvi} is equal to the right-hand side of \eqref{pvimn} for the stated range of $s$.
\end{proof}

\section{Combinatorial formulae for Theorem \ref{structuretheorem}}
\label{appendix: two combinatorial formulas}

In this appendix, we provide explicit formulas of $d_{m,l,s}(0,0)$ in Proposition \ref{proaboutb00}, and $d_{m,l,s}(h_1,h_2)$ in Proposition \ref{forgeneralh1h2}.
\bea\label{defineofdbarml}
&& d_{m,l,s}(0,0)\nonumber\\
&=& \left( \prod_{i=1}^{s-1} \frac{1}{(i!)^2} \right)(-1)^{m}\sum_{\substack{0\leq i_1<\cdots<i_{s-l}\leq s-1 \\ s \leq j_1 <\cdots < j_l \leq 2s-1 \\
i_{1}+\cdots+i_{s-l}+j_{1}+\cdots+j_{l}=m}}\Big(\prod_{1\leq a<b\leq s-l} (i_b-i_a) \Big) 
\Big(\prod_{1\leq a<b\leq l} (j_b-j_a) \Big)
\nonumber\\
& & \times \, 
\Big(\prod_{a=1}^{s-l} \prod_{b=1}^l (N+j_b-i_a) \Big) \Big(\prod_{a=1}^{s-l} \prod_{b=1}^{l} (N+j_a'-i_b') \Big)\nonumber\\
& & \times \, \Big(\prod_{1\leq a<b\leq s-l} (j_b'-j_a') \Big) 
\Big(\prod_{1\leq a<b\leq s} (i_b'-i_a') \Big),
\eea
where $i_1'<i_2'<\cdots<i_l'=\{0,\ldots,s-1\} \backslash \{i_1,\ldots,i_{s-l}\}$, 
and $j_1'<j_2'<\cdots<j_{s-l}'=\{s,\ldots,2s-1\} \backslash \{j_1,\ldots,j_l\}$.
\be\label{defofdml1new}
d_{m,l,s}(h_{1},h_{2})
=\sum_{\substack{\lambda \in Y_{h_1} , \mu \in Y_{h_2} }}
f_\lambda f_\mu
B_{m,l,s}(\lambda,\mu),
\ee
where $f_\lambda$ is defined in (\ref{hooklengthformula}).
Moreover, for $N\geq 3s^2-s$, $B_{m,l,s}$ involved in (\ref{defofdml1new}) can be expressed as follows. For $\lambda=(\lambda_{1},\ldots,\lambda_{h_{1}}) $ and $\mu=(\mu_{1},\ldots,\mu_{h_{2}}) $, set $\lambda_{h_1+1}=\cdots=\lambda_s=\mu_{h_2+1}=\cdots=\mu_s=0$.
Set $n_i=\lambda_i+s-i$ and $m_j=\mu_j+s-j$ for $i,j=1,\ldots,s$. 
Then
\bea\label{defofnewaml2}
B_{m,l,s}&=& \left( \prod_{i=1}^{s-1} \frac{1}{(i!)^2} \right)\sum_{
\substack{
k_1,\ldots,k_{s-l},u_1',\ldots,u_{s-l}'\in\{0,1,\ldots,s-1\} \\
k_1',\ldots,k_{l}',u_1,\ldots,u_l\in\{s,\ldots,2s-1\} \\
\sum_{q=1}^{s-l}(k_{q}+u_{q}')+\sum_{q=1}^{l}(k_{q}'+u_{q})=2m}}\sum_{\substack{0\leq i_1<\cdots<i_{s-l}\leq s-1 \\ s\leq j_1<\cdots<j_l\leq 2s-1} }  (-1)^{i_1+\cdots+i_{s-l}+j_1+\cdots+j_l}\nonumber\\
&&  \times \,
\prod_{q=1}^{s-l} \binom{k_{q}}{n_{i_{q}+1}}\binom{u_{q}'}{m_{j_{q}'+1-p}}
\prod_{q=1}^{l} \binom{N+k_{q}'}{n_{i_{q}'+1}}\binom{N+u_{q}}{m_{j_{q}+1-s}}\nonumber\\
&&\times \, \prod_{1\leq i <j\leq s-l} (k_j-k_i) (u_i'-u_j') 
\prod_{1\leq i<j\leq l} (k_j'-k_i') (u_i-u_j)\nonumber\\
&&  \times \,\prod_{1\leq j \leq l, 1\leq i\leq s-l} (2s-1-u_j-k_i)
\prod_{1\leq j \leq s-l, 1\leq i\leq l} (2s-1-u_j'-k_i'),
\eea
where $i_1'<i_2'<\cdots<i_l'=\{0,\ldots,s-1\} \backslash \{i_1,\ldots,i_{s-l}\}$, 
and $j_1'<j_2'<\cdots<j_{s-l}'=\{s,\ldots,2s-1\} \backslash \{j_1,\ldots,j_l\}$.

\bibliography{main}
\bibliographystyle{plain}
\end{document}